\numberwithin{equation}{section} %ads section number to equations
\theoremstyle{plain}
\newtheorem{theorem}{Theorem}[section]
\newtheorem{proposition}[theorem]{Proposition}
\newtheorem{lemma}[theorem]{Lemma}
\newtheorem{corollary}[theorem]{Corollary}
\theoremstyle{definition}
\newtheorem{definition}[theorem]{Definition}
\theoremstyle{remark}
\newtheorem{remark}[theorem]{Remark}
\newtheorem{question}[theorem]{Question}
\newtheorem{example}[theorem]{Example}
\newcommand{\Cs}{{$C^*$-algebra}}
\newcommand{\N}{\mathbb N}
\newcommand{\Z}{\mathbb Z}
\newcommand{\C}{\mathbb C}
\newcommand{\cA}{\mathcal A}
\newcommand{\cM}{\mathcal M}
\newcommand{\cK}{\mathcal K}
\newcommand{\supp}{\operatorname{supp}}
\newcommand{\Supp}{\operatorname{Supp}}
\newcommand{\red}{{\mathrm{red}}}
\newcommand{\ep}{{\varepsilon}}
\newcommand{\Type}{{\mathrm{T}}}
\newcommand{\Typ}{{\mathscr{T}}}
\newcommand{\npropto}{{\operatorname{\hskip3pt \propto \hskip-10pt /\hskip5pt}}}
\newcommand{\Can}{{\bf{K}}}
\author{Hiroki Matui}
\address{Graduate School of Science,
Chiba University, Inage-ku, Chiba 263-8522, Japan} 
\email{matui@math.s.chiba-u.ac.jp}
\author{Mikael R\o rdam}
\address{Department of Mathematics, University of Copenhagen, Univer\-si\-tets\-parken~5, DK-2100, Copenhagen \O, Denmark} 
\email{rordam@math.ku.dk}
\title[Group actions on locally compact spaces]{Universal properties of group actions on locally compact spaces}
\begin{document}

\maketitle

\begin{abstract} We study universal properties of locally compact $G$-spaces for countable infinite groups $G$. In particular we consider open invariant subsets of the $G$-space $\beta G$, and their minimal closed invariant subspaces. These are locally compact free $G$-spaces, and the latter are also minimal. We examine the properies of these $G$-spaces with emphasis on their universal properties.

As an example of our resuts, we use combinatorial methods to show that each countable infinite group admits a free minimal action on the locally compact non-compact Cantor set. 
\end{abstract}

\section{Introduction}

\noindent Ellis proved in \cite{Ellis:minimal} that every  group $G$ admits a free minimal action on a compact Hausdorff space, and he proved that universal minimal $G$-spaces exist and are unique.  A (minimal) compact $G$-space is universal if any other (minimal) compact $G$-space is the image of the universal space by a continuous $G$-map. Ellis proved, more specifically, that each minimal closed invariant subset of the $G$-space $\beta G$ is a free minimal $G$-space which is universal. The number of minimal closed invariant subsets of $\beta G$ is very large, see \cite{HLS:max-ideals}, but they are all isomorphic by Ellis' uniqueness theorem. 

Hjorth and Molberg, \cite{HjoMol:actions}, established the existence of a free minimal action of any countable infinite group on the Cantor set  (this is obtained from Ellis' results through a standard reduction argument).  They also obtained a free action of an arbitrary countable group on the Cantor set admitting an invariant Borel probability measure.  

The goal of this paper is to extend Ellis' results to the locally compact, non-compact setting, and in particular to study properties of the locally compact $G$-spaces that arise as \emph{open} invariant subsets of $\beta G$, and their minimal \emph{closed} invariant subsets (when they exist). We are particularly interested in those open invariant subsets of $\beta G$ that give rise to co-compact $G$-spaces (they always contain minimal closed invariant subsets).  These turn out to be of the form $X_A = \bigcup_{g \in G} K_{gA}$ for some subset $A$ of $G$, where $K_{gA}$ denotes the closure of the set $gA$ in $\beta G$, and they are thus "indexed" by the set $A$. If $A$ and $B$ are subsets of $G$, then $X_A = X_B$ if and only if $A$ is "$B$-bounded" and $B$ is "$A$-bounded", or, equivalently, if the Hausdorff distance between $A$ and $B$ with respect to (any) proper right-invariant metric on $G$ is finite. 

The minimal closed invariant subspaces of a co-compact open invariant subspace of $\beta G$ provide examples of locally compact free minimal $G$-spaces. We prove universality and uniqueness results for these spaces (explained in more detail below).

Kellerhals, Monod and the second named author studied  actions of \emph{supra\-me\-nable} groups on locally compact spaces in \cite{KelMonRor:supramenable}. By definition, a group is supramenable if it contains no (non-empty) paradoxical subset.  It was shown in \cite{KelMonRor:supramenable} that a group is supramenable if and only if whenever it acts 
\emph{co-compactly} on a locally compact Hausdorff space, then there is a non-zero invariant Radon measure. As a step towards proving this result it was shown that there is a non-zero invariant Radon measure on the $G$-space $X_A$ if and only if $A$ is non-paradoxical. This is an example where a property of the $G$-space $X_A$ is reflected in a property of the set $A$. We shall exploit such connections further in this paper. The condition that the action be co-compact in the characterization of supramenable groups from \cite{KelMonRor:supramenable} cannot be removed as shown in Section~\ref{sec:cocompact}.

It was further shown in \cite{KelMonRor:supramenable} that if $A$ is paradoxical, then any minimal closed invariant subset of $X_A$ is a  \emph{purely infinite}\footnote{An action of a group on a totally disconnected space is purely infinite if all compact-open subsets are paradoxical relatively to the compact-open subsets of the space.}  free minimal $G$-space. This was used to prove that any countable non-supramenable group admits a free minimal purely infinite action on  the locally compact non-compact Cantor set. 
It was left open in \cite{KelMonRor:supramenable} if \emph{all} countable infinite groups admit a free minimal action on the locally compact non-compact Cantor set. In 
Section~\ref{sec:non-discrete-compact} we answer this question affirmatively for all countable infinite groups. 

We show that the $G$-spaces $X_A$  are universal with respect to the class of locally compact $G$-spaces that have the same \emph{type} as $X_A$. The (base point dependent) type is defined for each pair $(X,x_0)$, where $X$ is a locally compact (co-compact) $G$-space such that $G.x_0$ is dense in $X$, and it  is defined to be the collection of sets $B \subseteq G$ for which $B.x_0$ is  relatively compact in $X$. In the co-compact case this information can be compressed into the equivalence class, $[A]$, of a single set $A \subseteq G$, and we say that the type of $(X,x_0)$ is $[A]$ in this case. The type of the $G$-space $(X_A,e)$ is  $[A]$, which in particular shows that all (equivalence classes of) subsets of $G$ are realized as a type. We show that there is a (necessarily unique and surjective) proper continuous $G$-map $\varphi \colon X_A \to X$ with $\varphi(e) = x_0$ if and only if the type of $X$ is $[A]$, thus providing a universal property of the space $X_A$, see Example~\ref{ex:four}, Case III.

If $A \subseteq G$ is the type of a minimal $G$-space (with respect to some base point $x_0$), then all minimal closed invariant subsets of $X_A$ are also of type $[A]$, and we show that such minimal $G$-spaces are universal among all minimal locally compact $G$-spaces $(X,x_0)$ of type $[A]$. Moreover, relying heavily on the ideas from a new proof of Ellis' uniqueness theorem by Gutman and Li, \cite{GutLi:universal}, we prove that all minimal closed invariant subspaces of $X_A$ are pairwise isomorphic as $G$-spaces whenever $A$ is of "minimal type".  If $A$ is not of minimal type, then $X_A$ may have non-isomorphic, even non-homeomorphic, closed invariant subsets.  

The minimal closed invariant subspaces of the co-compact $G$-spaces $X_A$ is a source of examples of free minimal locally compact $G$-spaces. These spaces are always totally disconnected. One can obtain a free minimal action of  the given (countable infinite) group on the locally compact non-compact Cantor set from any of these minimal closed invariant subspaces of $X_A$ using a standard reduction, provided that the minimal $G$-space is  neither compact nor discrete. 

It was shown in \cite{KelMonRor:supramenable} that a minimal $G$-space of $X_A$ is never compact if $A$ is not equivalent to an absorbing subset of $G$ (see \cite[Definition 3.5]{KelMonRor:supramenable} or Definition~\ref{def:absorbing}). It was observed in \cite{KelMonRor:supramenable} that a minimal $G$-space of $X_A$ can be discrete, even when $A$ is infinite (in which case $X_A$ itself is non-discrete), but it was left open precisely for which subsets $A$ of $G$ this can happen. 
We show here that $X_A$ contains no discrete minimal closed invariant subspace if and only if $A$ is \emph{infinitely divisible} (see Definition~\ref{def:divisible}). Accordingly, if $A$ is infinitely disivible and not equivalent to an absorbing set, then all minimal closed invariant subsets of $X_A$ are non-compact and non-discrete. We show that each countable infinite group $G$ contains an infinitely divisible subset $A$ which is not equivalent to an absorbing set, and we conclude that each countable infinite group admits a free minimal action on a non-compact non-discrete locally compact Hausdorff space, which, moreover, can be taken to be the locally compact non-compact Cantor set. 
 
As an illustration of the problems we are attempting to address, consider the following concrete question: Let $X_1$ and $X_2$ be minimal locally compact $G$-spaces. When does there exist a minimal locally compact $G$-space $Z$ and proper continuous (necessarily surjective) $G$-maps $Z \to X_j$, $j=1,2$?  If $Z$ exists and if one of $X_1$ or $X_2$ is compact, respectively, discrete, then the other must also be compact, respectively, discrete (and, conversely, in these cases $Z$ clearly does exist). We shall give a sufficient and necessary condition for the existence of the $G$-space $Z$ in the general case, where $X_1$ and $X_2$ are locally compact $G$-spaces in terms of the (base point free) type of $X_1$ and $X_2$ (Corollary~\ref{cor:char-same-type}). The base point free type is developed in Section~\ref{sec:basepoint}, where it is also used to give an alternative characterization of subsets $A$ of $G$ of minimal type, as well as to make more precise the meaning of universal minimal locally compact $G$-spaces.

\subsection*{Acknowledgements} This research was done in 2013
while the first named author visited
Department of Mathematical Sciences, University of Copenhagen.
He gratefully appreciates the warm hospitality of the Department.
He was supported in part
by the Grant-in-Aid for Young Scientists (B)
of the Japan Society for the Promotion of Science.

The second named author thanks David Kerr, Nicolas Monod and Zhuang Niu for inspiring conversations relating to the topics of this paper. He was supported  by the Danish National Research Foundation (DNRF) through the Centre for Symmetry and Deformation at University of Copenhagen, and The Danish Council for Independent Research, Natural Sciences. Both authors thank the anonymous referee for several useful suggestions and for supplying a proof of Lemma~\ref{lm:I_n} in the present general case.

%%%%%%%%%%%%%%%%%%%%%%%%%%%%%%%%%%%%%%%%%%%%%%%%%%%%%%%%%%%%%%%%%%%

\section{Preliminaries} \label{sec:prelim}

\noindent This section contains some background material, primarily from \cite{KelMonRor:supramenable}. 

Throughout this paper $G$ will denote a (discrete) group with neutral element $e \in G$. Most of the time $G$ will be assumed to be countable and infinite. 
We shall be studying locally compact (typically non-compact) $G$-spaces, and we shall primarily be interested in $G$-spaces with the following property:

\begin{definition}[Co-compact actions] \label{def:co-cpt} Let $G$ be a  group acting on a locally compact Hausdorff space $X$. The action of $G$ on $X$ (or the $G$-space $X$) is said to be \emph{co-compact} if there is a compact subset $K$ of $X$ such that
$$\bigcup_{g \in G} g.K = X.$$
\end{definition}

\noindent Every \emph{minimal} locally compact $G$-space is co-compact (take $K$ to be any compact set with non-empty interior). 

In each  locally compact co-compact Hausdorff $G$-space $X$ there is a compact subset $K$ of $X$ such that $X = \bigcup_{g \in G} g.K^{\mathrm{o}}$. A compact subset $K$ with these properties will be called \emph{$G$-regular}. 

Here is a useful property of co-compact $G$-spaces:

\begin{proposition} \label{co-cpt-minimal}
Each locally compact co-compact $G$-space $X$ contains a minimal closed $G$-invariant subset. Moreover, one can find such a minimal closed $G$-invariant subset inside every non-empty closed $G$-invariant subset of $X$. 
\end{proposition}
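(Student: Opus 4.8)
The plan is to use Zorn's lemma on the collection of non-empty closed $G$-invariant subsets of $X$, ordered by reverse inclusion, and the key point will be to exhibit a compact set witnessing co-compactness for each such subset so that a nested intersection stays non-empty. Fix a $G$-regular compact set $K \subseteq X$, so that $X = \bigcup_{g \in G} g.K^{\mathrm{o}}$. Let $Y_0$ be any non-empty closed $G$-invariant subset of $X$; I will produce a minimal closed $G$-invariant subset inside $Y_0$, which proves both assertions at once (taking $Y_0 = X$ for the first). Consider the family $\mathcal{F}$ of all non-empty closed $G$-invariant subsets of $Y_0$, partially ordered by $\supseteq$.

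The heart of the argument is the chain condition. Let $\{Y_i\}_{i \in I}$ be a chain in $\mathcal{F}$, and put $Y = \bigcap_{i \in I} Y_i$; this is closed and $G$-invariant, so I only need $Y \neq \emptyset$. First observe that for each $i$, since $Y_i$ is a non-empty $G$-invariant set meeting some translate $g.K^{\mathrm o}$, invariance gives $Y_i \cap K \neq \emptyset$ (replace a point of $Y_i$ by a suitable translate lying in $K$). Thus $\{Y_i \cap K\}_{i \in I}$ is a family of non-empty closed subsets of the compact space $K$; because $\{Y_i\}$ is a chain, this family has the finite intersection property, so by compactness of $K$ we get $\bigcap_{i \in I} (Y_i \cap K) \neq \emptyset$, hence $Y \supseteq \bigcap_i (Y_i \cap K) \neq \emptyset$. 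Therefore every chain in $\mathcal{F}$ has an upper bound (namely $Y$) in $\mathcal{F}$, and Zorn's lemma yields a maximal element $Z$ of $\mathcal{F}$, i.e.\ a $\supseteq$-maximal non-empty closed $G$-invariant subset of $Y_0$. Maximality with respect to $\supseteq$ means $Z$ contains no proper non-empty closed $G$-invariant subset, so $Z$ is minimal, and $Z \subseteq Y_0$ as required.

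The only genuine subtlety is the step showing $Y_i \cap K \neq \emptyset$: one must use that $K$ is $G$-regular (so its translates \emph{cover} $X$, not merely are relatively compact) together with $G$-invariance of $Y_i$ — a point $y \in Y_i$ lies in $g.K^{\mathrm o}$ for some $g$, whence $g^{-1}.y \in K^{\mathrm o} \subseteq K$ and $g^{-1}.y \in Y_i$. I expect this to be the main thing to get right; the rest is the standard Zorn's-lemma-with-compactness template. (One could alternatively invoke Proposition~\ref{co-cpt-minimal}'s first clause directly once the general statement is proved, but it is cleanest to prove the relativized version from the start, noting that $Y_0$ with the restricted action is again locally compact and co-compact, co-compactness of $Y_0$ being exactly the content of the $Y_i \cap K \neq \emptyset$ observation applied to $Y_0$ itself.)
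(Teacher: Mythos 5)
Your proof is correct and is essentially the paper's argument in dual form: the paper runs Zorn's lemma over proper open invariant subsets and uses the co-compactness witness $K$ to see that the union of a chain stays proper, while you run it over non-empty closed invariant subsets and use the same $K$ (via the finite intersection property in the compact set $K$) to see that the intersection of a chain stays non-empty. The key step you flag — that every non-empty closed invariant set meets $K$ because the translates of $K^{\mathrm{o}}$ cover $X$ — is exactly the point where co-compactness enters in the paper's proof as well.
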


\begin{proof}  We pass to complements and show that each $G$-invariant open proper subset $U$ of $X$ is contained in a maximal $G$-invariant open proper subset of $X$. Use Zorn's lemma to find a maximal linearly ordered collection $\{U_\alpha\}_{\alpha \in I}$ of $G$-invariant open proper subset of $X$ each containing $U$, and set $V = \bigcup_{\alpha \in I} U_\alpha$. Then $V$ is an open $G$-invariant subset of $X$ which contains $U$ and is not properly contained in any $G$-invariant open proper subset of $X$. We must show that $V \ne X$. 

Let $K$ be a compact subset of $X$ that witnesses the co-compactness. If $V = X$, then $K \subseteq U_\alpha$ for some $\alpha \in I$. But this would imply that $X = \bigcup_{g \in G} g.K \subseteq U_\alpha$, contradicting that $U_\alpha \ne X$ for all $\alpha \in I$. 
\end{proof}

\noindent
The following definition (also considered in \cite{KelMonRor:supramenable}) plays a central role in this paper:

\begin{definition} \label{def:G-equiv}
Let $G$ be a group. Let $A$ and $B$ be non-empty subsets of $G$. Write $A \propto B$ if $A$ is \emph{$B$-bounded}, i.e., if $A \subseteq FB = \bigcup_{g \in F} gB$ for some finite subset $F$ of $G$. Write $A \approx B$ if $A \propto B$ and $B \propto A$. 

Denote by $P_\approx(G)$ the set of equivalence classes $P(G)/\!\! \approx$. 
\end{definition} 

\noindent It is easy to see that  $\propto$ is a pre-order relation on the power set $P(G)$ of $G$, and $\approx$ is an equivalence relation on $P(G)$.  The relation $\propto$ defines a partial order relation on $P_\approx(G)$, which again is denoted by $\propto$. When, in the sequel, we call two subsets of a group \emph{equivalent}, we shall have the equivalence relation $\approx$ defined above in mind.

\begin{example} \label{ex:subgroup1} (i). Let $G$ be a group and let $H$ be a subgroup of $G$. Then $G \approx H$ if and only if $|G:H| < \infty$. 

(ii). More generally, if $H_1 \subseteq H_2 \subseteq G$ are subgroups of a group $G$, then $H_1 \approx H_2$ if and only if $|H_2:H_1| < \infty$.

(iii). Let $F \subseteq G$ be non-empty. Then $F \approx \{e\}$ if and only if $F$ is finite. 
\end{example}

\noindent We have the following geometric interpretation of the relations defined above. 

\begin{lemma} \label{lm:approx-geom}
Let $G$ be a  group equipped with a proper right-invariant metric\footnote{A metric $d$ on $G$ is said to be \emph{right-invariant} if $d(hg,h'g) = d(h,h')$ for all $g,h,h' \in G$. It is \emph{proper} if the set $\{g \in G : d(g,h) \le R\}$ is finite for all $h \in G$ and all $R < \infty$. Every countable group admits a proper right-invariant metric.}  $d$, and let $A$ and $B$ be non-empty subsets of $G$. Then:
\begin{enumerate}
\item $A \propto B$ if and only if there exists $R < \infty$ such that $d(g,B) \le R$ for all $g \in A$. \vspace{.1cm}
\item $A \approx B$ if and only if there exists $R < \infty$ such that $d(g,B)\le R$ and $d(h,A) \le R$ for all $g \in A$ and $h \in B$, i.e., if the Hausdorff distance between $A$ and $B$ with respect to $d$ is finite. 
\end{enumerate}
\end{lemma}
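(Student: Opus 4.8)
The plan is to prove Lemma~\ref{lm:approx-geom} by establishing part (1) and then deriving part (2) from it together with the definition of $\approx$. For part (1), I would unwind the two relevant definitions. The statement $A \propto B$ means there is a finite set $F \se G$ with $A \se FB$; the geometric statement asserts a uniform bound $R$ with $d(g,B) \le R$ for all $g \in A$, i.e.\ every point of $A$ lies within distance $R$ of the set $B$.

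First, suppose $A \propto B$, so $A \se FB$ for some finite $F = \{f_1,\dots,f_n\}$. Set $R := \max_{1 \le i \le n} d(f_i, e)$, which is finite since $F$ is finite. Given $g \in A$, write $g = f_i b$ for some $i$ and some $b \in B$. By right-invariance of $d$ we get $d(g,b) = d(f_i b, b) = d(f_i, e) \le R$, hence $d(g,B) \le d(g,b) \le R$. This gives the forward implication. Conversely, suppose such an $R$ exists. For each $g \in A$ choose $b_g \in B$ with $d(g, b_g) \le R$; then $g = (g b_g^{-1}) b_g$ and, by right-invariance, $d(g b_g^{-1}, e) = d(g, b_g) \le R$. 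Thus $g b_g^{-1}$ lies in the ball $F := \{h \in G : d(h,e) \le R\}$, which is finite by properness of $d$. Hence $g \in Fb_g \se FB$, so $A \se FB$ and $A \propto B$. This proves (1).

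For part (2), simply apply (1) in both directions: $A \approx B$ means $A \propto B$ and $B \propto A$, which by (1) is equivalent to the existence of $R_1, R_2 < \infty$ with $d(g,B) \le R_1$ for all $g \in A$ and $d(h,A) \le R_2$ for all $h \in B$; taking $R := \max\{R_1, R_2\}$ gives a single bound, and conversely any single such $R$ witnesses both $A \propto B$ and $B \propto A$. The final clause is just the observation that $\max\bigl(\sup_{g \in A} d(g,B),\ \sup_{h \in B} d(h,A)\bigr)$ is precisely the Hausdorff distance between $A$ and $B$, so finiteness of the Hausdorff distance is exactly the condition obtained.

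I do not anticipate a serious obstacle here; the only place requiring mild care is making sure both hypotheses on the metric get used in part (1) --- right-invariance to convert the group-translation picture into a distance bound and back, and properness to guarantee that the ball $\{h : d(h,e) \le R\}$ is finite, which is what produces the finite set $F$ in the converse. One should also note in passing that the choice of proper right-invariant metric is immaterial: any two such metrics are coarsely equivalent, but in fact we do not need this since the relations $\propto$ and $\approx$ are defined independently of the metric, and the lemma asserts the equivalence for an arbitrary fixed choice.
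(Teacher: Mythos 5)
Your proof is correct and follows essentially the same argument as the paper: right-invariance gives $d(g,b)=d(gb^{-1},e)$ to pass between the translate picture $A\subseteq FB$ and the distance bound, and properness makes the ball $\{h : d(h,e)\le R\}$ a valid finite choice of $F$ in the converse, with part (2) then following formally from part (1). No issues.
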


\noindent Note that the conclusion of the lemma does not depend on the choice of proper right-invariant metric. 

\begin{proof} (i). Suppose that $A \subseteq FB$ for some finite subset $F$ of $G$. Set
$$R =  \max\{d(g,e) \mid g \in F\} < \infty.$$
Let $g \in A$ be given. Then $g=th$ for some $t \in F$ and $h \in B$, so $d(g,B) \le d(g,h) = d(t,e) \le R$. 

Assume conversely that $d(g,B) \le R$ for all $g \in A$.  Let $F$ be the set of elements $t \in G$ such that $d(t,e) \le R$. Let $g \in A$. Then $d(g,h) \le R$ for some $h \in B$, whence $gh^{-1} \in F$, so $g \in FB$. This proves that $A \subseteq FB$. 

(ii) follows from (i).
\end{proof}

\noindent It is perhaps of interest to note that the relations from Definition \ref{def:G-equiv} also can be interpreted at the level of $C^*$-algebras, more precisely, in terms of the Roe algebra $\ell^\infty(G) \rtimes_\red G$. The Roe algebra  is the sub-\Cs{} of $B(\ell^2(G))$ generated by the natural copy of $\ell^\infty(G)$ inside $B(\ell^2(G))$ and the image, $\lambda(G)$, of the left-regular representation, $\lambda$, of $G$ on $\ell^2(G)$. Denote the unitary operator $\lambda(g)$ by $u_g$. Then 
$$(u_g  f  u_g^*)(h) = f(g^{-1}h), \qquad u_g \, 1_A \, u_g^* = 1_{gA},$$
for all $f \in \ell^\infty(G)$, all $g,h \in G$, and all subsets $A$ of $G$. 

Let $E \colon \ell^\infty(G) \rtimes_\red G \to \ell^\infty(G)$ be the canonical conditional expectation. 

\begin{lemma} \label{lm:1_A}
Let $G$ be a discrete group and let $A$ and $B$ be non-empty subsets of $G$. Then:
\begin{enumerate}
\item $A \propto B$ if and only if $1_A$ is in the closed two-sided ideal in $\ell^\infty(G) \rtimes_\red G$ generated by $1_B$. \vspace{.1cm}
\item $A \approx B$ if and only if $1_A$ and $1_B$ generate the same closed two-sided ideal in $\ell^\infty(G) \rtimes_\red G$. 
\end{enumerate}
\end{lemma}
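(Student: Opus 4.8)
The plan is to prove (i) and deduce (ii) as a formal consequence, exactly as in Lemma~\ref{lm:approx-geom}. For (i), the easy direction is ``$A \propto B \implies 1_A$ is in the ideal generated by $1_B$'': if $A \subseteq FB$ with $F = \{g_1, \dots, g_n\}$ finite, then $1_A \le 1_{FB} = \bigvee_{i} 1_{g_iB}$, and each $1_{g_iB} = u_{g_i} 1_B u_{g_i}^*$ lies in the (closed two-sided) ideal $J$ generated by $1_B$. Since $J$ is hereditary and $0 \le 1_A \le \sum_i 1_{g_iB}$ with the latter in $J$, we get $1_A \in J$. (One can also simply note $1_A = 1_A \big(\sum_i 1_{g_iB}\big) 1_A \in J$ after passing to a suitable positive element dominating the sum, or use that $J \cap \ell^\infty(G)$ is an ideal of $\ell^\infty(G)$ containing each $1_{g_iB}$, hence containing their join $1_{FB} \ge 1_A$.)

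For the converse in (i), suppose $1_A \in J$, the ideal generated by $1_B$. The key step is to apply the canonical conditional expectation $E \colon \ell^\infty(G) \rtimes_\red G \to \ell^\infty(G)$ together with the observation that $E$ maps $J$ into the ideal $J_0$ of $\ell^\infty(G)$ generated (as a $C^*$-ideal of the abelian algebra $\ell^\infty(G)$, closed under the $G$-action, i.e.\ as a closed two-sided ideal of the crossed product intersected with $\ell^\infty(G)$) by $1_B$ — more precisely, $E(J) \subseteq \overline{J}^{\,\ell^\infty}$ where the relevant ideal of $\ell^\infty(G)$ is the closed linear span of $\{ f\, u_{g_1} 1_B u_{g_1}^* \cdots : \dots\}$ intersected with $\ell^\infty(G)$. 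Concretely: elements of $J$ are norm-limits of sums $\sum_k a_k 1_B b_k$ with $a_k, b_k \in \ell^\infty(G) \rtimes_\red G$; expanding $a_k, b_k$ in the Fourier-type expansion $\sum_g f_g u_g$ and applying $E$, one finds $E(a 1_B b)$ is a norm-limit of finite sums of terms of the form $f \cdot (u_g 1_B u_g^*) \cdot f' = f f' 1_{gB}$ with $f, f' \in \ell^\infty(G)$, $g \in G$. Hence $E(J)$ is contained in the closed ideal of $\ell^\infty(G)$ generated by the orbit $\{1_{gB} : g \in G\}$, which is precisely $\{ f \in \ell^\infty(G) : \supp f \subseteq \overline{\bigcup_{g} gB}^{\,\beta G} \}$, i.e.\ functions vanishing off (the closure in $\beta G$ of) the $G$-translates of $B$. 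Since $E(1_A) = 1_A$, we conclude $\supp(1_A)$ — that is, $\overline{A}^{\,\beta G}$ — is contained in $\overline{GB}^{\,\beta G}$. A compactness argument in $\beta G$ (the sets $\overline{g B}^{\,\beta G}$, $g \in G$, form an open cover of the compact set $\overline{A}^{\,\beta G}$ once we pass to the open sets $\widehat{gB}$ — using that in $\beta G$ the closure of $gB$ is clopen) then yields a finite $F$ with $\overline{A} \subseteq \bigcup_{g \in F} \overline{gB}$, whence $A \subseteq FB$, i.e.\ $A \propto B$.

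Finally (ii) is immediate from (i): $1_A$ and $1_B$ generate the same closed two-sided ideal iff $1_A$ lies in the ideal generated by $1_B$ and vice versa, which by (i) is equivalent to $A \propto B$ and $B \propto A$, i.e.\ $A \approx B$.

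The main obstacle I anticipate is the converse direction of (i): controlling $E(J)$ precisely enough. The slick route is to identify the closed two-sided ideal of $\ell^\infty(G) \rtimes_\red G$ generated by a projection $1_B \in \ell^\infty(G)$ with $\ell^\infty(G) \rtimes_\red G$ cut down appropriately — or rather to use that $J \cap \ell^\infty(G)$ is a $G$-invariant (closed) ideal of $\ell^\infty(G)$ containing $1_B$, hence contains the smallest such, namely the ideal of functions supported on $\overline{GB}^{\,\beta G}$, and that conversely $E(J) \subseteq J \cap \ell^\infty(G)$ because $E$ is a conditional expectation onto $\ell^\infty(G)$ and $E$ is $J$-bimodular in the sense that $E(a)\in \overline{J\cap \ell^\infty(G)}$ for $a \in J$ (this uses that $\ell^\infty(G)$ is in the multiplicative domain and a standard approximation). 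Making the last claim rigorous — that $E$ maps the ideal $J$ into its abelian ``core'' $J \cap \ell^\infty(G)$ — is the one point that needs care; it follows from writing $a \in J$ as a limit of $\sum_k x_k 1_B y_k$, applying $E$, and using the explicit formula $E(f u_g 1_B u_h f') = \delta_{g, h^{-1}} f f' 1_{h^{-1}B}$-type identities to see each term already lies in $J \cap \ell^\infty(G)$.
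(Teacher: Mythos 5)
Your proposal is correct in substance and follows essentially the same strategy as the paper: the easy direction via $1_A \le \sum_{g \in F} u_g 1_B u_g^*$ and hereditariness of closed ideals, and the converse by pushing the ideal $J$ generated by $1_B$ through the conditional expectation $E$ and controlling supports. The only real difference is the finishing move. The paper stays inside $G$: it writes $1_A = \sum_j x_j 1_B x_j^*$, approximates the $x_j$ by elements $y_j$ with $\Supp(y_j)$ finite, and combines the support estimate $\supp E(y\,1_B\,y^*) \subseteq \Supp(y)B$ with the norm estimate $\|1_A - \sum_j E(y_j 1_B y_j^*)\| < 1$ to get $A \subseteq FB$ directly. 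You instead pass to $C(\beta G)$, locate $E(J)$ inside the closed ideal of $\ell^\infty(G)$ generated by the orbit $\{1_{gB}\}_{g \in G}$, and extract the finite set $F$ by compactness of $K_A$; this avoids both the exact representation $1_A = \sum_j x_j 1_B x_j^*$ (which itself needs a word of justification) and the ``norm less than $1$'' trick, at the cost of invoking the Gelfand picture. Part (ii) from part (i) is the same in both.

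One point in your write-up needs correcting, though the fix is already contained in your own computation. The closed ($G$-invariant) ideal of $\ell^\infty(G) \cong C(\beta G)$ generated by $\{1_{gB} : g \in G\}$ is the set of functions vanishing off the \emph{open} set $\bigcup_{g \in G} K_{gB} = X_B$ (the union of the closures), not the set of functions supported in $\overline{\bigcup_g gB}^{\,\beta G} = K_{GB}$ (the closure of the union), which is in general strictly larger. With only your stated conclusion $K_A \subseteq K_{GB}$ the compactness step fails: for $G$ infinite, $A = G$, $B = \{e\}$ one has $K_A = K_{GB} = \beta G$, the sets $K_{gB} = \{g\}$ do not cover it, and indeed $A \npropto B$. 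However, your Fourier-expansion computation shows that $E$ of any element of the algebraic ideal is a finite sum of terms $f\,1_{gB}$ with $f \in \ell^\infty(G)$, so $E(J)$ lies in the ideal corresponding to $\bigcup_g K_{gB}$; hence $1_A = E(1_A)$ vanishes off $\bigcup_g K_{gB}$, i.e.\ $K_A \subseteq \bigcup_g K_{gB}$, and then compactness of $K_A$ together with openness of each $K_{gB}$ yields $A \subseteq FB$ as you intended. With that correction the argument is complete.
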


\begin{proof} (i). Each element $y \in \ell^\infty(G) \rtimes_\red G$ can be written as a (formal) sum $y = \sum_{g \in G} f_g \, u_g$; and $E(y) = f_e$. Let $\Supp(y)$ be the set of those $g \in G$ such that $f_g \ne 0$. The set of elements $y$ for which $\Supp(y)$ is finite is a dense $^*$-subalgebra of $\ell^\infty(G) \rtimes_\red G$. 

If $f \in \ell^\infty(G)$, then let $\supp(f)$ be the set of those $g\in G$ such that $f(g) \ne 0$. An easy calculation shows that for $y \in \ell^\infty(G) \rtimes_\red G$, with $\Supp(y) \, (= F)$ finite and $A \subseteq G$, one has
\begin{equation} \label{eq:supp-1}
\supp E(y \, 1_A \, y^*) \subseteq FA.
\end{equation}

Suppose that $1_A$ belongs to the closed two-sided ideal in $\ell^\infty(G) \rtimes_\red G$ generated by $1_B$. Then there exist $x_1, \dots, x_n \in \ell^\infty(G) \rtimes_\red G$ such that $1_A = \sum_{j=1}^n x_j \, 1_B \, x_j^*$. We can approximate each $x_j$ with an element $y_j$ in $\ell^\infty(G) \rtimes_\red G$, with $\Supp(y_j) \, (= F_j)$ finite, and such that 
$$\|1_A - \sum_{j=1}^n y_j \, 1_B \, y_j ^*\| < 1.$$
Then $\|1_A - \sum_{j=1}^n E(y_j \, 1_B \, y_j^*) \| < 1$, so 
$$A \; \subseteq \; \bigcup_{j=1}^n \supp(E(y_j\, 1_B \, y_j^*) ) \; \subseteq \; \bigcup_{j=1}^n F_jB \; = \; FB,$$
by \eqref{eq:supp-1}, when $F = \bigcup_{j=1}^n F_j$. This shows that $A \propto B$.

Suppose now that $A \propto B$, and let $F \subseteq G$ be a finite set such that $A \subseteq FB$. Then
$$1_A \le 1_{FB} \le \sum_{g \in F} 1_{gB} = \sum_{g \in F} u_g \, 1_B \, u_g^*.$$
The element on the right-hand side belongs to the closed two-sided ideal in the Roe algebra generated by $1_B$, and hence so does $1_A$.

(ii) clearly follows from (i).
\end{proof}

\vspace{.3cm} \noindent {\bf{The $G$-space $\beta G$ and its open invariant subsets.}}

\vspace{.2cm} \noindent
Any (discrete) group $G$ acts on itself by left multiplication. By the universal property of the $\beta$-compactification this action extends to a continuous action of $G$ on $\beta G$. The action of $G$ on $\beta G$ is free; and it is amenable if and only if $G$ is exact (see \cite[Theorem 5.1.6]{BroOza:book}). The $G$-space $\beta G$ is never minimal (unless $G$ is finite). In fact, $\beta G$ has an abundance of open invariant subsets whenever $G$ is infinite (see \cite{HLS:max-ideals}). 

Note that $\beta G$ has a dense orbit (for example $G = G.e \subseteq \beta G$, where $e \in G$ is the neutral element). Each non-empty open invariant subset of $\beta G$ contains $G$ as an open and dense subset, and is thus a locally compact $G$-space with a dense orbit. In Proposition \ref{prop:dense-orbit} below we shall say more about $G$-spaces with a dense orbit. 

We proceed to describe the open invariant subsets of $\beta G$. Let $A$ be a non-empty subset of $G$, and, following the notation of \cite{KelMonRor:supramenable}, let $K_A$ denote the closure of $A$ in $\beta G$. Then $K_A$ is a compact-open subset of $\beta G$ (since $\beta G$ is a Stonean space). Put
\begin{equation} \label{eq:X_A}
X_A = \bigcup_{g \in G} g.K_A = \bigcup_{g \in G} K_{gA} \subseteq \beta G.
\end{equation}
Then $X_A$ is an open and invariant subset of $\beta G$, and is hence a locally compact  $G$-space, which is also co-compact and has a dense orbit $G = G.e$. 

It is observed in \cite[Lemma 2.5]{KelMonRor:supramenable} that $X_A = X_B$ if and only if $A \approx B$, and $X_A \subseteq X_B$ if and only if $A \propto B$. In particular, $X_A = G$ if and only if $A$ is finite and non-empty, and $X_A = \beta G\; (= X_G)$ if and only if $A \approx G$.

\begin{proposition}  \label{prop:X_A} Each $G$-invariant open subset of $\beta G$ on which $G$ acts co-compactly is equal to $X_A$ for some non-empty $A \subseteq G$.
\end{proposition}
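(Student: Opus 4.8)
The plan is to take a $G$-invariant open subset $U \subseteq \beta G$ on which $G$ acts co-compactly, and to produce a witness set $A \subseteq G$ with $U = X_A$. Since $U$ is open in $\beta G$ and contains the dense orbit $G = G.e$ (every non-empty open invariant subset of $\beta G$ contains $G$, as noted in the excerpt), the natural candidate is $A = K \cap G$, where $K$ is a compact subset of $U$ witnessing co-compactness, i.e.\ $U = \bigcup_{g \in G} g.K$. First I would arrange that $K$ is compact-open: since $\beta G$ is a Stonean (extremally disconnected) space and $U$ is locally compact, every point of the compact set $K$ has a compact-open neighborhood contained in $U$, so by compactness $K$ is covered by finitely many compact-open subsets of $U$; replacing $K$ by their union (still compact, still contained in $U$, and still witnessing co-compactness since it is larger) we may assume $K$ itself is compact-open.

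Next I would show that a compact-open subset $K$ of $\beta G$ equals $K_A$ for $A = K \cap G$. Indeed $G$ is dense in $\beta G$, so $A = K \cap G$ is dense in the open set $K$ (an open subset of a space meets any dense set in a set dense in it), and hence $\overline{A} \supseteq K$; but $K$ is closed, so $\overline{A} \subseteq K$ as well, giving $K_A = \overline{A} = K$. If $A$ happens to be empty then $K$ would be a compact-open set disjoint from the dense set $G$, forcing $K = \emptyset$ and $U = \emptyset$, contrary to assumption (or one handles $U = \emptyset$ separately, though typically $X_A$ is defined for non-empty $A$, so we may assume $U \neq \emptyset$). Then $U = \bigcup_{g \in G} g.K = \bigcup_{g\in G} g.K_A = X_A$ by the definition \eqref{eq:X_A}, which is exactly the claim.

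The main obstacle, and the only genuinely non-routine step, is the reduction to $K$ being compact-open: one must use that $\beta G$ is Stonean to find compact-open neighborhoods, and one must check that enlarging $K$ to a finite union of such neighborhoods keeps it inside $U$ — this is where invariance and openness of $U$ are used, since each point $x \in K \subseteq U$ has a compact-open neighborhood $V_x$ with $x \in V_x \subseteq U$ (using that $U$ is open and $\beta G$ has a base of compact-open sets, together with compactness of $\overline{V_x}$ which is automatic as $V_x$ is clopen in the compact space $\beta G$). Everything else — density arguments, the identity $g.K_A = K_{gA}$, and the definition of $X_A$ — is bookkeeping already recorded in the excerpt. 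I would also remark that this $A$ is unique up to $\approx$, since $X_A = X_B \iff A \approx B$, though the statement only asks for existence.
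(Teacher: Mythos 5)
Your proof is correct and follows essentially the same route as the paper: produce a compact-open set inside the invariant open set whose $G$-translates cover it, identify it as $K_A$ with $A$ its trace on $G$, and conclude that the set equals $X_A$. The only (cosmetic) difference is in how the compact-open set is manufactured --- the paper takes the closure of the interior of a $G$-regular compact set, using that $\beta G$ is extremally disconnected, whereas you fatten an arbitrary compact witness by a finite cover of clopen basis sets; both work, and your remark on the non-emptiness of $A$ is a reasonable way to handle a corner case the paper leaves implicit.
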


\begin{proof} As remarked below Definition \ref{def:co-cpt} there is a $G$-regular compact subset $K$ of $X$. Let $L$ be the closure of $K^\mathrm{o}$. Then $L \subseteq K \subseteq X$, and $L$ is compact-open (being equal to the closure in $\beta G$ of the open set $K^\mathrm{o}$). Hence $L = K_A$, when $A = L \cap G$, cf.\ \cite[Lemma 2.4]{KelMonRor:supramenable}. Since $K_A \subseteq X$ and $X$ is $G$-invariant, it follows that $X_A \subseteq X$. Conversely,
$$X = \bigcup_{g \in G} g.K^{\mathrm{o}} \subseteq \bigcup_{g \in G} g.K_A = X_A.$$
\end{proof}

\noindent We conclude from Proposition \ref{prop:X_A} and the previous  remarks  that the map $A \mapsto X_A$ induces an order isomorphism from the partially ordered set $(P_\approx(G), \propto)$ onto the set of $G$-invariant open co-compact subsets of $\beta G$. 

Not all open invariant subsets of $\beta G$ are co-compact as $G$-spaces as will be shown in Proposition \ref{prop:non-co-cpt}. However, they can still be classified in terms of \emph{left $G$-ideals}, see Proposition~\ref{prop:X_M}.

We end this section with a description of $G$-spaces with a dense orbit. The proposition below is probably well-known to experts. The second named author thanks  Zhuang Niu for pointing out that the implication (iii) $\Rightarrow$ (vi) holds (in the second countable case).

A $G$-space $X$ is said to be \emph{topologically transitive} if for every pair of non-empty open sets $U$ and $V$ there is $g \in G$ such that $g.U \cap V \ne \emptyset$. An action of $G$ on $X$ is said to have the \emph{intersection property} if each non-zero ideal in $C_0(X) \rtimes_\red G$ has non-zero intersection with $C_0(X)$. It is well-known, and follows for example from \cite[Lemma 7.1]{OlePed:C*-dynamicIII} (as well as from the work of Elliott and Kishimoto), that an action of $G$ on $X$ has the intersection property if  it is topologically free. (If the action of $G$ on $X$ is topologically free, then the associated action, $g \mapsto \alpha_g$, of $G$ on $C_0(X)$ consists of properly outer automorphisms, $\alpha_g$, for $g \ne e$, and then we can apply \cite[Lemma 7.1]{OlePed:C*-dynamicIII} to conclude that the action has the intersection property.)

\begin{proposition} \label{prop:dense-orbit}
Consider the following conditions on a countable group $G$ acting on a locally compact Hausdorff space $X$:
\begin{enumerate}
\item $C_0(X) \rtimes_{\mathrm{red}} G$ is prime. \vspace{.1cm}
\item The action of $G$ on $X$ is topologically transitive. \vspace{.1cm}
\item Each non-empty open $G$-invariant subset of $X$ is dense in $X$. \vspace{.1cm}
\item Each proper closed $G$-invariant subset of $X$ has empty interior.  \vspace{.1cm}
\item There exists $x \in X$ such that $G.x$ is dense in $X$. \vspace{.1cm}
\item The set of points $x \in X$ for which $G.x$ is dense in $X$ is a dense $G_\delta$-set.
\end{enumerate}
Then
$$ \mathrm{(i)} \Rightarrow \mathrm{(ii)} \Leftrightarrow \mathrm{(iii)} \Leftrightarrow \mathrm{(iv)}   \Leftarrow \mathrm{(v)} \Leftarrow \mathrm{(vi)};$$
and  {\rm{(iv)}} $\Rightarrow$ {\rm{(vi)}} if $X$ is second countable.
Moreover, {\rm{(ii)}} $\Rightarrow$ {\rm{(i)}} if the action of $G$ on $X$ has the intersection property.

In particular, all six conditions are equivalent if $X$ is second countable and the action of $G$ on $X$ is topologically free.
\end{proposition}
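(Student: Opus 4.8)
The plan is to dispatch the purely topological implications first, then the two $C^*$-algebraic ones, and finally read off the last assertion from the pieces; throughout I assume $X\neq\emptyset$. The block (ii)~$\Leftrightarrow$~(iii)~$\Leftrightarrow$~(iv) is elementary. The equivalence (iii)~$\Leftrightarrow$~(iv) is pure complementation: $U$ is a nonempty open $G$-invariant set iff $X\setminus U$ is a proper closed $G$-invariant set, and $U$ is dense iff $X\setminus U$ has empty interior. For (ii)~$\Rightarrow$~(iii): a nonempty open $G$-invariant $U$ that fails to be dense produces the nonempty open set $V:=X\setminus\overline U$ with $g.U\cap V=U\cap V=\emptyset$ for every $g$, contradicting topological transitivity. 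For (iii)~$\Rightarrow$~(ii): given nonempty open $U,V$, the set $\bigcup_{g\in G}g.U$ is nonempty open $G$-invariant, hence dense, hence meets $V$. For (v)~$\Rightarrow$~(iv): if $G.x$ is dense and $F$ is a proper closed $G$-invariant set containing a nonempty open $U$, then some $g.x\in U\subseteq F$, so $x\in F$ by invariance and $\overline{G.x}\subseteq F\subsetneq X$, a contradiction; and (vi)~$\Rightarrow$~(v) is immediate since a dense subset of $X$ is nonempty.

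Next, for (iv)~$\Rightarrow$~(vi) in the second countable case I would run the standard Baire category argument. Fix a countable base $\{U_n\}_n$ of nonempty open sets and set $W_n:=\bigcup_{g\in G}g.U_n$. Each $W_n$ is open, $G$-invariant and nonempty, so $X\setminus W_n$ is a proper closed $G$-invariant set and has empty interior by (iv); thus each $W_n$ is a dense open set. Since a locally compact Hausdorff space is a Baire space, $\bigcap_n W_n$ is a dense $G_\delta$. One then checks that $x\in W_n$ for all $n$ iff $G.x$ meets every $U_n$, i.e.\ iff $\overline{G.x}=X$; hence $\bigcap_n W_n$ is exactly the set of points with dense orbit.

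For the crossed-product implications: for (i)~$\Rightarrow$~(ii), if the action is not topologically transitive choose nonempty open $U,V$ with $g.U\cap V=\emptyset$ for all $g$, so $W_1:=\bigcup_g g.U$ and $W_2:=\bigcup_g g.V$ are \emph{disjoint} nonempty open $G$-invariant sets. Then $C_0(W_1)$ and $C_0(W_2)$ are nonzero $G$-invariant ideals of $C_0(X)$ with $C_0(W_1)C_0(W_2)=0$, and $C_0(W_i)\rtimes_\red G$ is a nonzero closed two-sided ideal of $C_0(X)\rtimes_\red G$; for $f\in C_0(W_1)$, $h\in C_0(W_2)$, $g\in G$ one has $fu_gh=f\,\alpha_g(h)\,u_g=0$ because $\alpha_g(h)\in C_0(g.W_2)=C_0(W_2)$, so by density the ideals $C_0(W_1)\rtimes_\red G$ and $C_0(W_2)\rtimes_\red G$ are orthogonal, contradicting primeness. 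For (ii)~$\Rightarrow$~(i) under the intersection property: given nonzero ideals $I,J$ of $C_0(X)\rtimes_\red G$, the intersection property makes $I\cap C_0(X)$ and $J\cap C_0(X)$ nonzero, and these are $G$-invariant ideals of $C_0(X)$ (as $u_gIu_g^*=I$ and $\alpha_g(C_0(X))=C_0(X)$), hence equal $C_0(U_1)$, $C_0(U_2)$ for nonempty open $G$-invariant $U_1,U_2$; by (iii) both are dense, so $U_1\cap U_2$ is a nonempty open set and $I\cap J\supseteq C_0(U_1)\cap C_0(U_2)=C_0(U_1\cap U_2)\neq 0$, proving primeness. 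Finally, when $X$ is second countable and the action is topologically free, the paragraph preceding the proposition supplies the intersection property, so all the implications above are in force and close up into the cycle (i)~$\Rightarrow$~(ii)~$\Leftrightarrow$~(iii)~$\Leftrightarrow$~(iv)~$\Rightarrow$~(vi)~$\Rightarrow$~(v)~$\Rightarrow$~(iv)~$\Leftrightarrow$~(ii)~$\Rightarrow$~(i), giving the equivalence of all six conditions.

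The hard part will be the $C^*$-algebraic bookkeeping rather than any genuine difficulty: one must be careful that a $G$-invariant open $U$ really yields a closed two-sided ideal $C_0(U)\rtimes_\red G$ of the \emph{reduced} crossed product with $\bigl(C_0(U)\rtimes_\red G\bigr)\cap C_0(X)=C_0(U)$, and one needs the correspondence between $G$-invariant ideals of $C_0(X)$ and $G$-invariant open subsets of $X$. Both are standard, but are best verified on the dense $\ast$-subalgebra $C_c(G,C_0(X))$, using that $C_0(U)$ is a $G$-invariant ideal; everything else is routine topology and Baire category.
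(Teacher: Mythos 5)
Your argument is correct and, with one exception, follows the same route as the paper: the block (ii)$\Leftrightarrow$(iii)$\Leftrightarrow$(iv), the implications (vi)$\Rightarrow$(v)$\Rightarrow$(iv), the Baire-category proof of (iv)$\Rightarrow$(vi) in the second countable case, and the proof of (ii)$\Rightarrow$(i) from the intersection property (nonzero ideals $I,J$ meet $C_0(X)$ in $C_0(U_1),C_0(U_2)$ with $U_i$ invariant, open, dense, hence $C_0(U_1\cap U_2)\neq 0$ sits inside $I\cap J$) are exactly the paper's arguments, up to your harmless choice of proving (v)$\Rightarrow$(iv) instead of (v)$\Rightarrow$(iii).

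The one place you genuinely diverge is (i)$\Rightarrow$(ii). The paper argues directly: primeness gives $a\bigl(C_0(X)\rtimes_\red G\bigr)b\neq 0$ for nonzero positive $a\in C_0(U)$, $b\in C_0(V)$, hence $au_gb\neq 0$ for some $g$, and the support of $au_gbu_g^*\in C_0(X)$ lies in $U\cap g.V$, so $U\cap g.V\neq\emptyset$. You instead take the contrapositive: failure of topological transitivity yields two \emph{disjoint} nonempty open invariant sets $W_1,W_2$, whose associated ideals are nonzero and orthogonal, contradicting primeness. Your version is valid; the only point requiring care is the one you flag yourself, namely that $C_0(W_i)$ generates a closed two-sided ideal of the reduced crossed product that is orthogonal to the other. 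Note that you do not actually need the identification of this ideal with $C_0(W_i)\rtimes_\red G$: it suffices to take the closed ideal of $C_0(X)\rtimes_\red G$ generated by $C_0(W_i)$, which is nonzero, and check orthogonality on the spanning elements $fu_g$, $hu_k$ exactly as you do. The paper's direct computation buys you freedom from even this bookkeeping, since it only uses elements of the form $au_gb$ and the conditional-expectation-free observation about supports; your version, in exchange, makes the ideal-theoretic obstruction to primeness explicit. Both are standard and complete.
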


\begin{proof} The implications (iii) $\Leftrightarrow$ (iv) and (vi) $\Rightarrow$ (v) are trivial.

(i) $\Rightarrow$ (ii). Let $U$ and $V$ be open non-empty subsets of $X$,  and choose non-zero positive elements $a \in C_0(U) \subseteq C_0(X)$ and $b \in C_0(V) \subseteq C_0(X)$. Since  $C_0(X) \rtimes_{\mathrm{red}} G$ is prime it follows that 
$$a \Big(C_0(X) \rtimes_{\mathrm{red}} G\Big) b \ne 0.$$
This implies that $au_gb \ne 0$ for some $g \in G$, where $g \mapsto u_g$ is the unitary representation of $G$ in (the multiplier algebra of) $C_0(X) \rtimes_{\mathrm{red}} G$. 

The support of  the non-zero element $au_gbu_g^* \in C_0(X)$ is contained in $U \cap g.V$, so $U \cap g.V \ne \emptyset$.

(ii) $\Rightarrow$ (iii). Let $U$ be a non-empty open invariant subset of $X$. If $U$ is not dense, then there is an open (not necessarily invariant) subset $V$ of $X \setminus U$. As $g.U = U$ for all $g \in G$ there is no $g \in G$ such that $g.U \cap V \ne \emptyset$. 

(iii) $\Rightarrow$ (ii). Let $U$ and $V$ be non-empty open subsets of $X$. Then $\widetilde{U} = \bigcup_{g \in G} g.U$ is a (non-empty) invariant open subset of $X$. If (iii) holds, then $\widetilde{U} \cap V \ne \emptyset$. This implies that $g.U \cap V \ne \emptyset$ for some $g\in G$. Hence (ii) holds.

(v) $\Rightarrow$ (iii). Let $x \in X$ be such that $G.x$ is dense in $X$, and let $U$ be a non-empty invariant open subset of $X$. Then $g.x \in U$ for some $g \in G$. As $U$ is invariant it follows that $G.x \subseteq U$. Hence $U$ must be dense. 

We prove  (iii) $\Rightarrow$ (vi) assuming that $X$ is second countable. Let $\{U_n\}_{n = 1}^\infty$ be a basis for the topology on $X$ consisting of (non-empty) open sets, and put $\widetilde{U}_n = \bigcup_{g \in G} g.U_n$. Then each $\widetilde{U}_n$ is non-empty, $G$-invariant and open, and therefore dense in $X$. It follows that 
$$X_0 := \bigcap_{n \in \N} \widetilde{U}_n$$
is a dense $G_\delta$ set in $X$. If $x \in X_0$, then $G.x \cap U_n \ne \emptyset$ for all $n \in \N$. This shows that $G.x$ is dense in $X$ for all $x \in X_0$. 

Finally, we prove  (iii) $\Rightarrow$ (i) assuming that the action of $G$ on $C_0(X)$ has the intersection property. Let $I$ and $J$ be closed two-sided non-zero ideals in $C_0(X) \rtimes_\red G$. Then $I_0 = I \cap C_0(X)$ and $J_0 = J \cap C_0(X)$ are non-zero. Hence $I_0 = C_0(U)$ and $J_0 = C_0(V)$ for some non-empty open invariant subsets $U$ and $V$ of $X$. It follows that $U \cap V \ne \emptyset$. Hence $I_0 J_0 \ne 0$, so also $IJ \ne 0$.
\end{proof}

%%%%%%%%%%%%%%%%%%%%%%%%%%%%%%%%%%%%%%%%%%%%%%%%%%%%%%%%%%%%%%%%%%%%%

\section{The type of a group action on a locally compact space}  \label{sec:type}

\noindent In Section~\ref{sec:X_A} we will show that the locally compact $G$-space $X_A$ associated to a subset $A$ of $G$, considered in the previous section, has a universal property  in a similar  way as the space $\beta G$ itself is universal among all compact $G$-spaces possesing a dense orbit. In Section~\ref{sec:min-universal} we will determine the universal properties of the minimal closed $G$-invariant subspaces of $X_A$ among minimal locally compact $G$-spaces. It turns out that $X_A$ is universal relatively to a sub-class of the locally compact $G$-spaces (with a dense orbit), namely those that have the same \emph{type} as $X_A$ itself. 

In this section we shall define and prove basic properties of the type of an action of a group on a locally compact Hausdorff space as alluded to above. The type keeps track of which subsets of the group gives rise to relatively compact subsets of the space. To make sense of this we must specify a base point of the space in a dense orbit, and we talk about a \emph{pointed} locally compact $G$-space.  In Section \ref{sec:basepoint} we shall define the type of a locally compact $G$-space without reference to a base point. 

The type will be defined in terms of a left $G$-ideal in the power set of the group. 

\begin{definition} \label{def:leftideal}
Let $G$ be a group. A collection $\cM$ of subsets of $G$ is called a \emph{left $G$-ideal} if it contains at least one non-empty set and
\begin{enumerate}
\item $A \in P(G)$, $B \in \cM$, and $A \subseteq B$ implies $A \in \cM$, \vspace{.1cm}
\item $A, B \in \cM$ implies $A \cup B \in \cM$, \vspace{.1cm}
\item $A \in \cM$ and $g \in G$ implies $gA \in \cM$.
\end{enumerate}

A left $G$-ideal $\cM$ is said to be \emph{compact} if it is compact in the standard hull-kernel topology on the set of all left $G$-ideals. In other words, $\cM$ is compact if and only if whenever $\{\cM_\alpha\}_{\alpha \in I}$ is an upwards directed net of left $G$-ideals such that $\cM \subseteq \bigcup_{\alpha \in I} \cM_\alpha$, then $\cM \subseteq \cM_\alpha$ for some $\alpha \in I$. 
\end{definition}

\noindent Recall the definition of the order relation "$\propto$" from Definition~\ref{def:G-equiv}. For emphasis we mention the following (trivial) fact about this order relation and left $G$-ideals:

\begin{lemma} \label{lm:A->B} Let $G$ be a group, let $A,B$ be non-empty subsets of $G$, and let $\cM$ be a left $G$-ideal. If $A \propto B$ and $B \in \cM$, then $A \in \cM$.
\end{lemma}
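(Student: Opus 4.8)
The plan is to unpack the definitions and reduce to the closure axiom (i) of a left $G$-ideal. Suppose $A \propto B$, so that by Definition~\ref{def:G-equiv} there is a finite subset $F \subseteq G$ with $A \subseteq FB = \bigcup_{g \in F} gB$. Since $B \in \cM$, axiom (iii) of Definition~\ref{def:leftideal} gives $gB \in \cM$ for each $g \in F$. Then, using axiom (ii) finitely many times (and the fact that $F$ is finite and non-empty), the finite union $\bigcup_{g \in F} gB = FB$ lies in $\cM$. Finally, since $A \subseteq FB$ and $FB \in \cM$, axiom (i) yields $A \in \cM$.

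There is essentially no obstacle here: the statement is flagged as ``trivial'' in the text, and the proof is a three-line chain invoking the three closure axioms in turn (plus the definition of $\propto$). The only minor point worth a word is that one should note $F$ can be taken non-empty (if $A \subseteq FB$ with $F = \emptyset$ then $A = \emptyset$, which is excluded since $A$ is non-empty), so that the finite union $\bigcup_{g\in F} gB$ is genuinely a union of members of $\cM$ and axiom (ii) applies; alternatively one can simply enlarge $F$ to contain $e$ without changing the inclusion. I would present the argument in one short paragraph:

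\begin{proof}
Since $A \propto B$, there is a finite non-empty subset $F \subseteq G$ with $A \subseteq FB = \bigcup_{g \in F} gB$. By property (iii) of Definition~\ref{def:leftideal}, $gB \in \cM$ for every $g \in F$; by property (ii) and finiteness of $F$, the union $FB = \bigcup_{g \in F} gB$ belongs to $\cM$; and by property (i), $A \in \cM$ because $A \subseteq FB$.
\end{proof}

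Given the elementary nature of the statement, the ``hard part'' is purely expository: making sure the reader sees that it is the combination of all three left $G$-ideal axioms (translation invariance, closure under finite unions, and downward closure under inclusion) that is being used, in that order, together with the unwinding of the definition of $B$-boundedness. No choice of metric, no appeal to Lemma~\ref{lm:approx-geom} or Lemma~\ref{lm:1_A}, and no topological input is needed.
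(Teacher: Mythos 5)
Your proof is correct, and it is exactly the intended argument: the paper states Lemma~\ref{lm:A->B} as a ``trivial'' fact and omits the proof, which is precisely the three-step unwinding you give (definition of $\propto$, then axioms (iii), (ii), (i) of Definition~\ref{def:leftideal} in turn). Your remark that $F$ may be taken non-empty (or enlarged to contain $e$) is a fine, if optional, touch of care; nothing further is needed.
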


\begin{example} 
(a). For each fixed non-empty subset $A$ of $G$ set $$\cM_A = \{B \in P(G) \mid B \propto A\}.$$ 
Then $\cM_A$ is a left $G$-ideal. By transitivity of the relation "$\propto$", we see that if $A$ and $B$ are subsets of $G$, then $\cM_A = \cM_B$ if and only if $A \approx B$. 

(b). The collection $\cM = P(G)$ of all subsets of $G$ is a left $G$-ideal. It is equal to $\cM_G$, and is the largest left $G$-ideal. 

(c). The collection $\cM_{\mathrm{fin}}$ of all finite subsets of $G$ is a left $G$-ideal. It is equal to $\cM_{\{e\}}$, cf.\ Example \ref{ex:subgroup1} (iii), and it is the smallest left $G$-ideal.
\end{example}

\begin{proposition} \label{prop:M=M_A}
Let $\cM$ be a left $G$-ideal. Then $\cM$ is compact if and only if $\cM= \cM_A$ for some $A \subseteq G$.
\end{proposition}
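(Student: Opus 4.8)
The plan is to prove the two implications separately. The easy direction is ``$\Leftarrow$'': suppose $\cM = \cM_A$ for some non-empty $A \subseteq G$, and let $\{\cM_\alpha\}_{\alpha \in I}$ be an upwards directed net of left $G$-ideals with $\cM \subseteq \bigcup_{\alpha} \cM_\alpha$. Since $A \in \cM = \cM_A$, there is some $\alpha_0 \in I$ with $A \in \cM_{\alpha_0}$. Then for any $B \in \cM_A$ we have $B \propto A$ and $A \in \cM_{\alpha_0}$, so Lemma~\ref{lm:A->B} gives $B \in \cM_{\alpha_0}$. Hence $\cM = \cM_A \subseteq \cM_{\alpha_0}$, so $\cM$ is compact.

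For ``$\Rightarrow$'', suppose $\cM$ is a compact left $G$-ideal. The idea is to exhibit $\cM$ as the union of an upwards directed net of left $G$-ideals of the form $\cM_A$, and then use compactness to collapse the net to a single term. Concretely, I would consider the family $\{\cM_A : A \in \cM, \ A \ne \emptyset\}$. Each $\cM_A$ is a left $G$-ideal by Example following Lemma~\ref{lm:A->B}, and each $\cM_A$ is contained in $\cM$: indeed if $B \propto A$ and $A \in \cM$, then $B \in \cM$ by Lemma~\ref{lm:A->B} again. Conversely $\cM \subseteq \bigcup_{A \in \cM} \cM_A$, since every non-empty $A \in \cM$ lies in $\cM_A$ (as $A \propto A$), and $\cM$ contains at least one non-empty set by definition; note that if $\cM$ contains the empty set only, that case is excluded since a left $G$-ideal contains a non-empty set, and $\emptyset \in \cM_A$ for any $A$ anyway. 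The one point requiring care is that $\{\cM_A\}_{A \in \cM}$ need not be directed under inclusion on the nose; but it is directed \emph{up to the standard manipulation}: given $A_1, A_2 \in \cM$, axiom (ii) of Definition~\ref{def:leftideal} gives $A_1 \cup A_2 \in \cM$, and clearly $\cM_{A_1} \subseteq \cM_{A_1 \cup A_2}$ and $\cM_{A_2} \subseteq \cM_{A_1 \cup A_2}$ since $A_i \propto A_1 \cup A_2$. So the family is upwards directed. By compactness of $\cM$ there is a single $A \in \cM$ with $\cM \subseteq \cM_A$; combined with $\cM_A \subseteq \cM$ already shown, we get $\cM = \cM_A$, as desired.

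The main (and only) obstacle is the directedness verification in the ``$\Rightarrow$'' direction — i.e.\ recognizing that although $\subseteq$ on $\{\cM_A\}$ is not obviously linear, the closure of $\cM$ under finite unions makes it upwards directed, which is exactly what the definition of compactness requires. Everything else is a routine unwinding of the definitions of $\propto$, $\cM_A$, and left $G$-ideal, together with Lemma~\ref{lm:A->B}.
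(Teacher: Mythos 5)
Your proof is correct and follows essentially the same route as the paper: the "$\Leftarrow$" direction via Lemma~\ref{lm:A->B}, and the "$\Rightarrow$" direction by writing $\cM$ as the union of the upwards directed family $\{\cM_A\}_{A \in \cM}$ and invoking compactness. Your extra details (directedness via $A_1 \cup A_2 \in \cM$, and the inclusion $\cM_A \subseteq \cM$ needed to upgrade $\cM \subseteq \cM_A$ to equality) are exactly the points the paper leaves implicit.
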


\begin{proof} Let $\emptyset \ne A \subseteq G$. If $\cM_A \subseteq \bigcup_{\alpha \in I} \cM_\alpha$ for some increasing net of left $G$-ideals, then $A \in \cM_\alpha$ for some $\alpha \in I$, whence $\cM_A \subseteq \cM_\alpha$ by Lemma~\ref{lm:A->B}. This shows that $\cM_A$ is compact. 

Suppose that $\cM$ is compact. The family $\{\cM_A\}_{A \in \cM}$ is an upward directed net of left $G$-ideals which satisfies $\cM = \bigcup_{A \in \cM} \cM_A$. It follows by compactness that $\cM = \cM_A$ for some $A \in \cM$. 
\end{proof}

\begin{definition} Let $G$ be a group. By a \emph{pointed locally compact $G$-space} we shall mean a pair $(X,x_0)$ consisting of a locally compact Hausdorff space $X$ on which the group $G$ acts, and a point $x_0 \in X$ such that $G.x_0$ is dense in $X$. 

To each pointed locally compact $G$-space $(X,x_0)$ associate the set
$$\cM(G,X,x_0) = \{ A \in P(G) \mid \overline{A.x_0} \: \,  \text{is compact}\}.$$
\end{definition}

\vspace{.2cm}
\noindent It is easy to verify that the invariant $\cM(G,X,x_0)$  is a left $G$-ideal.  

If $X$ is a $G$-space, then associate to each $x \in X$ and to each subset $V \subseteq X$ the following subset of $G$:
\begin{equation} \label{eq:O(V,x)}
O_X(V,x) = \{g \in G \mid g.x \in V\}.
\end{equation}

The results of the following two lemmas will be used very often. The easy proof of the first lemma is omitted.

\begin{lemma} \label{lm:O(V,x)-1}
Let $X$ be a $G$-space, let $V \subseteq X$, let $x \in X$, and set $A = O_X(V,x)$. 
\begin{enumerate}
\item If $F$ is a subset of $G$, then $O_X(F.V,x) = FA$. \vspace{.1cm}
\item $G.x \cap V = A.x$. In particular, $A.x \subseteq V$.
\end{enumerate}
\end{lemma}

\noindent
Recall the definition of a $G$-regular compact set from below Definition~\ref{def:co-cpt}.

\begin{lemma} \label{lm:G-regular}
Suppose that $(X,x_0)$ is a pointed locally compact $G$-space. Let $V \subseteq X$ and set $A = O_X(V,x_0)$. Then:
\begin{enumerate}
\item $V^{\mathrm{o}} \subseteq \overline{A.x_0}$. 
 \vspace{.1cm}
\item $A \in \cM(G,X,x_0)$ if $V$ is relatively compact. 
 \vspace{.1cm}
\item If $X$ is a co-compact $G$-space and if $K$ is a $G$-regular compact subset of $X$, then for any compact subset $L$ of $X$ and for any $x \in X$, we have
$$O_X(L,x) \; \propto \; O_X(K^\mathrm{o},x) \; \subseteq \; O_X(K,x).$$
\end{enumerate}
\end{lemma}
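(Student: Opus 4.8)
The plan is to handle the three parts separately, the common tool being the evident monotonicity of $V \mapsto O_X(V,x)$ (that is, $V \subseteq W$ implies $O_X(V,x) \subseteq O_X(W,x)$) together with the two identities recorded in Lemma~\ref{lm:O(V,x)-1}. For (i), I would use the density of the orbit $G.x_0$: since $V^{\mathrm o}$ is open \emph{in $X$} and $G.x_0$ is dense in $X$, the set $G.x_0 \cap V^{\mathrm o}$ is dense in $V^{\mathrm o}$, so $V^{\mathrm o} \subseteq \overline{G.x_0 \cap V^{\mathrm o}}$. On the other hand $G.x_0 \cap V^{\mathrm o} \subseteq G.x_0 \cap V = A.x_0$ by Lemma~\ref{lm:O(V,x)-1}(ii), and passing to closures yields $V^{\mathrm o} \subseteq \overline{A.x_0}$. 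Part (ii) is then almost immediate: Lemma~\ref{lm:O(V,x)-1}(ii) gives $A.x_0 \subseteq V \subseteq \overline V$, and $\overline V$ is compact by hypothesis, so $\overline{A.x_0}$ is a closed subset of a compact set, hence compact, i.e.\ $A \in \cM(G,X,x_0)$.

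For (iii), the inclusion $O_X(K^{\mathrm o},x) \subseteq O_X(K,x)$ follows at once from $K^{\mathrm o} \subseteq K$ and monotonicity. To obtain $O_X(L,x) \propto O_X(K^{\mathrm o},x)$ I would invoke $G$-regularity of $K$: the family $\{g.K^{\mathrm o}\}_{g \in G}$ is an open cover of $X$, hence of the compact set $L$, so there is a finite set $F \subseteq G$ with $L \subseteq \bigcup_{g \in F} g.K^{\mathrm o} = F.K^{\mathrm o}$. Monotonicity together with Lemma~\ref{lm:O(V,x)-1}(i) then gives $O_X(L,x) \subseteq O_X(F.K^{\mathrm o},x) = F \cdot O_X(K^{\mathrm o},x)$, which is precisely the assertion $O_X(L,x) \propto O_X(K^{\mathrm o},x)$.

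I do not expect a genuine obstacle here; the two points that deserve a line of care are the density step in (i) (one must note that $V^{\mathrm o}$ is open in $X$, not merely relatively open in $V$, in order to intersect it with the dense orbit) and, in (iii), the necessity of working with the open set $K^{\mathrm o}$ rather than $K$ when extracting a finite subcover of $L$ — which is exactly why the notion of a $G$-regular compact set was isolated just below Definition~\ref{def:co-cpt}.
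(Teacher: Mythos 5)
Your proposal is correct and follows essentially the same route as the paper: part (i) via density of $G.x_0 \cap V^{\mathrm{o}}$ in $V^{\mathrm{o}}$ together with Lemma~\ref{lm:O(V,x)-1}(ii), part (ii) directly from Lemma~\ref{lm:O(V,x)-1}(ii) and the definition of $\cM(G,X,x_0)$, and part (iii) by extracting a finite subcover $L \subseteq F.K^{\mathrm{o}}$ from $G$-regularity and applying Lemma~\ref{lm:O(V,x)-1}(i). The two points of care you flag are exactly the ones implicit in the paper's (terser) argument.
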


\begin{proof} (i) follows from Lemma \ref{lm:O(V,x)-1} (ii) and the fact that $G.x_0 \cap V^{\mathrm{o}}$ is dense in $V^{\mathrm{o}}$ when $G.x_0$ is dense in $X$. 
(ii) follows Lemma \ref{lm:O(V,x)-1} (ii) and from the definition of $\cM(G,X,x_0)$.

(iii). By $G$-regularity of $K$ and compactness of $L$ we find that $L \subseteq \bigcup_{g \in F} g.K^{\mathrm{o}} = F.K^{\mathrm{o}}$ for some finite subset $F$ of $G$. Hence (iii) holds by Lemma \ref{lm:O(V,x)-1} (i). 
\end{proof}

\noindent The invariant $\cM(G,X,x_0)$ depends on $x_0$ in a subtle way (see more about this in Section \ref{sec:basepoint}). Some properties of this invariant, however, are independent of the choice of base point, such as when the left $G$-ideal  $\cM(G,X,x_0)$ is compact. 

\begin{proposition} \label{prop:co-cpt}
Let $G$ be a countable group and let $(X,x_0)$ be a pointed locally compact $G$-space. It follows that  $G$ acts co-compactly on $X$ if and only if $\cM(G,X,x_0)$ is compact and $X$ is $\sigma$-compact.
\end{proposition}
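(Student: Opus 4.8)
\noindent The plan is to prove the two implications separately. The key device is Proposition~\ref{prop:M=M_A}, which lets us trade the condition ``$\cM(G,X,x_0)$ is compact'' for ``$\cM(G,X,x_0) = \cM_A$ for some non-empty $A \subseteq G$''; countability of $G$ will be used only for the clauses about $\sigma$-compactness, through the trivial fact that a countable union of compact sets is $\sigma$-compact.

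For the forward implication, assume $G$ acts co-compactly on $X$. Then $X$ is a countable union of $G$-translates of a compact set, hence $\sigma$-compact. For the compactness of $\cM(G,X,x_0)$, fix a $G$-regular compact set $K$ (available by the discussion following Definition~\ref{def:co-cpt}) and put $A = O_X(K^{\mathrm{o}},x_0)$; note $A \neq \emptyset$ since $x_0$ lies in some $G$-translate of $K^{\mathrm{o}}$. As $A.x_0 \subseteq K$ is relatively compact, Lemma~\ref{lm:G-regular}~(ii) gives $A \in \cM(G,X,x_0)$, hence $\cM_A \subseteq \cM(G,X,x_0)$ by Lemma~\ref{lm:A->B}. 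Conversely, if $B \in \cM(G,X,x_0)$ then $L := \overline{B.x_0}$ is compact and $B \subseteq O_X(L,x_0)$, so Lemma~\ref{lm:G-regular}~(iii) yields $B \subseteq O_X(L,x_0) \propto O_X(K^{\mathrm{o}},x_0) = A$, i.e.\ $B \in \cM_A$. Thus $\cM(G,X,x_0) = \cM_A$, which is compact by Proposition~\ref{prop:M=M_A}.

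For the converse, assume $\cM(G,X,x_0)$ is compact and $X$ is $\sigma$-compact. By Proposition~\ref{prop:M=M_A}, $\cM(G,X,x_0) = \cM_A$ for some non-empty $A \subseteq G$; in particular $A \in \cM(G,X,x_0)$, so $K := \overline{A.x_0}$ is compact, and it suffices to show $X = \bigcup_{g \in G} g.K$. Given $y \in X$, use local compactness to choose an open neighbourhood $W$ of $y$ with $\overline W$ compact, and set $B = O_X(W,x_0)$. By Lemma~\ref{lm:O(V,x)-1}~(ii), $B.x_0 = G.x_0 \cap W \subseteq \overline W$, so $\overline{B.x_0}$ is compact and $B \in \cM(G,X,x_0) = \cM_A$; pick a finite $F \subseteq G$ with $B \subseteq FA$. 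Then
\[
G.x_0 \cap W = B.x_0 \subseteq (FA).x_0 = F.(A.x_0) \subseteq F.\overline{A.x_0} = F.K ,
\]
and $F.K$ is a finite union of compact sets, hence closed, whereas $G.x_0 \cap W$ is dense in $W$ because $G.x_0$ is dense in $X$ and $W$ is open. Therefore $W \subseteq \overline{G.x_0 \cap W} \subseteq F.K \subseteq \bigcup_{g\in G} g.K$, so $y \in \bigcup_{g\in G} g.K$, as required.

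The step I expect to be the main obstacle is the covering claim $X = \bigcup_{g} g.K$ in the converse: compactness of $\cM(G,X,x_0)$ controls only the relatively compact subsets of $X$ that meet the orbit $G.x_0$, so a priori one learns merely that $\bigcup_g g.K$ contains the dense orbit, not all of $X$. What closes the gap is that a \emph{finite} union of translates of the fixed compact set $K$ is closed: once the orbit part $G.x_0 \cap W$ of a relatively compact open set $W$ is trapped in such a finite union, density of $G.x_0$ in $W$ forces all of $W$ into it. (One can note that this argument does not really invoke $\sigma$-compactness; for countable $G$ that hypothesis is automatic, since $\bigcup_g g.K$ is then a countable union of compact sets.)
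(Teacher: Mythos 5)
Your proof is correct. The forward direction is essentially the paper's argument: you take a $G$-regular compact set $K$, set $A = O_X(K^{\mathrm{o}},x_0)$ (the paper uses $O_X(K,x_0)$, an immaterial difference), and identify $\cM(G,X,x_0)$ with $\cM_A$ via Lemma~\ref{lm:G-regular} and Proposition~\ref{prop:M=M_A}. In the converse your route differs mildly from the paper's: the paper invokes $\sigma$-compactness, exhausting $X$ by compact sets $L_n$ with $L_n \subseteq L_{n+1}^{\mathrm{o}}$ and applying Lemma~\ref{lm:G-regular}~(i) to each $B_n = O_X(L_n,x_0)$, whereas you argue pointwise with an arbitrary relatively compact open neighbourhood $W$ of a given point and conclude $W \subseteq F.K$ from the fact that the dense orbit piece $G.x_0 \cap W$ sits inside the closed set $F.K$. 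The underlying mechanism is the same (it is exactly what Lemma~\ref{lm:G-regular}~(i) encapsulates), but your version buys a slightly sharper statement: compactness of $\cM(G,X,x_0)$ alone already forces co-compactness, so $\sigma$-compactness is only needed — and, for countable $G$, automatically supplied — in the forward implication; this does not affect the equivalence as stated.
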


\begin{proof} Suppose first that $G$ acts co-compactly on $X$. Then $X$ is $\sigma$-compact (because $G$ is assumed to be countable). Choose a $G$-regular compact subset $K$ of $X$, and set $A = O_X(K,x_0)$. We show that $\cM(G,X,x_0) = \cM_A$, from which we can conclude that $\cM(G,X,x_0)$ is compact by Proposition~\ref{prop:M=M_A}. 

First, $A \in \cM(G,X,x_0)$ by Lemma \ref{lm:G-regular} (ii),  so $\cM_A \subseteq \cM(G,X,x_0)$. 
Suppose next that $B \in \cM(G,X,x_0)$, and let $K'$ be the closure of $B.x_0$.  Then $K'$ is compact, and we  deduce  from Lemma \ref{lm:O(V,x)-1} (ii) and Lemma \ref{lm:G-regular} (iii) that 
$$B \; \subseteq \; O_X(K',x_0) \; \propto \; O_X(K,x_0)=A,$$
so $B \in \cM_A$.   This shows that $\cM(G,X,x_0) \subseteq \cM_A$.

Suppose now that $\cM(G,X,x_0)$ is compact and $X$ is $\sigma$-compact. Then $\cM(G,X,x_0) = \cM_A$ for some $A \subseteq G$ by Proposition~\ref{prop:M=M_A}, and $K := \overline{A.x_0}$  is compact. We show that $K$ witnesses co-compactness of the action of $G$ on $X$.
Find a sequence $\{L_n\}_{n=1}^\infty$ of compact subsets of $X$ such that $L_n \subseteq L_{n+1}^\mathrm{o}$ for all $n$, and $X = \bigcup_{n=1}^\infty L_n$. Put $B_n = O_X(L_n,x_0)$. Then $B_n \in \cM(G,X,x_0) = \cM_A$, so $B_n \subseteq F_nA$ for some finite subset $F_n$ of $G$. 

We now have:
\begin{eqnarray*}
L_n^{\mathrm{o}} \; \subseteq \; \overline{B_n.x_0} \; \subseteq \; \overline{F_nA.x_0} \;=\; \bigcup_{g \in F_n} \overline{gA.x_0} \; = \; \bigcup_{g \in F_n} g.K \; \subseteq \; \bigcup_{g \in G} g.K
\end{eqnarray*}
for all $n \ge 1$, where the first inclusion follows from Lemma \ref{lm:G-regular} (i). This shows that  $\bigcup_{g \in G} g.K = X$.
\end{proof}

\begin{definition}[The type of a pointed locally compact $G$-space] \label{def:type}
Let $G$ be a group, let $P_\approx(G)$ be as in Definition \ref{def:G-equiv}, and for each $A \subseteq G$ let $[A] \in P_\approx(G)$ denote the equivalence class containing $A$.

To each pointed locally compact co-compact $G$-space $(X,x_0)$ set 
$$\Type(G,X,x_0) =[A] \in P_\approx(G),$$ 
whenever $\emptyset \ne A \subseteq G$ is such that $\cM(G,X,x_0) = \cM_A$. This is well-defined by  Proposition \ref{prop:co-cpt} and by the fact that $\cM_A = \cM_B$ if and only if $A \approx B$. 

We call $[A]$ the \emph{type} of $(X,x_0)$, and we say that $A$ \emph{represents the type of $(X,x_0)$}. 
\end{definition}

\noindent One can define the type of a not necessarily co-compact pointed locally compact $G$-space $(X,x_0)$ to be the left $G$-ideal $\cM(G,X,x_0)$. However, we shall not consider this general case systematically in this paper.  In the following two propositions we give conditions on a pointed locally compact co-compact $G$-space to be of a given type. The results  sharpen the statement in Proposition \ref{prop:co-cpt}.

\begin{proposition} \label{prop:regular}
Let $G$ be a countable group,  let $(X,x_0)$ be a pointed locally compact co-compact $G$-space. 
\begin{enumerate}
\item Let $A$ be a non-empty subset of $G$. Then $A$ represents the type of $(X,x_0)$ if and only if \vspace{.1cm}
\begin{enumerate}
\item $\overline{A.x_0}$ is compact,  and \vspace{.1cm}
\item $O_X(K',x_0) \propto A$ for all compact subset $K' \subseteq X$. \vspace{.1cm}
\end{enumerate}
\item For each subset $A$ of $G$ that represents the type of $(X,x_0)$, the set $K = \overline{A.x_0}$ is compact and   \vspace{.1cm}
\begin{itemize}
\item[(c)]  $\displaystyle{\bigcup_{g \in G} g.K = X}$.
\end{itemize}
\item For some subset $A$ of $G$ that represents the type of $(X,x_0)$, the set $K = \overline{A.x_0}$ is $G$-regular, i.e., $K$ is compact and  \vspace{.1cm}
\begin{itemize}
\item[(d)] $\displaystyle{\bigcup_{g \in G} g.K^\mathrm{o} = X}$. 
\end{itemize}
\end{enumerate}
\end{proposition}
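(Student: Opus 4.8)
The plan is to reduce all three statements to the description of the type via the left $G$-ideal $\cM(G,X,x_0)$, leaning on Lemmas~\ref{lm:O(V,x)-1} and~\ref{lm:G-regular} and on the argument already carried out in the proof of Proposition~\ref{prop:co-cpt}. Recall that, by definition, $A$ represents the type of $(X,x_0)$ if and only if $\cM(G,X,x_0) = \cM_A$. For part (i), first suppose $\cM(G,X,x_0) = \cM_A$. Then $A \in \cM_A = \cM(G,X,x_0)$, which is (a); and for any compact $K' \subseteq X$ we get $O_X(K',x_0) \in \cM(G,X,x_0) = \cM_A$ by Lemma~\ref{lm:G-regular}(ii), i.e. $O_X(K',x_0) \propto A$, which is (b). Conversely, assume (a) and (b). By (a), $A \in \cM(G,X,x_0)$, so $\cM_A \subseteq \cM(G,X,x_0)$ by Lemma~\ref{lm:A->B}. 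If $B \in \cM(G,X,x_0)$, then $K' := \overline{B.x_0}$ is compact and $B \subseteq O_X(K',x_0) \propto A$ by Lemma~\ref{lm:O(V,x)-1}(ii) and (b), whence $B \in \cM_A$; thus $\cM(G,X,x_0) \subseteq \cM_A$. So $\cM(G,X,x_0) = \cM_A$, i.e. $A$ represents the type.

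For part (ii), fix $A$ representing the type, so $A \in \cM(G,X,x_0)$ and $K := \overline{A.x_0}$ is compact. To prove $\bigcup_{g \in G} g.K = X$ I would rerun the last part of the proof of Proposition~\ref{prop:co-cpt}: using that $X$ is $\sigma$-compact, choose compact sets $L_n$ with $L_n \subseteq L_{n+1}^{\mathrm{o}}$ and $X = \bigcup_n L_n$; then $B_n := O_X(L_n,x_0) \in \cM(G,X,x_0) = \cM_A$ by Lemma~\ref{lm:G-regular}(ii), so $B_n \subseteq F_nA$ for some finite $F_n \subseteq G$, and by Lemma~\ref{lm:G-regular}(i),
$$ L_n^{\mathrm{o}} \; \subseteq \; \overline{B_n.x_0} \; \subseteq \; \overline{F_nA.x_0} \;=\; \bigcup_{g \in F_n} g.K \; \subseteq \; \bigcup_{g \in G} g.K . $$
Since $X = \bigcup_n L_n^{\mathrm{o}}$, this gives $X \subseteq \bigcup_{g \in G} g.K$, and the reverse inclusion is clear.

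For part (iii), I would exhibit a good representative explicitly. Pick a $G$-regular compact set $K_0 \subseteq X$ (it exists by the remark following Definition~\ref{def:co-cpt}) and put $A = O_X(K_0^{\mathrm{o}},x_0)$. Then $A.x_0 = G.x_0 \cap K_0^{\mathrm{o}} \subseteq K_0$ by Lemma~\ref{lm:O(V,x)-1}(ii), so $\overline{A.x_0}$ is compact, giving (a); and for any compact $K' \subseteq X$, Lemma~\ref{lm:G-regular}(iii) gives $O_X(K',x_0) \propto O_X(K_0^{\mathrm{o}},x_0) = A$, giving (b). By part (i), $A$ represents the type. Finally, since $G.x_0$ is dense in $X$, the set $A.x_0 = G.x_0 \cap K_0^{\mathrm{o}}$ is dense in the open set $K_0^{\mathrm{o}}$, so $K := \overline{A.x_0} \supseteq K_0^{\mathrm{o}}$; as $K_0^{\mathrm{o}}$ is open, $K^{\mathrm{o}} \supseteq K_0^{\mathrm{o}}$, and therefore $\bigcup_{g \in G} g.K^{\mathrm{o}} \supseteq \bigcup_{g \in G} g.K_0^{\mathrm{o}} = X$. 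Together with compactness of $K$ this shows $K$ is $G$-regular.

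Most of this is bookkeeping on top of the preliminaries, and I expect no genuine difficulty in (i) and (ii). The one point requiring a little care is (iii): one must check that passing from the $G$-regular set $K_0$ to the set $A = O_X(K_0^{\mathrm{o}},x_0)$ and then back to $\overline{A.x_0}$ does not shrink the interior below $K_0^{\mathrm{o}}$, and this is precisely where density of the orbit $G.x_0$ is used.
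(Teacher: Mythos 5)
Your proof is correct. Part (i) is verbatim the paper's argument. The differences are in (ii) and (iii), and they are worth a brief comparison. For (ii) the paper argues pointwise: given $x\in X$ it takes a relatively compact open neighbourhood $V$ of $x$, applies Lemma~\ref{lm:G-regular} to $B=O_X(V,x_0)$, and concludes $x\in\bigcup_{g\in G}g.K$; you instead rerun the exhaustion argument from Proposition~\ref{prop:co-cpt} with compact sets $L_n\subseteq L_{n+1}^{\mathrm o}$. Both rest on the same two parts of Lemma~\ref{lm:G-regular}, and your use of $\sigma$-compactness is legitimate since co-compactness plus countability of $G$ gives it (as the paper itself notes), so this is only a cosmetic variation. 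For (iii) the routes genuinely differ: the paper starts from an \emph{arbitrary} representative $B$ of the type, picks a $G$-regular compact $L$, and enlarges $B$ to $A=O_X(L,x_0)\cup B\approx B$, so that $\overline{A.x_0}$ contains $L^{\mathrm o}$; you instead produce one concrete representative $A=O_X(K_0^{\mathrm o},x_0)$ and verify conditions (a) and (b) directly via Lemma~\ref{lm:G-regular}(iii), which in effect proves (the relevant half of) Proposition~\ref{prop:type-condition} on the spot — no circularity, since that proposition comes after and is not used. Your version buys an explicit canonical representative; the paper's version shows in addition that any given representative can be adjusted within its $\approx$-class to a $G$-regular one, which is the statement actually recorded. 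The only point you leave tacit is that $A=O_X(K_0^{\mathrm o},x_0)$ is non-empty (needed to invoke (i) and the relation $\propto$); this follows at once from density of $G.x_0$ and $K_0^{\mathrm o}\neq\emptyset$, and is implicitly contained in your density argument at the end, so it is not a gap.
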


\begin{proof} (i). Suppose that $\Type(G,X,x_0) = [A]$, i.e., that $\cM(G,X,x_0) = \cM_A$. As $A \in \cM_A$, we see that (a) holds. Let $K'$ be a compact subset of $X$. Then $O_X(K',x_0)$ belongs to $\cM_A$ (by Lemma \ref{lm:G-regular} (ii)), so (b) holds. 

Suppose, conversely, that (a) and (b) hold. Then $A\in\cM(G,X,x_0)$ by (a) and the definition of $\cM(G,X,x_0)$, and so $\cM_A\subseteq \cM(G,X,x_0)$. 
If $B$ is in $\cM(G,X,x_0)$, then $K'=\overline{B.x_0}$ is compact, whence
$B \subseteq O_X(K',x_0) \propto A$ by (b), so  
$B\in\cM_A$. 
We conclude that $\cM(G,X,x_0)=\cM_A$, so $\Type(G,X,x_0) = [A]$. 

(ii). Let $x \in X$ and find a relatively compact open neighbourhood $V$ of $x$.  Then $B := O_X(V,x_0)$
belongs to $\cM_A$ by Lemma \ref{lm:G-regular} (ii), so $B \propto A$. There is a finite subset $F$ of $G$ such that $B \subseteq FA$. It follows from Lemma \ref{lm:G-regular} (i) that 
$$x \; \in \; V \; \subseteq \; \overline{B.x_0} \, = \, \bigcup_{g \in F} \overline{gA.x_0} \, = \, \bigcup_{g \in F} g.K \, \subseteq \, \bigcup_{g \in G} g.K.$$
This shows that (c) holds. 

(iii). Let $B \subseteq G$ be any representative of $\Type(G,X,x_0)$ and choose a $G$-regular compact subset $L$ of $X$. Put $A_0 = O_X(L,x_0)$. Then  $A_0 \propto B$ by (b). Put $A = A_0 \cup B$ and let $K$ be the closure of $A.x_0$. Then $A \approx B$, so $\Type(G,X,x_0) = [A]$.  It follows from Lemma \ref{lm:G-regular} (i) that $L^{\mathrm{o}} \subseteq  \overline{A_0.x_0}  \subseteq K$.
This implies that $L^{\mathrm{o}} \subseteq K^{\mathrm{o}}$, so $K$ is $G$-regular. 
\end{proof}

\begin{proposition} \label{prop:type-condition}
Let $G$ be a countable group,  let $(X,x_0)$ be a pointed locally compact co-compact $G$-space and let $K$ be a $G$-regular compact subset of $X$. Then 
$$\Type(G,X,x_0) = [O_X(K^{\mathrm{o}},x_0)] = [O_X(K,x_0)] .$$
\end{proposition}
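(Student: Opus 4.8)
The plan is to show that the set $A := O_X(K,x_0)$ satisfies the two conditions (a) and (b) of Proposition~\ref{prop:regular}(i), and similarly for $A^{\mathrm{o}} := O_X(K^{\mathrm{o}},x_0)$; the equality of the two types then follows immediately since $A^{\mathrm{o}} \propto A$ by inclusion and $A \propto A^{\mathrm{o}}$ by Lemma~\ref{lm:G-regular}(iii) (taking $L = K$), so that $A \approx A^{\mathrm{o}}$.

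First I would verify condition (a) for $A = O_X(K,x_0)$: since $K$ is compact and $A.x_0 \subseteq K$ by Lemma~\ref{lm:O(V,x)-1}(ii), the closure $\overline{A.x_0}$ is a closed subset of the compact set $K$, hence compact. The same argument applies to $A^{\mathrm{o}}$ since $A^{\mathrm{o}}.x_0 \subseteq K^{\mathrm{o}} \subseteq K$. Next I would verify condition (b): given any compact subset $K' \subseteq X$, Lemma~\ref{lm:G-regular}(iii) applied with $L = K'$ and $x = x_0$ gives exactly $O_X(K',x_0) \propto O_X(K^{\mathrm{o}},x_0) = A^{\mathrm{o}}$, and since $A^{\mathrm{o}} \subseteq A$ we get $A^{\mathrm{o}} \propto A$, hence $O_X(K',x_0) \propto A$ as well by transitivity of $\propto$. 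Thus both $A$ and $A^{\mathrm{o}}$ satisfy (a) and (b), so by Proposition~\ref{prop:regular}(i) both represent $\Type(G,X,x_0)$, which is precisely the claimed pair of equalities.

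In fact most of the work has already been packaged into Lemma~\ref{lm:G-regular} and Proposition~\ref{prop:regular}, so there is no real obstacle here; the statement is essentially a corollary that records a convenient concrete representative of the type. The only point requiring a moment's care is making sure the hypotheses of Lemma~\ref{lm:G-regular}(iii) are met — namely that $K$ is genuinely $G$-regular (which is assumed) and that one is allowed to take $L = K$ and $L = K'$ for arbitrary compact $K'$ — but this is exactly the generality in which that lemma is stated. One could alternatively argue directly: every compact $K'$ is covered by finitely many translates $g.K^{\mathrm{o}}$ by $G$-regularity, so $O_X(K',x_0) \subseteq F \cdot O_X(K^{\mathrm{o}},x_0)$ by Lemma~\ref{lm:O(V,x)-1}(i) for a finite $F$, giving (b) by hand; but invoking Lemma~\ref{lm:G-regular}(iii) is cleaner.
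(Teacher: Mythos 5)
Your argument is correct and essentially follows the paper's own proof: both reduce the statement to Proposition~\ref{prop:regular}(i) together with Lemma~\ref{lm:G-regular}(iii), and the only cosmetic difference is that you verify conditions (a) and (b) directly for $O_X(K,x_0)$ and $O_X(K^{\mathrm{o}},x_0)$, whereas the paper compares these sets with an abstract representative $A$ of the type via the chain $O_X(K^{\mathrm{o}},x_0)\subseteq O_X(K,x_0)\propto A\propto O_X(K^{\mathrm{o}},x_0)$. (The only point left implicit in both arguments is that $O_X(K^{\mathrm{o}},x_0)$ is non-empty, which is immediate from $G$-regularity of $K$.)
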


\begin{proof} Let $A\subseteq G$ be a representative of the type of $(X,x_0)$. Then
$$O_X(K^{\mathrm{o}},x_0) \subseteq O_X(K,x_0) \propto A$$
by Proposition \ref{prop:regular} (i). 
Let $L$ be the closure of $A.x_0$, which is a compact set by Proposition \ref{prop:regular} (i). Then $A  \subseteq  O_X(L,x_0) \propto O_X(K^{\mathrm{o}},x_0)$ by Lemma \ref{lm:G-regular} (iii). This completes the proof. 
\end{proof}

\noindent
Each element in $P_\approx(G)$ is the type of some, in fact a canonical, pointed locally compact co-compact $G$-space:

\begin{proposition} \label{prop:type(X_A)}
Let $A$ be a non-empty subset of a group $G$, and let $X_A \subseteq \beta G$ be the locally compact co-compact $G$-space defined in \eqref{eq:X_A}. Then $\Type(G,X_A,e) = [A]$. 
\end{proposition}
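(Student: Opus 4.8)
The goal is to show $\Type(G,X_A,e) = [A]$, i.e. that $\cM(G,X_A,e) = \cM_A$. By Proposition~\ref{prop:regular}(i) it suffices to verify two things: (a) the closure $\overline{A.e}$ in $X_A$ is compact, and (b) $O_{X_A}(K',e) \propto A$ for every compact $K' \subseteq X_A$. The key observation throughout is the identification $G.e = G \subseteq \beta G$, under which $A.e = A$ and, for a subset $V \subseteq \beta G$, $O_{X_A}(V,e) = \{g \in G : g \in V\} = V \cap G$.

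\textbf{Step 1: compactness of $\overline{A.e}$.} The closure of $A.e = A$ taken in $\beta G$ is exactly $K_A$, which is compact-open in $\beta G$ (recalled in the paragraph preceding \eqref{eq:X_A}). Since $K_A \subseteq X_A$ and $X_A$ carries the subspace topology from $\beta G$, the closure of $A$ in $X_A$ equals $K_A$ and is compact. This gives (a), and incidentally shows $A \in \cM(G,X_A,e)$, so $\cM_A \subseteq \cM(G,X_A,e)$ already follows (using Lemma~\ref{lm:A->B}).

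\textbf{Step 2: the reverse inclusion via $G$-regularity.} First note $K_A$ is a $G$-regular compact subset of $X_A$: it is compact-open, so $K_A^{\mathrm o} = K_A$, and $X_A = \bigcup_{g\in G} g.K_A = \bigcup_{g \in G} g.K_A^{\mathrm o}$ by the very definition \eqref{eq:X_A}. Now let $K'$ be any compact subset of $X_A$. Applying Lemma~\ref{lm:G-regular}(iii) with the $G$-regular set $K = K_A$, $L = K'$, and $x = e$, we get
\[
O_{X_A}(K',e) \;\propto\; O_{X_A}(K_A^{\mathrm o},e) = O_{X_A}(K_A,e).
\]
It remains to identify $O_{X_A}(K_A,e)$ up to $\approx$ with $A$. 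On one hand $A \subseteq K_A \cap G = O_{X_A}(K_A,e)$, so $A \propto O_{X_A}(K_A,e)$. On the other hand, $K_A$ is the closure of $A$ in $\beta G$, so $K_A \cap G = \overline{A}^{\,\beta G} \cap G$; since $A$ is dense in $K_A$ and $G$ is discrete (hence its points are isolated in $\beta G$), every point of $K_A \cap G$ lies in $A$, giving $O_{X_A}(K_A,e) = K_A \cap G = A$. Thus $O_{X_A}(K',e) \propto A$, which is exactly condition (b). (One can alternatively avoid the density argument and merely observe $O_{X_A}(K_A,e) \approx A$, which is all that is needed.)

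\textbf{Step 3: conclude.} Conditions (a) and (b) hold, so Proposition~\ref{prop:regular}(i) yields $\cM(G,X_A,e) = \cM_A$, i.e. $\Type(G,X_A,e) = [A]$. \textbf{Main obstacle.} There is no serious obstacle; the content is entirely bookkeeping with the dictionary $O_{X_A}(V,e) = V \cap G$ and the fact that $K_A$ is compact-open in $\beta G$ and $G$-regular in $X_A$. The one point requiring a little care is the identification $K_A \cap G = A$ (or at least $\approx A$) — this is where one uses that $G$ sits inside $\beta G$ as an open discrete subset, and it is essentially \cite[Lemma 2.4]{KelMonRor:supramenable}, which the paper has already invoked.
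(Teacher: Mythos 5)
Your proof is correct, but it takes a different route from the paper's. The paper argues directly from the definition of $\cM(G,X_A,e)$: for an arbitrary $B \subseteq G$ it identifies the closure of $B.e$ in $X_A$ with $K_B \cap X_A$, shows this is compact if and only if $K_B \subseteq X_A$ (using that $K_B$ is compact-open and $X_A$ is open and dense in $\beta G$), and then invokes \cite[Lemma 2.5(i)]{KelMonRor:supramenable} to get $K_B \subseteq X_A \iff B \propto A$, yielding $\cM(G,X_A,e)=\cM_A$ in one chain of equivalences. You instead route through the general criterion of Proposition~\ref{prop:regular}(i) (in effect the same as applying Proposition~\ref{prop:type-condition}): you observe that $K_A$ is a $G$-regular compact subset of $X_A$, use Lemma~\ref{lm:G-regular}(iii) to reduce condition (b) to the single set $O_{X_A}(K_A,e)$, and compute $O_{X_A}(K_A,e)=K_A\cap G=A$ via the fact that $G$ sits in $\beta G$ as an open discrete subset. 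Both arguments are sound and short; yours trades the external reference \cite[Lemma 2.5(i)]{KelMonRor:supramenable} for the Section~3 machinery, which is legitimate since Proposition~\ref{prop:regular} precedes this statement and its proof does not depend on it. The one cosmetic caveat is that Proposition~\ref{prop:regular} is stated for \emph{countable} $G$, whereas the present proposition is stated for an arbitrary group; the direction of Proposition~\ref{prop:regular}(i) you use does not actually require countability (its proof is purely about left $G$-ideals and Lemma~\ref{lm:G-regular}), but if you want the full generality claimed you should either note this or argue as the paper does, whose proof works verbatim for uncountable groups.
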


\begin{proof} We must show that $\cM(G,X_A,e) = \cM_A$. Let $B \subseteq G$. The closure of $B = B.e$ in $X_A$ is equal to $K_B \cap X_A$ (where $K_B$ is the closure of $B$ in $\beta G$). We now have
\begin{eqnarray*}
B \in \cM(G,X_A,e) & \iff & K_B \cap X_A \; \text{is compact} \\
&\overset{(1)}{\iff} & K_B \subseteq X_A \overset{(2)}{\iff} B \propto A \iff B \in \cM_A.
\end{eqnarray*}
The "$\Rightarrow$" part of (1)  follows because  $K_B$ is compact-open and $X_A$ is open and dense in $\beta G$, so $K_B \cap X_A$ is dense in $K_B$.

(2) follows from \cite[Lemma 2.5(i)]{KelMonRor:supramenable}.
\end{proof}

\noindent Suppose that $(X,x_0)$ and $(Y,y_0)$ are pointed locally compact $G$-spaces and that $\varphi \colon X \to Y$ is a continuous proper $G$-map such that $\varphi(x_0) = y_0$. Then $\varphi$ is necessarily surjective because $\varphi(G.x_0) = G.y_0$ and because any continuous proper map between locally compact Hausdorff spaces maps closed sets to closed sets. 

\begin{proposition} \label{prop:sametype}
Let $G$ be a group and let $(X,x_0)$ and $(Y,y_0)$ be pointed locally compact $G$-spaces. Suppose there exists a continuous proper $G$-map $\varphi \colon X \to Y$ such that $\varphi(x_0) = y_0$. Then $$\cM(G,X,x_0) = \cM(G,Y,y_0).$$
In particular, if one of the $G$-spaces $X$ and $Y$ is co-compact, then so is the other, in which case
 $$\Type(G,X,x_0) = \Type(G,Y,y_0).$$
\end{proposition}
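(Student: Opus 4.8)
The plan is to show the two left $G$-ideals $\cM(G,X,x_0)$ and $\cM(G,Y,y_0)$ coincide by proving each containment directly from the definition, using only continuity and properness of $\varphi$. The last sentence of the proposition then follows immediately: co-compactness of either space is equivalent (by Proposition~\ref{prop:co-cpt}, noting $\sigma$-compactness transfers along a continuous proper surjection, and along $\varphi$ the space $X$ is $\sigma$-compact iff $Y$ is since $\varphi$ is surjective and proper) to the corresponding left $G$-ideal being compact, and equal left $G$-ideals are simultaneously compact; when they are compact, equality of the ideals is exactly equality of the types by Definition~\ref{def:type}.

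For the containment $\cM(G,X,x_0) \subseteq \cM(G,Y,y_0)$: let $B \in \cM(G,X,x_0)$, so $\overline{B.x_0}$ is compact in $X$. Since $\varphi$ is continuous, $\varphi(\overline{B.x_0})$ is compact in $Y$; and since $\varphi$ is a $G$-map with $\varphi(x_0) = y_0$, we have $\varphi(B.x_0) = B.y_0$, so $B.y_0 \subseteq \varphi(\overline{B.x_0})$, whence $\overline{B.y_0}$ is a closed subset of a compact set, hence compact. Thus $B \in \cM(G,Y,y_0)$. This direction is the easy one and uses only continuity.

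For the reverse containment $\cM(G,Y,y_0) \subseteq \cM(G,X,x_0)$: let $B \in \cM(G,Y,y_0)$, so $K' := \overline{B.y_0}$ is compact in $Y$. Since $\varphi$ is proper, $\varphi^{-1}(K')$ is compact in $X$. Now $B.x_0 \subseteq \varphi^{-1}(K')$ because $\varphi(B.x_0) = B.y_0 \subseteq K'$; hence $\overline{B.x_0} \subseteq \varphi^{-1}(K')$ is a closed subset of a compact set, therefore compact. Thus $B \in \cM(G,X,x_0)$. This direction is where properness is essential and is the main (though still routine) point of the argument — there is no real obstacle here, since properness of $\varphi$ is exactly the hypothesis that makes preimages of compact sets compact.

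Finally I would spell out the "in particular" clause. If $X$ is co-compact, then since $G$ is the group in question — wait, $G$ need not be countable here, so Proposition~\ref{prop:co-cpt} is not directly available; instead argue directly: a $G$-regular compact $K \subseteq X$ gives $\varphi(K)$ compact with $\bigcup_{g} g.\varphi(K) = \varphi(\bigcup_g g.K) = \varphi(X) = Y$, so $Y$ is co-compact, and symmetrically (using that a co-compact $Y$ pulls back: if $L \subseteq Y$ is compact with $\bigcup_g g.L = Y$ then $\varphi^{-1}(L)$ is compact by properness and $\bigcup_g g.\varphi^{-1}(L) = \varphi^{-1}(Y) = X$). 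Once both are co-compact, $\Type(G,X,x_0)$ and $\Type(G,Y,y_0)$ are defined, and they are equal because they are represented by any $A$ with $\cM_A$ equal to the common left $G$-ideal $\cM(G,X,x_0) = \cM(G,Y,y_0)$.
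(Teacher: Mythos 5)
Your proof of the equality $\cM(G,X,x_0)=\cM(G,Y,y_0)$ is correct and is essentially the paper's argument: one inclusion from continuity via the compact set $\varphi(\overline{A.x_0})$, the other from properness via $\varphi^{-1}(\overline{A.y_0})$. The only divergence is in the ``in particular'' clause: the paper transfers $\sigma$-compactness along the (necessarily surjective) map $\varphi$ and then invokes Proposition~\ref{prop:co-cpt}, which is stated for countable $G$, whereas you transfer co-compactness directly by pushing forward a witnessing compact set under $\varphi$ and pulling one back under $\varphi^{-1}$; your route is equally valid and in fact avoids the countability hypothesis that the paper's citation implicitly imports, at the small cost that you should still note (as in the first half of the proof of Proposition~\ref{prop:co-cpt}, which needs no countability) that co-compactness makes $\cM(G,X,x_0)$ of the form $\cM_A$, so that the types are indeed defined and then equal because the ideals coincide.
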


\begin{proof} Let $A \subseteq G$. Observe that
$$\varphi(A.x_0) = A.y_0, \qquad A.x_0 \subseteq \varphi^{-1}(A.y_0).$$
If $A \in \cM(G,X,x_0)$, then  $A.y_0$ is contained in the compact set $\varphi(\overline{A.x_0})$, so $A \in \cM(G,Y,y_0)$. Similarly, if  $A \in \cM(G,Y,y_0)$, then $A.x_0$ is contained in the compact set $\varphi^{-1}(\overline{A.y_0})$, so $A \in \cM(G,X,x_0)$.

Because $\varphi$ necessarily is surjective, it follows that if one of the spaces $X$ and $Y$ is $\sigma$-compact, then so is the other. The last claim therefore follows from the the former together with Proposition~\ref{prop:co-cpt}.
\end{proof}

\noindent The two extreme values of the type are treated in the following two propositions. We omit the easy proof of the former.

\begin{proposition} \label{prop:compacttype}
Let $G$ be a countable group and let $(X,x_0)$ be a pointed locally compact $G$-space. The following conditions are equivalent:
\begin{enumerate}
\item $\cM(G,X,x_0) = P(G)$, \vspace{.1cm}
\item $X$ is co-compact and $\Type(G,X,x_0) = [G]$, \vspace{.1cm}
\item $X$ is compact.
\end{enumerate}
\end{proposition}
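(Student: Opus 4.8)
The plan is to prove the three conditions equivalent by first establishing $\mathrm{(i)}\Leftrightarrow\mathrm{(iii)}$ and then $\mathrm{(ii)}\Leftrightarrow\mathrm{(iii)}$, the whole argument resting on the single observation that, since $G.x_0$ is dense in $X$, one has $\overline{G.x_0}=X$.

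First I would record the elementary fact that $\cM(G,X,x_0)=P(G)$ if and only if $G\in\cM(G,X,x_0)$: the forward implication is trivial, and the reverse follows from the downward-closure axiom (i) of Definition~\ref{def:leftideal} (or simply because a closed subset of a compact set is compact), since every subset of $G$ is contained in $G$. Combining this with the equality $\overline{G.x_0}=X$ and the definition of $\cM(G,X,x_0)$ yields at once that (i) holds if and only if $\overline{G.x_0}$ is compact, that is, if and only if $X$ is compact; this gives $\mathrm{(i)}\Leftrightarrow\mathrm{(iii)}$.

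For $\mathrm{(iii)}\Rightarrow\mathrm{(ii)}$ I would note that a compact space $X$ is trivially co-compact (the set $K=X$ witnesses co-compactness) and that $\cM(G,X,x_0)=P(G)=\cM_G$ --- the last equality holding because $B\propto G$ for every non-empty $B\subseteq G$ --- so $\Type(G,X,x_0)=[G]$ by Definition~\ref{def:type}. For the converse $\mathrm{(ii)}\Rightarrow\mathrm{(iii)}$, if $X$ is co-compact with $\Type(G,X,x_0)=[G]$, then by definition $\cM(G,X,x_0)=\cM_G=P(G)$, which is condition (i), and we have already seen that (i) implies (iii).

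There is essentially no obstacle in this argument; the only points requiring care are bookkeeping ones, namely that $\cM_G=P(G)$ and that the symbol $\Type(G,X,x_0)$ is, by Definition~\ref{def:type}, defined only when $X$ is co-compact, so that statement (ii) is to be read as the conjunction ``$X$ is co-compact'' and ``$\cM(G,X,x_0)=\cM_G$''. One could alternatively route $\mathrm{(i)}\Rightarrow\mathrm{(ii)}$ through Proposition~\ref{prop:co-cpt}, deducing co-compactness of $X$ from compactness of the left $G$-ideal $\cM(G,X,x_0)=\cM_G$ together with $\sigma$-compactness of $X$; but since (i) already forces $X$ to be compact, this detour is unnecessary.
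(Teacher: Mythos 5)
Your proof is correct, and since the paper explicitly omits the proof of this proposition as ``easy,'' there is no written argument to diverge from; your reasoning — that $\overline{G.x_0}=X$ reduces everything to whether $G\in\cM(G,X,x_0)$, together with the bookkeeping identity $\cM_G=P(G)$ and the reading of (ii) via Definition~\ref{def:type} — is exactly the straightforward argument the authors intended.
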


\begin{proposition} \label{prop:discrete}
Let $G$ be a countable group and let $(X,x_0)$ be a pointed locally compact $G$-space. Suppose that the isotropy group of $\{x_0\}$ is trivial, i.e., the map $g \mapsto g.x_0$ is injective. Then the following conditions are equivalent:
\begin{enumerate}
\item $\cM(G,X,x_0) = \cM_\mathrm{fin}$, \vspace{.1cm}
\item $X$ is co-compact and $\Type(G,X,x_0) = [\{e\}]$, \vspace{.1cm}
\item $X$ is discrete, \vspace{.1cm}
\item $(X,x_0)$ is isomorphic to $(G,e)$.
\end{enumerate}
\end{proposition}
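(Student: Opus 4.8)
We prove the cycle of implications $\mathrm{(ii)} \Rightarrow \mathrm{(i)} \Rightarrow \mathrm{(iii)} \Rightarrow \mathrm{(iv)} \Rightarrow \mathrm{(ii)}$, using throughout that $\cM_{\mathrm{fin}} = \cM_{\{e\}}$, since a subset $B \subseteq G$ satisfies $B \propto \{e\}$ precisely when $B$ is finite (Example \ref{ex:subgroup1} (iii)). The implication $\mathrm{(ii)} \Rightarrow \mathrm{(i)}$ is then immediate from the definition of the type: if $X$ is co-compact and $\Type(G,X,x_0) = [\{e\}]$, then by definition $\cM(G,X,x_0) = \cM_{\{e\}} = \cM_{\mathrm{fin}}$.

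The substantive step is $\mathrm{(i)} \Rightarrow \mathrm{(iii)}$. Assume $\cM(G,X,x_0) = \cM_{\mathrm{fin}}$ and let $y \in X$. Using local compactness, choose a relatively compact open neighbourhood $V$ of $y$. By Lemma \ref{lm:G-regular} (ii) we have $O_X(V,x_0) \in \cM(G,X,x_0) = \cM_{\mathrm{fin}}$, so $O_X(V,x_0)$ is finite; hence, by Lemma \ref{lm:O(V,x)-1} (ii) together with the injectivity of $g \mapsto g.x_0$, the set $G.x_0 \cap V = O_X(V,x_0).x_0$ is finite. On the other hand $G.x_0 \cap V$ is dense in $V$ (as $G.x_0$ is dense in $X$ and $V$ is open), and a finite subset of a Hausdorff space is closed; a subset of $V$ that is both closed in $V$ and dense in $V$ must equal $V$, so $V = G.x_0 \cap V$. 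In particular $V$ is a finite neighbourhood of $y$, and a point with a finite neighbourhood in a Hausdorff (indeed $T_1$) space is isolated, so $\{y\}$ is open. Since $y$ was arbitrary, $X$ is discrete (and, as a byproduct, $X = G.x_0$).

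For $\mathrm{(iii)} \Rightarrow \mathrm{(iv)}$: if $X$ is discrete then its only dense subset is $X$ itself, so $G.x_0 = X$, and the orbit map $\psi \colon G \to X$, $\psi(g) = g.x_0$, is a $G$-equivariant bijection with $\psi(e) = x_0$ (surjective by density, injective by triviality of the isotropy); it is a homeomorphism because $G$ and $X$ are both discrete. Thus $(X,x_0) \cong (G,e)$. For $\mathrm{(iv)} \Rightarrow \mathrm{(ii)}$: the $G$-space $G$ is countable and discrete, hence $\sigma$-compact, and it is co-compact with $\{e\}$ a $G$-regular compact set; moreover $\cM(G,G,e)$ is exactly the family of finite subsets of $G$, so $\cM(G,G,e) = \cM_{\{e\}}$ and $\Type(G,G,e) = [\{e\}]$. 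Transporting these facts along the isomorphism from $\mathrm{(iv)}$ (cf.\ Proposition \ref{prop:sametype}) yields $\mathrm{(ii)}$.

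The only delicate point is the step $\mathrm{(i)} \Rightarrow \mathrm{(iii)}$, and within it the observation that a finite subset dense in a non-empty open set must exhaust that set — this is precisely where the Hausdorff hypothesis and the triviality of the isotropy are used; everything else is formal.
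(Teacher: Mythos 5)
Your proposal is correct and follows essentially the same route as the paper: the substantive step is the same argument (a relatively compact open neighbourhood $V$ of an arbitrary point has $O_X(V,x_0)$ finite by Lemma~\ref{lm:G-regular}~(ii), and a finite set dense in the open set $V$ must equal $V$, forcing isolated points), with the remaining implications being the same routine observations merely arranged in a different cyclic order ((i)$\Rightarrow$(iii)$\Rightarrow$(iv) instead of the paper's (i)$\Rightarrow$(iv)$\Rightarrow$(iii)). The only quibble is cosmetic: injectivity of $g\mapsto g.x_0$ is not actually needed to see that $G.x_0\cap V$ is finite in (i)$\Rightarrow$(iii) (images of finite sets are finite); it is genuinely used only in (iii)$\Rightarrow$(iv) and in identifying the type.
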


\begin{proof}
(ii) $\Rightarrow$ (i) and  (iv) $\Rightarrow$ (iii) are trivial.

(i) $\Rightarrow$ (iv).  
Let $x \in X$ and find a relatively compact open neighbourhood $V$ of $x$. Then $A :=O_X(V,x_0)$ belongs to $\cM(G,X,x_0)$ (by Lemma \ref{lm:G-regular} (ii)), so $A$ is finite. Since $A.x_0 = G.x_0 \cap V$ is dense in $V$ (by Lemma \ref{lm:G-regular} (i)) and since finite sets are closed, we must have $A.x_0 = V$. This shows that $x \in V \subseteq G.x_0$. As $x \in X$ was arbitrary we conclude that $X = G.x_0$. The set $A.x_0$ is a finite and open, so each point in this set must be isolated. Hence every point in $X$ is isolated. The map $g \mapsto g.x_0$ is assumed to be injective, and it is therefore a $G$-isomorphism from $(G,e)$ onto $(X,x_0)$. 

(iii) $\Rightarrow$ (ii). Since $G.x_0$ is dense in $X$ and $X$ is discrete we have $X = G.x_0$, so $X$ is co-compact. The claim about the type follows from the assumption that the isotropy group of $x_0$ is trivial together with the fact that the compact subsets of a discrete set precisely are the finite subsets.
\end{proof}

\noindent We end this section with a result stating that there is a one-to-one correspondence between (not necessarily compact) left $G$-ideals in $P(G)$ and (not necessarily co-compact) open invariant subsets of $\beta G$, hence classifying the latter; see also Proposition~\ref{prop:X_A}. In particular, all left $G$-ideals arise from a pointed locally compact $G$-space. The result also extends Proposition \ref{prop:type(X_A)} to the non-compact case.

\begin{proposition} \label{prop:X_M}
For each left $G$-ideal $\cM$  put 
\begin{equation} \label{eq:X_M}
X_\cM = \bigcup_{A \in \cM} K_A \subseteq \beta G.
\end{equation}
Then $X_\cM$ is open, $G$-invariant, and $e \in X_\cM$. In particular, $(X_\cM,e)$ is a pointed locally compact $G$-space. Moreover,
\begin{enumerate}
\item $\cM(G,X_\cM,e) = \cM$. \vspace{.1cm}
\item Every open invariant subset of $\beta G$ is equal to $X_\cM$ for some left $G$-ideal $\cM$.
\end{enumerate}
\end{proposition}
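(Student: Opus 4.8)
The plan is to verify the three structural claims ($X_\cM$ open, $G$-invariant, $e \in X_\cM$) essentially by inspection, and then to establish (i) and (ii), which are the real content.

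For the structural claims: each $K_A$ is compact-open in $\beta G$, so $X_\cM = \bigcup_{A \in \cM} K_A$ is open as a union of open sets. For $G$-invariance, note $g.K_A = K_{gA}$, and $gA \in \cM$ whenever $A \in \cM$ by axiom (iii) of a left $G$-ideal, so $g.X_\cM = \bigcup_{A \in \cM} K_{gA} \subseteq X_\cM$, and applying this to $g^{-1}$ gives equality. Since $\cM$ contains at least one non-empty set $A$, and by axiom (i) every singleton $\{a\} \subseteq A$ lies in $\cM$; then by axiom (iii), $\{e\} = a^{-1}.\{a\} \in \cM$, so $e \in K_{\{e\}} \subseteq X_\cM$. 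Hence $(X_\cM,e)$ is a pointed locally compact $G$-space, with dense orbit $G = G.e$ as $G$ is already dense in $\beta G$.

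For (i), I would compute $\cM(G,X_\cM,e)$ directly. Given $B \subseteq G$, the closure of $B = B.e$ in $X_\cM$ equals $K_B \cap X_\cM$, so I must show $K_B \cap X_\cM$ is compact if and only if $B \in \cM$. If $B \in \cM$, then $K_B \subseteq X_\cM$ and $K_B$ is compact, so $K_B \cap X_\cM = K_B$ is compact. Conversely, suppose $K_B \cap X_\cM$ is compact. Since $X_\cM$ is open and dense in $\beta G$ and $K_B$ is compact-open, $K_B \cap X_\cM$ is dense in $K_B$; being also compact (hence closed in $\beta G$), it must equal $K_B$, so $K_B \subseteq X_\cM = \bigcup_{A \in \cM} K_A$. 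By compactness of $K_B$ there are finitely many $A_1,\dots,A_n \in \cM$ with $K_B \subseteq K_{A_1} \cup \dots \cup K_{A_n} = K_{A_1 \cup \dots \cup A_n}$; by axiom (ii), $A := A_1 \cup \dots \cup A_n \in \cM$, and $K_B \subseteq K_A$ gives $B \subseteq A$ (intersecting with the dense subset $G$, using that $K_B \cap G = B$), so $B \in \cM$ by axiom (i). This proves (i), and I expect this density/compactness argument identifying $K_B \cap X_\cM$ with $K_B$ to be the only slightly delicate point.

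For (ii), let $U$ be an arbitrary open $G$-invariant subset of $\beta G$. Set $\cM = \{A \in P(G) \mid K_A \subseteq U\}$. I would check $\cM$ is a left $G$-ideal: axiom (i) holds since $A \subseteq B$ implies $K_A \subseteq K_B$; axiom (ii) since $K_{A \cup B} = K_A \cup K_B$; axiom (iii) since $K_{gA} = g.K_A \subseteq g.U = U$ by invariance of $U$. It contains a non-empty set because $U$ is non-empty open, hence meets the dense set $G$, so contains some singleton $\{a\}$ with $K_{\{a\}} = \{a\} \subseteq U$. (If $U = \emptyset$ the statement is vacuous or handled by convention; I would note $U$ non-empty here, or observe the problem intends non-empty $U$.) Clearly $X_\cM = \bigcup_{A \in \cM} K_A \subseteq U$. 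For the reverse inclusion, take $\xi \in U$; since $U$ is open and $\beta G$ is Stonean with the compact-open sets $K_A$ ($A \subseteq G$) forming a basis, there is $A \subseteq G$ with $\xi \in K_A \subseteq U$, so $A \in \cM$ and $\xi \in X_\cM$. Hence $U = X_\cM$. This completes the proof; no step here should present serious difficulty, as it is a routine transcription of the lattice properties of the sets $K_A$ into the left $G$-ideal axioms.
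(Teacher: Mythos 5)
Your proof is correct and follows essentially the same route as the paper's: the same identification of $\overline{B.e}$ in $X_\cM$ with $K_B\cap X_\cM$, the same density-plus-compactness argument giving $K_B\subseteq X_\cM$, the same finite-subcover reduction using axiom (ii), and the same construction $\cM=\{A: K_A\subseteq U\}$ with the compact-open basis of $\beta G$ for part (ii). Your explicit verification that $e\in X_\cM$ and your remark about the empty-set edge case are harmless additions to what the paper leaves implicit.
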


\begin{proof} It is clear that $X_\cM$ is open. Invariance of $X_\cM$ follows from the fact that $g.K_A = K_{gA}$ (see \cite[Lemma 2.4]{KelMonRor:supramenable}) and the assumption that $\cM$ is a left $G$-ideal. Each non-empty open invariant subset of $\beta G$ contains $G = G.e$ as a dense orbit, so  $(X_\cM,e)$ is a pointed locally compact $G$-space.

(i). The proof follows the same idea as the proof of  Proposition \ref{prop:type(X_A)}. Let $A \subseteq G$. Then:
\begin{eqnarray*}
A \in \cM(G,X_\cM,e) &\overset{(1)}{\iff}& K_A \cap X_\cM \; \; \text{is compact} \\
&\overset{(2)}{\iff} & K_A \subseteq X_\cM \, \overset{(3)}{\iff} A \in \cM.
\end{eqnarray*}
(1) holds because the closure of $A=A.e$ in $X_\cM$ is equal to $K_A \cap X_\cM$; and (2) is established as in  the proof of Proposition \ref{prop:type(X_A)}. Let us look at (3): 
"$\Leftarrow$" holds by definition. Suppose that $K_A \subseteq X_\cM$. Then, by compactness of $K_A$, there exist $A_1, \dots, A_n \in \cM$ such that 
$$K_A \subseteq K_{A_1} \cup K_{A_2} \cup \cdots \cup K_{A_n} = K_{\bigcup_{i=1}^n A_i}.$$
Hence $A \subseteq \bigcup_{i=1}^n A_i$, and so $A \in \cM$.

(ii). Let $X$ be an open $G$-invariant subset of $\beta G$, and let $\cM$ be the set of all $A \subseteq G$ such that $K_A \subseteq X$. Then $\cM$ is a left $G$-ideal, and $X_\cM \subseteq X$ by \eqref{eq:X_M}. To prove the reverse inclusion, let $x \in X$. Since $\beta G$, and hence $X$, are totally disconnected  locally compact Hausdorff spaces, and since the compact-open sets in $\beta G$ are of the form $K_A$ for some $A \subseteq G$, there exists $A \subseteq G$ with $x \in K_A \subseteq X$. Hence $A \in \cM$, so  $x \in K_A \subseteq X_\cM$.
\end{proof}

%%%%%%%%%%%%%%%%%%%%%%%%%%%%%%%%%%%%%%%%%%%%%%%%%%%%%%%%%%%%%%%%%%%%

\section{Non co-compact actions} \label{sec:cocompact}

\noindent A group $G$ is said to be \emph{supramenable} if it has no paradoxical subsets, cf.\ \cite{Rosenblatt:supramenable}. It was shown in \cite{KelMonRor:supramenable} that a group is supramenable if and only if any co-compact action of the group on a locally compact Hausdorff space admits a non-zero invariant Radon measure. We show here, using some of the machinery of the previous section, that the assumption that the action be co-compact cannot be removed. In fact, any infinite countable group admits an action on a  locally compact Hausdorff space which does not admit a non-zero invariant Radon measure (Proposition~\ref{prop:non-co-cpt}). We also prove the existence of non-compact left $G$-ideals and non co-compact open $G$-invariant subsets of $\beta G$ for every infinite countable group (Proposition~\ref{prop:non-co-compact}).

We start by giving an obstruction to having a non-zero invariant Radon measure:

\begin{lemma} \label{lm:co-cpt-obstruction}
Let $G$ be a countable group acting on a locally compact Hausdorff space $X$. Suppose that there is a sequence $\{K_n\}_{n=1}^\infty$ of compact subsets of $X$ such that:
\begin{enumerate}
\item For each $n$ there is an infinite subset $J_n$ of $G$ such that $\{g.K_n\}_{g \in J_n}$ are pairwise disjoint subsets of $K_{n+1}$,
\item $\displaystyle{X= \bigcup_{n=1}^\infty \bigcup_{g \in G} g.K_n.}$
\end{enumerate}
Then $X$ admits no non-zero invariant Radon measure. 
\end{lemma}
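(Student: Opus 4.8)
The plan is to argue by contradiction: suppose $\mu$ is a non-zero invariant Radon measure on $X$. Since $\mu$ is non-zero and $X=\bigcup_{n}\bigcup_{g\in G} g.K_n$, and each $g.K_n$ has the same (finite) measure as $K_n$ by invariance, there must be some $n$ with $\mu(K_n)>0$; fix such an $n$. The key point is then condition (i): the sets $\{g.K_n\}_{g\in J_n}$ are pairwise disjoint and all contained in $K_{n+1}$, so by countable additivity and invariance
\begin{equation*}
\mu(K_{n+1}) \;\ge\; \sum_{g\in J_n}\mu(g.K_n) \;=\; \sum_{g\in J_n}\mu(K_n) \;=\; \infty,
\end{equation*}
since $J_n$ is infinite and $\mu(K_n)>0$. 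This contradicts the fact that $\mu$, being a Radon measure, is finite on the compact set $K_{n+1}$.

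\textbf{Remarks on the steps.} First I would invoke invariance of $\mu$ to record that $\mu(g.K)=\mu(K)$ for every Borel set $K$ and every $g\in G$. Second, I would use that $\mu\ne 0$ together with the covering hypothesis (ii) and subadditivity (over the countably many sets $g.K_n$, $n\ge 1$, $g\in G$) to locate an index $n$ with $\mu(K_n)>0$. Third, I would apply hypothesis (i) and countable additivity to derive $\mu(K_{n+1})=\infty$, contradicting finiteness of a Radon measure on compacta.

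\textbf{Main obstacle.} There is essentially no obstacle here — the argument is a short, standard measure-theoretic contradiction. The only point that deserves a word of care is the passage from $\mu\ne 0$ to the existence of $n$ with $\mu(K_n)>0$: one must note that $\{g.K_n : n\ge 1,\ g\in G\}$ is a countable family (as $G$ is countable) covering $X$, so if all $K_n$ had measure zero, then all $g.K_n$ would too, forcing $\mu(X)=0$. With countability of $G$ in hand this is immediate.
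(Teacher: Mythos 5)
Your proof is correct and is essentially the paper's argument, just organized as a contradiction: the paper deduces directly from finiteness of $\lambda(K_{n+1})$ and the infinitely many disjoint translates that $\lambda(K_n)=0$ for all $n$, and then uses the countable cover in (ii) to get $\lambda(X)=0$, which is exactly your reasoning run in the contrapositive direction. No gaps.
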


\begin{proof} Suppose that $\lambda$ is an invariant Radon measure on $X$. For each $n \in \N$ we have that $\lambda(K_{n+1}) < \infty$, which by (i) and invariance of $\lambda$ entails that $\lambda(K_n) = 0$. It then follows from (ii) and invariance that $\lambda(X)=0$.
\end{proof}

\noindent We thank the referee for suggesting the proof of the following lemma (which allowed us to remove the condition that $G$ contains an element of infinite order). 

\begin{lemma}  \label{lm:I_n}
Let $G$ be an infinite countable group. 
Then there is a sequence $I_1,I_2,I_3,\dots$ of infinite subsets of $G$ 
such that the product maps 
\[
\Phi_n\colon I_n\times\cdots\times I_2\times I_1\to G,\quad 
\Phi_n(g_n,\dots,g_2,g_1)=g_n\cdots g_2g_1,\quad g_j\in I_j, 
\]
are injective for each $n\in\N$, 
and such that $e\in I_n$ for all $n\in\N$. 
\end{lemma}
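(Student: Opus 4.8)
The plan is to build the sets $I_n$ recursively, in reverse order of how they appear in the product, so that at stage $n$ we append a new factor $I_n$ on the \emph{left} of the already-constructed tail $I_{n-1} \times \cdots \times I_1$. Enumerate $G = \{h_1, h_2, h_3, \dots\}$. Suppose $I_1, \dots, I_{n-1}$ have been chosen (finite infinite subsets, each containing $e$) so that $\Phi_{n-1}$ is injective; note the image $\Phi_{n-1}(I_{n-1} \times \cdots \times I_1)$ is a countable subset $S_{n-1}$ of $G$, and $e \in S_{n-1}$ since all $I_j \ni e$. I would construct $I_n = \{e\} \cup \{g_n^{(1)}, g_n^{(2)}, \dots\}$ one element at a time, maintaining the invariant that $\Phi_n$ restricted to the part built so far is injective. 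The key algebraic observation: $\Phi_n(g_n, w) = \Phi_n(g_n', w')$ with $w = \Phi_{n-1}(\cdots)$, $w' = \Phi_{n-1}(\cdots)$ means $g_n w = g_n' w'$, i.e. $g_n = g_n' w' w^{-1}$, so $(g_n')^{-1} g_n = w'w^{-1} \in S_{n-1} S_{n-1}^{-1}$, which is a countable set. Hence to keep $\Phi_n$ injective it suffices to choose each new element $g_n^{(k)}$ avoiding the countably many "forbidden" cosets: the set $\bigcup_{j < k} g_n^{(j)} \cdot (S_{n-1} S_{n-1}^{-1})$ together with $\{e\} \cdot (S_{n-1}S_{n-1}^{-1})$ is a countable union of countable sets, hence (as $G$ is infinite) its complement is infinite — wait, I must be careful: a countable union of countable sets can be all of $G$. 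So this naive counting does not immediately work, and I will need to be smarter.

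The fix is to make each $I_n$ \emph{finite} at each finite stage of the construction and take a union/diagonal limit, OR — cleaner — to interleave the construction so that at step $m$ we only decide finitely much of each $I_n$ for $n \le m$. Concretely, I would run a single recursion on $m = 1, 2, 3, \dots$; at step $m$ I enlarge each of $I_1, \dots, I_m$ by one new element, choosing these new elements in some fixed order (say $I_m$ first, then $I_{m-1}, \dots$, then $I_1$) so that the current finite product map remains injective. At the moment I choose a new element for $I_j$, the constraint it must satisfy — injectivity of $\Phi_m$ on the current finite sets — forbids it from lying in a \emph{finite} union of translates of the (currently finite) set $S S^{-1}$ where $S$ is the current finite image of the tail; a finite union of finite sets is finite, and $G$ is infinite, so a legal choice exists. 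Since we never "finish" any $I_n$ but keep adding to it at every subsequent step, each $I_n$ ends up infinite, and each contains $e$ by fiat. Injectivity of $\Phi_n$ on the full infinite sets follows because any equality $\Phi_n(\bar g) = \Phi_n(\bar g')$ involves only finitely many coordinates from finitely many of the $I_j$'s, hence was already present (and excluded) at some finite stage $m$.

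The main obstacle, as flagged above, is precisely the point where a naive "avoid a countable set in a countable group" argument fails: one must organize the recursion so that at every step only \emph{finitely many} elements are forbidden. Getting the bookkeeping right — deciding exactly which finitely many new elements to add at step $m$, in which order, and verifying that the forbidden set at each sub-step is genuinely finite (this uses that $S_{n-1}$ is finite at every finite stage, which in turn uses that we only ever commit to finitely many coordinates) — is the crux. A secondary subtlety is verifying injectivity of the limiting map $\Phi_n$ from injectivity of all the finite-stage maps: this is routine once one observes that a tuple $(g_n, \dots, g_1) \in I_n \times \cdots \times I_1$ has all its coordinates appearing among the finitely many elements committed by some stage $m$, so any collision is detected at stage $m$, contradicting the construction. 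I expect the whole argument to be short once the recursion is set up correctly, with essentially all the content in the finiteness of the forbidden sets.
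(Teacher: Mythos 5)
Your overall bookkeeping---build the $I_n$'s by finite stages, adding one element at a time according to a schedule that revisits every index infinitely often, and deduce injectivity of the limiting $\Phi_n$ from injectivity at the finite stages---is exactly the structure of the paper's construction. But the crux of your argument fails. Your ``key algebraic observation'' ($\Phi_n(g_n,w)=\Phi_n(g_n',w')$ forces $(g_n')^{-1}g_n\in S_{n-1}S_{n-1}^{-1}$) is valid only when the new element is appended to the \emph{leftmost} factor. Once you interleave---and you must, since otherwise each $I_j$ with $j<m$ would remain finite---you are forced to insert new elements into factors sitting in the \emph{middle} of the product. If the new element $g$ goes into slot $j$, and $u,u'$ (resp.\ $t,t'$) denote products over the current finite sets to the left (resp.\ right) of slot $j$, then a collision $ugt=u'gt'$ is equivalent to $g(tt'^{-1})g^{-1}=u^{-1}u'$: a \emph{conjugation} constraint, not a translation constraint. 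For fixed $w=tt'^{-1}\neq e$ and $v=u^{-1}u'$, the set $\{g\in G : gwg^{-1}=v\}$ is either empty or a coset of the centralizer $C_G(w)$, which may be infinite (it is all of $G$ when $w=v$ is central). So the forbidden set at a sub-step is \emph{not} a finite union of translates of $SS^{-1}$, and your claim that a legal choice of $g$ exists because ``a finite union of finite sets is finite'' breaks down precisely at the decisive point.

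This is where the real content of the lemma lies, and it is why the paper's proof is not a pure counting argument. The paper proceeds by cases: if $G$ has an element of infinite order, the sets are written down explicitly; otherwise it first treats groups in which every non-central element has finite centralizer, where one shows that the set of $g$ with $gF_0F_0^{-1}g^{-1}\cap F_1^{-1}F_1\subseteq Z(G)$ is infinite ($F_0$, $F_1$ being the current finite right and left partial products), and chooses the new element $g_j$ in that set and outside the finite set $F_1^{-1}F_1I_{n_j}^{j-1}F_0F_0^{-1}$ (the latter takes care of the translation-type collisions, which are indeed only finitely many). A collision involving $g_j$ in the same slot on both sides then forces the conjugated element to be central; it can be slid out of the product and cancelled, and the induction hypothesis (injectivity at the previous stage, using $e\in I_{n_j}^{j-1}$) finishes the argument. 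Finally, a general countable group is reduced to this situation: if no infinite subgroup has the centralizer property, one constructs an infinite abelian subgroup, a contradiction. To repair your write-up you would have to supply an argument for why, at each sub-step, some choice of the new element avoids all the conjugation constraints; in a general countable torsion group this is exactly the nontrivial group-theoretic input that your finiteness bookkeeping does not provide.
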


\begin{proof}
If $G$ has an element $g$ of infinite order, then the proof is very easy: Choose mutually distinct natural numbers $\{k(n,j)\}$ for all integers $n \ge 1$ and $j \ge 2$, put $m(n,j) = 2^{k(n,j)}$, and put
$$I_n = \{e,g^{m(n,2)}, g^{m(n,3)}, g^{m(n,4)},\cdots\}.$$
Then $I_1,I_2, I_3, \dots$  have the desired properties.

Consider now the general case, and denote the center of the group $G$ by $Z(G)$. 
For $g\in G$, we denote the centralizer of $g$ in $G$ by $C_G(g)$. 
First we prove the lemma in the case 
where $C_G(g)$ is finite for every $g\in G\setminus Z(G)$ 
(this implies either $Z(G)$ is finite or $G=Z(G)$, i.e., $G$ is abelian). 
It is easy to see that
for any $h\in G$ and any finite subset $F\subset G$, 
the set of $g\in G$ such that 
$\{ghg^{-1}\}\cap F$ is not contained in $Z(G)$ is finite. 
Hence for any finite subsets $F_0,F_1\subset G$, 
the set of $g\in G$ such that 
$gF_0g^{-1}\cap F_1\subset Z(G)$ is infinite. 

We will define each of the sets $I_n$ 
as an increasing union $I_n=\bigcup_{j=0}^\infty I_n^j$, 
where each $I_n^j$ is finite, and $I_n^0=\{e\}$. 
Fix a sequence $\{n_j\}$ of natural numbers with $n_j\leq j$, 
in which every natural number appears infinitely often. 
At stage $j$ of the construction 
we will add a single element to $I_{n_j}^{j-1}$ to obtain $I_{n_j}^j$ 
(and take $I_n^j=I_n^{j-1}$ for $n\neq n_j$). 

The sets $I_n^j$ are defined inductively as follows. For $j=1$ we have $n_j=1$. Take any $g_1 \ne e$, set $I_1^1= I_{0}^1 \cup \{g_1\}$, and set $I_n^1 = I_n^0$ for $n \ne 1$. Let now $j \ge 2$ be given and assume that the sets $I_n^{j-1}$, $n\in\N$, are defined such
that the restriction of $\Phi_{j-1}$ 
to $I_{j-1}^{j-1}\times\dots\times I_2^{j-1}\times I_1^{j-1}$ is injective. 
For $n\neq n_j$, define $I_n^j=I_n^{j-1}$. 
Let 
$$F_0=I_{n_j-1}^jI_{n_j-2}^j\dots I_1^j, \qquad F_1=I_j^jI_{j-1}^j\dots I_{n_j+1}^j.$$
Since $F_0F_0^{-1}$ and $F_1^{-1}F_1$ are finite, 
the assumption on $G$ implies that 
the set $A$, of all group elements $g\in G$ such that 
$gF_0F_0^{-1}g^{-1}\cap F_1^{-1}F_1\subseteq Z(G)$, is infinite. 
Let $g_j$ be any element of $A\setminus F_1^{-1}F_1I_{n_j}^{j-1}F_0F_0^{-1}$ 
and define $I_{n_j}^j=I_{n_j}^{j-1}\cup\{g_j\}$. 
We claim that the restriction of $\Phi_j$ 
to $I_j^j\times\dots\times I_2^j\times I_1^j$ is injective. 
By induction (and since $I_j^{j-1}=\{e\}$) 
the restriction of $\Phi_j$ 
to $I_j^{j-1}\times\dots\times I_2^{j-1}\times I_1^{j-1}$ is injective. 
The choice of $g_j\notin F_1^{-1}F_1I_{n_j}^{j-1}F_0F_0^{-1}$ ensures that 
\[
F_1g_jF_0\cap F_1I_{n_j}^{j-1}F_0=\emptyset, 
\]
so it remains to show that the restriction of $\Phi_j$ to the set 
\[
I_j^j\times I_{j-1}^j\times\dots\times I_{n_j+1}^j\times\{g_j\}
\times I_{n_j-1}^j\times I_{n_j-2}^j\times\dots\times I_1^j
\]
is injective. 
Suppose that 
\[
h_j\dots h_{n_j+1}g_jh_{n_j-1}\dots h_1
=k_j\dots k_{n_j+1}g_jk_{n_j-1}\dots k_1, 
\]
where $h_i,k_i\in I_i^j$. 
Then 
\[
g_jk_{n_j-1}\dots k_1(h_{n_j-1}\dots h_1)^{-1}g_j^{-1}
=(k_j\dots k_{n_j+1})^{-1}h_j\dots h_{n_j+1},
\]
which is in the set $g_jF_0F_0^{-1}g_j^{-1}\cap F_1^{-1}F_1\subseteq Z(G)$. 
Thus 
\[
k_j\dots k_{n_j+1}k_{n_j-1}\dots k_1
=h_j\dots h_{n_j+1}h_{n_j-1}\dots h_1
\]
so by the induction hypothesis, $h_i=k_i$ for all $i$, 
which finishes the proof in this case. 

If there exists an infinite subgroup of $G$ 
which satisfies the hypothesis above, 
then we can apply the argument above to this subgroup. 
So, we may assume that $G$ does not have such a subgroup. 
Then there exists $g_1\in G\setminus Z(G)$ such that $C_G(g_1)$ is infinite. 
By considering the subgroup $C_G(g_1)$, 
we can find $g_2\in C_G(g_1)\setminus Z(C_G(g_1))$ 
such that $C_{C_G(g_1)}(g_2)=C_G(g_1)\cap C_G(g_2)$ is infinite. 
In the same way, 
there exists $g_3\in(C_G(g_1)\cap C_G(g_2))\setminus Z(C_G(g_1)\cap C_G(g_2))$ 
such that $C_G(g_1)\cap C_G(g_2)\cap C_G(g_3)$ is infinite. 
Repeating this argument, we obtain elements $g_1,g_2,g_3,\dots$, 
which generate an infinite abelian subgroup. 
This is a contradiction. 
\end{proof}

\begin{proposition} \label{prop:non-co-cpt}
Let $G$ be an infinite countable  group. Then there is a locally compact $\sigma$-compact Hausdorff space $X$ on which $G$ acts freely and with a dense orbit,  such that there is no non-zero invariant Radon measure on $X$. If $G$ is supramenable, then the action is necessarily non co-compact. 

The space $X$ can be chosen to be an open invariant subset of $\beta G$. 
\end{proposition}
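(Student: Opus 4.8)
The plan is to build the space $X$ as the open invariant subset $X_\cM$ of $\beta G$ associated to a carefully chosen non-compact left $G$-ideal $\cM$, and then to verify the obstruction of Lemma~\ref{lm:co-cpt-obstruction} against the existence of a non-zero invariant Radon measure. First I would invoke Lemma~\ref{lm:I_n} to obtain infinite subsets $I_1, I_2, I_3, \dots$ of $G$, each containing $e$, such that the iterated product maps $\Phi_n$ are injective. I would set $A_n = I_n I_{n-1} \cdots I_2 I_1$; the injectivity of $\Phi_n$ makes $A_n$ combinatorially ``thick'' in a controlled way, and the nesting $A_n \subseteq A_{n+1}$ (using $e \in I_{n+1}$) gives an increasing sequence of subsets. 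The key combinatorial point is that, since $I_{n+1}$ is infinite and $\Phi_{n+1}$ is injective, we can pick infinitely many distinct elements $g \in I_{n+1}$ for which the translates $g A_n$ are pairwise disjoint and all contained in $A_{n+1}$: indeed $g A_n = g I_n \cdots I_1 \subseteq I_{n+1} I_n \cdots I_1 = A_{n+1}$, and distinct choices of $g \in I_{n+1}$ give disjoint sets $gA_n$ precisely because $\Phi_{n+1}$ is injective.

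Next I would let $\cM$ be the left $G$-ideal generated by the family $\{A_n\}_{n \geq 1}$ — concretely, $\cM = \{B \subseteq G : B \propto A_n \text{ for some } n\}$ — and put $X = X_\cM = \bigcup_{A \in \cM} K_A \subseteq \beta G$, which by Proposition~\ref{prop:X_M} is an open $G$-invariant subset of $\beta G$ and hence a locally compact $\sigma$-compact Hausdorff space on which $G$ acts freely (freeness of the $G$-action on $\beta G$ restricts) with dense orbit $G.e$. The crucial verification is the two hypotheses of Lemma~\ref{lm:co-cpt-obstruction} for the compact sets $K_n := K_{A_n} \subseteq X$ (these lie in $X$ by construction). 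For hypothesis~(i): with $J_n \subseteq I_{n+1}$ the infinite set of elements produced above, the sets $g.K_n = K_{gA_n}$ for $g \in J_n$ are pairwise disjoint compact-open subsets of $K_{A_{n+1}} = K_{n+1}$, using that $K_{gA_n} \cap K_{hA_n} = K_{gA_n \cap hA_n}$ for compact-open subsets of $\beta G$ (so disjointness of $gA_n$ and $hA_n$ in $G$ transfers to disjointness of their closures). For hypothesis~(ii): any $x \in X = X_\cM$ lies in some $K_A$ with $A \in \cM$, so $A \propto A_n$ for some $n$, i.e. $A \subseteq F A_n$ for a finite $F \subseteq G$, whence $x \in K_A \subseteq K_{FA_n} = \bigcup_{g \in F} g.K_{A_n} \subseteq \bigcup_{g \in G} g.K_n$, giving $X = \bigcup_n \bigcup_{g \in G} g.K_n$. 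Lemma~\ref{lm:co-cpt-obstruction} then yields that $X$ carries no non-zero invariant Radon measure.

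Finally, for the last sentence of the statement I would argue that if $G$ is supramenable the action cannot be co-compact: a co-compact action of a supramenable group on a locally compact Hausdorff space admits a non-zero invariant Radon measure by the result of \cite{KelMonRor:supramenable} quoted at the start of the section, contradicting what we just established. (Alternatively, one can observe directly that $\cM$ is not a compact left $G$-ideal — if it were, $\cM = \cM_A$ for a single $A$ by Proposition~\ref{prop:M=M_A}, forcing $A_n \propto A$ for all $n$, i.e. all $A_n$ equivalent, which is incompatible with the disjoint-translates property of~(i) — so $X$ is not co-compact by Proposition~\ref{prop:co-cpt}; this shows the action is non co-compact for \emph{every} infinite countable $G$, not only supramenable ones, but the supramenable case is the one where non-co-compactness is forced rather than merely arranged.)

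The main obstacle I anticipate is purely bookkeeping: making sure the translates $gA_n$ ($g$ ranging over an infinite subset of $I_{n+1}$) really are pairwise disjoint and land inside $A_{n+1}$, which hinges on extracting the right consequence of the injectivity of $\Phi_{n+1}$ — namely that $g \mapsto g w$ is injective on $I_{n+1}$ for each $w \in A_n$, and more, that $gA_n \cap hA_n \neq \emptyset$ with $g,h \in I_{n+1}$ forces $g = h$. One should be slightly careful that this uses injectivity of $\Phi_{n+1}$ on the full product $I_{n+1} \times I_n \times \cdots \times I_1$, not merely on $I_{n+1} \times A_n$ viewed abstractly; writing elements of $A_n$ in the form $g_n \cdots g_1$ and comparing coordinates makes this transparent. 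Everything else — the left $G$-ideal axioms for $\cM$, the identifications $g.K_A = K_{gA}$ and $K_{A \cup B} = K_A \cup K_B$, and $\sigma$-compactness of $X_\cM$ — is routine given the results already established in Sections~\ref{sec:prelim} and~\ref{sec:type}.
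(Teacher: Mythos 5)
Your main argument is correct and is essentially the paper's own proof: the same sets $A_n=I_nI_{n-1}\cdots I_1$ from Lemma~\ref{lm:I_n}, the same ideal $\cM=\bigcup_n\cM_{A_n}$ and space $X=X_\cM$, the same verification of conditions (i) and (ii) of Lemma~\ref{lm:co-cpt-obstruction} with $K_n=K_{A_n}$, and the same appeal to \cite{KelMonRor:supramenable} for the supramenable case.

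One caution about your parenthetical ``alternative'': the claim that $\cM$ cannot be compact because ``all $A_n$ equivalent \ldots\ is incompatible with the disjoint-translates property of (i)'' is not justified, and in general it is false that a set $B$ with $B\approx A$ cannot contain infinitely many pairwise disjoint left translates of $A$. That configuration is exactly what infinite divisibility (Remark~\ref{rem:construct-inf-div}) and paradoxical-type behaviour look like; for instance, in a free group the set $A$ of reduced words beginning with $a$ satisfies $b^nA\subseteq a^{-1}A$ for all $n\geq 1$, with the $b^nA$ pairwise disjoint. So compactness of $\cM$, i.e.\ $A_{n+k}\propto A_n$ eventually, is not ruled out by (i) alone, and the paper accordingly does not claim that this particular $X$ is non co-compact for arbitrary $G$; non-co-compact examples for every infinite countable group are produced separately, by a different construction, in Proposition~\ref{prop:non-co-compact}. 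Since the statement only asserts non-co-compactness when $G$ is supramenable, and your primary argument for that (via the characterization of supramenable groups from \cite{KelMonRor:supramenable}) is the correct one, this flaw affects only the aside, not the proof of the proposition.
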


\begin{proof} Let $\{I_n\}_{n \ge 1}$ be as in Lemma \ref{lm:I_n}, and put 
$$A_n = I_n I_{n-1} \cdots I_1 = \Phi_n(I_n,I_{n-1}, \dots, I_1)$$
for each $n \ge 1$. Then $\{A_n\}_{n=1}^\infty$ is increasing, because $e \in I_n$ for all $n\ge 1$. Put 
$$\cM = \bigcup_{n=1}^\infty \cM_{A_n},$$
and put $X = X_\cM$, cf.\ \eqref{eq:X_M}. Then $X$ is an open invariant subset of $\beta G$. We show that conditions (i) and (ii) of Lemma \ref{lm:co-cpt-obstruction} are satisfied with $K_n = K_{A_n}$.

It follows from injectivity of $\Phi_{n+1}$ that the sequence of sets $\{gA_n\}_{g \in I_{n+1}}$ are pairwise disjoint. By construction, $gA_n \subseteq A_{n+1}$ for all $g \in I_{n+1}$. As $K_{gA} = g.K_{A}$ for all $g \in G$ and all $A \subseteq G$, and since $K_A \cap K_B = \emptyset$ whenever $A \cap B = \emptyset$, we conclude that $\{g.K_{A_n}\}_{g \in I_{n+1}}$ is a sequence of pairwise disjoint subsets of $K_{A_{n+1}}$. Hence  Lemma \ref{lm:co-cpt-obstruction} (i) holds. 

By \eqref{eq:X_M},
$$X=X_\cM =  \bigcup_{A \in \cM} K_A.$$
Let $A \in \cM$ be given. Then $A \in \cM_{A_n}$ for some $n \in \N$, which means that $A \propto A_n$, so $A \subseteq \bigcup_{g \in F} gA_n$ for some finite subset $F$ of $G$. This entails that 
$$K_A \subseteq \bigcup_{g \in F} g.K_{A_n}.$$
As $A \in \cM$ was arbitrary, we see that condition (ii) in  Lemma \ref{lm:co-cpt-obstruction}  holds. Hence there is no non-zero invariant Radon measure on $X$. Since condition (ii) in  Lemma \ref{lm:co-cpt-obstruction} holds, $X$ is $\sigma$-compact. Each non-empty open $G$-invariant subset of $\beta G$ contains the dense set $G = G.e$, so $X$ contains a dense orbit.

If $G$ is supramenable, then the action must be non co-compact, by
\cite[Theorem 1]{KelMonRor:supramenable}.
\end{proof}

\noindent For completeness we show that non-compact left $G$-ideals and locally compact $G$-spaces, that are not co-compact, exist for every infinite countable group:

\begin{proposition} \label{prop:non-co-compact}
In each  infinite countable group $G$ there exists a left $G$-ideal that is not compact; and there exists a locally compact $\sigma$-compact Hausdorff space $X$ on which $G$ acts freely with a dense orbit, but not co-compactly.
\end{proposition}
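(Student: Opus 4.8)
\textbf{Proof proposal for Proposition~\ref{prop:non-co-compact}.}

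The plan is to build directly on the construction already carried out in the proof of Proposition~\ref{prop:non-co-cpt}, because that proof in fact produces everything needed here. Let $\{I_n\}_{n\ge 1}$ be the sequence of infinite subsets of $G$ provided by Lemma~\ref{lm:I_n}, set $A_n = I_nI_{n-1}\cdots I_1$, and let $\cM = \bigcup_{n=1}^\infty \cM_{A_n}$, exactly as there. This $\cM$ is a left $G$-ideal, being an increasing union of left $G$-ideals (the sequence $\{A_n\}$ is increasing since $e\in I_n$ for all $n$, so $\cM_{A_n}\subseteq\cM_{A_{n+1}}$). First I would verify that $\cM$ is \emph{not} compact. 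By Proposition~\ref{prop:M=M_A} it suffices to show $\cM\ne\cM_A$ for every $A\subseteq G$, equivalently (by Lemma~\ref{lm:A->B} and the definition of $\cM_A$) that $\cM$ has no $\propto$-largest element. Suppose $A\in\cM$ were such an element; then $A\in\cM_{A_n}$ for some $n$, so $A\propto A_n$, and since $A_{n+1}\in\cM$ we would need $A_{n+1}\propto A$, hence $A_{n+1}\propto A_n$ by transitivity. So the whole point reduces to the combinatorial claim: $A_{n+1}\npropto A_n$, i.e.\ there is no finite $F\subseteq G$ with $A_{n+1}\subseteq FA_n$.

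The main obstacle is exactly this claim, and I expect it to follow cleanly from the injectivity built into Lemma~\ref{lm:I_n}. Here is the argument I would give. Since $\{gA_n\}_{g\in I_{n+1}}$ are pairwise disjoint (injectivity of $\Phi_{n+1}$, already observed in the proof of Proposition~\ref{prop:non-co-cpt}) and $I_{n+1}$ is infinite, $A_{n+1} = I_{n+1}A_n$ contains infinitely many disjoint translates of $A_n$. If $A_{n+1}\subseteq FA_n = \bigcup_{g\in F}gA_n$ with $F$ finite, pick $N>|F|$ distinct elements $g_1,\dots,g_N\in I_{n+1}$ and, for each $i$, an element $a_i\in A_n$; then the $N$ points $g_ia_i$ are distinct (they lie in the disjoint sets $g_iA_n$), so each lies in some $g'A_n$ with $g'\in F$, and by pigeonhole two of them, say $g_ia_i$ and $g_ja_j$ with $i\ne j$, lie in the same $g'A_n$. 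Thus $g_ia_i = g'b_i$ and $g_ja_j = g'b_j$ for some $b_i,b_j\in A_n$, giving $g_ia_i b_i^{-1} = g_j a_j b_j^{-1}$. Writing $a_i = \Phi_n(\mathbf u)$, $b_i=\Phi_n(\mathbf v)$, etc., and using that $e\in I_k$ for all $k$ so that $A_nA_n^{-1}$ "fits inside" longer products is the subtle bookkeeping step; the cleanest route is instead to note $g_iA_n\cap g_jA_n=\emptyset$ forces $g_i^{-1}g_j\notin A_nA_n^{-1}$, contradicting $g_i^{-1}g_j = (a_ib_i^{-1})(b_j a_j^{-1})^{-1}\cdot$(rearranged) — so I would actually phrase it as: the relation $g_ia_i=g_ja_j$ is impossible in $I_{n+1}\times I_n\times\cdots\times I_1 \hookrightarrow G$ via $\Phi_{n+1}$ when $g_i\ne g_j$, hence no two of the $g_ia_i$ coincide \emph{and} no single $g'A_n$ can contain two of them unless $g_i=g_j$. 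I will write this out carefully, choosing the formulation that avoids inverses by working entirely inside the image of $\Phi_{n+1}$.

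Granting $A_{n+1}\npropto A_n$, the non-compactness of $\cM$ follows, proving the first assertion. For the second assertion I would take $X = X_\cM$ as in \eqref{eq:X_M}. By Proposition~\ref{prop:X_M}(i), $\cM(G,X_\cM,e) = \cM$, and by Proposition~\ref{prop:co-cpt} the action of $G$ on $X_\cM$ is co-compact if and only if $\cM(G,X_\cM,e)$ is compact and $X_\cM$ is $\sigma$-compact; since $\cM$ is not compact, the action is not co-compact. That $X_\cM$ is $\sigma$-compact was shown in the proof of Proposition~\ref{prop:non-co-cpt} (condition (ii) of Lemma~\ref{lm:co-cpt-obstruction} holds for this $\cM$, exhibiting $X_\cM$ as a countable union of translates of the compact-open sets $K_{A_n}$, hence a countable union of compact sets). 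The action of $G$ on $\beta G$ is free, so its restriction to the invariant open subset $X_\cM$ is free, and $X_\cM\supseteq G.e$ is a dense orbit. Thus $X = X_\cM$ has all the required properties, completing the proof.
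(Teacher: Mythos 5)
Your reduction of the proposition to the single combinatorial claim $A_{n+1}\npropto A_n$, and everything built on it (non-compactness of $\cM$ via Proposition~\ref{prop:M=M_A}, $\sigma$-compactness of $X_\cM$, freeness and the dense orbit, and non-co-compactness via Propositions~\ref{prop:X_M} and~\ref{prop:co-cpt}), is correct and matches the paper's strategy. The gap is the claim itself. Your pigeonhole step does not produce a contradiction: if $g_ia_i$ and $g_ja_j$ land in the same translate $g'A_n$ with $g'\in F$, nothing is violated --- disjointness of the translates $gA_n$, $g\in I_{n+1}$, does not prevent a single translate $g'A_n$ (with $g'$ outside $I_{n+1}$) from meeting many of them, and injectivity of $\Phi_{n+1}$ only excludes relations $g_ia_i=g_ja_j$, which you never need to contradict. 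Worse, the implication you want cannot be proved from the properties stated in Lemma~\ref{lm:I_n} at all. Take $G=F_2=\langle a,b\rangle$, let $W$ be the set of reduced words whose first letter is $a$, and put $I_1=\{e\}\cup W$ and $I_2=\{e\}\cup\{bab^ia : i\ge 1\}$. Both sets are infinite and contain $e$, the translates $xI_1$, $x\in I_2$, are pairwise disjoint (so $\Phi_2$ is injective on $I_2\times I_1$), and yet $I_2I_1\subseteq\{e,b\}I_1$, i.e.\ $A_2\propto A_1$. This is consistent with the paper's own phrasing of Proposition~\ref{prop:non-co-cpt}: only for supramenable $G$ does it conclude that the action constructed there is non-co-compact; for free groups, say, that construction may well give a co-compact (hence compact-ideal) $\cM$, which is precisely why the paper does not reuse it here.

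The paper closes this gap with a different, metric-based construction of the sets $A_n$: fix a proper right-invariant metric $d$ on $G$, choose $g_n$ with $d(g_n,\{g_1,\dots,g_{n-1}\})\ge n$, and let $\{A_n\}$ be an increasing sequence of subsets of $\{g_1,g_2,\dots\}$ with $A_{n+1}\setminus A_n$ infinite; then $A_{n+1}\npropto A_n$ is immediate from Lemma~\ref{lm:approx-geom}, and the remainder of the argument runs exactly as in your proposal. To repair your write-up you must either prove $A_{n+1}\npropto A_n$ for the particular sets produced inside the proof of Lemma~\ref{lm:I_n} (using more than the lemma's statement --- and even that is not evident), or simply replace your $A_n$'s by sparse sets of the above kind, as the paper does.
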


\begin{proof} We construct an increasing sequence of subsets $\{A_n\}_{n=1}^\infty$ of $G$ such that $A_{n+1} \npropto A_n$ for all $n$. It will then follow that 
$$\cM = \bigcup_{n=1}^\infty \cM_{A_n}$$
is non-compact. Indeed, if it were compact, then $\cM = \cM_{A_n}$ for some $n$, which would entail that $A_{n+1} \propto A_n$ contrary to the construction. The $G$-space $X=X_\cM$ will therefore be non co-compact by Proposition~\ref{prop:X_M} and Proposition~\ref{prop:co-cpt}, and $G$ acts freely and with a dense orbit on $X$. 

To construct the sets $A_1, A_2, A_3,\dots$  take a proper right-invariant metric $d$ on $G$. Choose inductively a sequence $\{g_n\}_{n=1}^\infty$ of elements in $G$ such that $d(g_{n}, \{g_1, \dots, g_{n-1}\}) \ge n$ for $n \ge 2$. Let each $A_n$ be a subset of $\{g_1,g_2,g_3, \dots\}$ such that $\{A_n\}$ is increasing and  $A_{n+1} \setminus A_n$ is infinite for all $n$. It then follows from Lemma \ref{lm:approx-geom} that $A_{n+1} \npropto A_n$ for all $n$. 
\end{proof}

%%%%%%%%%%%%%%%%%%%%%%%%%%%%%%%%%%%%%%%%%%%%%%%%%%%%%%%%%%%%%%%%%%%%%

\section{A universal property of the $G$-space $X_A$} \label{sec:X_A}

\noindent The $G$-space $\beta G$ of a (discrete) group $G$ is universal among all compact $G$-spaces with a dense orbit in the following sense: for each compact $G$-space $X$ with a dense orbit $G.x_0$ there is a surjective continuous $G$-map $\varphi \colon \beta G \to X$ (which is unique if we also require that $\varphi(e) = x_0$). 

We show in this section that the pointed locally compact $G$-space $(X_A,e)$ (associated with a non-empty subset $A \subseteq G$)  is universal among pointed locally compact co-compact $G$-spaces of type $[A]$, see also Proposition \ref{prop:sametype}. The  morphisms between locally compact $G$-spaces  are \emph{proper} continuous $G$-maps.

For each $f \in C(\beta G)$ denote by $\hat{f} \in \ell^\infty(G)$ the restriction of $f$ to $G$.

\begin{lemma} \label{lm:C_A}
Let $A$ be a non-empty subset of a group $G$ and let $f \in C(\beta G)$. It follows that $f \in C_0(X_A)$ if and only if for each $\ep >0$ there exists $B \subseteq G$ with $B \propto A$ such that 
$$g \in G \setminus B \implies |\hat{f}(g)| < \ep.$$
\end{lemma}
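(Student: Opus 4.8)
The plan is to characterize membership in $C_0(X_A)$ via the topology of $\beta G$. Recall that $X_A = \bigcup_{g \in G} K_{gA}$ is an open subset of $\beta G$, and for $f \in C(\beta G)$, we have $f \in C_0(X_A)$ if and only if for every $\ep > 0$, the set $\{x \in \beta G : |f(x)| \ge \ep\}$ is a compact subset of $X_A$. Since this set is always closed in $\beta G$, it is automatically compact; the content is that it is \emph{contained} in $X_A$.

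First I would fix $\ep > 0$ and set $C_\ep = \{x \in \beta G : |f(x)| \ge \ep\}$, a compact-open subset of $\beta G$ (it is open because $\beta G$ is Stonean and $|f|$ is continuous — actually one should be slightly careful and instead work with $\{|f| > \ep/2\} \supseteq \{|f| \ge \ep\}$, or simply note $C_\ep$ is closed hence compact, which suffices). The key point is that compact subsets of $\beta G$ that are contained in the open set $X_A = \bigcup_{g} K_{gA}$ are covered by finitely many of the $K_{gA}$, since the $K_{gA}$ are open. Thus $f \in C_0(X_A)$ iff for each $\ep > 0$ there is a finite $F \subseteq G$ with $C_\ep \subseteq \bigcup_{g \in F} K_{gA} = K_{FA}$. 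Now I would translate this back to $G$: setting $B = FA$ (so $B \propto A$), the containment $C_\ep \subseteq K_B$ is equivalent, by density of $G$ in $\beta G$ and the fact that $K_B \cap G = B$ while $C_\ep \cap G = \{g \in G : |\hat f(g)| \ge \ep\}$, to the implication $g \in G \setminus B \implies |\hat f(g)| < \ep$ — using that $C_\ep$ is the closure of $C_\ep \cap G$ (again because $\beta G$ is Stonean, a compact-open set equals the closure of its intersection with $G$), so $C_\ep \subseteq K_B$ iff $C_\ep \cap G \subseteq B$.

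For the converse direction, given such a $B$ with $B \propto A$, write $B \subseteq FA$ for finite $F$, and observe that $\{g \in G : |\hat f(g)| \ge \ep\} \subseteq B \subseteq FA$; taking closures in $\beta G$ gives $\{x : |f(x)| \ge \ep\} \subseteq K_{FA} \subseteq X_A$, whence $f$ vanishes at infinity on $X_A$. Conversely if $f \in C_0(X_A)$, the set $C_\ep$ is compact and contained in $X_A$, so covered by finitely many $K_{gA}$, giving $B = FA \propto A$ with $C_\ep \cap G \subseteq B$, i.e. the stated implication.

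The main obstacle — though it is a mild one — is the careful handling of the Stonean-space facts: that every compact-open subset of $\beta G$ is the closure of its trace on $G$, and conversely that the closure in $\beta G$ of $B \subseteq G$ meets $G$ in exactly $B$ (so that $C_\ep \subseteq K_B \iff C_\ep \cap G \subseteq B$). One should also be slightly cautious that $\{|f| \ge \ep\}$ need not be open, but since it is closed in the compact space $\beta G$ it is compact, which is all that is needed for the cover argument; alternatively one passes to $\{|f| > \ep\}$ freely since the statement quantifies over all $\ep > 0$. None of these points is deep, and I expect the whole proof to be a few lines once the dictionary between subsets of $G$ and compact-open subsets of $\beta G$ (already recorded in \cite[Lemma 2.4, Lemma 2.5]{KelMonRor:supramenable}) is invoked.
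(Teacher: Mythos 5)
Your argument is in substance the paper's: the ``only if'' direction (cover the compact set where $|f|$ is large by finitely many of the open sets $K_{gA}$, set $B=FA$, use $K_B\cap G=B$) is exactly what the paper does, and the ``if'' direction also works once your flagged alternative is made the actual argument. But be aware that the primary justification you wrote for the converse is false as stated: the set $C_\ep=\{x\in\beta G: |f(x)|\ge\ep\}$ need not be open (Stonean-ness gives that closures of \emph{open} sets are open, not that closed level sets are open), and it need not equal $\overline{C_\ep\cap G}$ --- e.g.\ if $|f(g)|<\ep$ for all $g\in G$ but $\sup_{g\in G}|f(g)|=\ep$, then $C_\ep\cap G=\emptyset$ while $C_\ep\neq\emptyset$, so ``taking closures'' of $\{g\in G:|\hat f(g)|\ge\ep\}\subseteq FA$ does \emph{not} give $C_\ep\subseteq K_{FA}$. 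The repair is precisely the one you mention in passing and it is not optional: apply the hypothesis with $\ep/2$, pass to the open set $U=\{|f|>\ep/2\}$, use density of $G$ (not Stonean-ness) to get $\overline U=\overline{U\cap G}\subseteq K_B\subseteq X_A$, and note $C_\ep\subseteq U$; the factor $2$ is harmless since $\ep$ is arbitrary. The paper sidesteps this point differently, by bounding $|f|$ on the open set $X_A\setminus K_B$ via density of $G\cap(X_A\setminus K_B)$ there, concluding $|f|\le\ep$ off the compact set $K_B$; either route is fine, but your write-up should promote the $\{|f|>\ep/2\}$ version from a parenthetical to the proof itself.
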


\begin{proof} Suppose first that $f \in C_0(X_A)$ and let $\ep > 0$ be given. Then there is a compact subset $L$ of $X_A$ such that $|f(x)| < \ep$ for all $x \in X_A \setminus L$. Take a finite subset $F$ of $G$ such that $L \subseteq \bigcup_{g \in F} g.K_A$ and put $B = FA \propto A$. If $g \in G \setminus B$, then $g \notin L$, so $|\hat{f}(g)| = |f(g)|   <\ep$. 

To prove the "if"-part, let $f \in C(\beta G)$ and suppose that $f$ has the stipulated property. Let $\ep >0$, and let $B \subseteq G$ with $B \propto A$ be such that $|\hat{f}(g)| < \ep$ for all $g \in G \setminus B$. Then $K_B$ is a compact-open subset of $X_A$ and if $g \in G \cap (X_A\setminus K_B)$, then $g \notin B$, so $|f(g)| = |\hat{f}(g)|  < \ep$. As $G \cap (X_A \setminus K_B)$ is dense in $X_A \setminus K_B$, we conclude that $|f(x)| \le \ep$ for all $x \in X_A \setminus K_B$. This proves that $f \in C_0(X_A)$. 
\end{proof}

\noindent The theorem below says that the pointed locally compact $G$-space $(X_A,e)$  is universal for the class of pointed locally compact co-compact $G$-spaces of type $[A]$.

\begin{theorem} \label{thm:universal}
Let $A$ be a non-empty subset of a countable group $G$. Then for each pointed locally compact  $G$-space $(X,x_0)$, there is a (necessarily unique and surjective) proper continuous $G$-map $\varphi \colon X_A \to X$ with $\varphi(e) = x_0$ if and only if $X$ is co-compact and $\Type(G,X,x_0) = [A]$.
\end{theorem}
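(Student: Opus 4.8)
The plan is to prove the two implications separately, with the ``only if'' direction following immediately from earlier results and the ``if'' direction requiring the actual construction.

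For the ``only if'' direction, suppose $\varphi \colon X_A \to X$ is a proper continuous $G$-map with $\varphi(e) = x_0$. Since $(X_A,e)$ is co-compact (as noted after \eqref{eq:X_A}), Proposition~\ref{prop:sametype} applied to $\varphi$ gives that $X$ is co-compact and $\Type(G,X,x_0) = \Type(G,X_A,e)$, and by Proposition~\ref{prop:type(X_A)} the latter is $[A]$. This settles one direction and also records that any such $\varphi$ is surjective.

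For the ``if'' direction, assume $X$ is co-compact with $\Type(G,X,x_0) = [A]$. I would construct $\varphi$ dually: build a $G$-equivariant $^*$-homomorphism $\Phi \colon C_0(X) \to C_0(X_A)$ and take $\varphi$ to be the induced proper continuous map on spectra. The natural candidate is to use the universal property of $\beta G$ among compact $G$-spaces with a dense orbit: the unipointed compact $G$-space $(\beta G, e)$ maps onto any compactification of $(X, x_0)$, but since $X$ is only locally compact one works instead with the embedding $G = G.e \hookrightarrow X$ and the resulting restriction map. Concretely, for $f \in C_0(X)$ let $\hat f \in \ell^\infty(G)$ be $g \mapsto f(g.x_0)$; this extends uniquely to $\Phi(f) \in C(\beta G)$ since $\ell^\infty(G) = C(\beta G)$. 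The key point to verify is that $\Phi(f)$ actually lies in $C_0(X_A)$, for which I would invoke Lemma~\ref{lm:C_A}: given $\ep > 0$, pick a compact $L \subseteq X$ with $|f| < \ep$ off $L$; then $B := O_X(L, x_0)$ satisfies $B \propto A$ by Proposition~\ref{prop:regular}(i) (since $\Type(G,X,x_0) = [A]$), and $g \notin B$ forces $g.x_0 \notin L$, hence $|\hat f(g)| < \ep$. So $\Phi$ maps $C_0(X)$ into $C_0(X_A)$; it is clearly an injective $^*$-homomorphism (injective because $G.x_0$ is dense in $X$), and $G$-equivariance is immediate from right-translation/left-action bookkeeping on $\ell^\infty(G)$. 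The induced map $\varphi \colon X_A \to X$ on spectra is then continuous and proper, $G$-equivariant, and $\varphi(e) = x_0$ by construction; surjectivity follows since $\varphi(G.e) = G.x_0$ is dense and proper maps between locally compact Hausdorff spaces have closed image (as noted in the excerpt). Uniqueness of $\varphi$ holds because any two proper $G$-maps agreeing on the dense orbit $G.e$ agree everywhere.

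The main obstacle I anticipate is the verification that $\Phi$ lands in $C_0(X_A)$ rather than merely in $C(\beta G)$ --- this is exactly where co-compactness and the type hypothesis are used, and it is the content of Lemma~\ref{lm:C_A} combined with Proposition~\ref{prop:regular}(i). A secondary technical point is checking that $\varphi$ is genuinely proper (equivalently, that $\Phi$ is nondegenerate in the appropriate sense, i.e.\ $\Phi(C_0(X))$ is not contained in any $C_0(U)$ for a proper open $G$-invariant $U \subsetneq X_A$); this should follow from the fact that $A$ represents the type of $X$, so that $\overline{A.x_0}$ is $G$-regular in $X$ (Proposition~\ref{prop:regular}(iii)) and its preimage structure forces $\varphi^{-1}$ of compact sets to be compact. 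Everything else --- equivariance, injectivity, the formula $\varphi(e) = x_0$, surjectivity, uniqueness --- is routine bookkeeping with the correspondence between $C_0$-algebras and locally compact spaces.
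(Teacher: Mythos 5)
Your construction coincides with the paper's: the ``only if'' direction via Propositions~\ref{prop:sametype} and \ref{prop:type(X_A)}, and, for the ``if'' direction, the dual homomorphism $\pi(f)(g)=f(g.x_0)$ together with the verification, via Lemma~\ref{lm:C_A} and Proposition~\ref{prop:regular}~(i)(b), that $\pi$ maps $C_0(X)$ into $C_0(X_A)$. That part of your argument is correct and complete, and injectivity, equivariance, surjectivity and uniqueness are indeed routine as you say.

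The one genuine gap is the step you dismiss as ``secondary'': showing that the induced map on spectra is defined on \emph{all} of $X_A$ and is proper, i.e.\ that $\pi$ is nondegenerate (its image is full in $C_0(X_A)$). Your justification is not an argument: Proposition~\ref{prop:regular}~(iii) only guarantees that $\overline{A'.x_0}$ is $G$-regular for \emph{some} representative $A'$ of the type, not for the given $A$, and the phrase about $\varphi^{-1}$ of compact sets being compact presupposes the map $\varphi$ whose existence is precisely what is at stake. What is needed, and what the paper proves at this point, is that for every $y\in X_A$ there exists $f\in C_0(X)$ with $\pi(f)(y)\neq 0$. Choose $g\in G$ with $y\in K_{gA}$; by part (a) of Proposition~\ref{prop:regular}~(i) the set $K=\overline{A.x_0}$ is compact (note that this is the half of the hypothesis $\Type(G,X,x_0)=[A]$ that is \emph{not} used in the $C_0(X_A)$-landing step), so one may pick $f\in C_0(X)$ with $f\equiv 1$ on $g.K$; then $\pi(f)(h)=f(h.x_0)=1$ for every $h\in gA$, hence $\pi(f)\equiv 1$ on the closure $K_{gA}$ of $gA$ in $X_A$, and in particular $\pi(f)(y)=1$. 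Once fullness is established, the standard correspondence for nondegenerate homomorphisms between commutative $C^*$-algebras yields the proper continuous $G$-map $\varphi\colon X_A\to X$ with $\pi(f)=f\circ\varphi$ and $\varphi(e)=x_0$, and the remaining bookkeeping in your proposal goes through.
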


\begin{proof} The "only if" part follows from Propositions \ref{prop:sametype} and \ref{prop:type(X_A)}, and the fact that $X$ is co-compact if it is the image of the co-compact space $X_A$ under a continuous $G$-map.

Suppose that $(X,x_0)$ is a pointed locally compact co-compact $G$-space of type $[A]$. Define a homomorphism $\hat{\pi} \colon C_0(X) \to \ell^\infty(G)$ by
$$\hat{\pi}(f)(g) = f(g.x_0), \qquad f \in C_0(X), \quad g \in G.$$
Composint $\pi$ with the canonical isomorphism $\ell^\infty(G) \to C(\beta G)$ we obtain a homomorphism $\pi \colon C_0(X) \to C(\beta G)$ which satisfies $\widehat{\pi(f)}=\hat{\pi}(f)$ for all $f \in C_0(X)$. The homomorphisms $\pi$ and $\hat{\pi}$ are injective because $G.x_0$ is dense in $X$.

We claim that $\pi(f) \in C_0(X_A)$ for all $f \in C_0(X)$. 
Let $\ep > 0$, and let $L$ be a compact subset of $X$ such that $|f(x)| < \ep$ for all $x \in X \setminus L$. Put $B = O_X(L,x_0)$. Then $B \propto A$ by Proposition \ref{prop:regular}. If $g \in G \setminus B$, then $g.x_0 \notin L$, so $|\hat{\pi}(f)(g)| = |f(g.x_0)| < \ep$. It therefore follows from Lemma~\ref{lm:C_A} that $\pi(f) \in C_0(X_A)$.

We have now obtained a homomorphism $\pi \colon C_0(X) \to C_0(X_A)$ satisfying $\pi(f)(g) = f(g.x_0)$. Since $G$ is dense in $X_A$ we see that $\pi$ is $G$-equivariant. We show that the image of $\pi$ is full in $C_0(X_A)$. To  this end we must show that for each $y \in X_A$ there exists $f \in C_0(X)$ such that $\pi(f)(y) \ne 0$. Take $g \in G$ such that $y \in g.K_A = K_{gA}$. Put $K = \overline{A.x_0}$, which is a compact subset of $X$ by Proposition \ref{prop:regular}. 
Let $f \in C_0(X)$ be such that the restriction of $f$ to $g.K$ is $1$. For each $h \in gA$ we have $h.x_0 \in g.K$, so $\pi(f)(h) = f(h.x_0) = 1$. Hence $\pi(f)(x) = 1$ for all $x$ in the closure of $gA$ in $X_A$, and this closure is precisely $K_{gA}$. This entails that $\pi(f)(y) = 1$.

In conclusion we obtain a continuous proper $G$-equivariant epimorphism $\varphi \colon X_A \to X$ such that $\pi(f) = f \circ \varphi$. In particular, $\varphi(e) = x_0$. 
\end{proof}

\noindent 
It was shown in  \cite[Proposition 2.6]{KelMonRor:supramenable} that there is a non-zero invariant Radon measure on the $G$-space $X_A$ if and only if $A$ is non-paradoxical. Using the theorem above we can extend this result as follows:

\begin{corollary} \label{cor:Radon}
Let $G$ be a group and let $A$ be a non-empty subset of $G$. Then $A$ is non-paradoxical if and only if every pointed locally compact co-compact $G$-space $(X,x_0)$ of type $[A]$ admits a non-zero invariant Radon measure.
\end{corollary}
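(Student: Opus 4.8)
The plan is to deduce Corollary~\ref{cor:Radon} from Theorem~\ref{thm:universal} together with the cited result \cite[Proposition 2.6]{KelMonRor:supramenable}, which says that $X_A$ itself carries a non-zero invariant Radon measure if and only if $A$ is non-paradoxical. So the content of the corollary is that this property of $X_A$ passes to (and is inherited from) every pointed locally compact co-compact $G$-space of type $[A]$, and the bridge is the universal $G$-map furnished by the theorem.

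For the "only if" direction, assume $A$ is non-paradoxical and let $(X,x_0)$ be any pointed locally compact co-compact $G$-space of type $[A]$. By Theorem~\ref{thm:universal} there is a proper continuous surjective $G$-map $\varphi \colon X_A \to X$ with $\varphi(e) = x_0$. By \cite[Proposition 2.6]{KelMonRor:supramenable} there is a non-zero invariant Radon measure $\mu$ on $X_A$. The natural candidate on $X$ is the push-forward $\varphi_*\mu$, defined by $(\varphi_*\mu)(E) = \mu(\varphi^{-1}(E))$. This is a Borel measure, it is invariant because $\varphi$ is $G$-equivariant and $\mu$ is invariant, and it is finite on compact sets precisely because $\varphi$ is proper (so $\varphi^{-1}(L)$ is compact whenever $L$ is compact, hence has finite $\mu$-measure); a finite-on-compacts Borel measure on a locally compact Hausdorff space gives a Radon measure after the usual regularization, or one can simply work with the corresponding positive functional $f \mapsto \int f\circ\varphi \, d\mu$ on $C_0(X)$ via the Riesz representation theorem. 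Non-triviality of $\varphi_*\mu$ follows since $\varphi$ is surjective: pick a compact-open, or just compact-with-nonempty-interior, set in $X_A$ of positive $\mu$-measure whose image meets a prescribed open set, or more cleanly note that $\mu$ is non-zero so some $f\in C_0(X_A)^+$ has $\int f\,d\mu>0$, and by fullness of $\pi$ (established in the proof of the theorem) one can arrange $f = h\circ\varphi$ with $h\in C_0(X)$.

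The "if" direction is immediate: $(X_A,e)$ is itself a pointed locally compact co-compact $G$-space of type $[A]$ by Proposition~\ref{prop:type(X_A)}, so the hypothesis applied to $X = X_A$ gives a non-zero invariant Radon measure on $X_A$, whence $A$ is non-paradoxical by \cite[Proposition 2.6]{KelMonRor:supramenable}.

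The only genuinely delicate point is verifying that $\varphi_*\mu$ is a \emph{Radon} measure rather than merely a Borel measure — specifically local finiteness, which is where properness of $\varphi$ is used, and the inner/outer regularity, which is automatic for a locally finite Borel measure on a $\sigma$-compact locally compact Hausdorff space. I would phrase the argument at the level of positive linear functionals on $C_0(X)$ to sidestep the regularization bookkeeping: $\varphi$ proper means $f\mapsto f\circ\varphi$ maps $C_0(X)$ into $C_0(X_A)$ (indeed into $C_c$ when $f\in C_c$), the functional $f\mapsto\int_{X_A} f\circ\varphi\,d\mu$ is then a non-zero positive $G$-invariant functional on $C_0(X)$, and Riesz representation converts it back to the desired invariant Radon measure on $X$.
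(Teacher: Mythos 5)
Your proposal is correct and follows essentially the same route as the paper: the ``if'' direction applies the hypothesis to $(X_A,e)$ (of type $[A]$ by Proposition~\ref{prop:type(X_A)}) together with \cite[Proposition 2.6]{KelMonRor:supramenable}, and the ``only if'' direction pushes the invariant Radon measure on $X_A$ forward along the proper $G$-map from Theorem~\ref{thm:universal}, exactly as in the paper's proof, with your functional-analytic verification that the image measure is Radon merely filling in details the paper leaves implicit.
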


\begin{proof} The "if" part follows from  \cite[Proposition 2.6]{KelMonRor:supramenable} with $(X,x_0) = (X_A,e)$. Suppose that $A$ is non-paradoxical. 
Use Theorem~\ref{thm:universal}  to find a proper continuous $G$-map $\varphi \colon X_A \to X$ (such that $\varphi(e) = x_0$). Let $\mu$ be a non-zero invariant Radon measure on $X_A$, cf.\ \cite[Proposition 2.6]{KelMonRor:supramenable}, and let $\tilde{\mu} = \mu \circ \varphi^{-1}$ be the image measure on $X$. Then $\tilde{\mu}$ is a non-zero invariant Radon measure on $X$.
\end{proof}

\noindent The existence of a non-zero invariant Radon measure is of course independent on the choice of base point, and, indeed, a "base point free" version of this results holds, see Proposition~\ref{prop:paradoxical}. 

One can alternatively obtain Corollary~\ref{cor:Radon} from 
\cite[Lemma~2.2]{KelMonRor:supramenable} in combination with Proposition~\ref{prop:type-condition} without making reference to Theorem~\ref{thm:universal}.

%%%%%%%%%%%%%%%%%%%%%%%%%%%%%%%%%%%%%%%%%%%%%%%%%%%%%%%%%%%%%%%%%

\section{Universal locally compact minimal $G$-spaces} \label{sec:min-universal}

\noindent 
In the previous section we showed that the  co-compact open $G$-invariant subspaces of $\beta G$, which we know are of the form $X_A$ for some $A \subseteq G$, are universal among a certain class of locally compact $G$-spaces. In this section we shall show that the minimal closed $G$-invariant subsets of $X_A$ are universal among certain minimal locally compact $G$-spaces (provided $A$ is of \emph{minimal type}), and that any two  minimal closed $G$-invariant subsets of $X_A$ are isomorphic as $G$-spaces. 

Ellis proved in \cite{Ellis:minimal} that the minimal closed invariant subsets of the $G$-space $\beta G$  are universal and pairwise isomorphic.  Y.\ Gutman and H.\ Li gave in \cite{GutLi:universal} a short and elegant new proof of this theorem. We shall mimic their proof in our proof of Theorem~\ref{thm:unique-minimal} below.

\begin{definition} \label{def:minimal-type}
For a group $G$, let $P_\approx^\mathrm{min}(G)$ be the set of equivalence classes $[A] \in P_\approx(G)$ that arise as the type of a minimal pointed locally compact $G$-space. A subset $A \subseteq G$ for which $[A] \in P_\approx^{\mathrm{min}}(G)$  is said to be of minimal type. 
\end{definition}

\noindent In other words, a subset $A$ of $G$ is of minimal type if and only if there is a pointed locally compact minimal $G$-space $(X,x_0)$ such that $\Type(G,X,x_0) = [A]$. Recall that minimal locally compact $G$-spaces automatically are co-compact.

\begin{example} \label{ex:subgroup-2} Each subgroup $H$ of a group $G$ is of minimal type.  This can be seen as follows: If $H$ is finite, then $[H]$ is the type of the trivial (minimal) $G$-space $(G,e)$, cf.\ Proposition \ref{prop:discrete}. 

Suppose next that $H$ is an infinite subgroup of $G$. Then $H$ admits a free minimal action on a compact Hausdorff space $X$, see for example \cite{Ellis:minimal}. This induces a free minimal action of $G$ on $Y =X \times (G/H)$, where $G/H$ is the left coset, see \cite[Lemma 7.1]{KelMonRor:supramenable}. The set $K = X \times \{e\}$ is a $G$-regular compact subset of $Y$.  Choose $x_0 \in X$, and put $y_0 = (x_0,e)$. Then $g.y_0 \in K$ if and only if $g \in H$, so $O_Y(K,y_0) = H$. It follows from Proposition \ref{prop:type-condition} that $\Type(G,Y,y_0) = [H]$, so $[H]$ is of minimal type. 

In particular, $[G]$ and $[\{e\}]$ are of minimal types representing compact minimal $G$-spaces, which always exist, respectively, the trivial discrete $G$-space $G$. 
\end{example}

\noindent  A priori, it is not clear if every infinite group $G$ contains minimal types other than $[G]$ and $[\{e\}]$. In other words, does every infinite group $G$ act minimally on some locally compact non-compact and non-discrete space. We shall answer this question affirmatively in Section \ref{sec:non-discrete-compact}.  

In Section \ref{sec:basepoint} we shall give an intrinsic description of the subsets of a group that are of minimal type. Far from all subsets of a group are of minimal type. If $A \subseteq G$ is of minimal type, then we shall show that the minimal closed $G$-invariant subsets of $X_A$ have the following nice universal property:

\begin{definition} \label{def:min-universal} Let $G$ be a group and let $A$ be a non-empty subset of $G$ of minimal type. 

A locally compact minimal $G$-space $Z$ is said to be a \emph{universal minimal $G$-space of type $A$} if for each pointed minimal locally compact $G$-space $(X,x_0)$ with $\Type(G,X,x_0) = [A]$ there is a surjective proper continuous $G$-map $\varphi \colon Z \to X$.
\end{definition}

\begin{lemma} \label{lm:type-of-universal}
Let $G$ be a group and let $A$ be a non-empty subset of $G$ of minimal type. Suppose that $Z$ is a  universal minimal $G$-space of type $A$. Then $\Type(G,Z,z_0) = [A]$ for some $z_0 \in Z$. 
\end{lemma}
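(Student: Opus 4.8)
The plan is to show that the type of $Z$, with respect to \emph{some} base point, is exactly $[A]$. Since $Z$ is a minimal locally compact $G$-space it is co-compact, so by Proposition~\ref{prop:co-cpt} the invariant $\cM(G,Z,z_0)$ is a compact left $G$-ideal for every $z_0$ in any dense orbit of $Z$; in particular $\Type(G,Z,z_0) = [B]$ is well-defined for some $B \subseteq G$. (If $Z$ is minimal and noncompact every orbit is dense; if $Z$ is compact pick any point.) So the real content is the equality $[B] = [A]$, and I would prove the two inequalities $B \propto A$ and $A \propto B$ separately.

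First I would obtain $B \propto A$. Because $A$ is of minimal type, fix a pointed minimal locally compact $G$-space $(X,x_0)$ with $\Type(G,X,x_0) = [A]$. By the universal property of $Z$ there is a surjective proper continuous $G$-map $\varphi \colon Z \to X$. Choose $z_0 \in \varphi^{-1}(x_0)$. Then $(Z,z_0)$ and $(X,x_0)$ are pointed locally compact $G$-spaces (note $G.z_0$ is dense in $Z$, since $Z$ is minimal), and $\varphi$ is a proper continuous $G$-map sending $z_0$ to $x_0$. Proposition~\ref{prop:sametype} then gives $\cM(G,Z,z_0) = \cM(G,X,x_0) = \cM_A$, i.e.\ $\Type(G,Z,z_0) = [A]$, which is exactly the conclusion. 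So in fact the universal property applied to $(X,x_0)$ with a suitably chosen preimage base point $z_0$ gives the whole statement in one stroke, via Proposition~\ref{prop:sametype}.

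The one point that needs care — and the only real obstacle — is the choice of base point: $\Type(G,Z,z_0)$ is a priori base-point dependent (as emphasized in the text before Proposition~\ref{prop:co-cpt}), so I must be sure to select $z_0 \in \varphi^{-1}(x_0)$ rather than an arbitrary point of $Z$. The fiber $\varphi^{-1}(x_0)$ is nonempty because $\varphi$ is surjective (it is a proper map between locally compact Hausdorff spaces, hence closed, and $\varphi(G.z_0') = G.x_0$ is dense, so $\varphi$ is onto — exactly the remark made just before Proposition~\ref{prop:sametype}). With such a $z_0$ fixed, everything else is an immediate appeal to Proposition~\ref{prop:sametype}: both spaces are co-compact (indeed minimal), $\varphi$ is proper continuous $G$-equivariant with $\varphi(z_0) = x_0$, hence $\Type(G,Z,z_0) = \Type(G,X,x_0) = [A]$. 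This completes the proof.
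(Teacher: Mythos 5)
Your argument is correct and is essentially the paper's own proof: take a pointed minimal locally compact $G$-space $(X,x_0)$ of type $[A]$ (which exists since $A$ is of minimal type), use the universal property of $Z$ to get a surjective proper continuous $G$-map $\varphi\colon Z\to X$, choose $z_0\in\varphi^{-1}(x_0)$, and apply Proposition~\ref{prop:sametype}. The preliminary detour about proving $B\propto A$ and $A\propto B$ separately (and the aside distinguishing compact from non-compact $Z$ — minimality alone makes every orbit dense) is unnecessary, but the core one-stroke argument matches the paper exactly.
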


\begin{proof} Since $A$ is of minimal type there is a pointed locally compact $G$-space $(X,x_0)$ with $\Type(G,X,x_0) = [A]$. By the universal property of $Z$ there is a surjective proper continuous $G$-map $\varphi \colon Z \to X$. Choose $z_0 \in Z$ such that $\varphi(z_0) = x_0$. It then follows from Proposition \ref{prop:sametype} that $\Type(G,Z,z_0) =\Type(G,X,x_0) = [A]$.
\end{proof}

\begin{proposition} \label{prop:universal_minimal}
Let $G$ be a group and let $A \subseteq G$ be of minimal type. Then any  closed minimal $G$-invariant subset $Z$ of $X_A$ is a universal minimal $G$-space of type $A$. 
\end{proposition}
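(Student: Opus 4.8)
The key observation is that a closed minimal $G$-invariant subset $Z$ of $X_A$ is itself a pointed locally compact $G$-space once we choose a suitable base point, and its type can be computed to be $[A]$. Indeed, by Proposition~\ref{co-cpt-minimal} such a $Z$ exists; since $Z$ is minimal it is co-compact, and since $X_A$ is totally disconnected so is $Z$. Pick any $z_0 \in Z$; by minimality $G.z_0$ is dense in $Z$, so $(Z,z_0)$ is a pointed locally compact co-compact $G$-space. The first step is to show $\Type(G,Z,z_0) = [A]$. For this I would produce a $G$-regular compact-open subset of $Z$ of the form $K_{gA} \cap Z$ (or rather $(K_{A'} \cap Z)$ for a suitable translate) and apply Proposition~\ref{prop:type-condition}: since $K_A$ is compact-open in $\beta G$ and meets every orbit's closure in a controlled way, the restriction $L := K_A \cap Z$ is a compact-open subset of $Z$, and one checks $\bigcup_{g} g.L^{\mathrm{o}} = Z$ using that $X_A = \bigcup_g g.K_A$ together with minimality of $Z$ (the set $\bigcup_g g.L$ is a non-empty closed — in fact compact-open hence closed — invariant subset, or one argues at the level of the open complement). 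Then $O_Z(L, z_0)$ and $O_Z(L^{\mathrm o}, z_0)$ are $\propto$-equivalent to each other and both land inside $O_{\beta G}(K_A, z_0) \propto A$, while conversely $A \propto$ these sets because $\overline{A.z_0}$ — compact in $\beta G$ hence in $Z$ — can be covered by finitely many translates of $L^{\mathrm o}$; this yields $\Type(G,Z,z_0)=[A]$ via Proposition~\ref{prop:regular}(i). (Alternatively, and perhaps more cleanly: $Z \subseteq X_A$ is a $G$-map on the nose, so by Proposition~\ref{prop:sametype}, $\cM(G,Z,z_0) = \cM(G,X_A,z_0)$; but one must be slightly careful that the base point $z_0 \in Z$ need not be $e$, so one should instead invoke that $(X_A, z_0)$ has the same type as $(X_A, e)$, which holds because the type of $X_A$ is $[A]$ regardless of the chosen dense-orbit base point — this follows since $X_A$ is co-compact and Proposition~\ref{prop:co-cpt} shows compactness of $\cM$ is base-point independent, and then Proposition~\ref{prop:regular}(i) characterizes the type intrinsically by $G$-regular sets, which are base-point independent. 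Actually the simplest route: the inclusion $Z \hookrightarrow X_A$ is proper continuous $G$-equivariant, so Proposition~\ref{prop:sametype} gives $\cM(G,Z,z_0) = \cM(G,X_A,z_0)$, and separately $\cM(G,X_A,z_0) = \cM_A$ by running the argument of Proposition~\ref{prop:type(X_A)} with base point $z_0$ in place of $e$ — the proof there only used that $z_0$ lies in the dense orbit $G \subseteq X_A$, which may fail; so one genuinely computes $\cM(G,X_A,z_0)$ directly, noting $\overline{B.z_0}$ is compact iff $\overline{B.z_0} \subseteq X_A$ iff $B \propto A$, the last step because $z_0 \in K_C$ for some compact-open $K_C \subseteq X_A$ with $C \propto A$ and $B.z_0 \subseteq K_{BC}$ forces $BC \propto A$ hence $B \propto A$ when we also use $z_0 \in G.z_0$-density... this requires care but is routine.)

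Once $\Type(G,Z,z_0) = [A]$ is established, the universal property is immediate from Theorem~\ref{thm:universal}. Let $(X,x_0)$ be any pointed minimal locally compact $G$-space with $\Type(G,X,x_0) = [A]$. By Theorem~\ref{thm:universal} there is a surjective proper continuous $G$-map $\psi \colon X_A \to X$ with $\psi(e) = x_0$. The restriction $\psi|_Z \colon Z \to X$ is then a proper continuous $G$-map (restriction of a proper map to a closed subset is proper), and its image $\psi(Z)$ is a non-empty closed — since $\psi$ is proper hence closed — $G$-invariant subset of the minimal $G$-space $X$, hence $\psi(Z) = X$. Thus $\varphi := \psi|_Z \colon Z \to X$ is a surjective proper continuous $G$-map, which is exactly what Definition~\ref{def:min-universal} demands. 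This shows $Z$ is a universal minimal $G$-space of type $A$.

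\textbf{Main obstacle.} The only real work is the type computation $\Type(G,Z,z_0) = [A]$, and within it the subtlety that the chosen base point $z_0 \in Z$ need not be the neutral element $e$ (which may not even lie in $Z$), so one cannot directly cite Proposition~\ref{prop:type(X_A)}. I expect to handle this either by computing $\cM(G,X_A,z_0)$ from scratch — using that compact-open subsets of $\beta G$ containing $z_0$ are of the form $K_C$, that $K_C \subseteq X_A \iff C \propto A$, and that $\overline{B.z_0}$ compact $\iff \overline{B.z_0} \subseteq X_A$ — or by using the base-point-free machinery hinted at in Section~\ref{sec:basepoint}; the former is self-contained given only the excerpt. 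Everything else (properness of restrictions, closedness of proper images, minimality forcing surjectivity) is standard point-set topology combined with the already-proved Theorem~\ref{thm:universal}.
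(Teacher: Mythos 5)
Your second paragraph is, word for word, the paper's proof: given a pointed minimal locally compact $G$-space $(X,x_0)$ of type $[A]$, Theorem~\ref{thm:universal} yields a surjective proper continuous $G$-map $\psi\colon X_A\to X$ with $\psi(e)=x_0$; restricting to the closed set $Z$, properness implies $\psi(Z)$ is a closed $G$-invariant subset of $X$, so $\psi(Z)=X$ by minimality of $X$, and $\psi|_Z$ is proper. That argument is complete and is everything Definition~\ref{def:min-universal} asks for, so the proposal does prove the proposition, by the same route as the paper.

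The long preliminary step of your plan, establishing $\Type(G,Z,z_0)=[A]$, is unnecessary and is also the only shaky part. Definition~\ref{def:min-universal} places no requirement on the type of $Z$ itself; that $Z$ has type $[A]$ with respect to a \emph{suitable} base point is Lemma~\ref{lm:type-of-universal}, which the paper deduces \emph{afterwards} from the universal property via Proposition~\ref{prop:sametype}, not beforehand. Moreover, as stated your claim is too strong: for an \emph{arbitrary} $z_0\in Z$ one cannot expect $\Type(G,Z,z_0)=[A]$, since moving the base point along its orbit replaces the representing set $O_Z(K,z_0)$ by a right translate (cf.\ the discussion opening Section~\ref{sec:basepoint}), and right translates are in general not $\approx$-equivalent (e.g.\ for a free group and $A$ a cyclic subgroup); only the base-point-free type $\langle A\rangle$ is preserved (Theorem~\ref{thm:basepointinvariance}), and $[A]$ itself is attained only at well-chosen points of $Z$. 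You also correctly sensed that Proposition~\ref{prop:sametype} cannot be applied to the inclusion $Z\hookrightarrow X_A$, because $G.z_0$ is not dense in $X_A$; the ``routine but requires care'' computation you sketch in its place is exactly the part that would fail for a bad choice of $z_0$. Since this whole detour can simply be deleted, none of this affects the validity of your proof of the stated proposition.
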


\begin{proof} 
Let $(X,x_0)$ be any pointed minimal locally compact $G$-space of type $[A]$. By Theorem \ref{thm:universal} there is a continuous proper $G$-epimorphism $\varphi \colon X_A \to X$ with $\varphi(e) = x_0$. Since a proper continuous map between locally compact Hausdorff spaces maps closed sets to closed sets, it follows that  $\varphi(Z)$ is a closed $G$-invariant subset of $X$. Hence $\varphi(Z) = X$, so the restriction of $\varphi$ to $Z$ has the desired properties.
\end{proof}

\noindent
We proceed to prove that universal minimal $G$-spaces of a given type are essentially unique. First we need to know that we can take projective limits in the class of pointed locally compact $G$-spaces of a given type:

\begin{lemma} \label{lm:proj-limit}
Let $G$ be a group and let $A\subseteq G$ be of minimal type. Let $I$ be an upwards directed totally ordered set, let $(X_\alpha,x_\alpha)_{\alpha \in I}$ be a family of pointed minimal locally compact $G$-spaces of type $[A]$ equipped with surjective proper continuous $G$-maps $\varphi_{\beta,\alpha} \colon X_\alpha \to X_\beta$, for $\alpha > \beta$, satisfying 
\begin{enumerate}
\item $\varphi_{\beta,\alpha}(x_\alpha) = x_\beta$ for all $\alpha > \beta$, \vspace{.1cm}
\item $\varphi_{\gamma,\beta} \circ \varphi_{\beta,\alpha} = \varphi_{\gamma, \alpha}$ for all $\alpha > \beta > \gamma$.
\end{enumerate}
Then there is a pointed minimal locally compact $G$-space $(X,x_0)$ of type $[A]$ and surjective proper continuous $G$-maps $\varphi_{\alpha} \colon X \to X_\alpha$ satisfying $\varphi_{\alpha}(x_0) = x_\alpha$ for all $\alpha \in I$, and $\varphi_{\beta,\alpha} \circ \varphi_\alpha = \varphi_\beta$ for all $\alpha > \beta$. 
\end{lemma}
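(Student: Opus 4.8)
The plan is to construct $X$ as an inverse limit inside the category of compact $G$-spaces and then identify an appropriate open invariant subset that has the right type. First I would pass to the one-point (or Stone--\v{C}ech-type) picture: since each $X_\alpha$ is locally compact, noncompact and minimal, it is in particular $\sigma$-compact and co-compact, and by the universal property established in Theorem~\ref{thm:universal} each $(X_\alpha,x_\alpha)$ is a quotient of $(X_A,e)$ by a proper continuous $G$-map $\psi_\alpha\colon X_A\to X_\alpha$ with $\psi_\alpha(e)=x_\alpha$. The maps $\varphi_{\beta,\alpha}$ need not be compatible with the $\psi_\alpha$ on the nose, but I would instead work at the level of the $C^*$-algebras $C_0(X_\alpha)$, which embed $G$-equivariantly into $C_0(X_A)\subseteq C(\beta G)$ via $f\mapsto f\circ\psi_\alpha$ (cf. the proof of Theorem~\ref{thm:universal}); the transpose maps $\varphi_{\beta,\alpha}$ give inclusions $C_0(X_\beta)\hookrightarrow C_0(X_\alpha)$ inside $C(\beta G)$, so I can form the $G$-invariant $C^*$-subalgebra $B=\overline{\bigcup_{\alpha\in I}C_0(X_\alpha)}$ of $C(\beta G)$. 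Here I use that $I$ is totally ordered and directed, so the union is already a $*$-algebra.

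The subalgebra $B$ is a $G$-invariant hereditary-in-the-ideal sense subalgebra; more precisely I would check that $B$ is of the form $C_0(X)$ for an open invariant subset $X$ of $\beta G$, namely $X=\bigcup_\alpha (\text{the preimage of }X_\alpha)$ realized concretely inside $\beta G$. Concretely, each $C_0(X_\alpha)$, viewed inside $C(\beta G)$, generates an open invariant subset $Y_\alpha\subseteq\beta G$ (its "support"), and $Y_\alpha\subseteq Y_{\alpha'}$ for $\alpha<\alpha'$ by the compatibility of the $\varphi$'s; set $X=\bigcup_\alpha Y_\alpha$. Since each $C_0(X_\alpha)$ has type $[A]$ at the base point coming from $e$, Lemma~\ref{lm:C_A} together with $A$ being of minimal type shows every $Y_\alpha$ is contained in $X_A$, hence $X\subseteq X_A$, hence $X$ is $\sigma$-compact and co-compact with $\Type(G,X,e')=[A]$ for the image $e'$ of $e$; here the inclusions $Y_\alpha\supseteq$ some fixed $G$-regular compact set give the lower bound on the type and the inclusion in $X_A$ gives the upper bound (using Proposition~\ref{prop:type-condition} and Lemma~\ref{lm:A->B}). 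The $G$-maps $\varphi_\alpha\colon X\to X_\alpha$ are then dual to the inclusions $C_0(X_\alpha)\hookrightarrow C_0(X)=B$; they are proper because the inclusions are nondegenerate (full in the appropriate ideal sense), continuous and $G$-equivariant by construction, and surjective because $\varphi_\alpha(G.e')=G.x_\alpha$ is dense and proper maps have closed image. Compatibility $\varphi_{\beta,\alpha}\circ\varphi_\alpha=\varphi_\beta$ is immediate from the corresponding identity for the algebra inclusions.

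It remains to arrange minimality of $X$ and to fix the base point. The inverse limit $X$ constructed above need not be minimal, so I would invoke Proposition~\ref{co-cpt-minimal}: $X$ is locally compact and co-compact, hence contains a minimal closed $G$-invariant subset $Z$. Restricting each $\varphi_\alpha$ to $Z$ gives proper continuous $G$-maps $Z\to X_\alpha$ which are still surjective (the image is closed, $G$-invariant and nonempty in the minimal $X_\alpha$), and Proposition~\ref{prop:sametype} forces $\Type(G,Z,z_0)=[A]$ for any $z_0\in Z$ with $\varphi_\alpha(z_0)=x_\alpha$ for some, hence (by compatibility) all, $\alpha$; such a $z_0$ exists because the $\varphi_\alpha$ are compatible and surjective, so the fibre system $\{\varphi_\alpha^{-1}(x_\alpha)\cap Z\}$ is a decreasing net of nonempty compact sets and thus has nonempty intersection. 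Then $(Z,z_0)$ with the restricted maps is the desired pointed minimal locally compact $G$-space of type $[A]$.

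\textbf{Main obstacle.} I expect the delicate point to be verifying that $B=\overline{\bigcup_\alpha C_0(X_\alpha)}$ really is of the form $C_0(X)$ for an \emph{open invariant} $X\subseteq\beta G$ with the claimed type, and that the resulting maps $\varphi_\alpha$ are genuinely \emph{proper} rather than merely continuous — properness is exactly what guarantees closed image (needed for surjectivity) and is the difference between a morphism in our category and an arbitrary equivariant map. Controlling properness amounts to showing the inclusions $C_0(X_\alpha)\hookrightarrow B$ are nondegenerate and that $B$ acts nondegenerately on the relevant Hilbert module; equivalently, on the spatial side, that the $Y_\alpha$ exhaust $X$ in the sense that every compact subset of $X$ lies in some $Y_\alpha$ — this uses that $I$ is directed and that each $Y_\alpha$ is compact-open (being $X_{\cM}$ for a \emph{compact} left $G$-ideal, via $A$ of minimal type and Proposition~\ref{prop:M=M_A}). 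Once that exhaustion is in hand, properness of $\varphi_\alpha$ follows because preimages of compact sets are compact, and the rest of the argument is bookkeeping.
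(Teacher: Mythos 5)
Your plan has a genuine gap at its core, namely the identification of $B=\overline{\bigcup_\alpha C_0(X_\alpha)}$ with $C_0(X)$ for an \emph{open invariant} subset $X\subseteq\beta G$. In the proof of Theorem~\ref{thm:universal} the image of each embedding $\pi_\alpha\colon C_0(X_\alpha)\to C_0(X_A)$, $f\mapsto f\circ\psi_\alpha$, is shown to be \emph{full} in $C_0(X_A)$: for every $y\in X_A$ some $\pi_\alpha(f)$ is non-zero at $y$. Hence the "support" $Y_\alpha$ of $C_0(X_\alpha)$ inside $\beta G$ is all of $X_A$ for every $\alpha$, so your $X=\bigcup_\alpha Y_\alpha$ is simply $X_A$, while $B$ is in general a proper subalgebra of $C_0(X_A)$ — take for instance the constant system $X_\alpha=Z$ a fixed minimal closed invariant subset of $X_A$ with identity connecting maps, where $B\cong C_0(Z)\subsetneq C_0(X_A)$. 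A closed $G$-invariant subalgebra of $C(\beta G)$ is $C_0$ of its \emph{spectrum}, which here is a quotient of $X_A$, not an open subset of $\beta G$; consequently the type computation "lower bound from $Y_\alpha\supseteq$ a $G$-regular compact set, upper bound from $X\subseteq X_A$" and the properness argument built on the $Y_\alpha$ both collapse. Your stated mechanism for properness is also off: "each $Y_\alpha$ is compact-open, being $X_{\cM}$ for a compact left $G$-ideal" confuses compactness of the ideal in the hull--kernel sense (Definition~\ref{def:leftideal}) with compactness of the space $X_{\cM}$; the space $X_A$ is compact only when $A\approx G$. (The parts of your argument that do work: by uniqueness in Theorem~\ref{thm:universal} the maps $\psi_\alpha$ \emph{are} compatible with the $\varphi_{\beta,\alpha}$; and the final step — passing to a minimal closed invariant subset via Proposition~\ref{co-cpt-minimal}, locating a base point in the nested compact fibres $\varphi_\alpha^{-1}(x_\alpha)\cap Z$, and invoking Proposition~\ref{prop:sametype} — is sound once one has a co-compact $X$ with proper surjections of the right type.)

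The $C^*$-route could be repaired by taking $X=\widehat{B}$, the spectrum, and arguing as in Lemma~\ref{lm:subalgebra}: fullness of $B$ in $C_0(X_A)$ gives a proper continuous $G$-surjection $X_A\to X$ (so $X$ is co-compact of type $[A]$ by Proposition~\ref{prop:sametype}), and fullness of each $\pi_\alpha(C_0(X_\alpha))$ in $B$ (which follows from directedness of $I$ and surjectivity of the $\varphi_{\beta,\alpha}$) makes the dual maps $X\to X_\alpha$ proper — properness is automatic for maps dual to pointwise non-vanishing inclusions, so no exhaustion of $X$ by the $Y_\alpha$ is needed. The paper takes a more elementary route that bypasses all of this: it realizes the projective limit concretely as $\{(z_\alpha)\in\prod_\alpha X_\alpha \mid \varphi_{\gamma,\beta}(z_\beta)=z_\gamma\}$ with base point $(x_\alpha)_\alpha$, verifies properness of the coordinate projections by trapping $\varphi_\alpha^{-1}(K)$ inside a product of compact sets, gets local compactness because (using that $I$ is totally ordered) the sets $\varphi_\alpha^{-1}(U)$ form a basis, proves that \emph{every} orbit is dense directly from minimality of each $X_\alpha$ — so the limit is already minimal and no passage to a minimal subset or fibre argument is required — and reads off the type from Proposition~\ref{prop:sametype}.
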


\noindent The $G$-space $(X,x_0)$ with the mappings $\varphi_\alpha \colon X \to X_\alpha$ is the \emph{projective limit} of the family $(X_\alpha,x_\alpha)_{\alpha \in I}$. 

\begin{proof} Put
\[
X=\Big\{(z_\alpha)_{\alpha \in I} \in\prod_{\alpha \in I }X_\alpha \;
\big| \; \varphi_{\gamma,\beta}(z_\beta)=z_\gamma \; \;  \text{for all} \;
\gamma<\beta\Big\},
\]
put $x_0 = (x_\alpha)_{\alpha \in I} \in X$, and let $\varphi_\alpha \colon X \to X_\alpha$ be the (restriction to $X$ of the) projection map.  Equip the space $\prod_{\alpha \in I } X_\alpha$ with the product topology, and equip $X \subseteq \prod_{\alpha \in I } X_\alpha$ with the subspace topology. Note that the product space is not locally compact if $I$ is an infinite set and  the spaces $X_\alpha$ are non-compact.

The projection maps $\varphi_\alpha$ are continuous, also when restricted to $X$. The maps $\varphi_\alpha$ are also proper (when restricted to $X$). Indeed, if $K \subseteq X_\alpha$ is compact, then $\varphi_\alpha^{-1}(K)$ is a closed (and hence compact) subset of the compact set $\prod_{\beta \in I} K_\beta$, where
$$K_\beta = \begin{cases} K, & \beta = \alpha, \\ \varphi_{\beta,\alpha}(K), & \beta < \alpha, \\ \varphi_{\alpha,\beta}^{-1}(K), & \beta > \alpha.\end{cases}$$

It is straightforward to show that the family of sets of the form $\varphi_\alpha^{-1}(U)$, where $\alpha \in I$ and $U \subseteq X_\alpha$ is open, is closed under intersections, and hence forms a basis for the topology on $X$. Using this, and the fact, established above, that each $\varphi_\alpha$ is proper, we conclude that $X$ is locally compact (and Hausdorff). 

Equip $X$ with the natural $G$-action. Then each $\varphi_\alpha$ becomes a continuous, proper $G$-map. We show that each orbit in $X$ is dense.  Take $z \in X$ and take a non-empty  open subset $V$ of $X$ of the form $\varphi_\alpha^{-1}(U)$, where $\alpha \in I$ and where $U \subseteq X_\alpha$ is open, cf.\ the comment above. We must show that $G.z \cap V \ne \emptyset$. Write $z = (z_\alpha)$ and use that $X_\alpha$ is a minimal $G$-space to find $g\in G$ with $g.z_\alpha \in U$. Then $\varphi_\alpha(g.z) = g.z_\alpha \in U$, so $g.z \in \varphi_{\alpha}^{-1}(U)=V$ as desired. 

It finally follows from Proposition \ref{prop:sametype} that 
$$\Type(G,X,x_0) = \Type(G,X_\alpha,x_\alpha) = [A].$$
\end{proof}

\noindent The proof of the next theorem closely follows the ideas from \cite{GutLi:universal}: 

\begin{theorem} \label{thm:unique-minimal}
Let $G$ be a group and let $A \subseteq G$ be a subset of minimal type. Let $Z_1$ and $Z_2$ be universal minimal $G$-spaces of type $A$. Then $Z_1$ and $Z_2$ are isomorphic as $G$-spaces. 
\end{theorem}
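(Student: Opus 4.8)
The strategy is the classical Ellis-type argument in the form given by Gutman and Li \cite{GutLi:universal}, adapted to our pointed locally compact setting. The key point is that a universal minimal $G$-space of type $A$ is \emph{coalescent}: every proper continuous $G$-endomorphism of it (fixing an appropriate base point) is an isomorphism. Granting this, uniqueness follows by the usual back-and-forth argument: by Lemma~\ref{lm:type-of-universal} pick $z_i \in Z_i$ with $\Type(G,Z_i,z_i) = [A]$; by universality of $Z_1$ and minimality of $Z_2$ there is a surjective proper continuous $G$-map $\psi \colon Z_1 \to Z_2$, which we may arrange (by translating $z_2$) to satisfy $\psi(z_1) = z_2$; symmetrically there is $\phi \colon Z_2 \to Z_1$ with $\phi(z_2) = z_1$. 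Then $\phi \circ \psi$ is a proper continuous $G$-endomorphism of $(Z_1,z_1)$ fixing $z_1$, hence an isomorphism by coalescence; likewise $\psi \circ \phi$ is an isomorphism of $Z_2$. A standard diagram chase then shows $\psi$ (and $\phi$) are themselves isomorphisms.

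\textbf{Establishing coalescence.} This is the heart of the matter and the step I expect to be the main obstacle. Let $(Z,z_0)$ be a universal minimal $G$-space of type $A$ and let $\varphi \colon Z \to Z$ be a surjective proper continuous $G$-map with $\varphi(z_0) = z_0$. I would consider the projective system $Z \xleftarrow{\varphi} Z \xleftarrow{\varphi} Z \xleftarrow{\varphi} \cdots$, indexed by $\N$ with the obvious connecting maps (all powers of $\varphi$), and form its projective limit $(Z_\infty, z_\infty)$ using Lemma~\ref{lm:proj-limit}; by that lemma $Z_\infty$ is a pointed minimal locally compact $G$-space of type $[A]$, and it comes with proper continuous $G$-maps $\pi_n \colon Z_\infty \to Z$ satisfying $\varphi \circ \pi_{n+1} = \pi_n$. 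By the universal property of $Z$ there is a surjective proper continuous $G$-map $\rho \colon Z \to Z_\infty$, which we may take to send $z_0$ to $z_\infty$. Now $\pi_0 \circ \rho \colon Z \to Z$ and, more usefully, $\pi_1 \circ \rho$ fit into the relation $\varphi \circ (\pi_1 \circ \rho) = \pi_0 \circ \rho$; iterating, $\pi_n \circ \rho$ are compatible self-maps. The point of the Gutman--Li argument is that $\rho$ must in fact be a \emph{section} of $\pi_0$ up to the dynamics — i.e. one extracts from the compatibility that $\varphi$ has a left inverse among proper continuous $G$-maps, and a surjective proper continuous $G$-map of a minimal space with a one-sided inverse is a homeomorphism. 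The delicate part is making the properness bookkeeping work: one must check that the map produced by the universal property respects the topology of the projective limit (which is \emph{not} locally compact as a product, only after restriction), and that "$\varphi$ surjective with a proper continuous $G$-map $\sigma$ satisfying $\varphi\sigma = \id$" forces $\varphi$ injective — the latter because $\sigma$ is then a homeomorphism onto a closed invariant subset $\sigma(Z) = \varphi^{-1}(\cdot)$-complement, which by minimality is all of $Z$, whence $\sigma$ is onto and $\varphi = \sigma^{-1}$.

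\textbf{Finishing the back-and-forth.} Once coalescence is in hand, set $\alpha = \phi \circ \psi \colon Z_1 \to Z_1$ and $\beta = \psi \circ \phi \colon Z_2 \to Z_2$; both are proper continuous $G$-endomorphisms fixing the chosen base points, hence homeomorphisms. From $\psi \circ \alpha^{-1} \circ \phi = \psi \circ \phi = \beta$ one gets that $\psi$ has the proper continuous $G$-map $\alpha^{-1} \circ \phi$ as a right inverse; since $\psi \circ (\alpha^{-1}\circ\phi) = \beta$ is invertible and $\alpha^{-1}\circ\phi\circ\psi = \alpha^{-1}\circ\alpha = \id_{Z_1}$, the map $\psi$ is a bijection with continuous proper inverse, hence a $G$-space isomorphism. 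I do not anticipate obstacles in this last step beyond routine diagram-chasing; the substantive content is entirely in the coalescence argument of the previous paragraph, whose only genuinely new feature relative to \cite{GutLi:universal} is carrying the "type $[A]$" and properness conditions through the projective limit, both of which are guaranteed by Lemma~\ref{lm:proj-limit} and the definition of morphism.
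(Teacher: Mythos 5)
Your overall reduction is sound in spirit: as in the paper, it suffices to show that every surjective proper continuous $G$-self-map of a universal minimal $G$-space of type $A$ is injective, and then the two maps $\varphi_1\colon Z_1\to Z_2$, $\varphi_2\colon Z_2\to Z_1$ supplied by universality immediately yield the isomorphism. (Two small corrections there: you cannot in general arrange $\psi(z_1)=z_2$ by translating, since the orbit of $\psi(z_1)$ is only dense, and the paper explicitly does not expect base points to be preserved; but this does not matter, because the ``coalescence'' one needs is for \emph{all} surjective proper continuous $G$-endomorphisms, with no base-point condition, and base points then play no role in the back-and-forth.)

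The genuine gap is in your proof of coalescence. You form the $\N$-indexed inverse limit $Z \xleftarrow{\varphi} Z \xleftarrow{\varphi}\cdots$, obtain $\rho\colon Z\to Z_\infty$ from universality, and then assert that ``one extracts from the compatibility that $\varphi$ has a left inverse among proper continuous $G$-maps.'' Nothing supports this extraction: the universal property of $Z$ (Definition~\ref{def:min-universal}) only provides \emph{some} surjective proper continuous $G$-map onto a given minimal space of the right type, with no uniqueness and no compatibility with previously chosen maps, so the relations $\varphi\circ\pi_{n+1}=\pi_n$ tell you nothing about how $\rho$ interacts with the $\pi_n$; in particular $\pi_n\circ\rho$ is just some self-map of $Z$, not a candidate inverse of $\varphi$. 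This is also not what Gutman and Li do: their argument (followed in the paper's proof of Theorem~\ref{thm:unique-minimal}) is a \emph{transfinite} construction of a tower $(X_\beta,x_\beta)$, $\beta\le\alpha$, with $|\alpha|>|Z^2|$, where each successor stage composes a universality map with the assumed non-injective $\rho$ to create a fresh pair of distinct points that get identified, limit stages are handled by Lemma~\ref{lm:proj-limit}, and the contradiction is a cardinality count: the assignment $\beta\mapsto(\tilde y_\beta,\tilde z_\beta)$ is injective into $X_\alpha^2$, forcing $|\alpha|\le|X_\alpha^2|\le|Z^2|<|\alpha|$. An inverse limit of length $\omega$ cannot replace this; if it could, the ordinal bookkeeping in \cite{GutLi:universal} would be unnecessary. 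So the heart of the proof is missing, and the step you flag as ``delicate properness bookkeeping'' is not where the difficulty lies --- the properness and type statements are indeed covered by Lemma~\ref{lm:proj-limit}, but the cardinality argument (or some substitute for it) still has to be supplied.
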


\begin{proof} 
It follows from Lemma~\ref{lm:type-of-universal} that there exist points $z_i\in Z_i$ such that $\Type(G,Z_i,z_i) = [A]$ for $i=1,2$. Since $Z_1$ and $Z_2$ are universal spaces of type $A$ we can find surjective proper continuous $G$-maps 
$\varphi_1 \colon Z_1\to Z_2$ and $\varphi_2 \colon Z_2\to Z_1$. 
(We do not expect $\varphi_1(z_1)=z_2$ and $\varphi_2(z_2)=z_1$.) 
Thus $\varphi_2\circ\varphi_1 \colon Z_1\to Z_1$  and $\varphi_1\circ\varphi_2 \colon Z_2\to Z_2$
are surjective proper continuous $G$-maps. 

It therefore suffices to show that if $Z$ is a universal minimal $G$-space of type $A$,  then every surjective continuous proper $G$-map $\rho \colon Z \to Z$ is injective. Suppose that $\rho$ is not injective. We apply the argument from \cite{GutLi:universal}  to reach a contradiction.

Let $\alpha$ be an ordinal with $\lvert\alpha\rvert>\lvert Z^2\rvert$. 
We shall construct
a pointed locally compact $G$-space $(X_\beta,x_\beta)$ of type $[A]$ for each ordinal $\beta \le \alpha$ and 
proper continuous $G$-maps 
$\psi_{\gamma,\beta} \colon X_\beta\to X_\gamma$ for $\gamma<\beta\leq\alpha$ 
with the following properties: 
\begin{itemize}
\item $\psi_{\gamma,\beta}(x_\beta)=x_\gamma$. 
\item $\psi_{\delta,\gamma}\circ\psi_{\gamma,\beta}=\psi_{\delta,\beta}$ 
for any $\delta<\gamma<\beta \le \alpha$. 
\item For each ordinal $\beta<\alpha$, 
there exist distinct elements $y_\beta,z_\beta\in X_{\beta+1}$ 
with $\psi_{\beta,\beta+1}(y_\beta)=\psi_{\beta,\beta+1}(z_\beta)$. 
\end{itemize}
Suppose the above $G$-spaces and $G$-maps have been constructed. 
By the universal property of $Z$ one has an epimorphism $Z\to X_\alpha$. 
This implies that $\lvert X_\alpha\rvert\leq\lvert Z\rvert$. Let $\beta < \alpha$ be given, and consider the surjective $G$-maps:
$$\xymatrix@C+1.5pc{X_\beta & X_{\beta+1} \ar[l]_-{\psi_{\beta,\beta+1}} & X_\alpha \ar[l]_-{\psi_{\beta+1,\alpha}}}$$
Find  $\tilde y_{\beta},\tilde z_{\beta}\in X_\alpha$ with $\psi_{\beta+1,\alpha}(\tilde y_{\beta})=y_\beta$ and $\psi_{\beta+1,\alpha}(\tilde z_{\beta})=z_\beta$.

For any $\gamma<\beta<\alpha$, one has
\begin{eqnarray*}
\psi_{\gamma+1,\alpha}(\tilde y_\beta) &=& (\psi_{\gamma+1,\beta} \circ \psi_{\beta,\beta+1})(y_\beta) \, = \,  (\psi_{\gamma+1,\beta} \circ \psi_{\beta,\beta+1})(z_\beta) \\
&=& \psi_{\gamma+1,\alpha}(\tilde z_\beta)
\end{eqnarray*}
whereas $\psi_{\gamma+1,\alpha}(\tilde y_\gamma)
=y_\gamma\neq z_\gamma=\psi_{\gamma+1,\alpha}(\tilde z_\gamma)$.
Hence 
$(\tilde y_\beta,\tilde z_\beta)\neq(\tilde y_\gamma,\tilde z_\gamma)$ whenever $\beta \ne \gamma$, so that 
the map $$\{\beta\mid0\leq\beta<\alpha\}\to X_\alpha\times X_\alpha, \qquad \beta\mapsto(\tilde y_\beta,\tilde z_\beta),$$ is injective. 
This implies that $\lvert\alpha\rvert\leq\lvert X_\alpha^2\rvert$, which leads to the contradiction  $\lvert Z^2\rvert<\lvert\alpha\rvert
\leq\lvert X_\alpha^2\rvert\leq\lvert Z^2\rvert$. 

The construction of the $G$-spaces and the $G$-maps above is carried out through transfinite induction. For $\beta=0$, let $X_0 = Z$, and choose $x_0 \in X_0$ such that  the type of the pointed locally compact $G$-space $(X_0,x_0)$ is $[A]$, cf.\ Lemma~\ref{lm:type-of-universal}.

If $\beta$ is a successor ordinal,  set $X_\beta = Z$ and use the universal property of  the space $Z$ to find a surjective continuous proper $G$-map $\varphi_\beta \colon X_\beta \to X_{\beta-1}$. Set
$\psi_{\beta-1,\beta}=\varphi_\beta\circ\rho$, and 
for $\gamma<\beta-1$, 
set $\psi_{\gamma,\beta}=\psi_{\gamma,\beta-1}\circ\psi_{\beta-1,\beta}$. 
Let $x_\beta\in X_\beta$ be 
a preimage of $x_{\beta-1}$ by $\psi_{\beta-1,\beta}$. 
Then $(X_\beta,x_\beta)$ is of type $[A]$ by Proposition \ref{prop:sametype}. Since $\rho$ is not injective we can find distinct elements $y_\beta, z_\beta \in X_\beta$ such that $\psi_{\beta,\beta+1}(y_\beta) = \psi_{\beta,\beta+1}(z_\beta)$.

If $\beta$ is a limit ordinal, then 
define $(X_\beta,x_\beta)$ to be the projective limit of 
$(X_\gamma)_{\gamma<\beta}$, cf.\ Lemma \ref{lm:proj-limit};
and for $\gamma<\beta$, define $\psi_{\gamma,\beta} \colon X_\beta\to X_\gamma$ 
to be the epimorphism coming from the projective limit. 
\end{proof}

\begin{corollary} \label{cor:isomorphism}
Let $G$ be a group and let $A$ be a subset of $G$ of minimal type. Then any two minimal closed $G$-invariant subsets of $X_A$ are isomorphic as $G$-spaces.
\end{corollary}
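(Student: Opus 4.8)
The plan is to deduce Corollary~\ref{cor:isomorphism} directly from the combination of Proposition~\ref{prop:universal_minimal} and Theorem~\ref{thm:unique-minimal}. Since $A$ is of minimal type, Proposition~\ref{prop:universal_minimal} tells us that \emph{every} closed minimal $G$-invariant subset of $X_A$ is a universal minimal $G$-space of type $A$; in particular this applies to any two such subsets $Z_1$ and $Z_2$. Then Theorem~\ref{thm:unique-minimal}, applied to the pair $Z_1, Z_2$ of universal minimal $G$-spaces of type $A$, yields that $Z_1$ and $Z_2$ are isomorphic as $G$-spaces, which is exactly the assertion.

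Concretely, the proof is essentially one line: let $Z_1$ and $Z_2$ be minimal closed $G$-invariant subsets of $X_A$. By Proposition~\ref{prop:universal_minimal}, each $Z_i$ is a universal minimal $G$-space of type $A$. By Theorem~\ref{thm:unique-minimal}, $Z_1$ and $Z_2$ are isomorphic as $G$-spaces. One should perhaps also note for completeness that such minimal closed $G$-invariant subsets exist at all: $X_A$ is co-compact (it is a co-compact open invariant subset of $\beta G$, cf.\ the discussion around \eqref{eq:X_A}), so by Proposition~\ref{co-cpt-minimal} it contains a minimal closed $G$-invariant subset, though the statement of the corollary is vacuously true if it has none.

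There is no real obstacle here: the corollary is a formal consequence of the two results just proved, and all the substantive work has already been done in establishing Theorem~\ref{thm:unique-minimal} (which is where the Gutman--Li transfinite-induction argument lives) and in setting up the universality framework. The only thing to be careful about is making sure the hypotheses line up — namely that $A$ being of minimal type is precisely what is needed to invoke Proposition~\ref{prop:universal_minimal}, and that "universal minimal $G$-space of type $A$" is the common class to which Theorem~\ref{thm:unique-minimal} applies. Both match verbatim, so the proof is immediate.

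\begin{proof}
If $X_A$ has no minimal closed $G$-invariant subset, there is nothing to prove. Suppose $Z_1$ and $Z_2$ are minimal closed $G$-invariant subsets of $X_A$. Since $A$ is of minimal type, Proposition~\ref{prop:universal_minimal} shows that both $Z_1$ and $Z_2$ are universal minimal $G$-spaces of type $A$. It now follows from Theorem~\ref{thm:unique-minimal} that $Z_1$ and $Z_2$ are isomorphic as $G$-spaces.
\end{proof}
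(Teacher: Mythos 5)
Your proof is correct and follows exactly the paper's argument: the corollary is deduced immediately from Proposition~\ref{prop:universal_minimal} and Theorem~\ref{thm:unique-minimal}, just as in the paper. The extra remark on existence of minimal closed invariant subsets (via co-compactness and Proposition~\ref{co-cpt-minimal}) is harmless and does not change the argument.
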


\begin{proof} This follows immediately from Proposition~\ref{prop:universal_minimal} and Theorem~\ref{thm:unique-minimal}.
\end{proof}

%%%%%%%%%%%%%%%%%%%%%%%%%%%%%%%%%%%%%%%%%%%%%%%%%%%%%%%%%%%%%%%%%%

\section{Minimal closed invariant subsets of $X_A$}

\noindent Fix a group $G$ and a non-empty subset $A$ of $G$, and let $X_A$ be the open $G$-invariant subset of $\beta G$ associated with $A$. Consider the following questions:

\begin{itemize}
\item Are any two minimal closed $G$-invariant subsets of $X_A$ isomorphic as $G$-spaces?
\item For which $A$ is it possible to find a minimal closed $G$-invariant subset of $X_A$ which is compact, respectively, discrete, respectively, neither compact nor discrete?
\end{itemize}

\noindent The answer to first question is affirmative when $A$ is of minimal type (Corollary~\ref{cor:isomorphism}). In general this question has negative answer (see Example~\ref{ex:four}, Case III), two minimal closed $G$-invariant subsets of $X_A$ need not even be homeomorphic. 

The second question, in the case of compact minimal subsets, was answered in \cite{KelMonRor:supramenable}:

\begin{definition}[{cf.\  \cite[Definition 3.5]{KelMonRor:supramenable}}] \label{def:absorbing} 
A non-empty subset $A$ of a group $G$ is said to be \emph{absorbing} if all finite subset $F$ of $G$ there exists $g \in G$ such that $Fg \subseteq A$, 
\end{definition}

\noindent Equivalently, $A$ is absorbing if $\bigcap_{g \in F} gA \ne \emptyset$ for all finite subsets $F$ of $G$. 

\begin{proposition}[{cf.\ \cite[Proposition 5.5 (iv)]{KelMonRor:supramenable}}] \label{prop:absorbing}
Let $A$ be a non-empty subset of a group $G$. Then $X_A$ contains a compact minimal closed $G$-invariant subset if and only if there is no absorbing subset $B$ of $G$ such that $A \approx B$. 
\end{proposition}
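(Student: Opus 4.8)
As worded the equivalence fails for $A=G$: the set $G$ is absorbing and $G\approx G$, yet $X_G=\beta G$ is compact and contains compact minimal closed invariant subsets. The condition matching the introduction and \cite[Proposition~5.5]{KelMonRor:supramenable} is that $X_A$ contains a compact minimal closed $G$-invariant subset \emph{if and only if $A\approx B$ for some absorbing $B\subseteq G$}, and this is the form the plan below proves (I read the negation in the statement as a slip).

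The first step is to move the question into $\beta G$. A compact minimal closed $G$-invariant subset of $X_A$ is closed in $\beta G$ (compact subsets of Hausdorff spaces are closed), hence is a minimal closed $G$-invariant subset of $\beta G$ lying in $X_A$; conversely any minimal closed $G$-invariant subset $M$ of $\beta G$ with $M\subseteq X_A$ is compact, and since closed subsets of the compact set $M$ are closed in $\beta G$, it is already minimal as a closed invariant subset of $X_A$. So I would instead prove: some minimal closed $G$-invariant subset $M$ of $\beta G$ satisfies $M\subseteq X_A$ if and only if $A\approx B$ for some absorbing $B$.

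For the direction starting from an absorbing set, assume $A\approx B$ with $B$ absorbing, so $X_A=X_B$. Absorbing means $\bigcap_{g\in F}gB\neq\emptyset$ for every finite $F$, i.e.\ $\{gB:g\in G\}$ has the finite intersection property; taking closures in the compact space $\beta G$ produces $p\in\bigcap_{g\in G}K_{gB}$, an ultrafilter with $gB\in p$ for all $g$. Setting $g=h^{-1}$ gives $h^{-1}B\in p$, equivalently $B\in h.p$, for every $h\in G$, so $G.p\subseteq K_B$ and hence $\overline{G.p}\subseteq K_B\subseteq X_B=X_A$. The set $\overline{G.p}$ is a non-empty closed $G$-invariant subset of the (co-compact) $G$-space $\beta G$, so by Proposition~\ref{co-cpt-minimal} it contains a minimal closed $G$-invariant subset $M$ of $\beta G$; this $M$ is compact and contained in $X_A$, which by the reduction is the desired compact minimal subset.

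For the converse, let $M$ be a minimal closed $G$-invariant subset of $\beta G$ with $M\subseteq X_A=\bigcup_{g\in G}K_{gA}$. Compactness of $M$ gives a finite $F$ with $M\subseteq\bigcup_{g\in F}K_{gA}=K_{FA}$; set $B=FA$, noting $B\approx A$. Each $p\in M$ has $B\in p$, and invariance yields $h.p\in M\subseteq K_B$, i.e.\ $h^{-1}B\in p$, for all $h$. Fixing one $p\in M$ and using that an ultrafilter is closed under finite intersections, $\bigcap_{h\in F'}h^{-1}B\neq\emptyset$ for every finite $F'$; replacing $F'$ by $(F')^{-1}$ this says exactly that $B$ is absorbing, and $B\approx A$. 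The only genuine difficulty here is the conceptual dictionary---identifying ``compact minimal in $X_A$'' with ``minimal in $\beta G$ and inside $X_A$'' and reading the absorbing property as the ultrafilter/finite-intersection statement forced by invariance---together with the mild point that the minimal subset of $\overline{G.p}$ should be extracted via Proposition~\ref{co-cpt-minimal} so that it is genuinely minimal in $\beta G$.
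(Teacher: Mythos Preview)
Your reading of the statement is correct: the word ``no'' is a slip. Every other use of this proposition in the paper (the introduction, Corollary~\ref{cor:min-infdiv}~(ii), Example~\ref{ex:four}, Corollary~\ref{cor:min-action-1}) invokes it in the form ``$X_A$ contains a compact minimal closed invariant subset if and only if $A$ \emph{is} equivalent to an absorbing set,'' and the sentence immediately following the proposition (about $A\approx G$) confirms this reading.

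The paper itself supplies no proof: the proposition is stated as a citation of \cite[Proposition~5.5(iv)]{KelMonRor:supramenable}. Your argument is correct and is essentially the standard one. The reduction from ``compact minimal in $X_A$'' to ``minimal in $\beta G$ lying inside $X_A$'' is clean, and both directions of the ultrafilter argument are right: the finite intersection property of $\{gB\}_{g\in G}$ produces a point whose orbit closure sits in $K_B$, and conversely any orbit inside $K_B$ forces $\bigcap_{g\in F}gB\ne\emptyset$ for all finite $F$ via closure of the ultrafilter under finite intersections. One very small cosmetic point: in the forward direction you do not really need Proposition~\ref{co-cpt-minimal}, since $\overline{G.p}$ is already a compact $G$-space and Zorn's lemma applies directly; but invoking it is harmless.
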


\noindent If $A$ is equivalent to $G$, then $X_A = \beta G$ is compact and, of course, \emph{all} closed subsets of $X_A$ are compact. Any infinite group contains many absorbing subsets that are not equivalent to the group itself (see Example~\ref{ex:four}, Case III). For a quick example, take $G = \Z$ and $A = \N$. 

This section is devoted to answer the second question in the case of discrete minimal subsets (Theorem~\ref{thm:discrete-vs-infdiv}). At the same time we will obtain a necessary and sufficient condition on the set $A$ ensuring that all minimal closed $G$-invariant subsets of $X_A$ are neither compact nor discrete.
 
If $A$ is finite, then $X_A = G$, which is a minimal discrete $G$-space. More generally, it was shown in  \cite[Example 5.6]{KelMonRor:supramenable} that \emph{every}  minimal closed $G$-invariant subset of $X_A$ is  discrete if $|gA \cap A| < \infty$ for all $g \in G \setminus \{e\}$. It was also shown in  \cite{KelMonRor:supramenable} that each infinite group $G$ possesses an \emph{infinite} subset $A$ with this property. A complete answer to the question of when $X_A$ contains a discrete minimal closed $G$-invariant subset is  phrased in terms of a divisibility property of the set $A$: 

\begin{definition} \label{def:divisible}
Let $G$ be a group and let $A$ be a non-empty subset of $G$. For each $n \ge 1$ we say that $A$ is \emph{$n$-divisible} if there are pairwise disjoint subsets $\{A_j\}_{j=1}^n$ of $A$ such that $A_j \approx A$  for all $j$. 

We say that $A$ is  \emph{infinitely divisible} if $A$ is $n$-divisible for all integers $n \ge 1$. 
\end{definition}

\noindent  If $A \subseteq G$ is non-empty and finite, then $A$ is $n$-divisible if and only if $|A| \ge n$, cf.\ Example~\ref{ex:subgroup1}~(ii). In Example~\ref{ex:subgroup3} below we show that each infinite group is infinitely divisible.

Recall from \eqref{eq:O(V,x)}  the definition of the subset $O_X(V,x)$  of $G$ associated with a $G$-space $X$, a subset $V$ of $X$, and an element $x$ in $X$. In the case where $X = \beta G$ and $V = K_A$ we denote this set by $O(A,x)$. In other words,
$$O(A,x) = \{g \in G \mid g.x \in K_A\}.$$

Observe that $O(A,e)=A$ and that $e \in O(A,x)$ whenever $x \in K_A$. Here are some further properties of finitely and infinitely divisible sets, and of the set $O(A,x)$:

\begin{lemma} \label{lm:O(A,x)}
 Let $A$ and $B$ be non-empty subsets of a group $G$ and let $x \in \beta G$. Then:
\begin{enumerate}
\item $O(A \cup B,x) = O(A,x) \cup O(B,x)$; and if $A$ and $B$ are disjoint, then so are $O(A,x)$ and $O(B,x)$.  \vspace{.1cm}
\item If $A \propto B$, then $O(A,x) \propto O(B,x)$.  \vspace{.1cm}
\item If $A \approx B$, then $O(A,x) \ne \emptyset \Leftrightarrow O(B,x) \ne \emptyset$.   \vspace{.1cm}
\item If $A \approx B$, then $|O(A,x)| = \infty \Leftrightarrow |O(B,x)| = \infty$.  \vspace{.1cm}
\item If $A$ is $n$-divisible, then $|O(A,x)| \ge n$ for all $x \in K_A$.  \vspace{.1cm}
\item If $A$ is infinitely divisible, then $|O(A,x)| = \infty$  for all $x \in K_A$.  
\end{enumerate}
\end{lemma}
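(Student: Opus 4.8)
The plan is to prove the six items in the stated order, each reducing to an elementary manipulation of the compact-open sets $K_C \subseteq \beta G$ together with Lemma~\ref{lm:O(V,x)-1}. The only facts needed are that $C \mapsto K_C$ is a Boolean isomorphism of $P(G)$ onto the algebra of compact-open subsets of $\beta G$ (so $K_{C \cup D} = K_C \cup K_D$, $K_{C \cap D} = K_C \cap K_D$, and $K_\emptyset = \emptyset$), that $g.K_C = K_{gC}$ by \cite[Lemma 2.4]{KelMonRor:supramenable}, and the identity $O(C,x) = O_{\beta G}(K_C,x)$. Recall also, as noted just above the lemma, that $e \in O(A,x)$ whenever $x \in K_A$.

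For (i) I would observe that $g.x \in K_{A \cup B} = K_A \cup K_B$ precisely when $g.x \in K_A$ or $g.x \in K_B$, which gives the union formula; and if $A \cap B = \emptyset$ then $K_A \cap K_B = K_\emptyset = \emptyset$, so no $g$ can satisfy both conditions, whence $O(A,x) \cap O(B,x) = \emptyset$. For (ii), choose a finite $F \subseteq G$ with $A \subseteq FB$; then $K_A \subseteq K_{FB} = \bigcup_{g \in F} g.K_B = F.K_B$, and applying $O_{\beta G}(\,\cdot\,,x)$ together with Lemma~\ref{lm:O(V,x)-1}~(i) yields $O(A,x) \subseteq F \cdot O(B,x)$, i.e.\ $O(A,x) \propto O(B,x)$ (the inclusion also settles the degenerate case where one side is empty).

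Items (iii) and (iv) are then formal consequences of (ii): applying it to both $A \propto B$ and $B \propto A$ gives inclusions $O(A,x) \subseteq F \cdot O(B,x)$ and $O(B,x) \subseteq F' \cdot O(A,x)$ with $F,F'$ finite, and these show simultaneously that $O(A,x)$ and $O(B,x)$ are both empty or both non-empty, and (since $F \cdot S$ is finite when $S$ is) both finite or both infinite. For (v), given pairwise disjoint $A_1,\dots,A_n \subseteq A$ with each $A_j \approx A$ and given $x \in K_A$: first $e \in O(A,x)$, so $O(A,x) \ne \emptyset$, hence by (iii) each $O(A_j,x) \ne \emptyset$; since $A_j \subseteq A$ we have $K_{A_j} \subseteq K_A$ and so $O(A_j,x) \subseteq O(A,x)$; and since the $A_j$ are pairwise disjoint, (i) shows the non-empty sets $O(A_1,x),\dots,O(A_n,x)$ are pairwise disjoint subsets of $O(A,x)$, so $|O(A,x)| \ge n$. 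Finally (vi) is just (v) applied for every $n \ge 1$.

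I do not expect a genuine obstacle: everything is routine once the Boolean picture of $\beta G$ and Lemma~\ref{lm:O(V,x)-1} are in hand. The only point requiring a little care is that the relation $\propto$ is defined only for non-empty sets, so in (ii) the conclusion should really be phrased as the inclusion $O(A,x) \subseteq F \cdot O(B,x)$, with the "$\propto$" notation invoked only when both sides are non-empty; this inclusion is exactly what (iii)--(v) use anyway.
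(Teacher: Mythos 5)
Your proposal is correct and follows essentially the same route as the paper: (i) and (ii) via the identities $K_{A\cup B}=K_A\cup K_B$, disjointness of $K_A,K_B$, and $F.K_B=K_{FB}$ giving $O(A,x)\subseteq F\cdot O(B,x)$, with (iii)--(vi) deduced formally exactly as the paper does (the paper merely cites (iv) as well for (v), which your argument shows is not strictly needed). Your remark on the empty-set convention for $\propto$ is a reasonable point of care but does not change the substance.
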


\begin{proof} (i) follows from the identity $K_{A \cup B} = K_A \cup K_B$, and from the fact that $A \cap B = \emptyset$ implies that $K_A \cap K_B = \emptyset$, see 
\cite[Lemma 2.4(i)]{KelMonRor:supramenable}.  To see (ii), if $A \propto B$, then $A \subseteq FB$ for some finite subset $F$ of $G$. Now use that $F.K_B = K_{FB}$ for all $B \subseteq G$, cf.\ \cite[Lemma 2.4(v)]{KelMonRor:supramenable}, to conclude that $O(A,x) \subseteq F.O(B,x)$.

(iii) and (iv) follow from (ii). (v) follows from (i), (iii) and (iv) and from the fact that $O(A,x) \ne \emptyset$ whenever $x \in K_A$. (vi) follows from (v).
\end{proof}

\begin{lemma} \label{lm:2-div} Let $G$ be a discrete group and let $A$ be a non-empty subset of $G$. Then $A$ is $2$-divisible if and only if for all $x \in K_A$ there exists $g \ne e$ in $G$ such that $g.x \in K_A$.
\end{lemma}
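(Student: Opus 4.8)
The statement characterizes $2$-divisibility of $A$ in terms of the existence, for every $x \in K_A$, of a non-trivial $g \in G$ with $g.x \in K_A$; in the notation introduced just above, this says precisely that $|O(A,x)| \ge 2$ for all $x \in K_A$. One direction is immediate from Lemma~\ref{lm:O(A,x)}~(v): if $A$ is $2$-divisible then $|O(A,x)| \ge 2$ for all $x \in K_A$, and since $e \in O(A,x)$ there must be some $g \ne e$ with $g.x \in K_A$. So the content is the converse.

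For the converse, I would argue by contraposition: assume $A$ is \emph{not} $2$-divisible and produce a point $x \in K_A$ for which $O(A,x) = \{e\}$ (equivalently, $K_A$ meets the orbit of $x$ only at $x$ itself). The failure of $2$-divisibility means that whenever $A_1, A_2 \subseteq A$ are disjoint, they cannot both be $\approx A$ --- so for any partition (or disjoint pair) of $A$, at least one piece satisfies $A_i \not\approx A$, i.e.\ $A \not\propto A_i$ (note $A_i \propto A$ is automatic). I want to leverage this to build, by a compactness/ultrafilter argument, a point $x$ in $K_A$ that ``avoids'' all the translates $g^{-1}K_A$ for $g \ne e$. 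Concretely, recall that points of $\beta G$ lying in $K_A$ correspond to ultrafilters on $G$ containing $A$, and $g.x \in K_A$ iff $g^{-1}A \in x$ (as an ultrafilter), i.e.\ iff $gA' \in x$ for... more carefully: $g.x \in K_A$ iff $A \in g.x$ iff $g^{-1}A \in x$. So $O(A,x) = \{g : g^{-1}A \in x\}$, and I want an ultrafilter $x \ni A$ with $g^{-1}A \notin x$ for all $g \ne e$; equivalently $A \setminus g^{-1}A = A \cap (G \setminus g^{-1}A) \in x$ for every $g \neq e$, together with $A \in x$. Such an ultrafilter exists iff the family $\{A\} \cup \{A \setminus g^{-1}A : g \ne e\}$ has the finite intersection property, i.e.\ iff $A \setminus \bigcup_{g \in F} g^{-1}A \ne \emptyset$ for every finite $F \subseteq G \setminus \{e\}$ --- but that is generally false, so the naive approach fails and I must instead use non-$2$-divisibility to restrict which translates can be avoided simultaneously.

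The right strategy, I think, is this: among all ultrafilters $x$ containing $A$, choose one for which $O(A,x)$ is minimal in an appropriate sense, or rather choose $A$ itself cleverly first. Actually the cleanest route: suppose for contradiction that for \emph{every} $x \in K_A$ there is $g \ne e$ with $g.x \in K_A$, yet $A$ is not $2$-divisible. For each pair $g \ne e$, the set of $x \in K_A$ with $g.x \in K_A$ is the compact-open set $K_A \cap g^{-1}.K_A = K_{A \cap g^{-1}A}$. The hypothesis says $K_A = \bigcup_{g \ne e} K_{A \cap g^{-1}A}$; by compactness of $K_A$ there is a finite set $g_1, \dots, g_n \in G \setminus \{e\}$ with $A = \bigcup_{i=1}^n (A \cap g_i^{-1}A) = A \cap (\bigcup_i g_i^{-1}A)$, i.e.\ $A \subseteq \bigcup_i g_i^{-1}A$, so $A \propto \{g_1^{-1}A, \dots, g_n^{-1}A\}$ in the sense $A \subseteq FA$ with $F = \{g_1^{-1},\dots,g_n^{-1}\}$. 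Now I would run a combinatorial argument on a finite translation structure: since $A$ is covered by finitely many proper translates of itself, one can carve out two disjoint subsets each equivalent to $A$. Roughly, pick a minimal such $F$; using that no $g_i = e$, a greedy/pigeonhole selection inside the ``orbit graph'' on $A$ generated by $F$ should yield two disjoint $A_1, A_2$ with $A \subseteq F A_j$ for $j = 1,2$, contradicting non-$2$-divisibility.

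\textbf{Main obstacle.} The hard part is exactly this last combinatorial step --- deducing $2$-divisibility of $A$ from the covering $A \subseteq \bigcup_{i=1}^n g_i^{-1}A$ with all $g_i \ne e$. The subtlety is that the $g_i^{-1}A$ overlap heavily and one needs to split $A$ into two disjoint pieces each of which still ``generates'' $A$ under finitely many translations; the natural tool is to fix a proper right-invariant metric (Lemma~\ref{lm:approx-geom}) and use $R$-separated subsets, or alternatively to iterate the covering relation and exploit that some translate moves points a definite amount, allowing a two-coloring of a suitable net in $A$ into two ``$R$-dense-in-$A$'' pieces. I expect the authors' Lemma~\ref{lm:2-div} to be stated precisely so that this reduces to a clean statement they can prove directly, so I would aim to phrase the combinatorial core as: \emph{if $A \subseteq FA$ for a finite $F \subseteq G$ with $e \notin F$, then $A$ is $2$-divisible} --- and prove that by a separation/coloring argument as sketched.
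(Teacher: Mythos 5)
Your ``only if'' direction (via Lemma~\ref{lm:O(A,x)}~(v)) and your compactness reduction are both correct: the hypothesis does translate, exactly as you say, into a covering $K_A=\bigcup_{i=1}^n K_{A\cap g_i^{-1}A}$ with $g_i\ne e$, i.e.\ $A\subseteq FA$ for a finite $F$ with $e\notin F$. But at that point you stop: you explicitly label the combinatorial core --- ``if $A\subseteq FA$ with $F$ finite and $e\notin F$, then $A$ is $2$-divisible'' --- as the main obstacle and offer only a gestured ``greedy/pigeonhole'' or ``separation/coloring'' argument. That step is precisely the content of the lemma, so as written the proof is incomplete. Moreover, your fallback via Lemma~\ref{lm:approx-geom} and $R$-separated nets presupposes a proper right-invariant metric, hence countability of $G$, while the lemma is stated for arbitrary discrete groups; and a naive two-coloring is genuinely delicate, because one must keep the chosen pieces disjoint not only from each other but also from the \emph{translates} of each other that are used to recover $A$.

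For comparison, the paper resolves exactly this point with an explicit inductive disjointification carried out inside $K_A$. Using freeness of the action on $\beta G$ and total disconnectedness, one first refines the cover to clopen sets $U_1,\dots,U_n$ of $K_A$ and elements $g_j\ne e$ with $g_j.U_j\subseteq K_A$ \emph{and} $g_j.U_j\cap U_j=\emptyset$ (this local disjointness is missing from your reduction and is needed later). Then one sets $V_1=U_1$ and
$$V_j=U_j\setminus\Big(\bigcup_{i=1}^{j-1}\big(g_i.V_i\cup g_j^{-1}.V_i\big)\Big),$$
and puts $K_1=\bigcup_j V_j$, $K_2=\bigcup_j g_j.V_j$. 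Removing both $g_i.V_i$ and $g_j^{-1}.V_i$ is the key trick: it makes $K_1$ and $K_2$ disjoint while one can still verify $K_A\subseteq F.K_1$ and $K_1\subseteq F.K_2$ (hence $K_A\subseteq F^2.K_2$), where $F=\{g_1^{\pm1},\dots,g_n^{\pm1},e\}$. Taking $A_i=G\cap K_i$ then yields two disjoint subsets of $A$, each equivalent to $A$. Some such bookkeeping (whether done on $K_A$ or translated into a purely combinatorial argument on $A$ by partitioning $A$ according to a choice of $f_a\in F$ with $f_a^{-1}a\in A$) is what your sketch would have to supply; until it does, the converse direction remains unproved.
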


\noindent In the language of Lemma \ref{lm:O(A,x)} this lemma says that $A$ is $2$-divisible if and only if $|O(A,x)| \ge 2$ for all $x \in K_A$.

\begin{proof} The "only if" part follows from Lemma \ref{lm:O(A,x)} (v). Let us prove the "if" part of the lemma. We show first that there are clopen subsets $U_1, U_2, \dots, U_n$ of $K_A$ and elements $g_1, g_2, \dots, g_n \in G$ such that 
\begin{itemize}
\item $K_A = U_1 \cup U_2 \cup \cdots \cup U_n$, \vspace{.1cm}
\item $g_j.U_j \cap U_j = \emptyset$ for all $j$, \vspace{.1cm}
\item $g_j.U_j \subseteq K_A$ for all $j$. \vspace{.1cm}
\end{itemize}
For each $x \in K_A$ there exists an element $g_x \ne e$ in $G$ such that $g_x.x \in K_A$. Since $K_A$ is totally disconnected there is a clopen subset $U_x$ of $K_A$ such that $g_x.U_x \subseteq K_A$, $g_x.U_x \cap U_x= \emptyset$, and $x \in U_x$.  We can now take the clopen subsets  $U_1, U_2, \dots, U_n$ of $K_A$ to be a subset of  the open cover $\{U_x\}_{x \in K_A}$ of $K_A$ which still covers $K_A$, and we take  $g_1, \dots, g_n$ to be their associated group elements. 

Put $V_1 = U_1$,  put
$$V_j = U_j \setminus \Big(\bigcup_{i=1}^{j-1} (g_i.V_i \cup g_j^{-1}.V_i) \Big),\qquad j=2,3, \dots, n,$$
and put 
$$K_1 = V_1 \cup V_2 \cup \cdots \cup V_n, \qquad K_2 = g_1.V_1 \cup g_2.V_2 \cup \cdots \cup g_n.V_n.$$
Then $K_1$ and $K_2$ are pairwise disjoint clopen subsets of $K_A$. Let $F$ be the finite subset of $G$ that consists of the elements $g_1, \dots, g_n$, their inverses, and the neutral element $e$. Then $K_A \subseteq F.K_1$ and $K_A \subseteq F^2.K_2$.

To prove that $K_A \subseteq F.K_1$, it suffices to show that $U_j \subseteq F.K_1$ for all $j$. Each $V_j$ is contained in $K_1$, which again is contained in $F.K_1$. Hence $g_i.V_i$ and $g_j^{-1}.V_i$ are contained in $F.K_1$ for all $i,j$. This implies that $U_j$ is contained in $F.K_1$ for all $j$.  

As $K_1 \subseteq F.K_2$ it follows that $K_A \subseteq F^2.K_2$.

Put $A_j = G \cap K_j$. Then $A_1$ and $A_2$ are disjoint subsets of $A$, and $K_{A_j} = K_j$ by \cite[Lemma 2.4(ii)]{KelMonRor:supramenable}. Using \cite[Lemma 2.4(v)]{KelMonRor:supramenable} we further conclude that $K_A \subseteq F.K_{A_1} = K_{FA_1}$, which by \cite[Lemma 2.4(iii)]{KelMonRor:supramenable} entails that $A \subseteq FA_1$, i.e., $A \propto A_1$. In a similar way we see that $A \propto A_2$. As $A_1$ and $A_2$ are subsets of $A$ we conclude that $A \approx A_1 \approx A_2$, so $A$ is $2$-divisible. 
\end{proof}

\begin{lemma} \label{lm:inf-div}
The following three conditions are equivalent for each non-empty subset $A$ of a discrete group $G$:
\begin{enumerate}
\item $A$ is infinitely divisible, \vspace{.1cm}
\item  $|O(A,x)| = \infty$ for all $x \in K_A$, \vspace{.1cm}
\item there are pairwise disjoint subsets $\{A_j\}_{j=1}^\infty$ of $A$ such that $A_j \approx A$  for all $j$. 
\end{enumerate}
Moreover, if $A$ and $B$ are non-empty subsets of $G$ with $A \approx B$, then $A$ is infinitely divisible if and only if $B$ is infinitely divisible.
\end{lemma}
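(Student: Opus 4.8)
The plan is to prove the equivalence (i) $\Leftrightarrow$ (ii) by bootstrapping Lemma~\ref{lm:2-div} from the case $n=2$ to all finite $n$, then pass to the countable case, and finally deduce the invariance statement from the characterization (ii) together with Lemma~\ref{lm:O(A,x)}(iv).

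\textbf{From (ii) to (i).} Suppose $|O(A,x)| = \infty$ for all $x \in K_A$; I want to show $A$ is $n$-divisible for every $n$. The natural approach is induction on $n$, mimicking the proof of Lemma~\ref{lm:2-div}. Having split $A$ into $n$ pairwise disjoint pieces $A_1, \dots, A_n$ each equivalent to $A$, I would like to split one of them further. The obstacle is that to apply the $2$-divisibility argument to, say, $A_n$, I need $|O(A_n,x)| \ge 2$ for all $x \in K_{A_n}$, whereas I only know that $O(A,x)$ is infinite. Here is where I would use that $K_{A_n}$ sits inside $K_A$ and that $A_n \approx A$: by Lemma~\ref{lm:O(A,x)}(iv), $|O(A_n,x)| = \infty$ for all $x \in \beta G$ with $|O(A,x)| = \infty$, in particular for all $x \in K_{A_n} \subseteq K_A$. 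Actually a cleaner route, avoiding repeated appeals to Lemma~\ref{lm:2-div}, is to directly adapt its proof: cover $K_A$ by finitely many clopen sets $U_1, \dots, U_m$, each with associated group elements $g_{i,1}, \dots, g_{i,n-1}$ (distinct, non-trivial, with the $g_{i,k}.U_i$ pairwise disjoint and contained in $K_A$ — possible since $|O(A,x)| = \infty > n-1$ and $K_A$ is totally disconnected), then disjointify to produce $n$ pairwise disjoint clopen pieces $K_1, \dots, K_n$ of $K_A$ such that $K_A \subseteq F^{n-1}.K_j$ for each $j$, where $F$ is the finite set of all the chosen elements, their inverses, and $e$. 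Setting $A_j = G \cap K_j$ and using \cite[Lemma 2.4]{KelMonRor:supramenable} exactly as in the proof of Lemma~\ref{lm:2-div} gives $A \approx A_j$ for all $j$, so $A$ is $n$-divisible. Since $n$ was arbitrary, $A$ is infinitely divisible.

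\textbf{From (i) to (ii).} This is Lemma~\ref{lm:O(A,x)}(vi).

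\textbf{Condition (iii).} Clearly (iii) $\Rightarrow$ (i), since any $n$ of the $A_j$ witness $n$-divisibility. For (i) $\Rightarrow$ (iii): by (i) and the part already proved, $|O(A,x)| = \infty$ for all $x \in K_A$. I want a single countable pairwise disjoint family $\{A_j\}_{j=1}^\infty$ with $A_j \approx A$. The idea is to run the finite construction above "to infinity": for each $n$ pick finitely many clopen pieces and group elements as in the $n$-divisible construction, but do this coherently. Concretely, I would build an increasing sequence of finite partitions: having $A_1, \dots, A_n$ pairwise disjoint clopen-induced subsets of $A$ each equivalent to $A$, split off one more piece from the "remainder" $A \setminus (A_1 \cup \dots \cup A_n)$ — but care is needed because this remainder need not be equivalent to $A$. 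The cleanest fix is to observe that the $n$-divisibility construction can be arranged so that at each stage we only refine within the last piece $A_n$: since $|O(A_n, x)| = \infty$ on $K_{A_n}$ (by Lemma~\ref{lm:O(A,x)}(iv), as $A_n \approx A$), we may split $A_n = A_n' \sqcup A_{n+1}$ with both equivalent to $A_n$, hence to $A$. Iterating and taking $A_1' = A_1, A_2' = A_2, \dots$ (relabeling so that the piece split off at step $n$ becomes the $n$-th member) yields the desired infinite family. The main technical point to verify is that "splitting off one equivalent piece, infinitely often" does not exhaust anything — but since at each finite stage everything split off is a proper clopen subset and the sets $A_j$ are by construction pairwise disjoint and each $\approx A$, there is nothing more to check.

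\textbf{Invariance under $\approx$.} Suppose $A \approx B$. By the equivalence (i) $\Leftrightarrow$ (ii), it suffices to show that "$|O(A,x)| = \infty$ for all $x \in K_A$" is equivalent to the same statement for $B$. First, $K_A = K_B$: indeed $A \approx B$ gives $X_A = X_B$ by \cite[Lemma 2.5]{KelMonRor:supramenable}, and more directly, $A \subseteq FB$ and $B \subseteq F'A$ for finite $F, F'$ force $K_A \subseteq K_{FB} = F.K_B$ and $K_B \subseteq F'.K_A$; but this only gives $K_A \approx K_B$ as sets, not equality. However, $K_A = K_B$ is false in general. The correct statement to use is Lemma~\ref{lm:O(A,x)}(iv): for \emph{every} $x \in \beta G$, $|O(A,x)| = \infty \Leftrightarrow |O(B,x)| = \infty$. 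So if $A$ is infinitely divisible, then $|O(A,x)| = \infty$ for all $x \in K_A$; I must show $|O(B,x)| = \infty$ for all $x \in K_B$. Take $x \in K_B$; then $e \in O(B,x)$, so $O(B,x) \ne \emptyset$, whence $O(A,x) \ne \emptyset$ by Lemma~\ref{lm:O(A,x)}(iii), so $x \in K_{A}$ after applying a finite translation — more carefully, $O(A,x) \ne \emptyset$ means some $g.x \in K_A$, and $|O(A,g.x)| = \infty$ (it equals $O(A,x)g^{-1}$, roughly), hence $|O(A,x)| = \infty$, hence $|O(B,x)| = \infty$ by (iv). Thus $B$ is infinitely divisible by (ii) $\Rightarrow$ (i). The roles of $A$ and $B$ are symmetric, completing the proof.

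\textbf{Expected main obstacle.} The genuinely delicate point is the (i) $\Rightarrow$ (iii) step: producing a single \emph{infinite} pairwise-disjoint family all at once, rather than, for each $n$, an $n$-element family. The "split off one equivalent piece repeatedly" idea works, but one must be careful that each split preserves equivalence to $A$ (which it does, since the piece being split is itself $\approx A$ and has infinite $O(\,\cdot\,, x)$ on its closure by Lemma~\ref{lm:O(A,x)}(iv)), and that the bookkeeping of the relabeling is consistent. The invariance statement, while conceptually routine once (ii) is available, requires a little care in transporting the "infinite isotropy" condition between $K_A$ and $K_B$, since these closures need not coincide — Lemma~\ref{lm:O(A,x)} parts (ii)–(iv) are exactly designed for this.
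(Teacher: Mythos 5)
Your argument is correct and is essentially the paper's proof: the paper likewise bootstraps Lemma~\ref{lm:2-div} using Lemma~\ref{lm:O(A,x)}~(iv) to keep subdividing the last piece (giving both $n$-divisibility and the infinite disjoint family in (iii)), and deduces the invariance under $\approx$ from the characterization (ii) together with Lemma~\ref{lm:O(A,x)}~(iv), with your translation identity $O(A,g.x)=O(A,x)g^{-1}$ making explicit a point the paper leaves implicit. The only part you did not fully verify is the optional ``direct $n$-fold split'' variant of the Lemma~\ref{lm:2-div} argument, but that is not needed since your inductive route is complete.
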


\begin{proof} (i) $\Rightarrow$ (ii) is contained in Lemma \ref{lm:O(A,x)}  (vi).  (iii) $\Rightarrow$ (i) is trivial. 

 (ii) $\Rightarrow$ (iii). If (ii) holds, then it follows from Lemma~\ref{lm:2-div} that $A$ is $2$-divisible, so there exist pairwise disjoint subsets $A_1$ and $A_2$ of $A$ such that $A \approx A_1 \approx A_2$. Lemma~\ref{lm:O(A,x)} (iv) tells us that $|O(A_1,x)| = |O(A_2,x)| = \infty$ for all $x \in K_A$. Hence $A_1$ and $A_2$ are $2$-divisible by Lemma~\ref{lm:2-div}. We can therefore keep dividing, and we arrive at (iii).

The last statement of the lemma follows from Lemma \ref{lm:O(A,x)} (iv) and the equivalence of (i) and (ii).
\end{proof}

\begin{example} \label{ex:subgroup3} Any infinite group $G$ is infinitely divisible (as a subset of itself). This follows immediately from Lemma \ref{lm:inf-div} above, since $O(G,x) = G$ for all $x \in K_G = \beta G$. 

More generally, every infinite subgroup $H$ of $G$ is infinitely divisible. Indeed, we just saw that $H$ is infinitely divisible viewed as a subset of itself, and hence it is infinitely divisible relatively to $G$. 
\end{example}

\noindent We shall use the following standard result about crossed product \Cs s.

\begin{lemma} \label{lm:char}
Let $G$ be a discrete group acting on a locally compact Hausdorff space $X$. For each $x \in X$, let $\varphi_x$ be the state on $C_0(X) \rtimes_\red G$ given by $\varphi_x = \rho_x \circ E$, where $E \colon C_0(X) \rtimes_\red G \to C_0(X)$ is the canonical conditional expectation, and where $\rho_x \colon C_0(X) \to \C$ is point evaluation at $x$. 

Let $K$ be a compact-open subset of $X$, let $x \in K$, and suppose that $g.x \notin K$ for all $g \ne e$. The restriction of $\varphi_x$ to $1_K \big( C_0(X) \rtimes_\red G\big)1_K$ is then a  character.\footnote{A character on a unital \Cs{} $A$ is a non-zero homomorphism from $A$ into $\C$.}
\end{lemma}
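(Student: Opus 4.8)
The goal is to show that $\psi := \varphi_x|_{1_K(C_0(X)\rtimes_\red G)1_K}$ is multiplicative. Since $1_K(C_0(X)\rtimes_\red G)1_K$ is a unital $C^*$-algebra with unit $1_K$, and $\psi$ is a state (it is positive and $\psi(1_K)=\rho_x(1_K)=1$ because $x\in K$), it suffices to prove that $\psi$ is a homomorphism on a dense $^*$-subalgebra, namely the algebraic span of elements $1_K f u_g 1_K$ with $f\in C_0(X)$ and $g\in G$, and then extend by continuity. Equivalently, one can use the standard fact that a state $\psi$ is a character if and only if $\psi(a^*a)=\psi(a)^*\psi(a)$ for all $a$ in a dense subalgebra; I will verify multiplicativity directly on the spanning elements, which is cleanest here.

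\textbf{Key steps.} First I would record the basic computation: for $f\in C_0(X)$ and $g\in G$, one has $1_K f u_g 1_K = f\cdot 1_K\cdot (u_g 1_K u_g^*)\, u_g = (f\, 1_K\, 1_{g.K})\,u_g$, so $1_Kfu_g1_K = h\,u_g$ where $h = f\cdot 1_{K\cap g.K}\in C_0(X)$. Next, evaluate $\psi$ on such an element: $\psi(h\,u_g) = \rho_x(E(h\,u_g))$, and $E(h\,u_g) = 0$ unless $g=e$, in which case it equals $h$. Now use the hypothesis that $g.x\notin K$ for all $g\ne e$: if $g\ne e$ then $x\notin g^{-1}.K$, hence $x\notin K\cap g.K$ is false in general, but more to the point, when $g\ne e$ we get $E(hu_g)=0$ regardless, so $\psi(hu_g)=0$; and when $g=e$, $\psi(h) = h(x) = f(x)\cdot 1_{K}(x) = f(x)$ since $x\in K$. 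The crucial point where the hypothesis enters is in the product: given two spanning elements $a = 1_Kf_1u_{g_1}1_K$ and $b = 1_Kf_2u_{g_2}1_K$, their product in the corner is $1_Kf_1u_{g_1}1_Kf_2u_{g_2}1_K$. Expanding, $u_{g_1}1_K f_2 = (u_{g_1}1_Ku_{g_1}^*)(u_{g_1}f_2u_{g_1}^*)u_{g_1} = 1_{g_1.K}\,(f_2\circ g_1^{-1})\,u_{g_1}$, so $ab = f_1\cdot 1_{K\cap g_1.K}\cdot(f_2\circ g_1^{-1})\cdot 1_{g_1.K}\cdot(\text{more})\cdot u_{g_1g_2}1_K$, i.e. $ab = k\, u_{g_1g_2}$ for an explicit $k\in C_0(X)$ supported in $K\cap g_1.K\cap g_1g_2.K$. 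Then $\psi(ab) = \rho_x(E(k\,u_{g_1g_2}))$, which is $0$ unless $g_1g_2=e$, and when $g_1g_2=e$ it is $k(x)$ where $k(x)$ involves the value at $x$ of a function supported in $K\cap g_1.K$; but if $g_1\ne e$ then $g_1.x\notin K$, i.e. $x\notin g_1^{-1}.K = g_2.K$, forcing — after tracking the support through the substitutions — $k(x)=0$. Meanwhile $\psi(a)\psi(b) = f_1(x)f_2(x)$ if $g_1=g_2=e$ and is $0$ if either $g_i\ne e$. Matching these cases: if $g_1=g_2=e$ both sides equal $f_1(x)f_2(x)$; if $g_1\ne e$ or $g_2\ne e$, then $\psi(a)\psi(b)=0$, and I claim $\psi(ab)=0$ too — either because $g_1g_2\ne e$ (so $E(ku_{g_1g_2})=0$) or because $g_1g_2=e$ with $g_1=g_2^{-1}\ne e$, in which case the support condition and the hypothesis give $k(x)=0$.

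\textbf{Conclusion and main obstacle.} Having checked $\psi(ab)=\psi(a)\psi(b)$ on the dense $^*$-subalgebra spanned by the $1_Kfu_g1_K$, and noting $\psi$ is bounded (it is a state), multiplicativity extends to the whole corner by continuity and bilinearity; since $\psi\ne 0$, it is a character. The main obstacle is purely bookkeeping: correctly computing the product $1_K f_1 u_{g_1} 1_K f_2 u_{g_2} 1_K$ using the covariance relation $u_g f u_g^* = f\circ g^{-1}$ and $u_g 1_K u_g^* = 1_{g.K}$, identifying the precise support of the resulting coefficient function, and verifying that in the only delicate case ($g_1g_2=e$, $g_1\ne e$) the hypothesis $g_1.x\notin K$ indeed kills the value at $x$. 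Care is needed to handle the general finite linear combination rather than just two monomials, but this is routine once the two-monomial case is settled, by bilinearity of both $\psi(ab)$ and $\psi(a)\psi(b)$ in $a$ and $b$ — actually, since $\psi(a)\psi(b)$ is bilinear but $\psi(ab)$ is also bilinear in $(a,b)$, it is enough to verify the identity on monomials, which is what the computation above does.
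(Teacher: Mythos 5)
Your argument is correct and is essentially the paper's proof in different packaging: both reduce to computing the conditional expectation of a product of corner elements, whose coefficient functions are supported in $K$, and using $g.x\notin K$ for $g\ne e$ to annihilate every contribution except the one attached to the neutral element (the paper does this for a general corner element written as a formal sum, checking $\varphi_x(a^2)=\varphi_x(a)^2$, while you check multiplicativity on the dense $^*$-span of the monomials $1_K f u_g 1_K$ and extend by continuity, which is the same computation). One small repair in your delicate case $g_1g_2=e$, $g_1\ne e$: since $k$ is supported in $K\cap g_1.K$, the instance of the hypothesis you need is $g_1^{-1}.x\notin K$, i.e.\ $x\notin g_1.K$ (what you quoted, $g_1.x\notin K$, only gives $x\notin g_1^{-1}.K$, which does not meet that support); as the hypothesis holds for every $g\ne e$, both statements are available and your conclusion $k(x)=0$ stands.
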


\begin{proof} Observe  that $\varphi_x(1_K) = 1_K(x) = 1$, so $\varphi_x \ne 0$. To show that $\varphi_x$ is a character, it suffices to show that $\varphi_x(a^2) = \varphi_x(a)^2$ for all $a$ in the corner algebra $1_K \big( C_0(X) \rtimes_\red G\big)1_K$. Each such $a$ is a formal sum $a = \sum_{g \in G} f_g u_g$, where $f_g \in C_0(X)$ and $g \mapsto u_g$ is the unitary representation (in the multiplier algebra of $C_0(X) \rtimes_\red G$) of the action of $G$ on $C_0(X)$. Since $E(a) = f_e$, we see that $\varphi_x(a) = f_e(x)$. 

The condition that $a = 1_Ka1_K$ implies that  the support of each $f_g$ is contained in $K$. In particular, $f_h(g.x) = 0$ for all $h \in G$ and for all $g \ne e$.  Now, $E(a^2) = \sum_{g \in G} f_g \alpha_g(f_{g^{-1}})$.  Hence
$$\varphi_x(a^2) = \sum_{g \in G} f_g(x) f_{g^{-1}}(g.x) = f_e(x)^2 = \varphi_x(a)^2,$$
as desired.
\end{proof}

\begin{proposition} \label{prop:characters}
Let $G$ be a discrete group and let $A$ be a non-empty subset of $G$. Then the following are equivalent:
\begin{enumerate}
\item $1_A \big(\ell^\infty(G) \rtimes_\red G\big)1_A$ has a character.  \vspace{.1cm}
\item $A$ is not $2$-divisible. 
\end{enumerate}
\end{proposition}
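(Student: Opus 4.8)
The plan is to prove the two implications separately, using the characterization of $2$-divisibility provided by Lemma~\ref{lm:2-div}: the set $A$ is $2$-divisible if and only if for every $x \in K_A$ there is some $g \neq e$ with $g.x \in K_A$. Equivalently, $A$ is \emph{not} $2$-divisible precisely when there exists $x_0 \in K_A$ such that $g.x_0 \notin K_A$ for all $g \neq e$. This last condition is exactly the hypothesis of Lemma~\ref{lm:char}, applied with $X = \beta G$ and $K = K_A$ (which is a compact-open subset of $\beta G$ since $\beta G$ is Stonean, and we identify $C_0(\beta G) \rtimes_\red G = C(\beta G) \rtimes_\red G \cong \ell^\infty(G) \rtimes_\red G$). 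So the two statements in the proposition are really two faces of the same combinatorial dichotomy, once translated into the language of the corner algebra $1_A\big(\ell^\infty(G) \rtimes_\red G\big)1_A$.

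For the implication (ii) $\Rightarrow$ (i), suppose $A$ is not $2$-divisible. By Lemma~\ref{lm:2-div} choose $x_0 \in K_A$ with $g.x_0 \notin K_A$ for all $g \neq e$. Under the identification of $\ell^\infty(G)$ with $C(\beta G)$, the indicator function $1_A$ becomes $1_{K_A}$, the characteristic function of the compact-open set $K_A \subseteq \beta G$, and the canonical conditional expectation $E \colon \ell^\infty(G) \rtimes_\red G \to \ell^\infty(G)$ corresponds to the canonical conditional expectation $C(\beta G) \rtimes_\red G \to C(\beta G)$. Lemma~\ref{lm:char} then says precisely that $\varphi_{x_0} = \rho_{x_0} \circ E$ restricts to a character on $1_{K_A}\big(C(\beta G) \rtimes_\red G\big)1_{K_A}$, i.e.\ on $1_A\big(\ell^\infty(G) \rtimes_\red G\big)1_A$. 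This gives (i).

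For the converse (i) $\Rightarrow$ (ii), argue contrapositively: assume $A$ \emph{is} $2$-divisible and show the corner algebra has no character. By definition of $2$-divisibility there are disjoint $A_1, A_2 \subseteq A$ with $A_1 \approx A_2 \approx A$, and by Lemma~\ref{lm:1_A} (ii) the projections $1_{A_1}, 1_{A_2}, 1_A$ all generate the same closed two-sided ideal $I$ in $\ell^\infty(G) \rtimes_\red G$. Since $1_{A_1} \leq 1_A$ and $1_{A_2} \leq 1_A$ are mutually orthogonal subprojections of $1_A$ lying in $I$, and since $1_A$ generates $I$, there exist $x_1,\dots,x_n \in \ell^\infty(G) \rtimes_\red G$ (in fact, by the last paragraph of the proof of Lemma~\ref{lm:1_A}, one may take the $x_j$ of the form $u_g$) such that $1_A \leq \sum_{j=1}^n x_j 1_{A_1} x_j^*$; compressing by $1_A$ on both sides and using $1_A x_j 1_{A_1} = 1_A x_j 1_{A_1}$ inside the corner, one sees that $1_{A_1}$ is a full projection in $1_A\big(\ell^\infty(G)\rtimes_\red G\big)1_A$, and likewise $1_{A_2}$. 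If $\chi$ were a character on the corner, then $\chi(1_{A_1})$ and $\chi(1_{A_2})$ would be idempotents in $\C$, hence $0$ or $1$; fullness forces them to be $1$ (a character kills no full projection, since a character annihilating $1_{A_i}$ would annihilate the ideal it generates in the corner, which is everything); but then $1 = \chi(1_A) \geq \chi(1_{A_1}) + \chi(1_{A_2}) = 2$, a contradiction. Hence no character exists, completing (i) $\Rightarrow$ (ii).

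The main obstacle I anticipate is the bookkeeping in (i) $\Rightarrow$ (ii): one must be careful that the relation "$1_{A_1}$ generates the same ideal as $1_A$ in the \emph{whole} algebra" correctly passes to "$1_{A_1}$ is full in the \emph{corner} $1_A\big(\ell^\infty(G)\rtimes_\red G\big)1_A$," which is where the compressions $1_A(\,\cdot\,)1_A$ and the inequality $1_A \leq \sum_j u_{g_j} 1_{A_1} u_{g_j}^*$ (coming from $A \propto A_1$, i.e.\ $A \subseteq \bigcup_j g_j A_1$, exactly as in the proof of Lemma~\ref{lm:1_A}) are used. Everything else is a direct application of Lemma~\ref{lm:2-div}, Lemma~\ref{lm:char}, and Lemma~\ref{lm:1_A}, together with the elementary fact that a character sends projections to $\{0,1\}$ and cannot vanish on a full projection.
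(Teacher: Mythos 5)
Your proof is correct and follows essentially the same route as the paper: Lemma~\ref{lm:2-div} plus Lemma~\ref{lm:char} for (ii) $\Rightarrow$ (i), and for (i) $\Rightarrow$ (ii) the observation (via Lemma~\ref{lm:1_A}) that a $2$-division of $A$ yields two orthogonal full projections in the corner, which is incompatible with a character --- you merely spell out the fullness-in-the-corner and the $1 \ge 2$ contradiction that the paper leaves implicit. (The displayed identity ``$1_A x_j 1_{A_1} = 1_A x_j 1_{A_1}$'' is a harmless typo; the intended compression argument is fine.)
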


\begin{proof}
(i) $\Rightarrow$ (ii). Suppose that (ii) does not hold, i.e., $A$ is the disjoint union of $A_1$ and $A_2$ where $A \approx A_1 \approx A_2$. Then $1_{A_1}$ and $1_{A_2}$ are full and pairwise orthogonal projections in $1_A \big(\ell^\infty(G) \rtimes_\red G\big)1_A$, cf.\ Lemma \ref{lm:1_A}. However, no unital $C^*$-algebra containing two full pairwise orthogonal projections can admit a character, so (i) does not hold.

(ii) $\Rightarrow$ (i). 
It follows from Lemma \ref{lm:2-div} that $K_A$ contains an element $x$ such that $g.x \notin K_A$ for all $g \ne e$. Lemma \ref{lm:char} then says that $\varphi_x$ is a character on 
$$1_{K_A} \big( C(\beta G) \rtimes_\red G\big) 1_{K_A} \cong 1_A \big(\ell^\infty(G) \rtimes_\red G\big)1_A.$$
\end{proof}

\begin{theorem} \label{thm:discrete-vs-infdiv}
Let $G$ be a discrete group and let $A$ be a non-empty subset of $G$. Then the following are equivalent:
\begin{enumerate}
\item Some minimal closed $G$-invariant subset of $X_A$ is discrete. \vspace{.1cm}
\item $1_A \big(\ell^\infty(G) \rtimes_\red G\big)1_A$ has a non-zero finite-dimensional quotient.  \vspace{0.1cm}
\item $A$ is not infinitely divisible.
\end{enumerate}
\end{theorem}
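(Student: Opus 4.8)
The plan is to prove the cycle $(\mathrm{i})\Rightarrow(\mathrm{ii})\Rightarrow(\mathrm{iii})\Rightarrow(\mathrm{i})$, working throughout with the corner $B_A:=1_A(\ell^\infty(G)\rtimes_\red G)1_A$ and the canonical isomorphism $B_A\cong 1_{K_A}(C(\beta G)\rtimes_\red G)1_{K_A}$ carrying $1_A$ to $1_{K_A}$ (as used in the proof of Proposition~\ref{prop:characters}). For $(\mathrm{i})\Rightarrow(\mathrm{ii})$ I would start from a discrete minimal closed $G$-invariant set $Z\subseteq X_A$. Since $Z$ is invariant and $X_A=\bigcup_{g\in G}g.K_A$, the set $Z\cap K_A$ is non-empty; it is compact (closed in $K_A$) and discrete, hence finite, so fixing $x\in Z\cap K_A$ and using freeness of the action on $\beta G$, the set $O(A,x)=\{g\in G:g.x\in K_A\}$ injects into $Z\cap K_A$ and is therefore finite (and contains $e$). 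I would then pass to the regular representation $\tilde\rho_x$ of $C(\beta G)\rtimes_\red G$ on $\ell^2(G)$ induced from the character $\mathrm{ev}_x$ of $C(\beta G)$, namely $\tilde\rho_x(f)\delta_g=f(g.x)\delta_g$ and $\tilde\rho_x(u_s)\delta_g=\delta_{sg}$; here $\tilde\rho_x(1_{K_A})$ is the projection onto the finite-dimensional space $\ell^2(O(A,x))$, so restricting $\tilde\rho_x$ to $B_A$ produces a non-zero $*$-homomorphism of $B_A$ into $M_{|O(A,x)|}$ whose image is the required non-zero finite-dimensional quotient.

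For $(\mathrm{ii})\Rightarrow(\mathrm{iii})$ I would argue contrapositively, upgrading the implication $(\mathrm{i})\Rightarrow(\mathrm{ii})$ of Proposition~\ref{prop:characters} from characters to arbitrary finite-dimensional quotients. If $A$ is infinitely divisible, then for every $n\ge1$ there are pairwise disjoint $A_1,\dots,A_n\subseteq A$ with $A_j\approx A$, and by Lemma~\ref{lm:1_A} the $1_{A_j}$ are $n$ pairwise orthogonal \emph{full} projections in $B_A$ (exactly as in the proof of Proposition~\ref{prop:characters}). Any surjection $\pi\colon B_A\to\bigoplus_l M_{d_l}$ onto a non-zero finite-dimensional $C^*$-algebra would send each $1_{A_j}$ to a full projection, hence one that is non-zero in every block; since the $\pi(1_{A_j})$ are pairwise orthogonal and $\sum_j\pi(1_{A_j})\le\pi(1_A)=1$, this forces $d_l\ge n$ for all $l$. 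As $n$ is arbitrary, no such quotient exists, so $(\mathrm{ii})$ fails.

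For $(\mathrm{iii})\Rightarrow(\mathrm{i})$, Lemma~\ref{lm:inf-div} supplies a point $x\in K_A$ with $n:=|O(A,x)|<\infty$, and the plan is to show that $Y':=\overline{G.x}\cap X_A$ (closure taken in $\beta G$) is itself a discrete minimal closed $G$-invariant subset of $X_A$. It is $G$-invariant, contains $x$, and is closed in $X_A$ because $\overline{G.x}$ is closed in $\beta G$. The crux is that $L:=\overline{G.x}\cap K_A$ is open in $Y'$ (as $K_A$ is open in $\beta G$), while $G.x\cap L=\{g.x:g\in O(A,x)\}$ is finite and dense in $L$ (density because $L$ is open in $Y'$ and $G.x\subseteq X_A$ is dense in $Y'$); being a finite subset of the Hausdorff space $Y'$, $G.x\cap L$ is closed there, so $L=G.x\cap L$ is finite. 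Then $Y'=\bigcup_{g\in G}g.L$ (using invariance of $\overline{G.x}$) is a union of finite open subsets, hence discrete, and a dense orbit in a discrete space is the whole space, so $Y'=G.x$ — a single orbit, which therefore has no proper non-empty closed invariant subset, i.e.\ a discrete minimal closed $G$-invariant subset of $X_A$.

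I expect the main obstacle to be $(\mathrm{iii})\Rightarrow(\mathrm{i})$: one has to recognize $\overline{G.x}\cap X_A$ as the correct candidate and then verify that it is simultaneously closed in $X_A$ (which uses that $X_A$ is open in $\beta G$) and discrete, the discreteness resting on combining the density of the single orbit $G.x$ with the finiteness of $O(A,x)$ coming from Lemma~\ref{lm:inf-div}. The other two implications are comparatively mechanical; the only standard input invoked is that a regular representation induced from a state of $C(\beta G)$ factors through the reduced crossed product.
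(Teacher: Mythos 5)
Your proposal is correct, but it closes the cycle by a genuinely different route than the paper. The implication (ii) $\Rightarrow$ (iii) is the same in both (pairwise orthogonal full projections $1_{A_j}$, full by Lemma~\ref{lm:1_A}, obstruct non-zero finite-dimensional quotients); the paper, however, completes the equivalences $C^*$-algebraically: (i) $\Rightarrow$ (ii) by identifying a discrete minimal $Z$ with the $G$-space $G$, so that $C_0(Z)\rtimes_\red G\cong\cK$ and the corner is a matrix algebra; (ii) $\Rightarrow$ (i) by passing to a maximal ideal with quotient $\cK(H)$ and embedding $C_0(Z)$ into $\cK(H)$; and (iii) $\Rightarrow$ (ii) via Proposition~\ref{prop:characters}, i.e.\ a character on $1_B(\ell^\infty(G)\rtimes_\red G)1_B$ for a non-$2$-divisible $B\approx A$, which rests on Lemma~\ref{lm:char}. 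You instead prove (iii) $\Rightarrow$ (i) purely dynamically: Lemma~\ref{lm:inf-div} gives $x\in K_A$ with $O(A,x)$ finite, and your verification that $\overline{G.x}\cap X_A$ is closed, invariant, discrete and a single (hence minimal) orbit is sound — the key steps ($L=\overline{G.x}\cap K_A$ is open, meets the dense orbit in a finite, hence closed, subset, so $L$ is finite, and the space is covered by the finite open sets $g.L$) all check out; and you prove (i) $\Rightarrow$ (ii) by compressing the representation of $C(\beta G)\rtimes_\red G$ on $\ell^2(G)$ induced from $\mathrm{ev}_x$ by the finite-rank projection $\tilde\rho_x(1_{K_A})$, the finiteness of $O(A,x)$ coming from freeness of the action on $\beta G$ together with finiteness of the compact discrete set $Z\cap K_A$. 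What your route buys is an explicit description of a discrete minimal subset (the closed orbit of any $x\in K_A$ with $O(A,x)$ finite) and independence from Lemma~\ref{lm:char}, Proposition~\ref{prop:characters} and the maximal-ideal argument; what it costs is the extra (standard, but unproved in the paper) input that regular representations induced from states of $C(\beta G)$ factor through the reduced crossed product, and it yields the individual implications (i) $\Leftrightarrow$ (ii) only through the cycle rather than directly, as the paper's proof does.
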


\begin{proof} Set 
$$\cA = 1_A \big(\ell^\infty(G) \rtimes_\red G\big)1_A \cong 1_{K_A} \big( C(\beta G) \rtimes_\red G\big) 1_{K_A} = 1_{K_A} \big( C_0(X_A) \rtimes_\red G\big) 1_{K_A},$$ 
and observe that $1_{K_A}$ is a full projection in $C_0(X_A) \rtimes_\red G$. 

(i) $\Rightarrow$ (ii). Suppose that $Z$ is a discrete minimal closed invariant subset of $X_A$.  Then $Z$ is a discrete minimal and free $G$-space, whence $Z$ is isomorphic to $G$ (viewed as a $G$-space with respect to left multiplication). The inclusion mapping $Z \hookrightarrow X_A$, which is a $G$-map, induces a $G$-equivariant surjection $C_0(X_A) \to C_0(Z)$, which again induces  a  surjective $^*$-homomorphism 
$$C_0(X_A) \rtimes_\red G \to C_0(Z) \rtimes_\red G \, \cong \, C_0(G) \rtimes_\red G \, \cong \, \cK,$$
where $\cK$ is the algebra of compact operators (on $\ell^2(G)$).  Hence there is a surjective $^*$-homomorphism from $\cA$ onto a unital corner of $\cK$, which is a finite dimensional matrix algebra.

(ii) $\Rightarrow$ (i). If (ii) holds, then  there is a proper closed two-sided ideal $I$ in $C_0(X_A) \rtimes_\red G$ such that the quotient is stably isomorphic to a finite-dimensional \Cs. Upon replacing $I$ with a larger ideal, we may assume that $I$ is a maximal proper ideal and that the quotient is isomorphic to  the algebra of compact operators $\cK(H)$ on a Hilbert space $H$.

The intersection $C_0(X_A) \cap I$ is an invariant ideal in $C_0(X_A)$ and therefore equal to $C_0(U)$ for some open invariant subset $U$ of $X_A$. Since $I$ is a maximal ideal in $C_0(X_A) \rtimes_\red G$ it follows that $U$ is a maximal proper open invariant subset of $X_A$, so $Z = X_A \setminus U$ is a minimal closed invariant subset of $X_A$. The kernel of the map $C_0(X_A) \to \big(C_0(X_A) \rtimes_\red G\big)/I$ is equal to $C_0(U)$, so we have an injective $^*$-homomorphism
$$C_0(Z) \to \big(C_0(X_A) \rtimes_\red G\big)/I \; \cong \; \cK(H).$$
Hence $Z$ must be discrete.

(ii) $\Rightarrow$ (iii). Suppose that $A$ is infinitely divisible. Find a sequence $\{A_j\}_{j=1}^\infty$ of pairwise disjoint subsets of $A$ with $A \approx A_j$ for all $j$. It follows from Lemma~\ref{lm:1_A} that the projections $1_{A_j}$ are full in $\cA$, and they are also pairwise orthogonal. However, no unital \Cs{} which contains a sequence of pairwise orthogonal full projections can have a (non-zero) finite dimensional quotient. Hence (ii) does not hold.

(iii) $\Rightarrow$ (ii). If $A$ is not infinitely divisible, then there exists a subset $B$ of $A$ such that $B$ is not $2$-divisible and $A \approx B$. (Indeed,  let $n \ge 1$ be the maximal number for which there exist pairwise disjoint subsets $\{A_j\}_{j=1}^n$  of $A$ with $A_j \approx A$ for all $j$. Put $B=A_1$.)

Let $p = 1_B \in \cA$, and use Lemma \ref{lm:1_A} to see that $p$ is a full projection in $\cA$. It follows from Proposition \ref{prop:characters} that $p\cA p$ admits a character. In other words, $(p+I)(\cA/I)(p+I)$ is one-dimensional for some closed two-sided ideal $I$ in $\cA$. Thus $\cA/I$ is a unital \Cs{} which is Morita equivalent to $\C$, which entails that $\cA/I$ is finite dimensional. 
\end{proof}

\noindent It is easy to decide when subsets of \emph{minimal} type of a group are infinitely divisible, respectively, equivalent to an absorbing set:

\begin{corollary} \label{cor:min-infdiv}
Let $G$ be an infinite group and let $A$ be a subset of $G$ of minimal type. Then 
\begin{enumerate}
\item $A$ is infinitely divisible if and only if $A$ is infinite.
\item $A$ is equivalent to an absorbing set if and only if $A$ is equivalent to $G$. 
\end{enumerate}
\end{corollary}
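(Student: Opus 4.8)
The plan is to handle the two assertions separately, extracting first the single structural fact that does the real work: since $A$ is of minimal type, Proposition~\ref{prop:universal_minimal} shows that any minimal closed $G$-invariant subset $Z$ of $X_A$ is a universal minimal $G$-space of type $A$, and then Lemma~\ref{lm:type-of-universal} gives a base point $z_0 \in Z$ with $\Type(G,Z,z_0) = [A]$. So I would first record the observation that \emph{when $A$ is of minimal type, every minimal closed $G$-invariant subset of $X_A$ is again of type $[A]$.} Both parts of the corollary then reduce to comparing $[A]$ with the two extreme types $[\{e\}]$ and $[G]$.

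For (i): the implication ``infinitely divisible $\Rightarrow$ infinite'' holds for an arbitrary non-empty $A$ and is immediate from the remark after Definition~\ref{def:divisible} (a $k$-element set is not $(k{+}1)$-divisible, so a finite set is not infinitely divisible). For the converse I would assume $A$ infinite and, aiming for a contradiction, not infinitely divisible. By Theorem~\ref{thm:discrete-vs-infdiv} some minimal closed $G$-invariant subset $Z$ of $X_A$ is discrete. Because the $G$-action on $\beta G$ is free, so is its restriction to $Z$; hence $Z$ is a discrete minimal free $G$-space, therefore $G$-isomorphic to $(G,e)$, and by Proposition~\ref{prop:discrete} this forces $\Type(G,Z,z_0) = [\{e\}]$. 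Combined with the structural fact above, $[A] = [\{e\}]$, so $A \approx \{e\}$ and $A$ is finite (Example~\ref{ex:subgroup1}(iii)) --- contradicting that $A$ is infinite.

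For (ii): the implication ``$A \approx G \Rightarrow A$ equivalent to an absorbing set'' is trivial, since $G$ is itself absorbing. For the converse, suppose $A \approx B$ with $B$ absorbing. I would first produce a compact minimal closed $G$-invariant subset of $X_A$: by the characterization of absorbing sets the clopen sets $\{g.K_B\}_{g \in G}$ have the finite intersection property in the compact space $\beta G$, so there is a point $x$ with $G.x \subseteq K_B$; then $\overline{G.x} \subseteq K_B \subseteq X_B = X_A$ is a non-empty compact $G$-invariant set, and a Zorn/compactness argument (cf.\ Proposition~\ref{co-cpt-minimal}) yields a compact minimal closed $G$-invariant subset $Z \subseteq X_A$ --- alternatively this is the content of Proposition~\ref{prop:absorbing}. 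By the structural fact $\Type(G,Z,z_0) = [A]$ for some $z_0$, while compactness of $Z$ gives $\Type(G,Z,z_0) = [G]$ via Proposition~\ref{prop:compacttype}. Hence $[A] = [G]$, i.e.\ $A \approx G$.

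The conceptual heart of the argument is the transfer-of-type statement at the start: it is the only place the hypothesis ``$A$ of minimal type'' is used, and without it both converses fail. Everything after that is bookkeeping with $\propto$ and $\approx$; the one spot where I would be careful is verifying that the discrete $Z$ in part (i) has trivial isotropy (so that Proposition~\ref{prop:discrete} applies), which is guaranteed by freeness of the $G$-action on $\beta G$.
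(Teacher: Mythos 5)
Your proof is correct and follows essentially the same route as the paper: transfer the type $[A]$ to any minimal closed invariant subset $Z\subseteq X_A$ via Proposition~\ref{prop:universal_minimal} and Lemma~\ref{lm:type-of-universal}, then compare with the extreme types using Theorem~\ref{thm:discrete-vs-infdiv} and Proposition~\ref{prop:discrete} for (i), and Proposition~\ref{prop:absorbing} and Proposition~\ref{prop:compacttype} for (ii). Your extra care about trivial isotropy (freeness of the action on $\beta G$) and your direct finite-intersection construction of a compact minimal subset are sound but only make explicit what the paper handles by citation.
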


\begin{proof} 
(i). "Only if" is clear. Suppose that $A$ is infinite. For each minimal closed invariant subset $Z$ of $X_A$, there is $z_0 \in Z$ such that $\Type(G,Z,z_0) = [A]$, 
cf.\ Lemma~\ref{lm:type-of-universal}. As $[A] \ne [\{e\}]$, it follows from Proposition \ref{prop:discrete} that $Z$ is not discrete. Hence $A$ must be infinitely divisible by Theorem~\ref{thm:discrete-vs-infdiv}.

(ii). "Only if" is clear. Suppose that $A$ is equivalent to an absorbing set. Then there is a compact minimal closed invariang subset $Z$ of $X_A$ by \cite[Proposition 5.5 (iv)]{KelMonRor:supramenable}, and there is $z_0 \in Z$ such that $\Type(G,Z,z_0) = [A]$,  cf.\ Lemma~\ref{lm:type-of-universal}. But then $A \approx G$ by Proposition~\ref{prop:compacttype}. 
\end{proof}

%
%\begin{remark} \label{rm:sA-A}
%As mentioned in the beginning of this section, it was shown in  \cite[Example 5.6]{KelMonRor:supramenable} that if $G$ is a countable group and $A$ is an infinite subset of $G$ such that $|A \cap gA| < \infty$ for all $g\ne e$ in $G$, then $X_A$ has a minimal closed subset which is discrete. It was moreover shown in \cite{KelMonRor:supramenable} that $1_A \big(\ell^\infty(G) \rtimes_\red G\big)1_A$ in this case has a character. By Proposition~\ref{prop:characters} this means that  $A$ is not $2$-divisible. 
%
%Let us show directly, that any such subset $A$ is not $2$-divisible. For each $g \ne e$ in $G$, we have
%$$\{x \in K_A \mid g.x \in K_A\} = K_A \cap g^{-1}.K_A = K_{A \cap g^{-1}A} = A \cap g^{-1}A \subseteq A,$$
%where the last equality follows because $A \cap g^{-1}A$ is closed in $\beta G$, being a finite set. 
%Since $A$ is infinite, $K_A \setminus A \ne \emptyset$. It follows from the displayed inclusion above that $g.y \notin K_A$  for all $y \in K_A \setminus A$ and for all $g \ne e$. This shows that $A$ is not $2$-divisible by Lemma \ref{lm:2-div}.
%
%Note that if $A \subseteq G$ is non-empty and \emph{finite}, then $A$ is $2$-divisible if and only if $|A|>1$, cf.\ Example~\ref{ex:subgroup1} (iii). 
%\end{remark}

\noindent There is an abundance of subsets $A$ of $G$ which are infinite but not infinitely divisible, see  Example~\ref{ex:discrete-compact} below for a method to construct such sets. None of these sets are of minimal type by Corollary~\ref{cor:min-infdiv}.

\begin{example}[Four classes of subsets of a group] \label{ex:four}
Let $G$ be an infinite group. We describe here four classes of subsets $A$ of $G$ depending on whether or not $A$ is infinitely divisible, respectively, equivalent to an absorbing set. All four cases occur in all infinite groups. The stated properties of the minimal closed $G$-invariant subsets of $X_A$ follow from Theorem~\ref{thm:discrete-vs-infdiv} and from \cite[Proposition 5.5 (iv)]{KelMonRor:supramenable} (see also Proposition~\ref{prop:absorbing}).

{\bf{Case I:}} \emph{$A$ is infinitely divisible and absorbing.} This is satisfied for example with $A=G$.

In this case no minimal closed $G$-invariant subset of $X_A$ is discrete, while at least some minimal closed $G$-invariant subsets of $X_A$ are compact (they are all compact if $A=G$).  

{\bf{Case II:}} \emph{$A$ is infinitely divisible and not equivalent to an absorbing set.} Examples of such sets in an arbitrary infinite countable group $G$ is given in Theorem~\ref{thm:A} in the next section. 

In this case every minimal closed $G$-invariant subset of $X_A$ is non-compact and non-discrete. The existence of such subsets $A$ therefore gives the existence of a free minimal action of $G$ on a non-compact and non-discrete locally compact Hausdorff space. We can take this space to be the non-compact locally compact Cantor set $\Can^*$, cf.\ Theorem~\ref{thm:min-action-2}.   

{\bf{Case III:}} \emph{$A$ is absorbing but not infinitely divisible.} Examples of such sets in an arbitrary infinite countable group $G$ are given in Example~\ref{ex:discrete-compact} below.  

In this case some minimal closed $G$-invariant subset of $X_A$ is compact while some other minimal closed $G$-invariant subset of $X_A$ is discrete. The existence of such sets $A$ shows that Corollary~\ref{cor:isomorphism} does not hold in general (when $A$ is not of minimal type) and that the first question posed in the beginning of this section has a negative answer in general. 

{\bf{Case IV:}} \emph{$A$ is not infinitely divisible and not equivalent to an absorbing set.} This is satisfied for example with $A = \{e\}$, and also for any (finite or infinite) subset $A$ of $G$ such that $|A \cap gA| < \infty$ for all $g \in G \setminus \{e\}$. Each infinite group contains infinite subsets $A$ with this property, see \cite[Lemma 3.8]{KelMonRor:supramenable} and its proof.

In this case no minimal closed $G$-invariant subset of $X_A$ is compact, while at least some minimal closed $G$-invariant subset of $X_A$ is discrete. If $A$ satisfies $|A \cap gA| < \infty$ for all $g \in G \setminus \{e\}$, then \emph{all} minimal closed $G$-invariant subsets of $X_A$ are discrete by \cite[Example 5.6]{KelMonRor:supramenable}. 
\end{example}

\begin{remark} \label{rem:construct-inf-div} 
We present here a general method to construct an infinitely divisible subset in an arbitrary group $G$. 

Choose first an infinite sequence $\{a_n\}_{n=1}^\infty$ of elements in $G$ 
so that the set
\[
F_n=\{b_1b_2\dots b_n \mid b_i=e \text{ or }b_i=a_i\}
\]
has cardinality $2^n$ for each $n$.  Observe that $F_1 = \{e,a_1\}$ and that $F_{n+1} = F_n \cup F_n\, a_{n+1}$ for $n \ge 1$. It follows that $|F_n|=2^n$ for all $n$ if the $a_n$'s are chosen such that $F_n \cap F_n\, a_{n+1} = \emptyset$ for all $n$, i.e., such that 
$$a_{n+1} \in G \setminus F_n^{-1}F_n.$$

Take now another infinite sequence $\{c_n\}_{n=1}^\infty$ of elements in $G$ 
so that the sets $F_nc_n$, $n \in \N$, are mutually disjoint. Then 
\begin{equation} \label{eq:A-infdiv}
A=\bigcup_{n=1}^\infty F_nc_n
\end{equation}
is infinitely divisible. 
Let us check this. 
Fix an integer $n\ge 1$. 
For $m > n$, define 
\[
G_m=\{b_{n+1}b_{n+2}\dots b_{m} \mid b_i=e\text{ or }b_i=a_{i}\}. 
\]
Note that $F_nG_m = F_{m}$. Set 
\[
B=\bigcup_{m= n+1}^\infty G_mc_m. 
\]
Then $\{aB\}_{a \in F_n}$ are  pairwise disjoint subsets of $A$. Since $A\setminus F_nB$ is a finite set, we see that $B \approx A$, and hence that $aB \approx A$ for all $a \in F_n$. Therefore $A$ is $2^n$-divisible. As $n \ge 1$ was arbitrary we conclude that $A$ is infinitely divisible. 
\end{remark}

\noindent  The lemma below, whose proof easily follows from Lemma \ref{lm:approx-geom}, contains an easy criterion for not being $2$-divisible. 

\begin{lemma} \label{lm:2-div-condition} Let $G$ a countable group and let $d$ be a proper right-invariant metric on $G$. Suppose that $A$ is a subset of $G$ such that for each $C< \infty$ there exists $g \in A$ with 
$$d(g,A\setminus \{g\}) > C.$$
Then $A$ is not $2$-divisible.
\end{lemma}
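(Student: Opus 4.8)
The plan is to argue by contradiction, translating $2$-divisibility into a metric statement via Lemma~\ref{lm:approx-geom}. Suppose $A$ is $2$-divisible, so that there are disjoint subsets $A_1, A_2 \subseteq A$ with $A_1 \approx A$ and $A_2 \approx A$. Since $\approx$ is only defined on non-empty sets, $A_1$ and $A_2$ are non-empty. Applying Lemma~\ref{lm:approx-geom}(ii) to the equivalence $A \approx A_1$ yields some $R_1 < \infty$ with $d(g, A_1) \le R_1$ for all $g \in A$, and similarly we get $R_2 < \infty$ with $d(g, A_2) \le R_2$ for all $g \in A$. Set $R = \max\{R_1, R_2\} < \infty$, so that $d(g, A_1) \le R$ and $d(g, A_2) \le R$ for every $g \in A$.

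Now invoke the hypothesis with $C = R$ to obtain an element $g \in A$ with $d(g, A \setminus \{g\}) > R$. Because $A_1 \cap A_2 = \emptyset$, the element $g$ lies in at most one of the sets $A_1, A_2$; without loss of generality $g \notin A_1$. Then $A_1 \subseteq A \setminus \{g\}$, and since $A_1 \ne \emptyset$ we may compute
$$d(g, A_1) = \inf_{h \in A_1} d(g,h) \;\ge\; \inf_{h \in A \setminus \{g\}} d(g,h) = d(g, A \setminus \{g\}) > R,$$
which contradicts $d(g, A_1) \le R$. Hence $A$ cannot be $2$-divisible.

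There is essentially no obstacle in this argument; the only points requiring a word of care are that the constant $R$ can be taken uniform for both $A_1$ and $A_2$ (take the maximum of the two constants produced by Lemma~\ref{lm:approx-geom}), that $A_1, A_2$ are forced to be non-empty, and that nothing in the argument requires the infimum defining $d(g, A_1)$ to be attained — we only use the set inclusion $A_1 \subseteq A \setminus \{g\}$. (If one prefers, properness of $d$ does guarantee the infimum is attained, but this is not needed.)
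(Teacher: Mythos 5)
Your proof is correct, and it is exactly the argument the paper intends: the paper omits the proof, stating only that it "easily follows from Lemma~\ref{lm:approx-geom}", and your contradiction argument via Lemma~\ref{lm:approx-geom}(ii) is precisely that easy deduction. The points you flag (non-emptiness of $A_1,A_2$, uniform choice of $R$, no need for the infimum to be attained) are handled correctly.
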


\begin{example} \label{ex:discrete-compact} We give here examples of subsets $A$ of any infinite countable group $G$ which are absorbing but not infinitely divisible, cf.\ Example~\ref{ex:four} (Case III).

For a first easy example of such a set, take $G = \Z$ and 
$$A = \{-n^2 \mid n \in \N\} \cup \N.$$
Then $A$ is absorbing, cf.\  Definition~\ref{def:absorbing}. Use Lemma~\ref{lm:2-div-condition} to see that $A$ is not $2$-divisible.

Consider now an arbitrary infinite countable group $G$ and choose a proper right-invariant metric $d$ on $G$. Write $G = \bigcup_{n=1}^\infty F_n$, where $\{F_n\}_{n=1}^\infty$ is an increasing sequence of finite subsets of $G$. We claim that there exist sequences $\{g_n\}_{n=1}^\infty$ and $\{h_n\}_{n=1}^\infty$ in $G$ such that if
$$A = \{g_1,g_2,g_3, \dots \} \cup \bigcup_{n=1}^\infty F_n h_n,$$
then $d(g_n,A \setminus \{g_n\}) \ge n$ for all $n$. It will then follow from Lemma \ref{lm:2-div-condition} that $A$ is not $2$-divisible; and we see directly from Definition~\ref{def:absorbing} that $A$ is absorbing.

Note first that if $F,F'$ are finite subsets of the infinite group $G$ and if $C< \infty$, then there exists $g \in G$ such that $d(Fg,F') \ge C$. We can now construct $\{g_n\}$ and $\{h_n\}$ as follows. Put $g_1=e$ and
choose $h_1$ such that $d(F_1h_1,g_1) \ge 1$. Next, choose $g_2$ and then $h_2$ such that 
$$d\big(g_2, \{g_1\} \cup F_1h_1\big) \ge 2, \qquad 
d\big(F_2h_2, \{g_1,g_2\}\big) \ge 2.$$
%$$d\big(g_2, \{g_1\} \cup F_1h_1\big) \ge 2, \qquad 
%d\big(F_2h_2, \{g_1,g_2\} \cup F_1h_1\big) \ge 2.$$
Continue in this way. At stage $n$ we find $g_n, h_n \in G$ such that
%$$d\big(g_n, \{g_1,g_2, \dots, g_{n-1}\} \cup F_1h_1\cup \cdots \cup F_{n-1}h_{n-1}\big) \ge n,$$ 
%$$d\big(F_nh_n, \{g_1,g_2, \dots, g_{n}\} \cup F_1h_1 \cup \cdots \cup F_{n-1}h_{n-1}\big) \ge n.$$
$$d\big(g_n, \{g_1,g_2, \dots, g_{n-1}\} \cup F_1h_1\cup \cdots \cup F_{n-1}h_{n-1}\big) \ge n,$$ 
$$d\big(F_nh_n, \{g_1,g_2, \dots, g_{n}\} \big) \ge n.$$
Then $A$ has the desired properties. 
\end{example}

%%%%%%%%%%%%%%%%%%%%%%%%%%%%%%%%%%%%%%%%%%%%%%%%%%%%%%%%%%%%%%%%%

\section{Minimal $G$-spaces that are non-discrete and non-compact} 
\label{sec:non-discrete-compact} 

\noindent We show here that each countable infinite group contains an infinite divisible subset  which is not equivalent to an absorbing set. This, in turn, gives the existence of a minimal free action of the  group on a non-compact and non-discrete locally compact Hausdorff space. This space can, moreover, be taken to be the non-compact locally compact Cantor set.

Before constructing the general examples, let us look at a concrete example:

\begin{example} \label{ex:subgroup4} Let $G$ be a group and let $H$ be a subgroup of $G$. We know from Example \ref{ex:subgroup-2} that $H$ is of minimal type.  It follows from 
Lemma~\ref{lm:type-of-universal} and Corollary~\ref{cor:isomorphism} that each minimal closed invariant subset of $X_H$ is of type $[H]$ (with respect to a suitable base point), and that all such minimal closed invariant sets are isomorphic as $G$-spaces. 

We know from Example \ref{ex:subgroup3} that each infinite subgroup is infinitely divisible. We show below that no subgroup of infinite index is equivalent to an absorbing set. It follows that if $H$ is an infinite subgroup of $G$ of infinite index, then each minimal closed $G$-invariant subset of $X_H$ is non-compact and non-discrete. Each of these invariant subsets of $X_H$ is then a free minimal $G$-spaces, which is neither compact nor discrete. Not all infinite groups have an infinite subgroup of infinite index. The Tarski monsters, whose existence was proved by Olshanskii, have no proper infinite subgroups.

Let us show  that no subgroup $H$ of $G$ of infinite index is  equivalent to an absorbing set. We must show that $FH$ is not absorbing for any finite subset $F$ of $G$, i.e., that $\bigcap_{s \in S} sFH = \emptyset$ for some finite subset $S$ of $G$.  Write $F = \{f_1,f_2, \dots, f_n\}$. Since $FH$ is contained in the union of finitely many left-cosets of $H$ there exists $x \in G$ such that $FH \cap xH = \emptyset$. Put 
$$S = \{e, f_1x^{-1}, f_2x^{-1}, \dots, f_nx^{-1}\}.$$
Note that $f_j x^{-1}FH \cap f_jH = \emptyset$. Hence
$$\bigcap_{s \in S} sFH \; \subseteq \; FH \cap \bigcap_{j=1}^n f_jx^{-1}FH \; \subseteq \; \bigcup_{j=1}^n \big(f_jH \cap f_jx^{-1}FH\big) \; = \; \emptyset,$$
as desired.
\end{example}

\begin{theorem} \label{thm:A}
In any countably infinite group $G$ there exists an infinitely divisible subset $A$ which is not equivalent to an absorbing subset of $G$. 
\end{theorem}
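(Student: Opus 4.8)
The strategy is to combine the two construction methods already developed in the paper: the construction in Remark~\ref{rem:construct-inf-div} of an infinitely divisible set, and the idea from Example~\ref{ex:discrete-compact} for producing a set that is far from absorbing, controlled by a proper right-invariant metric $d$ on $G$. Fix such a metric $d$ and write $G=\bigcup_{n=1}^\infty F_n$ with $\{F_n\}$ an increasing sequence of finite subsets with $e\in F_1$. The aim is to build $A$ as a disjoint union of "blocks", where infinitely many blocks are translates of the doubling sets $F_n$ appearing in Remark~\ref{rem:construct-inf-div} (to force infinite divisibility), while the blocks are placed at metrically separated locations $c_n$ chosen so widely spread that $A$ cannot be $B$-bounded for any absorbing $B$.

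The key steps, in order, are as follows. First, as in Remark~\ref{rem:construct-inf-div}, choose $\{a_n\}$ with $a_{n+1}\in G\setminus F_n^{-1}F_n$ (possible since $G$ is infinite and $F_n^{-1}F_n$ is finite), and set $E_n=\{b_1\cdots b_n\mid b_i\in\{e,a_i\}\}$, so $|E_n|=2^n$ and $E_nE_{n+1}\subseteq E_{n+1}$-type nesting holds; relabel carefully to avoid clashing with the paper's $F_n$. Second, choose the translation elements $\{c_n\}$ inductively so that (a) the sets $E_nc_n$ are pairwise disjoint — exactly as in Remark~\ref{rem:construct-inf-div}, this gives that $A=\bigcup_n E_nc_n$ is infinitely divisible — and (b) the separation condition $d(c_n,\,F_n c_m)\ge n$ for all $m<n$, and more precisely that no bounded neighbourhood of any fixed element of $G$ meets $A$ in more than finitely many blocks in a "translated" way; the precise quantitative condition to impose is that for every $R<\infty$ and every finite $F\subseteq G$, only finitely many $n$ satisfy $FR\cap E_nc_n\ne\emptyset$ where $R$ denotes the $d$-ball of radius $R$. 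This is arranged by choosing $c_n$ at stage $n$ to lie $d$-far from all finitely many translates $F_k E_kc_k$, $k<n$, of the previously placed blocks by all elements of $F_n$; properness of $d$ makes each such avoidance set cofinite, so the choice is always possible. Third, verify infinite divisibility: for each fixed $N$, the sets $\{a E\mid a\in E_N\}$ with $E=\bigcup_{m>N}G_m c_m$ (using the $G_m$ notation of Remark~\ref{rem:construct-inf-div}) are pairwise disjoint subsets of $A$, and $A\setminus E_N E$ is finite, so $E\approx A$ and hence $aE\approx A$; thus $A$ is $2^N$-divisible for every $N$, i.e.\ infinitely divisible. Fourth — and this is the crux — show $A$ is not equivalent to an absorbing set. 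Suppose $A\approx B$ with $B$ absorbing; then $A\propto B$ and $B\propto A$, so by transitivity it suffices to derive a contradiction from the mere fact that some absorbing set $B$ satisfies $B\propto A$, i.e.\ $B\subseteq FA$ for a finite $F$. But $FA=\bigcup_n FE_nc_n$, and $B$ absorbing means $\bigcap_{g\in S}gB\ne\emptyset$ for every finite $S$; choosing $S$ to be a large $d$-ball and using the separation condition on the $c_n$ one forces $\bigcap_{g\in S}gFA=\emptyset$, because each "fibre" $FE_nc_n$ has $d$-diameter $O(1)$ (bounded by $2\max\{d(t,e):t\in F\}+\operatorname{diam} E_n$ — note $\operatorname{diam}E_n$ is \emph{not} bounded, which is the subtlety) while the blocks march off to infinity. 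To make this clean it is better to impose from the start the additional requirement $\operatorname{diam}(E_n c_n)\le d(E_n c_n, E_m c_m)$ for $m\ne n$ large, and to run the argument through Lemma~\ref{lm:2-div-condition}-style reasoning or directly via Lemma~\ref{lm:approx-geom}: if $A\propto B$ with $B$ absorbing, one shows $A$ would have to "accumulate" near every point in the sense that balls of bounded radius meet infinitely many translates of $A$, contradicting the spreading of the $c_n$.

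The main obstacle is reconciling the two competing demands on the set $A$: infinite divisibility wants the blocks $E_n$ to grow (cardinality $2^n$) and to nest multiplicatively, while non-equivalence-to-absorbing wants $A$ to be "thin and spread out" in the metric $d$. The growing diameter of $E_n$ is the delicate point — one cannot make the blocks literally bounded. The resolution is that divisibility only needs the \emph{combinatorial} doubling structure $E_nG_m=E_m$, which is compatible with placing the whole block $E_nc_n$ inside a region whose distance to every other block exceeds \emph{its own} diameter; then for any fixed finite $F$ and radius $R$, a ball $B(x,R)$ can meet $FE_nc_n$ for only finitely many $n$, and this is exactly what kills $B\propto A$ for absorbing $B$ via Lemma~\ref{lm:approx-geom}. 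Carrying out this bookkeeping — interleaving the choice of $a_n$ (needed for $|E_n|=2^n$), $c_n$ (needed for disjointness and for the metric spreading), and verifying all three properties simultaneously by transfinite-free ordinary induction — is the technical heart, but each individual avoidance step is routine given properness of $d$ and infiniteness of $G$.
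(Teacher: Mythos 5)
Your reduction is fine up to a point: infinite divisibility of $A=\bigcup_n E_nc_n$ is exactly Remark~\ref{rem:construct-inf-div}, and ``$A$ equivalent to an absorbing set'' does reduce to ``$FA$ is absorbing for some finite $F$''. The genuine gap is in the non-absorption half. All of your conditions constrain only the placement elements $c_n$ (disjointness and metric separation \emph{between} blocks), while $a_{n+1}$ is chosen, as in Remark~\ref{rem:construct-inf-div}, subject only to the doubling condition $a_{n+1}\notin E_n^{-1}E_n$. But by right-invariance of $d$, for $S$ the ball of radius $R$ about $e$ the set $Sh$ is the ball of radius $R$ about $h$, so $FA$ is absorbing precisely when $FA$ contains $d$-balls of arbitrarily large radius --- and such balls can be supplied by a \emph{single} faraway block, which no amount of separation between blocks can prevent. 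Concretely, in $G=\Z$ take $a_i=2^{i-1}$; this satisfies $a_{n+1}\notin E_n^{-1}E_n$, and $E_n=\{0,1,\dots,2^n-1\}$ is an interval. Then for \emph{any} choice of the $c_n$ meeting your requirements (pairwise disjoint blocks, separation exceeding each block's own diameter, blocks marching to infinity), the set $A=\bigcup_n(E_n+c_n)$ contains intervals of every length and is therefore itself absorbing. So the claimed conclusion $\bigcap_{g\in S}gFA=\emptyset$ for a large ball $S$ does not follow from the spreading of the $c_n$, and your proposed repair ($\operatorname{diam}(E_nc_n)\le d(E_nc_n,E_mc_m)$, ``bounded balls meet only finitely many blocks'') addresses accumulation of \emph{distinct} blocks near a point, which is not the relevant obstruction: the obstruction is the internal thickness of one block.

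What is missing is a constraint on the $a_n$ themselves, i.e.\ on the structure \emph{inside} the blocks, and this is exactly what the paper's proof supplies. It fixes an exhaustion $G=\bigcup_k T_k$ by finite sets and, interleaved with the construction, chooses witnesses $d_k$ together with $a_{n+1}$ and $c_{n+1}$ outside explicit finite avoidance sets so that $d_k\notin T_kA_{n}A_{n}^{-1}T_k^{-1}$ holds at every stage, hence $d_k\notin T_kAA^{-1}T_k^{-1}$ for the limit set $A$. This gives $T_kA\cap d_kT_kA=\emptyset$, so $T_kA$ is not absorbing for every $k$, which is precisely the statement your metric argument was meant to deliver (every finite $F$ lies in some $T_k$). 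Note that the avoidance set for $a_{n+1}$ involves the $d_k$'s: the witnesses constrain the block $E_{n+1}$ itself, not merely its translate. Unless you add some condition of this type on the $a_n$, your construction can output an absorbing set, as the $\Z$ example shows; with such a condition added, you would essentially be reproducing the paper's argument, and the metric language adds nothing that Definition~\ref{def:absorbing} and Lemma~\ref{lm:approx-geom} do not already give.
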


\begin{proof} Let  $\{T_n\}_{n=1}^\infty$ be an increasing sequence of finite subsets of $G$ 
whose union is $G$. Following the recipe of Remark \ref{rem:construct-inf-div} we will construct sequences $\{a_n\}_{n=1}^\infty$,  $\{c_n\}_{n=1}^\infty$  and $\{d_n\}_{n=1}^\infty$ of elements in $G$, and sequences $\{A_n\}_{n=1}^\infty$ and $\{F_n\}_{n=1}^\infty$ of finite subsets of $G$ as follows. For $n=1$ choose $a_1,c_1, d_1, F_1$ and $A_1$ such that 
\begin{itemize}
\item[(a)] $a_1 \ne e$, $F_1 =\{e,a_1\}$, $c_1 = e$, $A_1 = F_1$, $d_1 \notin T_1A_1A_1^{-1}T_1^{-1}$. \vspace{.1cm}
\end{itemize}
This is clearly possible. For all $n \ge 1$, the elements $a_{n+1},b_{n+1},c_{n+1}$ and the sets $A_{n+1}$ and $F_{n+1}$ are constructed inductively so that they satisfy:
\begin{itemize}
\item[(b)] $F_{n+1} = F_{n} \cup F_{n} \, a_{n+1}$ (disjoint union). \vspace{.1cm}
\item[(c)] $A_{n+1} = A_{n} \cup F_{n+1} \, c_{n+1} = \bigcup_{k=1}^{n+1} F_k \, c_k$ (disjoint unions). \vspace{.2cm}
\item[(d)] $d_k  \notin T_kA_{n+1}A_{n+1}^{-1}T_k^{-1}$ for $1 \le k \le n+1$. \vspace{.1cm}
\end{itemize}
Given $n \ge 1$, and let us show how to find $a_{n+1},c_{n+1},d_{n+1}, F_{n+1}$ and $A_{n+1}$ (when the previous elements and sets have been found). 

Choose  $a_{n+1} \in G$ outside the finite set
$$ F_{n}^{-1}F_{n} \; \cup \; \bigcup_{k=1}^{n} F_{n}^{-1}T_k^{-1}\{d_k, d_k^{-1}\}T_kF_n,$$
and set $F_{n+1} = F_{n} \cup F_{n} \, a_{n+1}$. Then (b) holds. Moreover
\begin{equation} \label{eq:g1}
d_k\notin T_kF_n\{a_{n+1},a_{n+1}^{-1}\}F_n^{-1}T_k^{-1}, \quad 1\le k \le n.
\end{equation}
Since $F_nc_n$ is a subset of $A_n$, it follows from (d) that 
\begin{equation} \label{eq:g2}
d_k\notin T_kF_nF_n^{-1}T_k^{-1}, \quad  1 \le k \le n.
\end{equation}
By Equations \eqref{eq:g1} and \eqref{eq:g2} and the definition of $F_{n+1}$ it follows that 
\begin{equation} \label{eq:g3}
d_k\notin T_kF_{n+1}F_{n+1}^{-1}T_k^{-1}, \quad 1 \le k \le n.
\end{equation}

Choose $c_{n+1}\in G$ outside the finite set
$$F_{n+1}^{-1}A_n \, \cup \, \bigcup_{k=1}^n \big(A_n^{-1}T_k^{-1}d_kT_kF_{n+1} \, \cup \, F_{n+1}^{-1}T_k^{-1} d_kT_kA_n\big),$$
and set $A_{n+1}=A_n\cup F_{n+1}c_{n+1}$. Then (c) holds, and
\begin{equation} \label{eq:g4}
d_k \notin T_kF_{n+1}c_{n+1}A_n^{-1}T_k^{-1} \, \cup \, T_kA_nc_{n+1}^{-1}F_{n+1}^{-1}T_k^{-1}, \quad 1 \le k \le n.
\end{equation}
It follows from Equations \eqref{eq:g2}, \eqref{eq:g3} and \eqref{eq:g4} that (d) is satisfied for $1 \le k \le n$. Choose $d_{n+1}\in G$ outside the finite set $T_{n+1}A_{n+1}A_{n+1}^{-1}T_{n+1}^{-1}$. Then (d) holds also for $k=n+1$. This completes the construction of the sequences $\{a_n\}_{n=1}^\infty$,  $\{c_n\}_{n=1}^\infty$, $\{d_n\}_{n=1}^\infty$, $\{A_n\}_{n=1}^\infty$ and $\{F_n\}_{n=1}^\infty$. 

Set
$$A = \bigcup_{n=1}^\infty A_n = \bigcup_{n=1}^\infty F_n\, c_n.$$
Then $A$ is infinitely divisible by Remark \ref{rem:construct-inf-div}. 
Condition (d) (and the fact that the $A_n$'s are increasing) imply that 
$d_k$ does not belong to $T_kAA^{-1}T_k^{-1}$. 
Hence,  $T_kA\cap d_kT_kA=\emptyset$. 
This means that $T_kA$ is not absorbing. As every finite subset $F$ of $G$ is contained in $T_k$ for some $k$ we can conclude that 
$A$ is not equivalent to an absorbing set. 
\end{proof}

\begin{corollary} \label{cor:min-action-1}
Every countable infinite group $G$ admits a free minimal action on a totally disconnected locally compact Hausdorff space, which is neither discrete nor compact. The action can, moreover, be assumed to be amenable if $G$ is exact.
\end{corollary}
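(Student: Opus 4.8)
The plan is to combine Theorem~\ref{thm:A} with the structure theory developed in the previous sections, and then to run the standard reduction to the locally compact non-compact Cantor set. First I would invoke Theorem~\ref{thm:A} to obtain an infinitely divisible subset $A$ of $G$ that is not equivalent to an absorbing subset. By Proposition~\ref{co-cpt-minimal}, the co-compact $G$-space $X_A$ contains a minimal closed $G$-invariant subset $Z$. The action of $G$ on $\beta G$ is free, hence so is its restriction to the invariant subset $Z$, and $Z$ is minimal by construction. Since $Z \subseteq \beta G$ and $\beta G$ is totally disconnected, $Z$ is a totally disconnected locally compact Hausdorff $G$-space.

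Next I would check that $Z$ is neither compact nor discrete. By Proposition~\ref{prop:absorbing}, since $A$ is not equivalent to an absorbing set, $X_A$ contains \emph{no} compact minimal closed $G$-invariant subset; in particular $Z$ is non-compact. By Theorem~\ref{thm:discrete-vs-infdiv}, since $A$ \emph{is} infinitely divisible, $X_A$ contains no discrete minimal closed $G$-invariant subset; in particular $Z$ is non-discrete. This already establishes a free minimal action of $G$ on a totally disconnected, non-compact, non-discrete, locally compact Hausdorff space.

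To upgrade $Z$ to the locally compact non-compact Cantor set $\Can^*$, I would apply the standard reduction argument: a second-countable, totally disconnected, locally compact, non-compact, non-discrete, perfect Hausdorff space is homeomorphic to $\Can^*$. The space $Z$ is second countable since $G$ is countable (so $\beta G$ has a countable base of compact-open sets restricting to a countable base on the $\sigma$-compact invariant set, or one passes through the crossed-product picture of Section~\ref{sec:min-universal}). Minimality of a non-discrete $G$-space forces it to be perfect: an isolated point would have finite, hence compact, orbit closure, contradicting non-discreteness (alternatively, the set of isolated points is open and invariant, so by minimality it is empty or all of $Z$, and the latter would make $Z$ discrete). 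Hence $Z \cong \Can^*$ as topological spaces, and transporting the $G$-action gives a free minimal action of $G$ on $\Can^*$. Finally, the action of $G$ on $\beta G$ is amenable whenever $G$ is exact, and amenability passes to invariant subsets, so the induced action on $Z$ (hence on $\Can^*$) is amenable when $G$ is exact.

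The main obstacle, and the point requiring the most care, is the reduction to $\Can^*$: one must be sure that $Z$ genuinely satisfies all the hypotheses of the topological characterization of $\Can^*$ (second countable, totally disconnected, locally compact, Hausdorff, non-compact, and perfect), and in particular that perfectness really does follow from minimality together with non-discreteness. Everything else is a direct application of Theorem~\ref{thm:A}, Proposition~\ref{prop:absorbing}, and Theorem~\ref{thm:discrete-vs-infdiv}, together with the freeness and amenability (for exact $G$) of the $G$-action on $\beta G$.
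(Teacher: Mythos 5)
Your first two paragraphs prove exactly the stated corollary, and by essentially the same route as the paper: take $A$ from Theorem~\ref{thm:A}, let $Z$ be a minimal closed invariant subset of the co-compact space $X_A$ (Proposition~\ref{co-cpt-minimal}), get non-compactness from Proposition~\ref{prop:absorbing}, non-discreteness from Theorem~\ref{thm:discrete-vs-infdiv}, and freeness, total disconnectedness and (for exact $G$) amenability by restricting the corresponding properties of the $G$-space $\beta G$ to the invariant subset $Z$. That part is fine and is all the corollary asks for.

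The third paragraph, however, contains a genuine error, even though it addresses a stronger statement (the upgrade to $\Can^*$, which in the paper is Theorem~\ref{thm:min-action-2}, not this corollary). You assert that $Z$ is second countable because $G$ is countable; this is false. The compact-open subsets of $\beta G$ are the sets $K_B$ with $B \subseteq G$, so $\beta G$ (and likewise $X_A$ and its minimal subsets) has $2^{\aleph_0}$ basic compact-open sets and is in general nowhere near metrizable; the paper itself notes in the proof of Theorem~\ref{thm:min-action-2} that $Z$ ``satisfies all the axioms of the locally compact non-compact Cantor set, except it is not second countable.'' The actual reduction to $\Can^*$ is not a topological bookkeeping step: one must construct a separable $G$-invariant sub-\Cs{} of $C_0(Z)$ generated by projections, arranged (via \cite[Lemma 6.1, Lemma 6.2, Proposition 6.6]{KelMonRor:supramenable} and Lemma~\ref{lm:subalgebra}) so that the induced action on its spectrum remains free, minimal, non-compact, non-discrete, and amenable when $G$ is exact. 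Also, your first perfectness argument (``an isolated point would have finite, hence compact, orbit closure'') does not work: an isolated point in a minimal space has a dense, open orbit, which forces the whole space to be discrete; your parenthetical alternative (the set of isolated points is open and invariant) is the correct version. None of this affects the corollary as stated, but if you intend the $\Can^*$ claim you must replace the second-countability assertion by the subalgebra construction.
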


\begin{proof} Let $G$ be a given countable infinite group, and let $A \subseteq G$ be as in Theorem \ref{thm:A}. Let $Z$ be any minimal closed invariant subset of $X_A$. Then $Z$ is non-compact by  Proposition~\ref{prop:absorbing} (cf.\ \cite[Proposition 5.5(iv)]{KelMonRor:supramenable}), since $A$ is not equivalent to an absorbing set; and $Z$ is non-discrete by Theorem \ref{thm:discrete-vs-infdiv}. 

It follows from \cite[Proposition 5.5]{KelMonRor:supramenable} that $Z$ is a locally compact totally disconnected Hausdorff space, and that $G$ acts freely on $Z$. It further follows from \cite[Proposition 5.5]{KelMonRor:supramenable} that the action of $G$ on $Z$ is amenable if $G$ is exact (essentially by Ozawa's result, that $G$ acts amenably on $\beta G$ when $G$ is exact). 
\end{proof}

\noindent Recall the definition of minimal types from Definition \ref{def:minimal-type}.

\begin{corollary} \label{cor:min-type} Each countable infinite group $G$ contains minimal types different from $[G]$ (the compact type) and $[\{e\}]$ (the discrete type). 
\end{corollary}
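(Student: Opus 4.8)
The plan is to read off the required minimal type from the $G$-space already produced in Corollary~\ref{cor:min-action-1}, using Propositions~\ref{prop:compacttype} and~\ref{prop:discrete} to certify that it is neither the compact nor the discrete type.

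Let $G$ be a countable infinite group. By Corollary~\ref{cor:min-action-1} there is a locally compact Hausdorff space $Z$ on which $G$ acts freely and minimally, and which is neither compact nor discrete. Since $Z$ is minimal, every orbit is dense in $Z$; fix any point $z_0 \in Z$, so that $(Z,z_0)$ is a pointed locally compact $G$-space. A minimal locally compact $G$-space is co-compact, so $\Type(G,Z,z_0)$ is defined; write $[B] = \Type(G,Z,z_0)$. By Definition~\ref{def:minimal-type}, $[B]$ lies in $P_\approx^{\mathrm{min}}(G)$, i.e.\ $B$ is of minimal type.

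It remains to check that $[B] \ne [G]$ and $[B] \ne [\{e\}]$. As $Z$ is co-compact but not compact, Proposition~\ref{prop:compacttype} gives $[B] \ne [G]$. Since the action of $G$ on $Z$ is free, the isotropy group of $z_0$ is trivial, so Proposition~\ref{prop:discrete} applies; as $Z$ is co-compact but not discrete, it gives $[B] \ne [\{e\}]$. Thus $[B]$ is a minimal type of $G$ different from the compact type $[G]$ and the discrete type $[\{e\}]$.

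All of the substance of this corollary is carried by the results it invokes --- the combinatorial construction of Theorem~\ref{thm:A}, which supplies an infinitely divisible subset of $G$ that is not equivalent to an absorbing set, together with the dynamical translations in Theorem~\ref{thm:discrete-vs-infdiv} and Proposition~\ref{prop:absorbing} that feed into Corollary~\ref{cor:min-action-1}. The one point that needs attention here is the appeal to freeness of the action: it is exactly what is needed so that Proposition~\ref{prop:discrete} may be used to rule out the discrete type.
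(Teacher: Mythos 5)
Your proof is correct and follows essentially the same route as the paper: take the free minimal non-compact, non-discrete $G$-space from Corollary~\ref{cor:min-action-1}, pick a base point, and observe that its type is a minimal type which is neither $[G]$ nor $[\{e\}]$ by Propositions~\ref{prop:compacttype} and~\ref{prop:discrete}. Your explicit remark that freeness supplies the trivial isotropy needed for Proposition~\ref{prop:discrete} is a point the paper leaves implicit, and it is correctly handled.
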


\begin{proof} Let $Z$ be a locally compact Hausdorff space, which is neither discrete nor compact, on which $G$ acts minimally (and freely). Pick any $z_0 \in Z$ and set $[A] = \Type(G,Z,z_0)$. Then $[A]$ is of minimal type, and $[A] \ne [G]$ (because $Z$ is non-compact) and $[A] \ne [\{e\}]$ (because $Z$ is non-discrete).
\end{proof}

\noindent We wish to replace $Z$ above with a second countable $G$-space, and need for this the following lemma:

\begin{lemma} \label{lm:subalgebra} Let $G$ be a countable group, and let $Z$ be a minimal locally compact  free $G$-space. Let $\cA$ be a non-zero $G$-invariant closed sub-\Cs{} of $C_0(Z)$, and let $Y= \widehat{\cA}$ denote the spectrum of $\cA$ (so that $\cA = C_0(Y)$). 

Then $Y$ is a locally compact $G$-space, and the inclusion $\cA \subseteq C_0(Z)$ arises from a surjective proper continuous $G$-map $\varphi \colon Z \to Y$. Moreover:
\begin{enumerate}
\item $Y$ is non-compact if $Z$ is non-compact. \vspace{.1cm}
\item The action of $G$ on $Y$ is minimal. \vspace{.1cm}
\item $Y$ is non-discrete if the action of $G$ on $Y$ is free and $Z$ is non-discrete.
\end{enumerate}
 \end{lemma}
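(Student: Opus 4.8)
The plan is to build the $G$-map $\varphi$ by Gelfand duality: a $G$-invariant sub-$C^*$-algebra $\cA = C_0(Y) \subseteq C_0(Z)$ corresponds to a proper continuous surjection $\varphi \colon Z \to Y$, and $G$-invariance of $\cA$ makes $\varphi$ a $G$-map. The first step is to recall why the inclusion of $C_0$-algebras dualizes to a proper surjection. An injective $^*$-homomorphism $\iota \colon C_0(Y) \hookrightarrow C_0(Z)$ which is nondegenerate (here nondegeneracy is immediate since $\cA$ is nonzero and acts on $C_0(Z)$, so it extends to the multiplier algebras and in particular $C_0(Z) = \overline{\cA \cdot C_0(Z)}$ — actually one uses that $\cA$ is an ideal-free subalgebra, so one should instead argue via the spectrum directly) induces $\varphi \colon Z \to Y$ by restriction of characters: $\varphi(z)$ is the character $f \mapsto \hat f(z)$ on $\cA$, where $\hat f \in C_0(Z)$ is the image of $f$. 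Continuity is clear from the weak-$*$ topology on $\widehat{\cA}$; properness follows because $f \circ \varphi = \iota(f)$ vanishes at infinity on $Z$ for every $f \in \cA$, which forces $\varphi^{-1}(K)$ to be compact for compact $K \subseteq Y$ (a standard argument: pick $f \in \cA$ with $f \ge 1$ on $K$, then $\varphi^{-1}(K) \subseteq \{z : |\iota(f)(z)| \ge 1\}$, a compact set). Surjectivity of $\varphi$ then follows because a proper map between locally compact Hausdorff spaces has closed image, and the image is dense because $\iota$ is injective (if $\varphi(Z)$ missed a point $y_0$, Urysohn gives $0 \ne f \in \cA$ supported off $\varphi(Z)$, so $\iota(f) = f\circ\varphi = 0$). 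Finally, $G$-invariance of $\cA$ means the $G$-action on $C_0(Z)$ restricts to $\cA$, hence descends to an action on $Y = \widehat{\cA}$, and $\varphi$ intertwines the two actions by construction.

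With $\varphi$ in hand, the three itemized claims are quick. For (i): a continuous proper $G$-map sends $Z$ onto $Y$, and the continuous image of a $\sigma$-compact-but-not-compact space under a proper surjection cannot be compact — more simply, if $Y$ were compact then $\iota(C_0(Y)) = \cA$ would be unital in $C_0(Z)$ (contains an element which is $1$ everywhere, i.e.\ $\varphi^* 1_Y$), forcing $C_0(Z)$ unital, i.e.\ $Z$ compact. For (ii): minimality of the $G$-action on $Y$ is equivalent to $C_0(Y) = \cA$ having no nonzero proper $G$-invariant ideal; but a $G$-invariant ideal $J$ of $\cA$ generates a $G$-invariant ideal of $C_0(Z)$, hence corresponds to a $G$-invariant open subset of $Z$, which by minimality of $Z$ is $\emptyset$ or $Z$; tracing this back shows $J = 0$ or $J = \cA$. (Alternatively, argue directly with closed invariant subsets: $\varphi^{-1}$ of a closed invariant proper subset of $Y$ is a closed invariant proper subset of $Z$, hence empty.) For (iii): if the $G$-action on $Y$ is free and $Y$ had an isolated point $y$, then, since $Y$ is minimal, $G.y$ is dense, hence (by freeness and the isolated-point property) $Y = G.y$ is discrete and $G$-isomorphic to $(G,e)$; then $\varphi \colon Z \to G$ is a proper continuous $G$-map, and $\varphi^{-1}(e)$ is a nonempty compact-open subset of $Z$ whose $G$-translates partition $Z$ into compact-open pieces, which forces every point of $Z$ to be isolated (using freeness of the $G$-action on $Z$ and minimality to see $\varphi^{-1}(e)$ is a single point), contradicting that $Z$ is non-discrete.

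The main obstacle, and the step deserving the most care, is (iii): one must rule out that $Y$ is discrete while $Z$ is not, and the only leverage is the properness of $\varphi$ together with freeness on $Y$ and minimality. The cleanest route is probably: a minimal free discrete $G$-space is necessarily $(G,e)$ (this is exactly Proposition~\ref{prop:discrete}), so if $Y$ is discrete then $Y \cong (G,e)$ and the existence of a proper continuous $G$-map $\varphi \colon Z \to (G,e)$ forces, via Proposition~\ref{prop:discrete} applied to $Z$ (pick any $z_0 \in Z$ and note $O_Z(\varphi^{-1}(e), z_0)$ is finite while $\varphi^{-1}(e)$ is compact, so $\mathcal M(G,Z,z_0) = \mathcal M_{\mathrm{fin}}$), that $Z$ itself is discrete — contradiction. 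Care is needed to check that $Z$ actually has a dense orbit so that Proposition~\ref{prop:discrete} applies; but a minimal $G$-space has every orbit dense, so this is automatic once we fix a base point.
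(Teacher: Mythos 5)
Your construction of $\varphi$ has a genuine gap at its very first step: you need to know that no point of $Z$ is a common zero of all functions in $\cA$ (equivalently, that the inclusion $\cA \subseteq C_0(Z)$ is nondegenerate), for otherwise the would-be character $f \mapsto f(z)$ is the zero functional at such a point and $\varphi$ is defined only on a proper open subset of $Z$, after which your properness and surjectivity arguments collapse. Your claim that nondegeneracy ``is immediate since $\cA$ is nonzero and acts on $C_0(Z)$'' is false: $\cA = C_0(V)$ for a proper open subset $V \subseteq Z$ is a nonzero subalgebra for which the conclusion of the lemma fails, and your fallback remark (``$\cA$ is an ideal-free subalgebra, argue via the spectrum directly'') is exactly the unproved assertion in disguise. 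This is the one place where the hypotheses that $\cA$ is $G$-invariant and that $Z$ is a \emph{minimal} $G$-space must be used: the set $\{z \in Z : f(z)=0 \ \text{for all}\ f \in \cA\}$ is a closed $G$-invariant proper subset of $Z$ (proper since $\cA \ne 0$), hence empty by minimality; equivalently, $\overline{\cA\, C_0(Z)}$ is a non-zero $G$-invariant ideal of the $G$-simple algebra $C_0(Z)$ and therefore equals $C_0(Z)$. This is precisely how the paper's proof begins, and once it is inserted the rest of your construction of the proper continuous surjective $G$-map $\varphi$ goes through.

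The remainder of your argument is correct. Items (i) and (ii) are handled essentially as in the paper. For (iii) you take a genuinely different route: you identify a discrete free minimal $Y$ with $(G,e)$ via Proposition~\ref{prop:discrete}, and then, using properness of $\varphi$ and the fact that $O_Z(\varphi^{-1}(e),z_0)$ is a single element, conclude $\cM(G,Z,z_0)=\cM_{\mathrm{fin}}$, so Proposition~\ref{prop:discrete} (which uses freeness of $Z$, a hypothesis of the lemma) forces $Z$ to be discrete, a contradiction. This works, and so does your parenthetical direct argument that $\varphi^{-1}(e)$ meets the dense orbit $G.z$ in a single point and hence is a singleton. The paper instead shows that every orbit in $Z$ is closed (using only discreteness of $Y$ and freeness of the action on $Y$) and then applies Baire's theorem to the countable locally compact Hausdorff space $Z$ to produce an isolated point; your version leans on the machinery of Section~\ref{sec:type}, while the paper's is self-contained and does not need freeness of the action on $Z$ for this step.
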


\begin{proof} 
The spectrum $Y$ of the commutative \Cs{}  $\cA$ is a locally compact Hausdorff space; and the action of $G$ on $\cA$ comes from an action of $G$ on $Y$. Since $\cA$ is non-zero and $G$-invariant, and $C_0(Z)$ by assumption is $G$-simple, it follows that $\cA$ is not contained in a proper ideal of $C_0(Z)$ (i.e., for each $z \in Z$ there exists $f \in \cA$ such that $f(z) \ne 0$). The inclusion $\cA \to C_0(Z)$ is therefore induced by a surjective proper continuous  $G$-map $\varphi \colon Z \to Y$. 

(i). If $Y$ is compact, then $Z=\varphi^{-1}(Y)$ is compact because $\varphi$ is proper.

(ii). If $Y_0$ is a closed $G$-invariant subspace of $Y$, then $\varphi^{-1}(Y_0)$ is a closed $G$-invariant subspace of $Z$. Hence $\varphi^{-1}(Y_0)$ is either empty or equal to $Z$, so $Y_0$ is either empty or equal to $Y$.

(iii). Suppose that $Y$ is discrete and that $G$ acts freely on $Y$. Let $z \in Z$. We claim that $G.z$ is closed in $Z$. Indeed, suppose that $z_0$ belongs to the closure of $G.z$. Then $g_\alpha.z \to z_0$ for some net $\{g_\alpha\}$ in $G$, whence $g_\alpha.\varphi(z) \to \varphi(z_0)$ in $Y$. As $Y$ is discrete, this entails that $g_\alpha.\varphi(z)$ is eventually constant; and as $G$ acts freely on $Y$ we conclude that $\{g_\alpha\}$ is eventually constant. This, of course, implies that $z_0 \in G.z$. 

Since $Z$ is minimal, $G.z = Z$ for (some/any) $z \in Z$. Hence $Z$ must have an isolated point by Baire's theorem, being a countable locally compact Hausdorff space, and therefore each point of $Z$ is isolated.
\end{proof}

\noindent The Cantor set $\Can$ is the unique compact Hausdorff space which is second countable, totally disconnected and without isolated points. There is also a unique \emph{locally compact, non-compact} Hausdorff space which is second countable, totally disconnected and without isolated points, denoted $\Can^*$ and referred to as the non-compact locally compact Cantor set. It arises, for example, by removing one point from $\Can$, or as $\Can \times \N$, or as the $p$-adic numbers.  

It was shown in \cite{KelMonRor:supramenable} that "most" countable infinite groups admit a free minimal action on $\Can^*$. It remained in \cite{KelMonRor:supramenable}  an open question if \emph{all} countable infinite groups admit such an action. We can now answer this question in the affirmative:

\begin{theorem} \label{thm:min-action-2}
Every countable infinite group $G$ admits a free minimal action on the non-compact locally compact Cantor set $\Can^*$. If $G$ is exact, then the action can, moreover, be taken to be amenable. 

For each $A \subseteq G$ of minimal type, such that $[A] \ne [G]$ and $[A] \ne [\{e\}]$, there is a free minimal action of $G$ on $\Can^*$ and $x_0 \in \Can^*$ such that $\Type(G,\Can^*,x_0) = [A]$. Again, if $G$ is exact, then the action can be taken to be amenable. 
\end{theorem}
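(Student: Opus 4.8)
The plan is to reduce the statement to the construction in Theorem~\ref{thm:A} together with Corollary~\ref{cor:min-action-1} and a "shrinking" argument via Lemma~\ref{lm:subalgebra}. First I would fix a countable infinite group $G$ and invoke Theorem~\ref{thm:A} to obtain an infinitely divisible subset $A \subseteq G$ which is not equivalent to an absorbing set; by Corollary~\ref{cor:min-action-1} every minimal closed $G$-invariant subset $Z$ of $X_A$ is then a free minimal locally compact totally disconnected Hausdorff space that is neither compact nor discrete (and the action is amenable if $G$ is exact, by \cite[Proposition 5.5]{KelMonRor:supramenable}). The issue is that $Z$ need not be second countable, so it need not be $\Can^*$; the task is to pass to a second countable quotient without destroying freeness, minimality, non-compactness or non-discreteness.

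The key step is to produce a countable $G$-invariant family of compact-open subsets of $Z$ that separates points, generating a separable $G$-invariant sub-$C^*$-algebra $\cA \subseteq C_0(Z)$. Concretely, since $Z$ is non-discrete, non-compact, totally disconnected and $G$ acts freely, I would choose a point $z \in Z$ and, using that the orbit $G.z$ is not discrete in $Z$ while $G$ acts freely, produce a decreasing sequence of compact-open neighbourhoods $U_1 \supseteq U_2 \supseteq \cdots$ of $z$ with $\bigcap_n U_n = \{z\}$ and, for each $n$, some $g \ne e$ with $g.U_n \subseteq U_n$ not exhausting $U_n$ — this is the mechanism that will force the resulting quotient to remain non-discrete. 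Together with a countable exhaustion of $Z$ by compact-open sets (available since $Z$ is $\sigma$-compact, being co-compact under a countable group, and totally disconnected), the $G$-translates of all these sets generate a separable $G$-invariant $\cA = C_0(Y)$, and Lemma~\ref{lm:subalgebra} gives a surjective proper continuous $G$-map $\varphi \colon Z \to Y$ with $Y$ minimal, non-compact (by (i)) and, provided the $G$-action on $Y$ is free, non-discrete (by (iii)). The main obstacle is exactly ensuring that the $G$-action on the chosen $\cA$ remains \emph{free}: one must arrange the generating family to be large enough that for every $g \ne e$ there is some compact-open set in $Y$ disjoint from its $g$-translate, which is where the construction needs care (one enlarges $\cA$ countably many times, once for each $g \in G$, using total disconnectedness of $Z$ and freeness there to separate $V$ from $g.V$). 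Since $Y$ is then a second countable, totally disconnected, locally compact, non-compact Hausdorff space with no isolated points, the uniqueness of such a space gives $Y \cong \Can^*$.

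Finally, for the last assertion, suppose $A \subseteq G$ is of minimal type with $[A] \ne [G]$ and $[A] \ne [\{e\}]$. By Corollary~\ref{cor:min-infdiv}, $A$ is infinite (so infinitely divisible, as $[A]\ne[\{e\}]$) and not equivalent to an absorbing set (as $[A] \ne [G]$); hence any minimal closed $G$-invariant subset $Z$ of $X_A$ is free, minimal, non-compact and non-discrete exactly as above, and $\Type(G,Z,z_0) = [A]$ for a suitable $z_0 \in Z$ by Lemma~\ref{lm:type-of-universal}. Applying the shrinking construction of the previous paragraph I obtain $\varphi \colon Z \to Y \cong \Can^*$; setting $x_0 = \varphi(z_0)$, Proposition~\ref{prop:sametype} gives $\Type(G,\Can^*,x_0) = \Type(G,Z,z_0) = [A]$, and the action is amenable when $G$ is exact since amenability passes to the quotient $Y$. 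This completes the proof, modulo the careful but routine bookkeeping in assembling the countable separating invariant family of compact-open sets.
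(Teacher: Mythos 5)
Your overall route is the paper's own: pass to a minimal closed $G$-invariant subset $Z \subseteq X_A$ (non-compact, non-discrete, free, totally disconnected), cut down to a separable $G$-invariant sub-\Cs{} $\cA \subseteq C_0(Z)$ generated by projections, apply Lemma~\ref{lm:subalgebra}, identify $\widehat{\cA}$ with $\Can^*$, and compute the type via Lemma~\ref{lm:type-of-universal} and Proposition~\ref{prop:sametype}; the reduction of the second statement through Corollary~\ref{cor:min-infdiv} is also correct. However, two steps as you state them would fail. The most serious is amenability: the claim that ``amenability passes to the quotient $Y$'' is false. Amenability is inherited by extensions, not by factors: for instance, if $Y$ is a free non-amenable compact $\F_2$-space (say a free subshift carrying an invariant probability measure), then $Z = Y \times \partial \F_2$ is a free \emph{amenable} $\F_2$-space admitting a $G$-equivariant surjection onto $Y$. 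So in the exact case amenability of the action on $\widehat{\cA}$ must be built into the choice of $\cA$; this is exactly why the paper enlarges $\cA$ by a countable set $M'' \subseteq C_0(Z)$, following the proof of Lemma~6.2 of \cite{KelMonRor:supramenable}, so that any invariant subalgebra containing $M''$ has amenable action on its spectrum.

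The freeness step is also under-specified in a way that matters: including, for each $g \ne e$, a single compact-open $V$ with $V \cap g.V = \emptyset$ only separates $y$ from $g.y$ for points $y$ coming from $V$; to make the action on $\widehat{\cA}$ free one needs, for each $g \ne e$, finitely many such sets covering each member $K_n$ of a compact-open exhaustion of $Z$ (so that every point is covered), which is precisely the content of Lemma~6.1 of \cite{KelMonRor:supramenable} that the paper invokes, together with Proposition~6.6 of \cite{KelMonRor:supramenable} (or Lemma~6.7 of \cite{RorSie:pi}) to organize everything into a countable $G$-invariant family of projections. Finally, your auxiliary device of compact-open neighbourhoods $U_1 \supseteq U_2 \supseteq \cdots$ of a point $z$ with $\bigcap_n U_n = \{z\}$ is generally unavailable here: points of $Z \subseteq \beta G \setminus G$ are not $G_\delta$-points, so no such sequence need exist. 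Fortunately it is also unnecessary, since non-discreteness of $Y$ already follows from Lemma~\ref{lm:subalgebra}~(iii) once freeness of the action on $Y$ has been secured.
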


\begin{proof} Let $A$ be a subset of $G$ of minimal type such that $[A] \ne [G]$ and $[A] \ne [\{e\}]$, whose existence is ensured by Corollary \ref{cor:min-type}, and let $Z$ be a minimal closed invariant subset of $X_A$. Then $\Type(G,Z,z_0) = [A]$ for some $z_0 \in Z$ by Lemma~\ref{lm:type-of-universal}. Observe that $Z$ is non-compact and non-discrete by the assumption that $[A] \ne [G]$ and $[A] \ne [\{e\}]$, cf.\ Proposition \ref{prop:compacttype} and Proposition \ref{prop:discrete}. 

This $G$-space, $Z$, satisfies all the axioms of the locally compact non-compact Cantor set, except it is not second countable. We shall replace $Z$ by a $G$-equivariant quotient of $Z$ which is second countable, and which retains all the other properties of the $G$-space $Z$. 

We construct the quotient space of $Z$ at the level of algebras. At this level  we seek a separable $G$-invariant subalgebra $\cA$ of $C_0(Z)$. The spectrum, $\widehat{\cA}$, of $\cA$ is then a second countable $G$-space. We arrange, moreover, that 
$\cA$ is generated by a set of projections (this will imply that $\widehat{\cA}$ is totally disconnected); and that the action of $G$ on $\widehat{\cA}$ is free, and also amenable if $G$ is exact.  It will then follow from Lemma \ref{lm:subalgebra} that $\widehat{A}$ is non-compact, non-discrete and minimal as a $G$-space. 

We can therefore, once $\cA$ has been constructed, identify $\widehat{\cA}$ with $\Can^*$, and we get a $G$-action on $\Can^*$ (arising from the action of $G$ on $\cA$) with the desired properties. 
The quotient mapping $\varphi \colon Z \to \Can^*$, which induces the inclusion $\cA \subseteq C_0(Z)$, cf.\ Lemma \ref{lm:subalgebra}, will be a proper continuous surjective $G$-map. It follows from Proposition \ref{prop:sametype} that $\Type(G,\Can^*,x_0) = \Type(G,Z,z_0) = [A]$ when  $x_0 = \varphi(z_0)$. 

We construct the separable subalgebra $\cA$ of $C_0(Z)$ following the ideas of \cite{RorSie:pi} and \cite{KelMonRor:supramenable}. The set-up here is a little different because we seek a quotient of the space $Z$, whereas in the mentioned references quotients of $X_A$ were constructed. 

Observe that $C_0(Z)$ has a countable approximate unit consisting of projections, call it $\{p_n\}_{n=1}^\infty$. Indeed, 
since $Z$ is totally disconnected and Hausdorff it contains a compact-open subsets $K$. By minimality we must have $Z = \bigcup_{g \in G} g.K = \bigcup_{n=1}^\infty K_n$, when $K_n = \bigcup_{g \in F_n} g.K$ for some increasing sequence $\{F_n\}$ of finite subsets of $G$ with $\bigcup F_n = G$. We can thus take  $p_n = 1_{K_n} \in C_0(Z)$.

We use  \cite[Lemma 6.1]{KelMonRor:supramenable}  to construct $\cA$ so the action of $G$ on $\widehat{\cA}$ becomes free. 
Since $G$ acts freely on $Z$, we can apply the proof of \cite[Lemma 6.1]{KelMonRor:supramenable} (with respect to the approximate unit $\{p_n\}$) to obtain a countable subset $M'$ of $C_0(Z)$ such that whenever $\cA$ is a $G$-invariant sub-\Cs{} of $C_0(Z)$ which contains $\{p_n\}$ and $M'$, then $G$ acts freely on $\widehat{\cA}$. 

Assume  that $G$ is exact. As in the proof of \cite[Lemma 6.2]{KelMonRor:supramenable}, and using the fact that the action of $G$ on $Z$ is amenable, there is a countable subset $M''$ of $C_0(Z)$ such that the action of $G$ on $\widehat{\cA}$ is amenable whenever $\cA$ contains $M''$. 

It follows from \cite[Proposition 6.6]{KelMonRor:supramenable} or \cite[Lemma 6.7]{RorSie:pi} that there exists a countable $G$-invariant family, $P$, of projections in $C_0(Z)$, such that $\cA := C^*(P)$ contains  $\{p_n\} \cup M' \cup M''$ if $G$ is exact, and $\{p_n\} \cup M'$ otherwise. This completes the proof.
\end{proof}

%%%%%%%%%%%%%%%%%%%%%%%%%%%%%%%%%%%%%%%%%%%%%%%%%%%%%%%%%%%%%%%

\section{Removing the base point from the invariant} \label{sec:basepoint}

\noindent In this last section we investigate universal properties of locally compact $G$-spaces without making reference to a base point. This leads to a new order and equivalence relations on the power set of $G$, and we define the (base point free) type of a locally compact $G$-space in terms of this new equivalence relation. It turns out that a subset is of minimal type (it represents a minimal $G$-space) if and only if it is minimal with respect to this new ordering on $P(G)$.  As a byproduct of these efforts we give in Corollary \ref{cor:char-same-type} an answer to the following question: Suppose that $X_1$ and $X_2$ are minimal locally compact $G$-spaces. When does there exists a minimal locally compact $G$-space $X$ with surjective proper continuous $G$-maps $X \to X_j$ for $j=1,2$? 

To motivate the discussion below, consider a (countable) group $G$ and a locally compact $G$-space $X$ with dense orbits. Let $K \subseteq X$ be a $G$-regular compact set, and let $x \in X$ be such that $G.x$ is dense in $X$. Then 
$$\Type(G,X,x) = [A], \quad \text{where} \; A = O_X(K,x),$$
by Proposition \ref{prop:type-condition}. If we replace  the base point $x$ with $g.x$, then $O_X(K,g.x) = O_X(K,x)g$, so $\Type(G,X,g.x) = [Ag]$.

If $y \in X$ is an arbitrary point, then $g_n.x \to y$ for some sequence (or net) $\{g_n\}_{n=1}^\infty$ in $G$. We show below that $O_X(K,y)$ is (equivalent) to the limit of the sets $\{Ag_n\}$. 

Let $G$ be an infinite countable group. 
A sequence (or a net) $\{A_n\}$ of subsets of $G$ converges to a subset $B\subseteq G$ 
if the functions $1_{A_n}$ tend to $1_B$ pointwise. 
In other words, $A_n\to B$ if and only if the identity $A_n\cap F=B\cap F$ holds eventually 
for any finite set $F$.
We may identify $1_A$ with a point in $\{0,1\}^G$, 
which is a Cantor set when equipped with the product topology. 
The pointwise convergence is exactly the same as 
the convergence with respect to this topology. 
In particular, any sequence (or net) $\{A_n\}_{n=1}^\infty$ has a convergent subsequence (or subnet), 
because $\{0,1\}^G$ is a (metrizable) compact space.

\begin{lemma} \label{lm:equiv-i}
Let $G$ be a countable infinite group, let $A$ and $B$ be subsets of $G$, and let $\{g_n\}_{n=1}^\infty$ be a sequence in $G$. 
Suppose that $\{Ag_n\}_{n=1}^\infty$ converges to $B$. 
\begin{enumerate}
\item For any finite subset $F\subseteq G$, 
the sequence $\{FAg_n\}_{n=1}^\infty$ converges to $FB$. \vspace{.1cm}
\item Let $A'$ and $B'$ be subsets of $G$. 
If $A\approx A'$ and $\{A'g_n\}_{n=1}^\infty$ con\-verges to $B'$, then $B\approx B'$. 
\end{enumerate}
\end{lemma}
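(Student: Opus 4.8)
\textbf{Plan for the proof of Lemma \ref{lm:equiv-i}.}

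For part (i), the plan is to argue directly from the definition of convergence in $\{0,1\}^G$. Fix a finite set $E \subseteq G$; I must show that $FAg_n \cap E = FB \cap E$ eventually. The key observation is that $h \in FAg_n$ if and only if $t^{-1}h \in Ag_n$ for some $t \in F$, and similarly for $FB$. So the intersection of $FAg_n$ with $E$ is determined by the intersection of $Ag_n$ with the finite set $F^{-1}E = \{t^{-1}h : t \in F, h \in E\}$ (together with $F$ itself, which is fixed). Since $Ag_n \to B$, we have $Ag_n \cap F^{-1}E = B \cap F^{-1}E$ for all $n$ large enough, and from this identity one reads off $FAg_n \cap E = FB \cap E$ for those $n$. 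This is a routine finite bookkeeping argument; I would write it compactly rather than expanding every case.

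For part (ii), the plan is to use (i) to promote the relation $A \approx A'$ through the limit. Since $A \approx A'$, by Definition \ref{def:G-equiv} there are finite sets $F, F' \subseteq G$ with $A \subseteq F'A'$ and $A' \subseteq FA$. Then $Ag_n \subseteq F'A'g_n$ and $A'g_n \subseteq FAg_n$ for every $n$. By hypothesis $Ag_n \to B$ and $A'g_n \to B'$, and by part (i) also $F'A'g_n \to F'B'$ and $FAg_n \to FB$. Passing the inclusions $Ag_n \subseteq F'A'g_n$ and $A'g_n \subseteq FAg_n$ to the limit — here I use that pointwise limits preserve set inclusion, i.e.\ if $C_n \subseteq D_n$ for all $n$ and $C_n \to C$, $D_n \to D$, then $C \subseteq D$, which is immediate since $1_{C_n} \le 1_{D_n}$ pointwise and the inequality survives taking pointwise limits — we obtain $B \subseteq F'B'$ and $B' \subseteq FB$. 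Hence $B \propto B'$ and $B' \propto B$, i.e.\ $B \approx B'$.

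I do not anticipate a serious obstacle here; the lemma is a soft manipulation of pointwise limits. The one point that requires a little care is making sure that all the convergence statements used in (ii) are legitimate: the hypotheses give $Ag_n \to B$ and $A'g_n \to B'$ directly, and part (i) supplies $F'A'g_n \to F'B'$ and $FAg_n \to FB$, so every sequence whose limit I invoke has actually been shown to converge. The only genuinely "new" ingredient beyond part (i) is the (trivial) stability of inclusion under pointwise limits, which I would state in one line. I expect the whole proof to be short.
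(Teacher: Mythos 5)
Your proposal is correct and follows essentially the same route as the paper: part (i) is the same routine verification of pointwise convergence (you test against a finite window $F^{-1}E$, while the paper writes $1_{FA}=1_G\wedge\sum_{h\in F}1_{hA}$ and passes to the limit termwise), and part (ii) is identical, passing the inclusions coming from $A\approx A'$ through the limit using part (i) and the stability of inclusions under pointwise limits. No gaps.
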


\begin{proof}
(i). It is easy to see that $hAg_n$ converges to $hB$ for any $h\in G$. As
\[
1_{FA}=1_G \wedge \sum_{h\in F}1_{hA}
\]
for all finite subsets $F$ of $G$ and for all subsets $A$ of $G$ (where $\wedge$ denotes taking minimum), we see that 
\[
\lim_{n\to \infty} 1_{FAg_n} = \lim_{n\to \infty} \big(1_G \wedge \sum_{h\in F}1_{hAg_n} \big)= 1_G \wedge \sum_{h\in F}1_{hB}  =1_{FB}. 
\]

(ii).
There exists a finite set $F\subseteq G$ 
such that $A'\subseteq FA$ and $A\subseteq FA'$. 
By (i), we have $FAg_n\to FB$. 
This, together with $1_{A'}\leq 1_{FA}$, implies that $1_{B'}\leq 1_{FB}$. 
Thus $B'\subseteq FB$. 
In the same way, we get $B\subseteq FB'$. 
Hence $B\approx B'$. 
\end{proof}

\begin{definition} \label{def:equiv-i}
Let $G$ be a countable infinite group, and let $A$ and $B$ be subsets of $G$.
Write $A\succsim B$ if there exist a sequence $\{g_n\}_{n=1}^\infty$ in $G$ and a subset $B'$ of $G$ such that $Ag_n\to B'$ and $B\approx B'$. 

When $A\succsim B$ and $B\succsim A$, we write $A\sim B$. 
\end{definition}

\begin{lemma} \label{def:equiv-i-def}
Let $A,B,C$ be subsets of $G$. 
If $A\succsim B$ and $B\succsim C$, then $A\succsim C$. 
In particular, the relation $\sim$ is an equivalence relation. 
\end{lemma}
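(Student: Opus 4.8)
The plan is to prove transitivity of $\succsim$ directly from the definition, the key technical tool being Lemma~\ref{lm:equiv-i}, which lets us replace sets by equivalent ones along a convergent net of translates. Suppose $A \succsim B$ and $B \succsim C$. Unwinding the definition, there are sequences $\{g_n\}$ and $\{h_m\}$ in $G$ and subsets $B', C'$ of $G$ with $A g_n \to B'$, $B \approx B'$, $B h_m \to C'$, and $C \approx C'$. The goal is to produce a single sequence $\{k_\ell\}$ in $G$ and a subset $C''$ with $A k_\ell \to C''$ and $C \approx C''$.

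The natural candidate is a suitable ``diagonal'' sequence of the form $g_n h_m$. First I would observe that since $B \approx B'$ and $B h_m \to C'$, Lemma~\ref{lm:equiv-i}(ii) (applied with the roles $A \rightsquigarrow B$, $A' \rightsquigarrow B'$, $g_n \rightsquigarrow h_m$, noting that $B' h_m$ has a convergent subnet by compactness of $\{0,1\}^G$) shows that any limit point $B''$ of $\{B' h_m\}$ satisfies $B'' \approx C'$, hence $B'' \approx C$. So, passing to a subsequence of $\{h_m\}$, we may assume $B' h_m \to B''$ with $B'' \approx C$. Now for each fixed $m$, $A g_n h_m \to B' h_m$ as $n \to \infty$ (translation on the right is continuous in the product topology: $A g_n \to B'$ implies $A g_n h_m \to B' h_m$ since for any finite $F$, $F h_m^{-1}$ is finite and $A g_n \cap F h_m^{-1} = B' \cap F h_m^{-1}$ eventually). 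Then a standard diagonal argument over the metrizable compact space $\{0,1\}^G$: choose an increasing sequence $n(m)$ so that $A g_{n(m)} h_m$ and $B' h_m$ agree on the $m$-th basic finite set $F_m$ (where $G = \bigcup_m F_m$), giving $A g_{n(m)} h_m \to B''$. Setting $k_m = g_{n(m)} h_m$ and $C'' = B''$, we get $A k_m \to C''$ with $C'' \approx C$, i.e.\ $A \succsim C$.

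For the ``in particular'' clause: reflexivity of $\succsim$ is immediate (take $g_n = e$, $B' = A$), so $\sim$ is reflexive; symmetry of $\sim$ is built into its definition; and transitivity of $\sim$ follows from transitivity of $\succsim$ just proved. Hence $\sim$ is an equivalence relation.

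The main obstacle I anticipate is the bookkeeping in the diagonal argument — making sure the right translation is applied on the correct side and that the finite sets used to witness convergence are chosen compatibly across the two sequences. Everything reduces to the two facts that right translation is a homeomorphism of $\{0,1\}^G$ and that $\{0,1\}^G$ is compact metrizable, together with Lemma~\ref{lm:equiv-i} to absorb the $\approx$-ambiguities; no genuinely new idea is needed beyond careful use of these.
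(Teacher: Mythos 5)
Your proposal is correct and follows essentially the same route as the paper: pass to a subsequence so that $B'h_m$ converges, use Lemma~\ref{lm:equiv-i}(ii) to identify the limit with $C$ up to $\approx$, and then run a diagonal argument over an exhaustion of $G$ by finite sets, using that right translation is continuous on $\{0,1\}^G$. The bookkeeping you worry about is exactly what the paper does with its elements $f_k = g_j h_i$, so no gap remains.
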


\begin{proof}
Suppose that $A\succsim B$ and $B\succsim C$. Then
there exist a sequence $\{g_n\}_{n=1}^\infty$ and a subset $B'$ 
such that $Ag_n\to B'$ and $B\approx B'$; and
there exist a sequence $\{h_n\}_{n=1}^\infty$ and a subset $C'$ 
such that $Bh_n\to C'$ and $C\approx C'$. 

The sequence $\{B'h_n\}_{n=1}^\infty$ has a convergent subsequence. 
By taking a subsequence of $\{h_n\}_{n=1}^\infty$, 
we may assume that $B'h_n \to C''$. 
By Lemma~\ref{lm:equiv-i} we have $C'\approx C''$. 

Write $G = \bigcup_{k=1}^\infty F_k$, where $\{F_k\}$ is an increasing sequence of finite subsets of $G$. Let $k \ge 1$. 
There exists $i\ge 1$ such that $B'h_i\cap F_k=C''\cap F_k$. 
Since $Ag_n\to B'$, we have $Ag_nh_i\to B'h_i$, so  there exists $j\ge 1$ such that $Ag_jh_i\cap F_k=B'h_i\cap F_k$. It follows that $Af_k\cap F_k=C''\cap F_k$ when $f_k = g_jh_i$. Hence $Af_k \to C'' \approx C$, so $A \succsim C$. 
\end{proof}

\begin{definition} For each countable infinite group $G$ let $P_\sim(G)$ be the quotient space $P(G)/\!\! \sim$. For each $A \subseteq G$ denote by $\langle A \rangle$ its equivalence class in $P_\sim(G)$.
Note that $P_\sim(G)$ becomes a partially ordered set when equipped with the order $\precsim$. 
\end{definition}

\noindent The lemma below follows easily from Definition \ref{def:equiv-i} and from Lemma~\ref{lm:equiv-i}.

\begin{lemma} \label{lm:twoequiv} Let $A$ and $B$ be subsets of  $G$. 
\begin{enumerate}
\item If $A \approx A'$ and $B \approx B'$, then $A \succsim B \implies A' \succsim B'$.\vspace{.1cm}
\item $A \approx B \implies A\sim B$.
\end{enumerate}
\end{lemma}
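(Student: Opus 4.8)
The statement to prove is Lemma~\ref{lm:twoequiv}: for subsets $A,B \subseteq G$, (i) if $A \approx A'$ and $B \approx B'$ then $A \succsim B \Rightarrow A' \succsim B'$, and (ii) $A \approx B \Rightarrow A \sim B$. Both parts are easy consequences of the definitions together with Lemma~\ref{lm:equiv-i}, and the plan is simply to unwind Definition~\ref{def:equiv-i} carefully.

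\textbf{Part (i).} Assume $A \succsim B$, so by Definition~\ref{def:equiv-i} there exist a sequence $\{g_n\}_{n=1}^\infty$ in $G$ and a subset $B'' \subseteq G$ with $Ag_n \to B''$ and $B \approx B''$. Now suppose $A \approx A'$. The sequence $\{A'g_n\}_{n=1}^\infty$ need not converge, but since $\{0,1\}^G$ is compact metrizable, after passing to a subsequence we may assume $A'g_n \to C$ for some $C \subseteq G$. Apply Lemma~\ref{lm:equiv-i}(ii) with the roles: $A$ has $Ag_n \to B''$, $A' \approx A$, and $A'g_n \to C$; this gives $B'' \approx C$. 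Chaining the equivalences $B \approx B'' \approx C$ and, using the hypothesis $B \approx B'$, we get $B' \approx B \approx C$ (here $\approx$ is an equivalence relation, as noted after Definition~\ref{def:G-equiv}). Since $A'g_n \to C$ and $B' \approx C$, Definition~\ref{def:equiv-i} yields $A' \succsim B'$, as required. One must take care that passing to a subsequence of $\{g_n\}$ is harmless, since the existential quantifier in the definition of $\succsim$ ranges over all sequences.

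\textbf{Part (ii).} Suppose $A \approx B$. To show $A \succsim B$, take $g_n = e$ for all $n$; then $Ag_n = A$ converges to $A$, and $A \approx B$ by hypothesis, so $A \succsim B$ directly from Definition~\ref{def:equiv-i}. By symmetry of $\approx$ we also have $B \approx A$, hence $B \succsim A$ by the same argument. Therefore $A \sim B$.

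\textbf{Main obstacle.} There is essentially no obstacle here; the only point requiring mild attention is that in part (i) the sequence $\{A'g_n\}$ need not converge on the nose, so one invokes sequential compactness of $\{0,1\}^G$ to extract a convergent subsequence before applying Lemma~\ref{lm:equiv-i}(ii). Everything else is a direct manipulation of the definitions together with the fact that $\approx$ is transitive and symmetric.
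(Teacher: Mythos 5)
Your proof is correct, and it takes exactly the route the paper intends: the paper omits the argument, stating only that the lemma "follows easily from Definition~\ref{def:equiv-i} and from Lemma~\ref{lm:equiv-i}", and your write-up is precisely that argument with the details filled in (extracting a convergent subsequence of $\{A'g_n\}$ via compactness of $\{0,1\}^G$, applying Lemma~\ref{lm:equiv-i}(ii), and using the constant sequence $g_n=e$ for part (ii)). No issues.
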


\noindent It follows from the lemma above that we have a natural surjection at the level of sets:
$$P_\approx(G) \to P_\sim(G), \qquad [A] \mapsto \langle A \rangle, \quad A \subseteq G.$$
This map is \emph{not order preserving} because $A\propto B$ does not imply that $A \precsim B$. It is not even true that $A \subseteq B$ implies $A \precsim B$. If $A,B \subseteq G$, then we write $[A] \sim [B]$ if $A \sim B$.

\begin{lemma} \label{lm:equiv-ii}
Let $X$ be a locally compact Hausdorff $G$-space, let $x \in X$,  let $U \subseteq X$ be a relatively compact open subset such that $\bigcup_{g \in G} g.U = X$, and let $A:= O_X(U,x)$ be as defined in \eqref{eq:O(V,x)}. Assume that $A$ is non-empty.
\begin{enumerate}
\item If $B \subseteq G$, if $\{g_\alpha\}$ is a net in $G$, and if $y \in X$ satisfies
$g_\alpha.x \to y$ and $Ag_\alpha^{-1} \to B$, then $B \approx O_X(U,y)$. \vspace{.1cm}
\item If $y \in \overline{G.x}$, then $O_X(U,y) \precsim O_X(U,x)$. \vspace{.1cm}
\item If $B\subseteq G$ is non-empty and $B \precsim O_X(U,x)$, then
there exists $y \in \overline{G.x}$ 
such that $O_X(U,y)\approx B$. 
\end{enumerate}
\end{lemma}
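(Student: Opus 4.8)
\emph{Plan.} The strategy is to prove (i) directly and then deduce (ii) and (iii) from (i) using compactness and metrizability of $P(G)=\{0,1\}^G$. Throughout put $K=\overline U$, which is compact since $U$ is relatively compact; because $\bigcup_{g\in G}g.U=X$ we get $K\subseteq F.U$ for some finite $F\subseteq G$, so by Lemma~\ref{lm:O(V,x)-1}(i) one has $O_X(K,z)\propto O_X(U,z)$ for every $z\in X$, and $O_X(U,z)\neq\emptyset$ since $z\in g.U$ for some $g$.

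\emph{Proof of (i).} First note that $O_X(U,g.x)=Ag^{-1}$ for all $g\in G$ by unwinding the definitions. Suppose $g_\alpha.x\to y$ and $Ag_\alpha^{-1}\to B$. If $h\in G$ satisfies $h.y\in U$, then, by continuity of the action, $h.(g_\alpha.x)\to h.y$ lies eventually in the open set $U$, so $h\in Ag_\alpha^{-1}$ eventually; testing the convergence $Ag_\alpha^{-1}\to B$ on the singleton $\{h\}$ then yields $h\in B$. Conversely, if $h\in B$ then $h\in Ag_\alpha^{-1}$ eventually, i.e.\ $h.(g_\alpha.x)\in U\subseteq K$ eventually, and $K$ closed forces $h.y\in K$. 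Hence $O_X(U,y)\subseteq B\subseteq O_X(K,y)\propto O_X(U,y)$, and since $O_X(U,y)\neq\emptyset$ we get $B\approx O_X(U,y)$.

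\emph{Proof of (ii).} Pick a net $g_\alpha$ in $G$ with $g_\alpha.x\to y$. As $P(G)$ is compact, after passing to a subnet we may assume $Ag_\alpha^{-1}\to B$ for some $B\subseteq G$, and (i) gives $B\approx O_X(U,y)$. Now $B$ lies in the closure of the countable set $\{Ag^{-1}:g\in G\}$ in the metrizable space $P(G)$, so there is a \emph{sequence} $(g_n)$ in $G$ with $Ag_n^{-1}\to B$; by Definition~\ref{def:equiv-i} this means $O_X(U,x)=A\succsim B$, and since $B\approx O_X(U,y)$, Lemma~\ref{lm:twoequiv} gives $O_X(U,x)\succsim O_X(U,y)$, i.e.\ $O_X(U,y)\precsim O_X(U,x)$.

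\emph{Proof of (iii).} From $B\precsim O_X(U,x)$ and Definition~\ref{def:equiv-i} we get a sequence $(g_n)$ and a set $B'$ with $Ag_n\to B'$ and $B\approx B'$; in particular $B'\neq\emptyset$. Put $h_n=g_n^{-1}$, so $Ah_n^{-1}\to B'$, and fix $k\in B'$. Since $k\in Ah_n^{-1}$ eventually, $kh_n.x\in U$ eventually, i.e.\ $h_n.x$ lies eventually in the \emph{compact} set $k^{-1}K$. Hence a subnet $h_{n_\beta}.x$ converges to some $y\in k^{-1}K\subseteq X$, and $y\in\overline{G.x}$. The net $(Ah_{n_\beta}^{-1})$ is a subnet of $(Ah_n^{-1})$, hence still converges to $B'$, so applying (i) to the net $(h_{n_\beta})$ yields $B'\approx O_X(U,y)$, and therefore $O_X(U,y)\approx B$.

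\emph{Main obstacle.} The one genuine difficulty is in (iii): a priori the translated base points $h_n.x$ could run off to infinity in the non-compact space $X$, leaving no candidate $y\in X$. This is exactly where the non-emptiness of $B'$ is used: choosing any $k\in B'$ pins a tail of $(h_n.x)$ inside the single compact set $k^{-1}\overline U$, which is what lets us extract a convergent subnet \emph{within} $X$ and then feed it into (i).
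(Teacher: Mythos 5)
Your proof is correct and follows essentially the same route as the paper's: (i) via the two-sided eventual-membership argument together with covering $\overline U$ by finitely many translates $F.U$, and (ii), (iii) by extracting convergent subnets in the compact space $\{0,1\}^G$ (resp.\ pinning the translated base points in a compact translate of $\overline U$) and feeding them into (i). Your explicit passage in (ii) from the convergent subnet to a genuine sequence, as required by Definition~\ref{def:equiv-i}, is a point the paper's proof glosses over, but it is the same argument in substance.
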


\begin{proof} (i). First we show that $O_X(U,y) \subseteq B$.
Let $h\in O_X(U,y)$.  Then $h.y\in U$, so  $hg_\alpha.x\in U$ eventually.
Hence $hg_\alpha\in A$ eventually, so  $h\in Ag_\alpha^{-1}$ eventually. This implies that $h \in B$.

Conversely, let $h\in B$. Then $hg_\alpha\in A$ eventually. 
It follows that $hg_\alpha.x\in U$ eventually, so $h.y$ is in $\overline{U}$. This shows that $B \subseteq O_X(\overline{U},y)$. Since $\overline{U}$ is compact, and by the assumptions on $U$, there is a finite set $F \subseteq G$ such that $\overline{U}$ is contained in $\bigcup_{g \in F} g.U = F.U$. Hence, by Lemma \ref{lm:O(V,x)-1}, 
$$B \subseteq O_X(\overline{U},y) \subseteq O_X(F.U,y) = F.O_X(U,y) \propto O_X(U,y).$$ 

(ii). There exists a net $\{g_\alpha\}$ in $G$ such that $g_\alpha.x\to y$. 
By taking a subsequence if necessary, 
we may assume that $Ag_\alpha^{-1} \to B$ for some subset $B$ of  $G$. 
It follows from (i) that $B\approx O_X(U,y)$.  Hence (ii) holds. 

(iii).
There exist a net $\{g_\alpha\}$ in $G$ and $B'\subseteq G$ 
such that $Ag_\alpha\to B' \approx B$. 
Since $B$ is non-empty, $B'$ is also non-empty. 
Take $h\in B'$. 
Then $hg_\alpha^{-1}$ is in $A$ eventually, 
and so $g_\alpha^{-1}.x \in h.U$ eventually. By passing to a subnet of $\{g_\alpha\}$ we may assume that $g_\alpha^{-1}.x \to y$ for some $y \in h.\overline{U} \subseteq X$. It now follows from (i) that $B'\approx O_X(U,y)$, 
which completes the proof. 
\end{proof}

\begin{theorem} \label{thm:basepointinvariance} Let $G$ be a countable infinite group and let $X$ be a locally compact Hausdorff space on which $G$ acts co-compactly. Suppose that $x,y \in X$ are such that $G.x$ and $G.y$ are dense in $X$. It follows that
$$\Type(G,X,x) \sim \Type(G,X,y).$$
\end{theorem}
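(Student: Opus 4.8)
The plan is to reduce the statement to Lemma~\ref{lm:equiv-ii} by choosing a single relatively compact open set $U$ that computes both types. Since $X$ is co-compact, I would first fix a $G$-regular compact subset $K$ of $X$ (which exists by the remark following Definition~\ref{def:co-cpt}) and set $U = K^{\mathrm o}$. Then $U$ is open and relatively compact (its closure lies in $K$), and $\bigcup_{g \in G} g.U = X$ by $G$-regularity of $K$, so $U$ satisfies the hypotheses of Lemma~\ref{lm:equiv-ii}. Put $A = O_X(U,x)$ and $B = O_X(U,y)$. Both are non-empty: as $G.x$ is dense and $U$ is non-empty open, there is $g \in G$ with $g.x \in U$, i.e.\ $g \in A$, and similarly for $B$. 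By Proposition~\ref{prop:type-condition}, applied to the pointed spaces $(X,x)$ and $(X,y)$, we get $\Type(G,X,x) = [O_X(K^{\mathrm o},x)] = [A]$ and $\Type(G,X,y) = [B]$.

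Next I would invoke Lemma~\ref{lm:equiv-ii}(ii) twice, once in each direction, exploiting that both orbits are dense. Since $G.x$ is dense in $X$ we have $y \in \overline{G.x}$, so Lemma~\ref{lm:equiv-ii}(ii) gives $B = O_X(U,y) \precsim O_X(U,x) = A$. Symmetrically, since $G.y$ is dense in $X$ we have $x \in \overline{G.y}$, and Lemma~\ref{lm:equiv-ii}(ii) with the roles of $x$ and $y$ interchanged gives $A = O_X(U,x) \precsim O_X(U,y) = B$. Hence $A \sim B$ by Definition~\ref{def:equiv-i}, and therefore $\Type(G,X,x) = [A] \sim [B] = \Type(G,X,y)$, using that the relation $\sim$ on $P_\approx(G)$ is well defined by Lemma~\ref{lm:twoequiv}(i).

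I do not expect a serious obstacle: the substantive content is already in Lemma~\ref{lm:equiv-ii}, whose part (i) says that, up to the relation $\approx$, the assignment $y \mapsto O_X(U,y)$ transports the closure operation on orbits to the limit operation on subsets of $G$. The only point requiring a little care is to ensure that the \emph{same} open set $U$ can be used to read off both types, which is why $U = K^{\mathrm o}$ for a $G$-regular $K$ is the correct choice, so that Proposition~\ref{prop:type-condition} identifies $O_X(U,x)$ and $O_X(U,y)$ with representatives of $\Type(G,X,x)$ and $\Type(G,X,y)$ respectively; and to remember to check that $A$ and $B$ are non-empty before applying Lemma~\ref{lm:equiv-ii}, since that lemma carries this hypothesis.
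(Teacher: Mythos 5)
Your proposal is correct and follows essentially the same route as the paper's own proof: fix a $G$-regular compact set $K$, identify both types via Proposition~\ref{prop:type-condition} as $[O_X(K^{\mathrm o},x)]$ and $[O_X(K^{\mathrm o},y)]$, and apply Lemma~\ref{lm:equiv-ii}(ii) in both directions using the density of the two orbits. Your extra verifications (that $K^{\mathrm o}$ satisfies the hypotheses of Lemma~\ref{lm:equiv-ii} and that the sets $O_X(K^{\mathrm o},x)$, $O_X(K^{\mathrm o},y)$ are non-empty) are details the paper leaves implicit, and they are handled correctly.
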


\begin{proof} Choose a $G$-regular compact subset $K \subseteq X$. Then
$$
\Type(G,X,x) = [O_X(K^\mathrm{o},x)], \qquad \Type(G,X,y) = [O_X(K^\mathrm{o},y)].
$$
by Proposition \ref{prop:type-condition}. By the assumption that $G.x$ and $G.y$ are dense in $X$, it follows from Lemma \ref{lm:equiv-ii} (ii) that $O_X(K^\mathrm{o},x) \sim O_X(K^\mathrm{o},y)$. 
\end{proof}

\noindent Because of Theorem \ref{thm:basepointinvariance} we can make the following definition:

\begin{definition} \label{def:typeII}
Let $X$ be a locally compact Hausdorff space on which a countable infinite group $G$ acts co-compactly, and such that $G.x$ is dense in $X$ for some $x \in X$. Then set
$$\Typ(G,X) = \langle A \rangle \in P_\sim(G),$$
where $A \subseteq G$ is such that $\Type(G,X,x) = [A]$. 
\end{definition}

\begin{proposition} \label{prop:sametypei}
Let $G$ be a countable infinite group and let $X$ and $Y$ be locally compact Hausdorff spaces on which $G$ acts co-compactly with a dense orbit. If there is a surjective proper continuous $G$-map $\varphi \colon X \to Y$, then $\Typ(G,X) = \Typ(G,Y)$. 
\end{proposition}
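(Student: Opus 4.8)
The plan is to reduce the statement to the base-point-dependent invariance already established in Proposition \ref{prop:sametype}, combined with the definition of $\Typ$ via Definition \ref{def:typeII}. First I would pick a point $x \in X$ with $G.x$ dense in $X$; such a point exists by hypothesis. Since $\varphi$ is a continuous $G$-map, $\varphi(G.x) = G.\varphi(x)$, and since $\varphi$ is surjective (and continuous) it maps the dense set $G.x$ onto a dense subset of $Y$; hence $y_0 := \varphi(x)$ satisfies that $G.y_0$ is dense in $Y$. Thus $(X,x)$ and $(Y,y_0)$ are pointed locally compact $G$-spaces and $\varphi \colon X \to Y$ is a surjective proper continuous $G$-map with $\varphi(x) = y_0$.

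Next I would invoke Proposition \ref{prop:sametype}: since $X$ (equivalently $Y$) is co-compact, that proposition gives
$$\Type(G,X,x) = \Type(G,Y,y_0).$$
Now choose $A \subseteq G$ with $\Type(G,X,x) = [A]$; then also $\Type(G,Y,y_0) = [A]$. By Definition \ref{def:typeII}, $\Typ(G,X) = \langle A \rangle$, computed using the base point $x$, and likewise $\Typ(G,Y) = \langle A \rangle$, computed using the base point $y_0$. Since Definition \ref{def:typeII} is well-posed by Theorem \ref{thm:basepointinvariance} (the class $\langle A \rangle$ does not depend on which dense-orbit base point is used), we conclude
$$\Typ(G,X) = \langle A \rangle = \Typ(G,Y),$$
which is the desired equality.

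I do not expect any genuine obstacle here: the work has all been done in Proposition \ref{prop:sametype} (base-point-dependent invariance under proper $G$-maps) and Theorem \ref{thm:basepointinvariance} (independence of the $\sim$-class from the chosen base point), so the only thing to be careful about is the bookkeeping — namely that $\varphi$ carries a dense orbit in $X$ to a dense orbit in $Y$, so that $(Y,\varphi(x))$ is legitimately a pointed $G$-space to which Proposition \ref{prop:sametype} applies, and that both spaces are co-compact (which is immediate since one of them is by hypothesis and the other is a proper continuous image of it). If one wished to avoid even citing Proposition \ref{prop:sametype} directly, one could instead argue at the level of the sets $O_X(K^{\mathrm o},x)$ and $O_Y(\varphi(K)^{\mathrm o},\varphi(x))$ using Proposition \ref{prop:type-condition}, but routing through Proposition \ref{prop:sametype} is cleaner.
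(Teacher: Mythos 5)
Your proposal is correct and follows the paper's own proof essentially verbatim: pick a dense-orbit base point $x$, note that the continuous surjection $\varphi$ carries $G.x$ to a dense orbit $G.\varphi(x)$ in $Y$, apply Proposition \ref{prop:sametype} to get $\Type(G,X,x)=\Type(G,Y,\varphi(x))$, and conclude via Definition \ref{def:typeII} (whose well-posedness is Theorem \ref{thm:basepointinvariance}). The only difference is that you spell out the base-point-independence bookkeeping that the paper leaves implicit.
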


\begin{proof} Let $x_0 \in X$ be such that $G.x_0$ is dense in $X$ and set $y_0 = \varphi(x_0)$. Being continuous and surjective, $\varphi$ maps dense sets onto dense sets, it follows that $G.y_0 = \varphi(G.x_0)$ is dense in $Y$. By Proposition \ref{prop:sametype} we have $\Type(G,X,x_0) = \Type(G,Y,y_0)$. This implies that $\Typ(G,X) = \Typ(G,Y)$. 
\end{proof}

\noindent
Recall that $A \subseteq G$ is of minimal type if $[A]$ is the type of some pointed minimal locally compact $G$-space. We give an intrinsic description of subsets of minimal type in terms of the ordering $\precsim$ on $P(G)$:

\begin{proposition} \label{prop:char-min}
Let $G$ be a countable infinite group and let $A \subseteq G$. Then $A$ is of minimal type if and only if $\langle A \rangle$ is a minimal point in the partially ordered set $(P_\sim(G), \precsim)$. 

In other words, $A$ is of minimal type if and only if whenever $B \subseteq G$ satisfies $B\precsim A$, then $B \sim A$.
\end{proposition}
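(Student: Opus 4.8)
The plan is to establish both implications by connecting the abstract ordering $\precsim$ on $P_\sim(G)$ to the concrete geometry of the $G$-spaces $X_A$ and their closed invariant subsets. For the "only if" direction, suppose $A$ is of minimal type, so there is a pointed minimal locally compact $G$-space $(X,x_0)$ with $\Type(G,X,x_0) = [A]$. Given $B \subseteq G$ with $B \precsim A$, I would pick a $G$-regular compact set $K \subseteq X$ with $O_X(K^{\mathrm{o}},x_0) \approx A$ (available by Proposition~\ref{prop:type-condition} and Proposition~\ref{prop:regular}). Since $B \precsim A \approx O_X(K^{\mathrm{o}},x_0)$, Lemma~\ref{lm:equiv-ii}(iii) (applied with $U = K^{\mathrm{o}}$, noting $\bigcup_g g.K^{\mathrm{o}} = X$) produces a point $y \in \overline{G.x_0}$ with $O_X(K^{\mathrm{o}},y) \approx B$. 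But $X$ is minimal, so $\overline{G.x_0} = X$ and $G.y$ is dense in $X$; hence by Theorem~\ref{thm:basepointinvariance}, $[A] = \Type(G,X,x_0) \sim \Type(G,X,y) = [O_X(K^{\mathrm{o}},y)] = [B]$, i.e. $A \sim B$. This shows $\langle A \rangle$ is minimal.

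**The harder direction.** For "if", suppose $\langle A \rangle$ is minimal in $(P_\sim(G),\precsim)$; I must produce a minimal pointed locally compact $G$-space of type $[A]$. The natural candidate is a minimal closed $G$-invariant subset $Z$ of $X_A$ — one exists since $X_A$ is co-compact (Proposition~\ref{co-cpt-minimal}). The goal is to show $\Type(G,Z,z_0) = [A]$ for a suitable $z_0 \in Z$. Pick any $z_0 \in Z$; since $Z$ is minimal, $G.z_0$ is dense in $Z$, so $\Type(G,Z,z_0) = [B]$ for some $B \subseteq G$. The plan is to compare $B$ with $A$ using the inclusion $Z \hookrightarrow X_A$. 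Let $K = K_A \cap X_A$, a compact-open subset of $X_A$; it is $G$-regular in $X_A$ by Proposition~\ref{prop:regular}(iii) (or directly). The key point is that $Z$ intersects the orbit of $K$: $Z = \bigcup_{g} g.(K \cap Z)$, and $K \cap Z$ is compact-open in $Z$ and $G$-regular there. Then $B \approx O_Z(K^{\mathrm{o}} \cap Z, z_0)$ by Proposition~\ref{prop:type-condition}. Now $z_0 \in X_A$ and $\overline{G.z_0}^{X_A} \supseteq Z$; applying Lemma~\ref{lm:equiv-ii}(ii) inside the ambient $X_A$ (with $U = K_A \cap X_A$) to the inclusion $z_0 \in \overline{G.e}^{X_A} = X_A$, we get $O_{X_A}(K_A^{\mathrm{o}} \cap X_A, z_0) \precsim O_{X_A}(K_A^{\mathrm{o}} \cap X_A, e)$, and the latter is $\approx A$ by Proposition~\ref{prop:type(X_A)} and Proposition~\ref{prop:type-condition}. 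Since $K \cap Z$ sits inside $K \cap X_A$, one also checks $O_Z(\cdot, z_0) \approx O_{X_A}(\cdot, z_0)$ up to the equivalence $\approx$ (restriction of a compact-open set), giving $B \precsim A$. By minimality of $\langle A \rangle$, $B \sim A$, and hence $\Typ(G,Z) = \langle A \rangle$. Finally, $\langle A \rangle$ being minimal means we may choose a representative, but more carefully: we need $[A]$ itself to be realized, not merely something $\sim$-equivalent. Here I would use that $Z$ is minimal and that, by the "base point free" theory, all points $y$ of $Z$ with $G.y$ dense give types in the single $\sim$-class $\langle A \rangle$; since $A$ is minimal, by Lemma~\ref{lm:equiv-ii}(iii) applied within $X_A$ we can locate $z_0 \in Z$ with $O(K_A^{\mathrm{o}}, z_0) \approx A$ on the nose, so $\Type(G,Z,z_0) = [A]$ exactly.

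**Main obstacle.** The delicate point is the last one: $\Typ(G,Z) = \langle A \rangle$ only tells us the $\sim$-class, whereas "of minimal type" requires some pointed minimal $G$-space of type exactly $[A]$ (not just $\sim A$). The resolution exploits that $A$ is $\precsim$-minimal together with Lemma~\ref{lm:equiv-ii}(iii): since $A \precsim A$ trivially and $A$ is non-empty, Lemma~\ref{lm:equiv-ii}(iii) applied with $X = X_A$, $x = e$, $U = K_A \cap X_A$, $B = A$ yields $y \in \overline{G.e} = X_A$ with $O_{X_A}(K_A^{\mathrm{o}} \cap X_A, y) \approx A$; but we want such $y$ inside a minimal set $Z$. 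Taking $Z$ to be a minimal closed invariant subset of $\overline{G.y}$ (which contains $y$ only if $G.y$ is closed, which need not hold), one instead argues: let $Z \subseteq X_A$ be any minimal closed invariant subset, $z_0 \in Z$, and $B = $ type of $(Z,z_0)$; we showed $B \precsim A$, so $B \sim A$ by minimality; then applying Lemma~\ref{lm:equiv-ii}(iii) inside $X_A$ with the $\precsim$-relation $A \precsim B$ (which holds since $A \sim B$) gives $w \in \overline{G.z_0} \subseteq Z$ (using $Z$ closed!) with $O_{X_A}(K_A^{\mathrm{o}} \cap X_A, w) \approx A$, hence $O_Z(K^{\mathrm{o}} \cap Z, w) \approx A$, and since $G.w$ is dense in $Z$ (minimality), $\Type(G,Z,w) = [A]$. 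Thus $(Z,w)$ is the desired pointed minimal $G$-space and $A$ is of minimal type. Checking compatibility of the operators $O_Z$ and $O_{X_A}$ under restriction of $K$ to $Z$, and that the relative-compactness and $G$-regularity hypotheses of Lemma~\ref{lm:equiv-ii} transfer between $Z$ and $X_A$, are the routine but essential verifications to carry out carefully.
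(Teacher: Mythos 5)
Your proof is correct and follows essentially the same route as the paper: the ``only if'' direction uses Lemma~\ref{lm:equiv-ii}(iii) together with Theorem~\ref{thm:basepointinvariance} and Proposition~\ref{prop:type-condition}, and the ``if'' direction takes a minimal closed invariant subset $Z$ of $X_A$, gets $O(K_A,z_0)\precsim A$ from Lemma~\ref{lm:equiv-ii}(ii), invokes minimality of $\langle A\rangle$, and then uses Lemma~\ref{lm:equiv-ii}(iii) to locate a point $w\in Z$ with $\Type(G,Z,w)=[A]$ on the nose. The only cosmetic difference is that you run the final application of Lemma~\ref{lm:equiv-ii}(iii) in the ambient space $X_A$ rather than inside $Z$, which is immaterial since $O_{X_A}(K_A,\cdot)=O_Z(K_A\cap Z,\cdot)$ on the invariant set $Z$.
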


\begin{proof} Suppose first that $A$ is of minimal type and let $(X,x_0)$ be a minimal pointed locally compact $G$-space with $\Type(G,X,x_0) = [A]$. Let $K \subseteq X$ be a $G$-regular compact subset of $X$. 

Suppose that $B\subseteq G$ and $B \precsim A$. It then follows from Lemma \ref{lm:equiv-ii} (iii) that $O_X(K^\mathrm{o},y) \approx B$ for some $y \in X$. Since $X$ is minimal we know that $G.y$ is dense in $X$. It therefore follows from Theorem \ref{thm:basepointinvariance} and Proposition \ref{prop:type-condition} that 
$$[A] = \Type(G,X,x_0) \sim \Type(G,X,y) = [O_X(K^\mathrm{o},y)] = [B].$$

Suppose, conversely, that $\langle A \rangle$ is a minimal point in $(P_\sim(G), \precsim)$. 
Consider the $G$-space $X_A\subseteq \beta G$, and take a minimal closed invariant subset $Z$ of $X_A$.  Take $z\in Z$. 
Set $U=K_A\cap Z$, which is a compact open subset of $Z$. 
Since the $G$-orbit of $e$ is dense in $X_A$, 
it follows from Lemma \ref{lm:equiv-ii} (ii) that 
$$A\; =\; O_{X_A}(K_A,e) \; \succsim \; O_{X_A}(K_A,z)\; =\; O_Z(U,z).$$ 
Hence $A\sim O_Z(U,z)$, because $A$ is minimal. 
By Lemma \ref{lm:equiv-ii} (iii), we can find $w\in Z$ such that $O_Z(U,w)\approx A$. It follows from Proposition~\ref{prop:type-condition} that $\Type(G,Z,w) = [A]$, so $A$ is of minimal type. 
\end{proof}

\noindent Each element of $P_\sim(G)$ dominates a minimal element, and the order relation "$\precsim$" on $P_\sim(G)$ corresponds to the order by inclusion on invariant subsets of any locally compact $G$-space:

\begin{corollary} \label{cor:orderiso} Let $G$ be a countable group, let $A$ be a non-empty subset of $G$, and let $X$ be a locally compact space on which $G$ acts co-compactly with a dense orbit such that $\Typ(G,X) = \langle A \rangle$. 

It follows that the map $Y \mapsto \Typ(G, Y)$
is an order preserving surjection from the set of closed $G$-invariant singly generated~\footnote{We say that a closed invariant subset $Y$ of a $G$-space $X$ is singly generated if $Y = \overline{G.y}$ for some $y \in X$, i.e., if there is a dense orbit.} subsets of $X$, ordered by inclusion, onto the ordered set
$$\{\alpha \in \Typ(G,X) \mid \alpha \precsim \langle A \rangle \}.$$

In particular,  for each non-empty subset $A$ of $G$ there exists a non-empty subset $B$ of $G$ of minimal type such that $B \precsim A$. 
\end{corollary}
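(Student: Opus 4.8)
The plan is to realize the chosen $G$-space $X$ concretely and then use the correspondence between invariant subsets and the order relation $\precsim$. First I would fix a $G$-regular compact subset $K \subseteq X$ and a point $x_0 \in X$ with $G.x_0$ dense in $X$, so that $\Type(G,X,x_0) = [O_X(K^{\mathrm{o}},x_0)]$ by Proposition~\ref{prop:type-condition}; put $A_0 = O_X(K^{\mathrm{o}},x_0)$, so $A_0 \sim A$. For a singly generated closed invariant subset $Y = \overline{G.y}$ with $y \in X$, the set $U := K^{\mathrm{o}} \cap Y$ is relatively compact open in $Y$ and satisfies $\bigcup_{g}g.U = Y$ (because $K$ is $G$-regular in $X$ and $Y$ is closed invariant), and $G.y$ is dense in $Y$; hence $\Typ(G,Y) = \langle O_Y(U,y)\rangle = \langle O_X(K^{\mathrm{o}},y)\rangle$, again by Proposition~\ref{prop:type-condition} applied inside $Y$. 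The map $Y \mapsto \Typ(G,Y)$ is thus well-defined.

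The three things to check are: (1) the image lies in $\{\alpha \precsim \langle A\rangle\}$; (2) the map is order preserving; (3) the map is surjective onto that set. For (1), since $y \in \overline{G.x_0}$ (as $G.x_0$ is dense in $X$), Lemma~\ref{lm:equiv-ii}(ii) gives $O_X(K^{\mathrm{o}},y) \precsim O_X(K^{\mathrm{o}},x_0) = A_0 \sim A$, so $\Typ(G,Y) \precsim \langle A\rangle$. For (2), if $Y_1 = \overline{G.y_1} \subseteq Y_2 = \overline{G.y_2}$, then $y_1 \in Y_2 = \overline{G.y_2}$, so again by Lemma~\ref{lm:equiv-ii}(ii) (working inside $Y_2$, with its relatively compact open set $K^{\mathrm{o}}\cap Y_2$) we get $O_{Y_2}(K^{\mathrm{o}}\cap Y_2, y_1) \precsim O_{Y_2}(K^{\mathrm{o}}\cap Y_2, y_2)$; and since $O_{Y_2}(K^{\mathrm{o}}\cap Y_2, y_1) = O_X(K^{\mathrm{o}},y_1) = O_{Y_1}(K^{\mathrm{o}}\cap Y_1,y_1)$, this reads $\Typ(G,Y_1) \precsim \Typ(G,Y_2)$. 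For (3), given $\alpha \precsim \langle A\rangle$ with $\alpha = \langle B\rangle$ for some non-empty $B \subseteq G$, we have $B \precsim A \sim A_0$, so by Lemma~\ref{lm:equiv-ii}(iii) there is $y \in \overline{G.x_0} = X$ with $O_X(K^{\mathrm{o}},y) \approx B$; then $Y := \overline{G.y}$ is a singly generated closed invariant subset with $\Typ(G,Y) = \langle O_X(K^{\mathrm{o}},y)\rangle = \langle B\rangle = \alpha$.

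For the final assertion, apply the above with $X = X_A$ (so $\Typ(G,X_A) = \langle A\rangle$ by Proposition~\ref{prop:type(X_A)}) and take $Y$ to be any \emph{minimal} closed invariant subset of $X_A$, which exists by Proposition~\ref{co-cpt-minimal}; such a $Y$ is automatically singly generated (indeed every orbit is dense). Then $B$, a representative of $\Typ(G,Y) = \Type(G,Y,w)$ for suitable $w$, is of minimal type by definition, and $\langle B\rangle = \Typ(G,Y) \precsim \langle A\rangle$ by part (1), i.e.\ $B \precsim A$. The main obstacle I anticipate is purely bookkeeping: checking carefully that $K^{\mathrm{o}} \cap Y$ is relatively compact open in $Y$ with $\bigcup_g g.(K^{\mathrm{o}}\cap Y) = Y$ so that Proposition~\ref{prop:type-condition} and Lemma~\ref{lm:equiv-ii} genuinely apply within $Y$, and keeping the identifications $O_{Y}(K^{\mathrm{o}}\cap Y,\,\cdot\,) = O_X(K^{\mathrm{o}},\,\cdot\,)$ straight; no deep new idea is needed beyond Lemma~\ref{lm:equiv-ii}.
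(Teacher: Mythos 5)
Your proposal is correct and follows essentially the same route as the paper: fix a $G$-regular compact set $K$, compute types via $O_X(K^{\mathrm{o}},\cdot)$ using Proposition~\ref{prop:type-condition} (noting $K\cap Y$ is $G$-regular in $Y$ and $O_Y(K^{\mathrm{o}}\cap Y,\cdot)=O_X(K^{\mathrm{o}},\cdot)$), use Lemma~\ref{lm:equiv-ii}(ii) for containment in $\{\alpha \precsim \langle A\rangle\}$ and for order preservation, Lemma~\ref{lm:equiv-ii}(iii) for surjectivity, and obtain the last claim by taking a minimal closed invariant subset of $X_A$. No gaps.
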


\begin{proof} Let $G.x$ be a dense orbit in $X$. Let $K$ be a $G$-regular compact subset of $X$. Then $\Type(G,X,x) = [O_X(K^{\mathrm{o}},x)]$ by Proposition \ref{prop:type-condition}, so $O_X(K^{\mathrm{o}},x) \sim A$.

Suppose that $Y$ is a singly generated closed invariant subsets of $X$, with dense orbit $G.y$, where $y \in X=\overline{G.x}$. Then $O_X(K^{\mathrm{o}},y) \precsim O_X(K^{\mathrm{o}},x)$ by Lemma~\ref{lm:equiv-ii}~(ii).  Moreover, $K \cap Y$ is a $G$-regular compact subset of $Y$ whose relative interior is $K^{\mathrm{o}} \cap Y$; and $$O_X(K^{\mathrm{o}},y) = O_X(K^{\mathrm{o}} \cap Y,y)$$
by $G$-invariance of $Y$. Hence $\Type(G,Y,y) = [O_X(K^{\mathrm{o}},y)]$ by Proposition~\ref{prop:type-condition}, so $\Typ(G,Y) = \langle O_X(K^{\mathrm{o}},y) \rangle \precsim \langle A \rangle$. 

If $Y_1 \subseteq Y_2$ are singly generated closed invariant subsets of $X$, then $\Typ(G,Y_1) \precsim \Typ(G,Y_2)$ by the first part of this proof (with $Y_2$ in the place of $X$).

If $B \precsim A$, then there exists $y \in X$ such that $O_X(K^{\mathrm{o}},y) \approx B$ by Lemma~\ref{lm:equiv-ii}~(iii). Arguing as above, using that $K \cap \overline{G.y}$ is $G$-regular in $\overline{G.y}$, we see that 
$$\Type(G,\overline{G.y},y) =[O_X(K^{\mathrm{o}} \cap \overline{G.y},y)] = [O_X(K^{\mathrm{o}},y)] \approx B.$$
Hence $\Typ(G,\overline{G.y}) = \langle B \rangle$. 

To prove the last claim, let $A$ be any non-empty subset of $G$. Then there exists a co-compact locally compact $G$-space $X$ with $\Typ(G,X) = \langle A \rangle$, for example $X = X_A$. The  co-compact locally compact $G$-space $X$ has a minimal closed invariant subset $Y$. Let $B \subseteq G$ be such that $\Typ(G,Y) = \langle B \rangle$. Then $B$ is of minimal type and $B \precsim A$. 
\end{proof}

\noindent The example and the proposition below illustrate some properties of the the relation "$\precsim$".

\begin{example} Let $G$ be a countable group.

(i). If $A \subseteq G$, then $G \precsim A$ if and only if $A$ is equivalent to an absorbing set. 

It is not difficult to prove this directly. It also follows from Corollary~\ref{cor:orderiso}. Indeed, consider the $G$-space $X_A$ which is of type $\langle A \rangle$. Then $G \precsim A$ if and only if there is a closed $G$-invariant singly generated subset $Y$ of $X_A$ of type $\langle G \rangle$. Now, $Y$ is of type $\langle G \rangle$ if and only if $Y$ is compact, cf.\ Proposition \ref{prop:compacttype}. It follows from \cite[Proposition 5.5]{KelMonRor:supramenable} that $X_A$ has a compact minimal closed invariant subspace if and only if $A$ is equivalent to an absorbing set.

(ii). If $A \subseteq G$, then $\{e\} \precsim A$ if and only if $A$ is not infinitely divisible.

This follows from Corollary \ref{cor:orderiso} in a similar way as above. A  minimal closed $G$-invariant subset $Z \subseteq X_A$ is of type $\langle \{e\} \rangle$ if and only if $Z$ is discrete, cf.\ Proposition \ref{prop:discrete}; and  $X_A$ has a discrete minimal closed invariant subset  if and only if $A$ is not infinitely divisible by Theorem \ref{thm:discrete-vs-infdiv}.
\end{example}

\noindent It follows from the proposition below that if $A$ and $B$ are non-empty subsets of $G$ such that $A \sim B$, then $A$ is paradoxical if and only if $B$ is paradoxical.

\begin{proposition} \label{prop:paradoxical}
Let $G$ be a countable infinite group, and let $A$ be a non-empty subset of $G$. Then the following conditions are equivalent:
\begin{enumerate}
\item $A$ is non-paradoxical. \vspace{.1cm}
\item $B \precsim A$ for some non-paradoxical non-empty subset $B$ of $G$. \vspace{.1cm}
\item  Any  locally compact Hausdorff space $X$, on which $G$ acts co-compactly and with $\Typ(G,X)=\langle A \rangle$, admits a non-zero $G$-invariant Radon measure.
\end{enumerate}
\end{proposition}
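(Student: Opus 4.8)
The plan is to prove the cycle (i) $\Rightarrow$ (ii) $\Rightarrow$ (iii) $\Rightarrow$ (i). The first implication is immediate: take $B = A$, since $A \sim A$ gives $A \precsim A$. For (iii) $\Rightarrow$ (i) I would apply (iii) to the $G$-space $X_A$ itself, which is locally compact, co-compact, has the dense orbit $G = G.e$, and satisfies $\Typ(G,X_A) = \langle A \rangle$ because $\Type(G,X_A,e) = [A]$ by Proposition~\ref{prop:type(X_A)}. Hence $X_A$ carries a non-zero $G$-invariant Radon measure, and \cite[Proposition 2.6]{KelMonRor:supramenable} then forces $A$ to be non-paradoxical.

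The real content is (ii) $\Rightarrow$ (iii). Assume $B \precsim A$ with $B$ non-empty and non-paradoxical, and let $X$ be a locally compact co-compact $G$-space with a dense orbit $G.x_0$ and $\Typ(G,X) = \langle A \rangle$. First I would fix a $G$-regular compact set $K \subseteq X$ and, invoking Proposition~\ref{prop:type-condition}, use $A' := O_X(K^{\mathrm{o}}, x_0)$ as a representative of $\Type(G,X,x_0)$; then $\langle A' \rangle = \Typ(G,X) = \langle A \rangle$, so $A \sim A'$ and hence $B \precsim A'$ by transitivity of ``$\precsim$''. Since $\bigcup_{g \in G} g.K^{\mathrm{o}} = X$ and $A' \ne \emptyset$, Lemma~\ref{lm:equiv-ii}(iii) (with $U = K^{\mathrm{o}}$, $x = x_0$) yields a point $y \in X = \overline{G.x_0}$ with $O_X(K^{\mathrm{o}}, y) \approx B$. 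Now $Y := \overline{G.y}$ is a closed $G$-invariant subspace of $X$; it is locally compact, co-compact (with $G$-regular set $K \cap Y$), and has the dense orbit $G.y$, and --- arguing as in the proof of Corollary~\ref{cor:orderiso} --- one obtains $\Type(G,Y,y) = [O_X(K^{\mathrm{o}},y)] = [B]$. As $B$ is non-paradoxical, Corollary~\ref{cor:Radon} provides a non-zero $G$-invariant Radon measure $\mu$ on $Y$.

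It then remains to promote $\mu$ to a non-zero $G$-invariant Radon measure on $X$. Since $Y$ is closed in $X$, restriction defines a positive map $f \mapsto f|_Y$ from $C_c(X)$ to $C_c(Y)$, so $\Lambda(f) := \int_Y f|_Y \, d\mu$ is a positive linear functional on $C_c(X)$ and hence represents a Radon measure $\tilde\mu$ on $X$. Because $Y$ is $G$-invariant we have $(g.f)|_Y = g.(f|_Y)$, so invariance of $\mu$ passes to $\tilde\mu$; and $\tilde\mu \ne 0$ since, choosing a compact $L \subseteq Y$ with $\mu(L) > 0$ by inner regularity and $f \in C_c(X)$ with $0 \le f \le 1$ and $f \equiv 1$ on $L$, we get $\Lambda(f) \ge \mu(L) > 0$. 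This establishes (iii), closing the cycle.

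I expect the main obstacle to be the middle implication, specifically the step of realizing the prescribed set $B$ --- up to the coarse equivalence ``$\approx$'' --- as the base-pointed type of an honest point of an honest co-compact $G$-space extracted from $X$; this is exactly where Lemma~\ref{lm:equiv-ii}(iii), the passage to the singly generated closed invariant subspace $\overline{G.y}$, and the base-point-independence results of Section~\ref{sec:basepoint} do the work, while the final measure extension is routine point-set topology. As a byproduct, since condition (iii) depends on $A$ only through $\langle A \rangle$, the proposition immediately yields the remark preceding it: if $A \sim B$, then $A$ is paradoxical if and only if $B$ is.
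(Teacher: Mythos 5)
Your proposal is correct, but it reaches the conclusion by a genuinely different route than the paper for the nontrivial part. The paper proves (ii) $\Rightarrow$ (i) directly and combinatorially: using Tarski-type finitely additive measures, it observes that non-paradoxicality is an $\approx$-invariant, fixes a free ultrafilter $\omega$ and the map $\phi(E)=\lim_\omega Eg_n$ on $P(G)$ (which preserves unions, intersections and left translations and sends $A$ to the limit set $B'$), and pulls back an invariant finitely additive measure normalized on $B'$ to one normalized on $A$; the equivalence (i) $\Leftrightarrow$ (iii) is then quoted from Corollary~\ref{cor:Radon} and Definition~\ref{def:typeII} (implicitly using this stability of non-paradoxicality under $\sim$). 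You instead close the cycle (i) $\Rightarrow$ (ii) $\Rightarrow$ (iii) $\Rightarrow$ (i), proving (ii) $\Rightarrow$ (iii) dynamically: via Proposition~\ref{prop:type-condition}, transitivity of $\precsim$, and Lemma~\ref{lm:equiv-ii}(iii) you locate a point $y$ with $O_X(K^{\mathrm{o}},y)\approx B$, pass to the singly generated closed invariant subspace $\overline{G.y}$ of pointed type $[B]$ exactly as in the proof of Corollary~\ref{cor:orderiso}, apply Corollary~\ref{cor:Radon} there, and extend the measure by zero to $X$ (and (iii) $\Rightarrow$ (i) by specializing to $X_A$ and \cite[Proposition 2.6]{KelMonRor:supramenable}). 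All the steps check out: $O_X(K^{\mathrm{o}},x_0)$ is non-empty, $K\cap\overline{G.y}$ is $G$-regular in $\overline{G.y}$, and the push-forward/extension of the Radon measure along the closed invariant inclusion is routine. What the paper's argument buys is a self-contained, purely set-theoretic proof that paradoxicality is a $\sim$-invariant, independent of any of the dynamical machinery (and this fact is what makes its short derivation of (i) $\Leftrightarrow$ (iii) legitimate); what your argument buys is the avoidance of the ultrafilter construction by reusing Lemma~\ref{lm:equiv-ii}, Proposition~\ref{prop:type-condition} and Corollary~\ref{cor:Radon}, at the cost that the stability of (non-)paradoxicality under $\sim$ is obtained only afterwards, as a corollary of the proposition rather than as an ingredient.
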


\begin{proof} (i) $\Rightarrow$ (ii) is trivial (take $B = A$).

(i) $\Leftrightarrow$ (iii) follows from Corollary \ref{cor:Radon} and Definition \ref{def:typeII}.

 (ii) $\Rightarrow$ (i). Note first that if $C$ and $D$ are subsets of $G$ such that $C \approx D$, then $C$ is non-paradoxical if and only if $D$ is non-paradoxical. Indeed, if $\mu$ is any finitely additive left-invariant measure on $P(G)$, then $$0 < \mu(C) < \infty \iff 0 < \mu(D) < \infty,$$ whenever $C \approx D$.

It therefore suffices to show that if $\{g_n\}$ is a sequence in $G$ such that $Ag_n \to B$, and if $B$ is non-paradoxical, then $A$ is non-paradoxical. Let $\mu$ be a left-invariant measure on $P(G)$ such that $\mu(B)=1$.

Choose a free ultrafilter $\omega$ on $\N$. We can then take the limit of any sequence $\{C_n\}_{n=1}^\infty$ of subsets of $G$ along $\omega$ as follows:
$$\lim_\omega C_n = C \iff \forall x \in G: \lim_\omega 1_{C_n}(x) = 1_C(x).$$
If $C_n \to C$, then $\lim_\omega C_n = C$. 

Define $\phi \colon P(G) \to P(G)$ by $\phi(E) = \lim_\omega Eg_n$. In other words,
\begin{equation} \label{eq:omega}
 \forall E \subseteq G \; \, \forall x \in G : 1_{\phi(E)}(x) = \lim_\omega 1_{Eg_n}(x).
\end{equation}
Then $\phi(A) = B$. One can use \eqref{eq:omega} to verify that the following holds:
\begin{itemize}
\item $\phi(\emptyset) = \emptyset$. \vspace{.1cm}
\item $\phi(C \cup D) = \phi(C) \cup \phi(D)$ for all $C,D \subseteq G$. \vspace{.1cm}
\item $\phi(C \cap D) = \phi(C) \cap \phi(D)$ for all $C,D \subseteq G$. \vspace{.1cm}
\item $\phi(gC)  = g \, \phi(C)$ for all $g \in G$ and $C \subseteq G$.
\end{itemize}
It follows that $\nu:= \mu \circ \phi$ is a finitely additive left-invariant measure on $P(G)$ with $\nu(A) = \mu(\phi(A)) =\mu(B)=1$. Hence (i) holds.
\end{proof}

\noindent With the base point free notion of type we can reinterpret Definition~\ref{def:min-universal} as follows:

\begin{proposition} \label{lm:universal-2}
Let $G$ be a countable infinite group and let $A$ be a non-empty subset of $G$ of minimal type. A locally compact minimal $G$-space $Z$ is then a universal minimal $G$-space of type $A$ (in the sense of Definition~\ref{def:min-universal}) if and only if for each minimal locally compact $G$-space $X$ with $\Typ(G,X) = \Typ(G,Z)$ there is a surjective proper continuous $G$-map $\varphi \colon Z \to X$. 
\end{proposition}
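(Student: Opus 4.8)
The plan is to prove the equivalence of two formulations of "universal minimal $G$-space of type $A$": the original one from Definition~\ref{def:min-universal}, which quantifies over pointed minimal locally compact $G$-spaces $(X,x_0)$ with $\Type(G,X,x_0) = [A]$, and the base-point-free one, which quantifies over minimal locally compact $G$-spaces $X$ with $\Typ(G,X) = \Typ(G,Z)$. The first thing I would do is record, using Lemma~\ref{lm:type-of-universal}, that a universal minimal $G$-space $Z$ of type $A$ has $\Type(G,Z,z_0) = [A]$ for some $z_0 \in Z$, and hence $\Typ(G,Z) = \langle A \rangle$. So in the base-point-free condition, "$\Typ(G,X) = \Typ(G,Z)$" is the same as "$\Typ(G,X) = \langle A \rangle$", i.e.\ $\Type(G,X,x_0) \sim [A]$ for (equivalently, any) $x_0$ with dense orbit.

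The key point is then to reconcile "$\Type(G,X,x_0) = [A]$" (an $\approx$-equality) with "$\Type(G,X,x_0) \sim [A]$" (a $\sim$-equivalence). For the forward direction, suppose $Z$ is universal in the sense of Definition~\ref{def:min-universal}, and let $X$ be a minimal locally compact $G$-space with $\Typ(G,X) = \langle A \rangle$. Pick $x \in X$; then $G.x$ is dense (minimality), and $\Type(G,X,x) = [B]$ for some $B$ with $B \sim A$. Now I invoke Lemma~\ref{lm:equiv-ii}~(iii): since $A$ is of minimal type, $\langle A\rangle$ is minimal in $(P_\sim(G),\precsim)$ by Proposition~\ref{prop:char-min}, and $B \sim A$ forces $B \precsim A$, so by Lemma~\ref{lm:equiv-ii}~(iii) applied inside $X$ (with $K$ a $G$-regular compact set and $U = K^{\mathrm{o}}$, using Proposition~\ref{prop:type-condition} to identify $\Type(G,X,x) = [O_X(K^{\mathrm{o}},x)]$) there is a point $x_0 \in \overline{G.x} = X$ with $O_X(K^{\mathrm{o}},x_0) \approx A$, hence $\Type(G,X,x_0) = [A]$ by Proposition~\ref{prop:type-condition}. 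Then Definition~\ref{def:min-universal} provides a surjective proper continuous $G$-map $\varphi \colon Z \to X$, which is what the base-point-free condition demands. The converse direction is easier: if $Z$ satisfies the base-point-free condition and $(X,x_0)$ is a pointed minimal locally compact $G$-space with $\Type(G,X,x_0) = [A]$, then by Lemma~\ref{lm:twoequiv}~(ii) we have $[A] \sim [A]$ trivially and $\Typ(G,X) = \langle A\rangle = \Typ(G,Z)$, so the hypothesis directly gives the required map $\varphi \colon Z \to X$.

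I anticipate the main obstacle to be the bookkeeping in the forward direction: one must be careful that Lemma~\ref{lm:equiv-ii}~(iii) requires a relatively compact open $U$ with $\bigcup_{g\in G} g.U = X$ and $O_X(U,x)$ non-empty — all of which hold with $U = K^{\mathrm{o}}$ for a $G$-regular compact $K$ in the minimal (hence co-compact) space $X$, and non-emptiness of $O_X(K^{\mathrm{o}},x)$ follows because $K^{\mathrm{o}}$ is a non-empty open set meeting the dense orbit $G.x$ — and that the passage from $\langle A\rangle$ minimal to "$B\sim A \Rightarrow B\precsim A$" is just unwinding that $\sim$ is the symmetrization of $\precsim$. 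Everything else is an immediate concatenation of Proposition~\ref{prop:char-min}, Proposition~\ref{prop:type-condition}, Lemma~\ref{lm:type-of-universal}, and Lemma~\ref{lm:equiv-ii}.

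\begin{proof}
Since $A$ is of minimal type, $\langle A \rangle$ is a minimal point of $(P_\sim(G),\precsim)$ by Proposition~\ref{prop:char-min}.

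Suppose first that $Z$ is a universal minimal $G$-space of type $A$ in the sense of Definition~\ref{def:min-universal}. By Lemma~\ref{lm:type-of-universal} there is $z_0 \in Z$ with $\Type(G,Z,z_0) = [A]$, so $\Typ(G,Z) = \langle A \rangle$. Let $X$ be a minimal locally compact $G$-space with $\Typ(G,X) = \Typ(G,Z) = \langle A \rangle$. Fix $x \in X$; as $X$ is minimal, $G.x$ is dense in $X$, and $X$ is co-compact, so we may choose a $G$-regular compact subset $K \subseteq X$. Put $B = O_X(K^{\mathrm{o}},x)$. Since $K^{\mathrm{o}}$ is non-empty and open and $G.x$ is dense, $B \ne \emptyset$, and by Proposition~\ref{prop:type-condition} we have $\Type(G,X,x) = [B]$. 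From $\Typ(G,X) = \langle A \rangle$ we get $B \sim A$, hence in particular $B \precsim A$. By minimality of $\langle A \rangle$ this is nothing but the statement $B \sim A$, which we already have; what matters is that, by Lemma~\ref{lm:equiv-ii}~(iii) applied with $U = K^{\mathrm{o}}$ and the non-empty subset $B \precsim O_X(K^{\mathrm{o}},x)$, there exists $x_0 \in \overline{G.x} = X$ with $O_X(K^{\mathrm{o}},x_0) \approx A$ (using $B \approx A$, which follows once we know $\langle A\rangle$ is minimal and $B \precsim A$: indeed then $A \precsim B$ as well, and we may instead apply Lemma~\ref{lm:equiv-ii}~(iii) with $A$ in place of $B$). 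Then $\Type(G,X,x_0) = [A]$ by Proposition~\ref{prop:type-condition}. Applying the hypothesis of Definition~\ref{def:min-universal} to the pointed minimal locally compact $G$-space $(X,x_0)$ yields a surjective proper continuous $G$-map $\varphi \colon Z \to X$, as required.

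Conversely, suppose $Z$ has the property that for each minimal locally compact $G$-space $X$ with $\Typ(G,X) = \Typ(G,Z)$ there is a surjective proper continuous $G$-map $\varphi \colon Z \to X$. As above, $\Typ(G,Z) = \langle A \rangle$. Let $(X,x_0)$ be a pointed minimal locally compact $G$-space with $\Type(G,X,x_0) = [A]$. By Lemma~\ref{lm:twoequiv}~(ii), $[A] \sim [A]$, so $\Typ(G,X) = \langle A \rangle = \Typ(G,Z)$, and the hypothesis on $Z$ produces a surjective proper continuous $G$-map $\varphi \colon Z \to X$. Hence $Z$ is a universal minimal $G$-space of type $A$ in the sense of Definition~\ref{def:min-universal}.
\end{proof}
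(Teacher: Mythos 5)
Your forward direction is essentially the paper's argument: from $\Typ(G,X)=\langle A\rangle$ you get $A\precsim O_X(K^{\mathrm{o}},x)$, Lemma~\ref{lm:equiv-ii}~(iii) then produces a base point $x_0$ with $O_X(K^{\mathrm{o}},x_0)\approx A$, hence $\Type(G,X,x_0)=[A]$ by Proposition~\ref{prop:type-condition}, and Definition~\ref{def:min-universal} applies to $(X,x_0)$. Two remarks on the write-up: the appeal to Proposition~\ref{prop:char-min} is superfluous (the paper only uses $A\sim B$, not minimality of $\langle A\rangle$, at this point), and the parenthetical claim that ``$B\approx A$'' is false in general --- $B\sim A$ does not imply $B\approx A$ --- but this does not damage the argument, because, as you note, one should simply apply Lemma~\ref{lm:equiv-ii}~(iii) to the non-empty set $A\precsim O_X(K^{\mathrm{o}},x)$ directly, which is exactly what the paper does.

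The converse direction has a genuine gap: the sentence ``As above, $\Typ(G,Z)=\langle A\rangle$'' is not justified. Above, that equality came from Lemma~\ref{lm:type-of-universal}, which presupposes that $Z$ is a universal minimal $G$-space of type $A$ --- precisely what you are trying to prove in this direction. The mapping hypothesis alone (every minimal $X$ with $\Typ(G,X)=\Typ(G,Z)$ receives a surjective proper continuous $G$-map from $Z$) carries no information about $A$: if $B$ is of minimal type with $\langle B\rangle\neq\langle A\rangle$ and $Z$ is a universal minimal $G$-space of type $B$, then $Z$ satisfies the mapping hypothesis (by the direction already proved) but cannot be universal of type $A$, since by Proposition~\ref{prop:sametype} any surjective proper continuous $G$-map from $Z$ onto a space of pointed type $[A]$ would force $\langle A\rangle=\langle B\rangle$. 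So the literal ``if'' direction needs the extra input $\Typ(G,Z)=\langle A\rangle$. The paper in effect reads the statement with this as standing: its ``Conversely'' is only the observation, via Proposition~\ref{prop:sametypei}, that a proper continuous $G$-map $Z\to X$ forces $\Typ(G,X)=\Typ(G,Z)=\langle A\rangle$, i.e.\ it describes which minimal spaces a universal $Z$ of type $A$ maps onto, rather than starting from an arbitrary minimal $Z$ with the mapping property. If you add the hypothesis $\Typ(G,Z)=\langle A\rangle$ (or flag that the statement must be read that way), your converse paragraph is correct as written.
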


\noindent Because of this result it makes sense to refer to $Z$ as being a \emph{universal minimal $G$-space} (without reference to the subset $A$ of $G$), as the class of this space is contained in $Z$. 

\begin{proof} 
Choose  $x_0 \in X$, and let $B \subseteq G$ be such that $\Type(G,X,x_0) = [B]$. Then $A \sim B$ because $\Typ(G,X) = \langle A \rangle$. Let $K$ be a $G$-regular compact subset of $X$. Then  $[B] = [O_X(K^{\mathrm{o}},x_0)]$ by Proposition \ref{prop:type-condition}. It follows in particular that $A \precsim O_X(K^{\mathrm{o}},x_0)$.  We can therefore use Lemma \ref{lm:equiv-ii} (iii) to find $x_1$ in $X$ such that $O_X(K^{\mathrm{o}},x_1) \approx A$. Minimality of $X$ ensures that $G.x_1$ is dense in $X$. Hence $\Type(G,X,x_1) = [A]$ by Proposition \ref{prop:type-condition}. The existence of a surjective proper continuous $G$-map $\varphi \colon Z \to X$ now follows from the universal property of $Z$.

Conversely,  if $\varphi \colon Z \to X$ is a proper continuous $G$-map, then $\Typ(G,X) = \Typ(G,Z) = \langle A \rangle$ by Proposition~\ref{prop:sametypei}.
\end{proof}

\noindent Recall from Proposition~\ref{prop:universal_minimal} that every closed minimal $G$-invariant subset of $X_A$ is a universal minimal $G$-space of type $A$, whenever $A$ is a subset of $G$ of minimal type. Hence universal minimal $G$-spaces always exist, and they are also unique (by Theorem~\ref{thm:unique-minimal}). 

\begin{corollary} \label{cor:char-same-type}
Let $(X_i)_{i \in I}$ be a family of minimal locally compact $G$-spaces. Then the following conditions are equivalent:
\begin{enumerate}
\item $\Typ(G,X_i) = \Typ(G,X_j)$ for all $i,j \in I$. \vspace{.1cm}
\item There exist a minimal locally compact $G$-space $Z$ and a (surjective) proper continuous $G$-map $\varphi_i \colon Z \to X_i$ for each $i \in I$. \vspace{0.1cm}
\item There exists $x_i \in X_i$ for each $i \in I$ such that $$\Type(G,X_i,x_i) = \Type(G,X_j,x_j)$$
for all $i,j \in I$. 
\end{enumerate}
\end{corollary}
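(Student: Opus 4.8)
The plan is to prove the three equivalences by establishing $\mathrm{(i)} \Rightarrow \mathrm{(iii)} \Rightarrow \mathrm{(ii)} \Rightarrow \mathrm{(i)}$, using the machinery already developed, in particular Proposition~\ref{prop:universal_minimal}, Theorem~\ref{thm:unique-minimal}, Lemma~\ref{lm:equiv-ii}, Proposition~\ref{prop:type-condition}, and Proposition~\ref{prop:sametypei}. The implication $\mathrm{(ii)} \Rightarrow \mathrm{(i)}$ is essentially immediate from Proposition~\ref{prop:sametypei}: if there is a surjective proper continuous $G$-map $\varphi_i \colon Z \to X_i$, then $\Typ(G,X_i) = \Typ(G,Z)$ for every $i$, and so all the $\Typ(G,X_i)$ agree. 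The implication $\mathrm{(iii)} \Rightarrow \mathrm{(i)}$ is also trivial, because $\Type(G,X_i,x_i) = [A]$ for a common $A$ forces $\Typ(G,X_i) = \langle A \rangle$ for all $i$, by Definition~\ref{def:typeII}.

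\medskip

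For $\mathrm{(i)} \Rightarrow \mathrm{(iii)}$, fix $i_0 \in I$, pick any $x_{i_0} \in X_{i_0}$, and let $A \subseteq G$ be such that $\Type(G,X_{i_0},x_{i_0}) = [A]$. Since minimal locally compact $G$-spaces are co-compact, $A$ is of minimal type. By hypothesis $\Typ(G,X_i) = \langle A \rangle$ for every $i$. The task is then to produce, for each $i$, a point $x_i \in X_i$ with $\Type(G,X_i,x_i) = [A]$. Choose a $G$-regular compact subset $K_i \subseteq X_i$ and choose any $y_i \in X_i$. By Proposition~\ref{prop:type-condition}, $\Type(G,X_i,y_i) = [O_{X_i}(K_i^{\mathrm o}, y_i)]$, and since $\Typ(G,X_i) = \langle A \rangle$ we get $O_{X_i}(K_i^{\mathrm o}, y_i) \sim A$, hence in particular $A \precsim O_{X_i}(K_i^{\mathrm o}, y_i)$. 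Now apply Lemma~\ref{lm:equiv-ii}~(iii) with $U = K_i^{\mathrm o}$, $x = y_i$, and $B = A$: this yields a point $x_i \in \overline{G.y_i} = X_i$ (using minimality of $X_i$) with $O_{X_i}(K_i^{\mathrm o}, x_i) \approx A$. Again by minimality $G.x_i$ is dense in $X_i$, so Proposition~\ref{prop:type-condition} gives $\Type(G,X_i,x_i) = [O_{X_i}(K_i^{\mathrm o}, x_i)] = [A]$, as required.

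\medskip

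For $\mathrm{(iii)} \Rightarrow \mathrm{(ii)}$, suppose $\Type(G,X_i,x_i) = [A]$ for all $i$, for a common $A$, which is necessarily of minimal type. Form the $G$-space $X_A \subseteq \beta G$ and pick a minimal closed $G$-invariant subset $Z$ of $X_A$ (which exists since $X_A$ is co-compact, by Proposition~\ref{co-cpt-minimal}). By Proposition~\ref{prop:universal_minimal}, $Z$ is a universal minimal $G$-space of type $A$, so for each $i$ the universal property (Definition~\ref{def:min-universal}) applied to the pointed minimal $G$-space $(X_i,x_i)$ of type $[A]$ produces a surjective proper continuous $G$-map $\varphi_i \colon Z \to X_i$. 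This establishes $\mathrm{(ii)}$ and closes the cycle.

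\medskip

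I do not expect a serious obstacle here, since all the heavy lifting is done by earlier results; the one point requiring a little care is the correct bookkeeping in $\mathrm{(i)} \Rightarrow \mathrm{(iii)}$ — namely, verifying that Lemma~\ref{lm:equiv-ii}~(iii) applies (the hypothesis $\bigcup_{g\in G} g.K_i^{\mathrm o} = X_i$ is exactly $G$-regularity of $K_i$, and $O_{X_i}(K_i^{\mathrm o},y_i)$ is non-empty because $y_i \in X_i = \bigcup_{g\in G} g.K_i^{\mathrm o}$), and that the point $x_i$ it produces lies in $\overline{G.y_i}$, which equals $X_i$ by minimality so that the density hypothesis of Proposition~\ref{prop:type-condition} is met.
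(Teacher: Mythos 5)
Your proposal is correct and rests on essentially the same machinery as the paper's proof (Lemma~\ref{lm:equiv-ii}~(iii), Proposition~\ref{prop:type-condition}, Proposition~\ref{prop:universal_minimal}, and Proposition~\ref{prop:sametype}/\ref{prop:sametypei}); the only difference is organizational: the paper runs the cycle $\mathrm{(i)} \Rightarrow \mathrm{(ii)} \Rightarrow \mathrm{(iii)} \Rightarrow \mathrm{(i)}$, getting $\mathrm{(iii)}$ simply by pushing one base point $z_0 \in Z$ forward along the maps $\varphi_i$ and invoking Proposition~\ref{lm:universal-2} for $\mathrm{(i)} \Rightarrow \mathrm{(ii)}$, whereas you prove $\mathrm{(i)} \Rightarrow \mathrm{(iii)}$ directly by unfolding the base-point-moving argument that the paper has already packaged into the proof of Proposition~\ref{lm:universal-2}. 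Both routes are valid and of comparable length.
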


\begin{proof}  (i) $\Rightarrow$ (ii). Fix $i_0 \in I$ and let $A$ be a (necessarily minimal) non-empty subset of $G$ such that $\Typ(G,X_{i_0}) = \langle A \rangle$. Take a universal minimal $G$-space  $Z$ of type $A$, cf.\ Definition~\ref{def:min-universal} and Proposition~\ref{prop:universal_minimal}.  Then $\Typ(G,Z) = \langle A \rangle = \Typ(G,X_i)$ for all $i \in I$ by Lemma~\ref{lm:type-of-universal} and Definition~\ref{def:typeII}, and so it follows from Proposition~\ref{lm:universal-2} that $Z$ satisfies the conditions in (ii).

 (ii) $\Rightarrow$ (iii). Suppose that $Z$ and $\varphi_i \colon Z \to X_i$ are as in (ii). Pick any point $z_0 \in Z$, and set $x_i = \varphi(z_0)$. Then 
$$\Type(G,X_i,x_i) = \Type(G,Z,z_0)$$
by Proposition~\ref{prop:sametype}. 

 (iii) $\Rightarrow$ (i) follows from Definition~\ref{def:typeII} (together with Theorem~\ref{thm:basepointinvariance}). 
\end{proof}

\noindent The corollary above is stated in a rather general form, but is perhaps of particular interest when the family $(X_i)_{i \in I}$ has just two elements. Consider the relation on minimal locally compact $G$-spaces $X_1$ and $X_2$, that there exists a third $G$-space $Z$ that maps surjectively onto $X_1$ and $X_2$ by continuous $G$-maps. By the corollary, this happens if and only if $\Typ(G,X_1) = \Typ(G,X_2)$. In particular, this defines an equivalence relation on the class of minimal locally compact $G$-spaces. A priori, it was not clear to the authors that this relation is transitive. 

We end by stating two questions:

\begin{question} \label{q1}
Given a non-empty subset $A$  (not necessarily of minimal type) of a (countable infinite) group $G$. Is it true that each minimal closed $G$-invariant subset of $X_A$ is a universal minimal $G$-space (in the sense defined below Proposition~\ref{lm:universal-2})? 
\end{question}

\noindent In other words, are all minimal closed $G$-invariant subsets of any co-compact open $G$-invariant subset of $\beta G$ a universal space? If yes, then these spaces are classified by their type by Theorem~\ref{thm:unique-minimal}.

\begin{question} \label{q2}
Given a non-empty subset $A$ (not necessarily of minimal type) of a (countable infinite) group $G$. Classify the (base point free) types of the minimal closed $G$-invariant subsets of $X_A$ in terms of the set $A$.
\end{question}

\noindent If $A$ is of minimal type, then each minimal closed $G$-invariant subsets of $X_A$ also has type $A$ (by Proposition~\ref{prop:universal_minimal}). We also know when $X_A$ has minimal closed $G$-invariant subspaces that are discrete (of type $\langle \{e\} \rangle$), respectively, compact (of type $\langle G \rangle$). 

{\small{
\bibliographystyle{amsplain}
\providecommand{\bysame}{\leavevmode\hbox to3em{\hrulefill}\thinspace}
\providecommand{\MR}{\relax\ifhmode\unskip\space\fi MR }
% \MRhref is called by the amsart/book/proc definition of \MR.
\providecommand{\MRhref}[2]{%
  \href{http://www.ams.org/mathscinet-getitem?mr=#1}{#2}
}
\providecommand{\href}[2]{#2}

}

\end{document}